\definecolor{my-blue}{rgb}{0.0,0.0,0.6}
\definecolor{my-red}{rgb}{0.5,0.0,0.0}
\definecolor{my-green}{rgb}{0.0,0.5,0.0}
\definecolor{nicos-red}{rgb}{0.75,0.0,0.0}
\definecolor{light-gray}{gray}{0.6}
\definecolor{really-light-gray}{gray}{0.8}
\definecolor{sussexg}{rgb}{0.0,0.5,0.5}
\definecolor{sussexp}{rgb}{0.5,0.0,0.5}
\newtheorem{theorem}{\color{blue} \sc Theorem}[section]
\newtheorem{lemma}[theorem]{\color{blue} \sc Lemma}
\newtheorem{proposition}[theorem]{\color{blue} \sc Proposition}
\newtheorem{corollary}[theorem]{\color{blue} \sc Corollary}
\newtheorem{assumption}[theorem]{\color{nicos-red} \bf Assumption}
\newtheorem{definition}[theorem]{\color{nicos-red} \bf Definition}
\newtheorem{example}[theorem]{\color{nicos-red} \bf Example}
\numberwithin{equation}{section}
\theoremstyle{remark}
\newtheorem{remark}[theorem]{\color{nicos-red} \bf Remark}
\newcommand{\be}{\begin{equation}}
\newcommand{\ee}{\end{equation}}
\providecommand{\P}[1]{\langle#1\rangle}
\newcommand{\fl}[1]{\left\lfloor{#1}\right\rfloor}
\def\bN{\mathbb{N}}
\def\bP{\mathbb{P}}
\def\bR{\mathbb{R}}
\def\bZ{\mathbb{Z}}
\def\e{\varepsilon}
\def\om{\omega}
 \def\Z{\bZ}
\def\R{\bR}
\def\N{\bN}
\def\P{\bP}
\begin{document}
\title[LPP in discontinuous environment]
{Last passage percolation in an exponential environment with discontinuous rates}

\author[F.~Ciech]{Federico Ciech}
\address{Federico Ciech\\ University of Sussex\\ Department of  Mathematics \\ Falmer Campus\\ Brighton BN1 9QH\\ UK.}
\email{F.Ciech@sussex.ac.uk}
\urladdr{http://www.sussex.ac.uk/profiles/395447} 

\author[N.~Georgiou]{Nicos Georgiou}
\address{Nicos Georgiou\\ University of Sussex\\ Department of  Mathematics \\ Falmer Campus\\ Brighton BN1 9QH\\ UK.}
\email{N.Georgiou@sussex.ac.uk}
\urladdr{http://www.sussex.ac.uk/profiles/329373}

\thanks{N. Georgiou was partially supported by the EPSRC First Grant EP/P021409/1: The flat edge in last passage percolation.}

\keywords{last passage time, corner growth model, flat edge, shape theorem, discontinuous percolation, discontinuous environment, inhomogeneous corner, two-phase models, variational formula}
\subjclass[2000]{60K35}
\begin{abstract}
	We prove a strong law of large numbers for directed last passage times in an independent but inhomogeneous exponential environment. Rates for the exponential random variables are obtained from a discretisation of a speed function that may be discontinuous on a locally finite set of discontinuity curves. The limiting shape is cast as a variational formula that maximizes a certain functional over a set of weakly increasing curves. 
	
	Using this result, we present two examples that allow for partial analytical tractability and show that the shape function may not be strictly concave, and it may exhibit points of non-differentiability, flat segments, and non-uniqueness of the optimisers of the variational formula. Finally, in a specific example, we analyse further the macroscopic optimisers and uncover a phase transition for their behaviour.
\end{abstract}

\maketitle

\section{Introduction}
We consider a model of directed last passage growth model in two dimensions, where each lattice site $(i,j)$ of $\Z_+^2$ is given a random weight $\tau_{i,j}$ according to some background measure $\P$. 

Given lattice points $(a,b),(u,v)\in \Z^2_+$, $\Pi_{(a,b),(u,v)}$ is the set of lattice paths $\pi=\{(a,b)=(i_0,j_0),(i_1,j_1),\dots,(i_p,j_p)=(u,v)\}$ whose admissible steps satisfy 
\begin{equation}\label{steps}
(i_\ell,j_\ell)-(i_{\ell-1},j_{\ell-1})\in\{(1,0),(0,1)\}.
\end{equation}
If $(a,b)=(0,0)$ we simply denote this set by $\Pi_{u,v}$. 

For $(u,v)\in\Z^2_+$ and $n\in \mathbb{N}$ denote the \textit{last passage time}
\begin{equation}\label{microLPT0}
G_{(a,b),(u,v)}=\max_{\pi\in\Pi_{(a,b), (u,v)}}\bigg\{\sum_{(i,j)\in\pi}\tau_{i,j}\bigg\}.
\end{equation}
Again, if $(a,b)=(0,0)$ and no confusion arises, we simply denote $G_{(0,0),(u,v)}$ with $G_{u,v}$.  In the homogeneous setting, $\{ \tau_{i,j} \}_{(i,j) \in \Z_2^+}$ are i.i.d.\ under $\P$ and standard subadditivity arguments give the existence of a point-to-point scaling limit 
\[
\lim_{n\to \infty} \frac{G_{\fl{nx},\fl{ny}}}{n} = g_{pp}(x,y).
\]
Generic properties of $g_{pp}(x,y)$ have been obtained in \cite{mar-04}, that are universal under some mild conditions on the distribution of $\tau_{i,j}$. In \cite{bodineau2005universality},  a distributional limit to a Tracy-Widom law was proven 
for passage times `near the edge', i.e. for passage times in thin rectangles of order $n \times n^a$. It is expected that several properties of the last passage models hold irrespective of the distribution of $\tau_{i,j}$; these include the fluctuation exponent of $G_{\fl{nx},\fl{ny}}$, limiting laws and fluctuations of the maximal path around its macroscopic direction. As far as the law of large numbers goes, an universal approach, under only some moment assumptions on the distribution of $\tau_{i,j}$, has been developed in \cite{georgiou2016variational, rassoul2014quenched, rassoul2013quenched, rassoul2017variational}, where the limiting shape is given in terms of variational formulas. 

When the environment $\tau_{i,j} \sim \text{Exp(1)}$, the last passage model is one of the exactly solvable models of the KPZ class (see \cite{Cor-12} for a survey).
The strong law of large numbers in the exponential model is explicitly computed in \cite{Ros-81}
\be\label{eq:homoLPP}
\lim_{n\to \infty} \frac{G_{\fl{nx},\fl{ny}}}{n} = \gamma(x,y) = (\sqrt{x} + \sqrt{y})^2,\quad  \P\text{-a.s.}
\ee

In this article we derive the limiting constant for a sequence of scaled last passage times on the lattice. The passage times themselves are coupled through a common realization of exponential random variables. However, the rates of these random variables will be chosen according to a discrete approximation of a macroscopic function 
\[
c: \R^2_+ \longrightarrow \R_+.
\]   
 Consider the lattice corner $\Z^2_+$. The environment $\tau = \{ \tau_{i,j} \}_{(i,j) \in \Z^2_+}$ is a collection of i.i.d.\ exponential random variables of rate 1. For any $n \in \N$ we alter the rate of each of these random variables by a scalar multiplication using the macroscopic speed function $c(x,y)$. Namely, define 
\begin{equation}\label{eq:DSF}
r_{i,j}^{(n)} = c\Big(\frac{i}{n},\frac{j}{n}\Big)^{-1},\qquad (i,j)\in\bZ^2_+,
\ee
and define $n$-scaled, inhomogeneous environment by 
\be
\tau^{(n)}_{i,j} = r_{i,j}^{(n)} \tau^n_{i,j}.
\ee      
The rate of the exponential random variable $\tau^{(n)}_{i,j}$ is now determined by the scalar  $c\big(\frac{i}{n},\frac{j}{n}\big)$. 
On each site the rate is completely determined by the speed function $c(\cdot, \cdot)$. We indicate the corresponding exponential 1 random variable as $\tau^n_{i,j}$.

For $(u,v)\in\Z^2_+$ and $n\in \mathbb{N}$ denote the last passage time
\begin{equation}\label{microLPT}
G^{(n)}_{u,v}=\max_{\pi\in\Pi_{u,v}}\bigg\{\sum_{(i,j)\in\pi}r_{i,j}^{(n)}\tau_{i,j}^n\bigg\}=\max_{\pi\in\Pi_{u,v}}\bigg\{\sum_{(i,j)\in\pi}\tau_{i,j}^{(n)}\bigg\}.
\end{equation}
 
 We impose several conditions on the function $c(x,y)$ and they are described in Section \ref{sec:model}. For the moment we emphasise that for any compact set $K \subseteq \R^2_+$ there exist finite constant $m_K$ and $M_K$ such that 
\[
m_K \le c(x, y) \le M_K \quad \text{ for all } (x,y )\in K
\]
and there are a finite number (that depends on $K$) of discontinuity curves of the function $c(x,y)$. These are to avoid degeneracies: If $c(x,y)$ can take the value $0$ then the environment could take the value $\infty$ which leads to trivial passage times. If 
$c(x,y)$ can be infinity, that region of space will never be explored by a path. If the discontinuities have an accumulation point, then no descritisation of $c(x,y)$ can capture that.  

We prove a strong law of large numbers for $n^{-1}G^{(n)}_{\fl{nx},\fl{ny}}$. The limiting last passage constant $\Gamma_c(x,y)$ has a variational characterization that naturally leads to a continuous version of a last passage time model (see Theorem \ref{thm:1}). We study the variational formula and discuss properties of the shape 
$\Gamma_c(x,y)$ and obtain explicit minimizers in two cases of interest. 

The first example is the \textit{shifted two-phase model} with speed function
\be\label{eq:cell0}
c_{\ell}(x,y) = 	\begin{cases}
			1, & \text{if } y>x-\lambda,\\
			r, & \text{if } y\le x-\lambda.
		\end{cases} 
\ee 
and the second model is the \textit{corner-inhomogeneous} model with speed function 
\be \label{eq:cfxy0}
c_{f}(x,y) = 
\begin{cases}
1, & f(x) > y,\\
r, & f(x) < y,\\
1\wedge r, & f(x) =y.
\end{cases}
\ee    
Precise assumptions on $f, r, \lambda$ can be found in Section \ref{sec:model}.

\subsection{Inhomogeneous growth models} We are concerned with directed last passage percolation on the lattice  in a discontinuous 
environment; weights $\om_{i,j}$ at each site $(i,j)$ are exponentially distributed but with different rates that depend on their position. Similar 
arguments can be repeated when the environment comes from geometrically distributed weights, and in this case the inhomogeneity will be 
captured by changing the values of $p$, the probability of success of the geometric weight. Such models do not have the super-additivity properties that guarantee the existence of a macroscopic shape, so other techniques must be utilised to first show existence of macroscopic limits and then compute a formula for them.

Several inhomogeneous models of last passage percolation exist, each one with different ways of assigning rates (or weights in general). One way is to fix two positive sequences $\{ a_{i} \} _{i \in \N}$ and $\{ b_j \}_{j \in \N}$ to assign to site $(i,j)$ an exponential weight $\om_{i,j}$ with rate $a_i + b_j$. Laws of large numbers for the last passage time for these model was obtained in \cite{Sep-Kru-99} when $a_i$ where i.i.d.\ and $b_j$ constant, and then generalised in \cite{Emr-16}. The model enjoys several aspects of integrability, and large deviations from the shape were obtained in \cite{Emr-Jan-15}. When admissible steps are not restricted to just $e_1, e_2$, \cite{Grav-Tra-Wid-01} studies an inhomogeneous model which generalises the one introduced in \cite{Sep-98-aop-2} and obtain explicit distributional limits for fluctuations of the passage time.   

Macroscopic inhomogeneities defined via the speed function have been already considered in the literature. 
When the speed function is continuous, \cite{Rol-Tei-08} showed the law of large numbers for the passage times and convergence of the microscopic maximal paths to a continuous curve conditioned on uniqueness of the macroscopic maximiser.

When the speed function is $c(x,y) = r \mathbbm1 \{ x =y \} +  \mathbbm1 \{ x \neq y \}$ the law of large numbers was studied in  \cite{Sep-01-jsp, Bes-Sid-Sly-14} and it was shown that for small values of $r$ the LLN disagrees with that of the 1-homogeneous model. When the discontinuity curves of $c(x,y)$ was a locally finite  set of lines of the form $\{ y = x + b_i \}_{i \in \N}$, the law of large numbers limits was obtained in  \cite{Geo-Kum-Sep-10} and an explicit limit for the shape function was obtained in the case of the two-phase model with $c(x,y) = r_1 \mathbbm1 \{ x \le y \} + r_2 \mathbbm1 \{ x > y \}$. In this case a flat edge was observed for the limiting shape function. A first passage (unoriented)  percolation two-phase model was studied in \cite{Ahl-Dam-Sid-16}, where the edge-weight distribution was different to the left and right half-planes and in certain cases proved the creation of a `pyramid' in the limiting shape, i.e. a polygonal segment with a point of non-differentiability at the peak. 

In \cite{Cal-15} the law of large numbers for directed last passage percolation was extended when the set of discontinuity curves for $c(x,y)$ was a locally finite set of piecewise Lipschitz strictly increasing curves. A PDE approach was used, bypassing the usual techniques of the totally asymmetric simple exclusion process (TASEP)  particle systems, used in the earlier articles. 

In general, several models of percolation with inhomogeneities can be understood by looking at particle systems with inhomogeneities. For directed  nearest neighbour LPP the corresponding particle system is TASEP. A standard coupling connects TASEP with corner growth. To visualize it, rotate the corner growth model of $\pi/4$ anti-clockwise and the resulting shape is the so called \textit{wedge}. Particles occupy sites of $\Z$, subject to the exclusion rule that does not allow for two particles to occupy the same site.

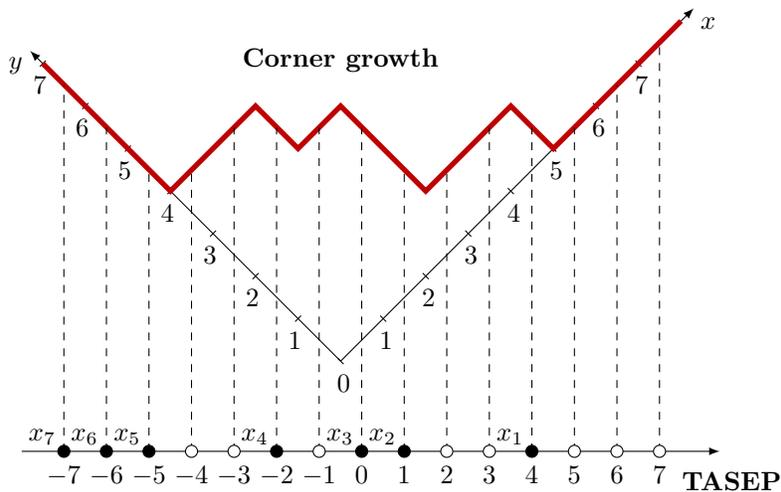
\begin{figure}[h!]
\begin{center}
\begin{tikzpicture}[>= latex, scale=0.8]
\draw[->,rotate=45]  (0,0) -- (8.3,0) node[xshift=0.5em,yshift=-0.5em] {\small$x$};
\draw[->,rotate=45]   (0,0)-- (0,7.3) node[xshift=-0.5em,yshift=-0.5em]  {\small$y$};
  \foreach \x/\xtext in {0/0,1/1,2/2, 3/3,4/4,5/5,6/6,7/7}
    \draw[shift={(\x*0.707,\x*0.707)},rotate=45] (0pt,2pt) -- (0pt,-2pt) node[below] {\small$\xtext$};
  \foreach \x/\xtext in {1/1,2/2, 3/3,4/4,5/5,6/6,7/7}
    \draw[shift={(-\x*0.707,\x*0.707)},rotate=-45] (0pt,2pt) -- (0pt,-2pt) node[below] {\small$\xtext$};
  \foreach \x/\xtext in {-7/-7,-6/-6,-5/-5,-4/-4,-3/-3,-2/-2,-1/-1,0/0,1/1,2/2, 3/3,4/4,5/5,6/6,7/7}
    \draw[shift={(\x*0.707+0.35,-1.5)}] (0pt,2pt) -- (0pt,-2pt) node[below] {\small$\xtext$};
 \draw[line width=2 pt, color=nicos-red,rotate=45] (0,7)--(0,4)--(2,4)--(2,3)--(3,3)--(3,1)--(5,1)--(5,0)--(8,0);
\draw[->] (-5.3,-1.5)--(6.3,-1.5);
\draw[dashed] (-7*0.707+0.35,-1.5)--(-7*0.707+0.35,6*0.707+0.35);
\draw[dashed] (-6*0.707+0.35,-1.5)--(-6*0.707+0.35,5*0.707+0.35);
\draw[dashed] (-5*0.707+0.35,-1.5)--(-5*0.707+0.35,4*0.707+0.35);
\draw[dashed] (-4*0.707+0.35,-1.5)--(-4*0.707+0.35,4*0.707+0.35);
\draw[dashed] (-3*0.707+0.35,-1.5)--(-3*0.707+0.35,5*0.707+0.35);
\draw[dashed] (-2*0.707+0.35,-1.5)--(-2*0.707+0.35,5*0.707+0.35);
\draw[dashed] (-1*0.707+0.35,-1.5)--(-1*0.707+0.35,5*0.707+0.35);
\draw[dashed] (0*0.707+0.35,-1.5)--(-0*0.707+0.35,5*0.707+0.35);
\draw[dashed] (1*0.707+0.35,-1.5)--(1*0.707+0.35,4*0.707+0.35);
\draw[dashed] (2*0.707+0.35,-1.5)--(2*0.707+0.35,4*0.707+0.35);
\draw[dashed] (3*0.707+0.35,-1.5)--(3*0.707+0.35,5*0.707+0.35);
\draw[dashed] (4*0.707+0.35,-1.5)--(4*0.707+0.35,5*0.707+0.35);
\draw[dashed] (5*0.707+0.35,-1.5)--(5*0.707+0.35,5*0.707+0.35);
\draw[dashed] (6*0.707+0.35,-1.5)--(6*0.707+0.35,6*0.707+0.35);
\draw[dashed] (7*0.707+0.35,-1.5)--(7*0.707+0.35,7*0.707+0.35);
 \draw  [fill] (-7*0.707+0.35,-1.5) circle [radius=0.1]  node[xshift=-0.75em,yshift=0.5em] {\small$x_7$};
 \draw  [fill] (-6*0.707+0.35,-1.5) circle [radius=0.1]  node[xshift=-0.75em,yshift=0.5em] {\small$x_6$};
 \draw  [fill] (-5*0.707+0.35,-1.5) circle [radius=0.1]  node[xshift=-0.75em,yshift=0.5em] {\small$x_5$};
 \draw  [color=black,fill=white] (-4*0.707+0.35,-1.5) circle [radius=0.1];
 \draw   [color=black,fill=white]  (-3*0.707+0.35,-1.5) circle [radius=0.1] ;
 \draw  [fill] (-2*0.707+0.35,-1.5) circle [radius=0.1]  node[xshift=-0.75em,yshift=0.5em] {\small$x_4$};
 \draw   [color=black,fill=white]  (-1*0.707+0.35,-1.5) circle [radius=0.1] ;
 \draw  [fill] (0*0.707+0.35,-1.5) circle [radius=0.1]  node[xshift=-0.75em,yshift=0.5em] {\small$x_3$};
 \draw  [fill] (1*0.707+0.35,-1.5) circle [radius=0.1]  node[xshift=-0.75em,yshift=0.5em] {\small$x_2$};
 \draw   [color=black,fill=white]  (2*0.707+0.35,-1.5) circle [radius=0.1] ;
 \draw   [color=black,fill=white]  (3*0.707+0.35,-1.5) circle [radius=0.1] ;
 \draw  [fill] (4*0.707+0.35,-1.5) circle [radius=0.1]  node[xshift=-0.75em,yshift=0.5em] {\small$x_1$};
 \draw  [color=black,fill=white] (5*0.707+0.35,-1.5) circle [radius=0.1];
 \draw   [color=black,fill=white]  (6*0.707+0.35,-1.5) circle [radius=0.1] ;
 \draw  [color=black,fill=white] (7*0.707+0.35,-1.5) circle [radius=0.1];
\draw (0,5) node{\small$\text{\textbf{Corner growth}}$};
\draw (6.5,-2) node{\small$\text{\textbf{TASEP}}$};
\end{tikzpicture}
\caption{Graphical representation of the coupling between the corner growth model and TASEP.}
\label{tasep}
\end{center}
\end{figure}

The connection between the corner growth and TASEP comes via the height function $h_t$ that evolves with the particle system as time $t$ progresses. It is a piecewise linear curve, differentiable in intervals $(x - 1/2, x+1/2), x \in \Z$. For each such interval the derivative of $h_t$ exists and it is constant $1$ or $-1$. If the height function has a positive slope on $(x - 1/2, x+1/2)$, it means that the corresponding site $x$ on the line is not occupied by a particle at time $t$. Otherwise if the edge of the height function has a negative slope in $(x - 1/2, x+1/2)$ it means that the corresponding site on the line is occupied . Particles jump to the right at random exponential times subject to the exclusion rule. With each step, the height function updates. In particular, note that the height function $h_t$ corresponds to the level curves of the last passage time. 
 (see Figure \ref{tasep}).

Therefore understanding the height function in the wedge  which is the level curve of the last passage time, 
is equivalent in studying the exclusion process for the particle system. This coupling was utilised for example in \cite{Sep-01-jsp, Rol-Tei-08, Geo-Kum-Sep-10} to obtain results about hydrodynamic limits of the particle current and density, together with the results for the last passage times. 

Hydrodynamic limits for spatially inhomogeneous conservative systems for different versions of inhomogeneities have been extensively studied \cite{ Cov-Rez-97, Bah-98, Rez-02, Che-08, Geo-Kum-Sep-10, Bah-18}. An example where the discontinuity is microscopic in nature is the slow bond problem. This TASEP model was introduced in \cite{Ja-92} and \cite{Ja-94}, in which particles jump at the same rate 1 everywhere on $\Z$ except at site zero where the jump happens at a slower rate than the other sites. Results regarding the hydrodynamic limits (and by extension the last passage times) were obtained in \cite{Sep-01-jsp} and finally in \cite{Bes-Sid-Sly-14} the full conjecture was proven that a slow bond will always affect the hydrodynamics. Recently, In \cite{Bo-Pe-17} a totally asymmetric particle with blockage with spatial inhomogeneities was studied and limiting Tracy-Widom laws were obtained. 

\subsection{Organisation of the paper}
In Section \ref{sec:model} we describe the main theorems. First we state the law of large numbers limit for the passage time \eqref{microLPT}. This is Theorem \ref{thm:1}. The limiting shape function, denoted by $\Gamma(x,y)$ comes in the form of a variational formula, where a functional is maximised over a set of suitable functions. Coninuity properties of $\Gamma$ are proved in Section \ref{sec:continuity}.  The proof of the law of large numbers is in Section \ref{sec:5}.

We then state results for two explicitly analysable examples. The first one is the shifted-two phase model with speed function \eqref{eq:cell0}; here we study properties of the shape and show analytically that there are flat edges, convexity-breaking and points of non differentiability for the shape function $\Gamma(x,y)$. The related proofs are in Section \ref{sec:2shifted}. 

The other example is the corner-inhomogeneous model with a speed function \eqref{eq:cfxy0}. Under some regularity conditions on $f$, we are able to study properties of the maximisers of the variational formula for the shape and how their behaviour depends on $f$. For example, depending on $f$ we may have points $(x,y)$ for which the macroscopic maximiser follows the axes. For both studied examples we have cases where macroscopic maximisers are not unique. The proofs for this model can be found in Section \ref{sec:cornerinho}.     

\subsection{Commonly used notation}
$\N$ denotes the set of natural numbers. $\Z$ is the set of integers and $\Z_+ =  \N \cup\{0\}$. $\R$ denotes the real numbers and $\R_+$ the non-negative reals.  If a variable $\tau$ follows the exponential distribution with parameter $r > 0$ this means $\P\{\tau > t \} = e^{-r t}$, in other words $r$ is the rate.

Bold-face letters (e.g.\ $\bf v$) indicate two dimensional vectors (e.g.\ ${\bf v} = (v_1, v_2)$). In particular letter $\bf x$ is reserved for 
denoting two-dimensional curves; 
often we write ${\bf x} (s) = (x_1(s), x_2(s))$ to
 emphasise that the curve is parametrised and seen as a function. Inequalities of vectors ${\bf v} \le {\bf w}$ or $(v_1, v_2) \le (w_1, w_2)$ 
means the inequality holds coordinate-wise. For a vector ${\bf v} = (v_1, v_2)$, we denote by $\fl{{\bf v}} =  (\fl{v_1}, \fl{v_2})$.

Without any special mention, when we write $\|\cdot\|$ we mean $\|\cdot\|_{\infty}$ unless explicitly referring to a different norm.
For any continuous function $g$ we denote its modulus of continuity by $\om_g$ and we assume 
\[ \| g(z_1) - g(z_2) \| _{\infty} \le \om_g(| z_1 -z_2 |_{\infty}). \] In the sequence we use the fact that 
$\om_g$ is continuous at 0 and that $\om_g(0) = 0$ without particular mention.

For any set $A \subseteq  \R_2^+$, we denote the multiplication  $nA = \{ (nx, ny): (x,y) \in \R^2_+ \}$ and the floor 
$\fl{nA} = \{ (\fl{nx}, \fl{ny}): (nx, ny) \in nA \}$. The topological interior of the set is denoted by ${\rm int}(A)$. 
For vectors $\bf v, w$, ${\bf v} \le {\bf w}$, we denote by $R({\bf v, w})$ the rectangle with south-west corner $\bf v$ and north-east corner ${\bf w}$. 

Letter $G$ is reserved for last passage times. Often we use the notation $G_A$ to denote the last passage time in the set $A$, which is the maximum weight that can be collected on up-right paths that lie in the set $A$. If no such paths exist, $G_A = 0$. 

\section{Model and results} 
\label{sec:model}

At this point, we state the technical conditions on $c(x,y)$ that we are imposing. There will be no special mention to these in the sequence, unless absolutely necessary. We explain why these assumptions are used after the statement of Theorem \ref{thm:1}.

We assume the speed function $c(x,y)$ satisfies the following two assumptions:

\begin{assumption}\label{ass:c}[Discontinuity curves of $c(x,y)$] 
	Function $c(x,y)$ is discontinuous on a (potentially) countable set of curves $H_c = \{ h_i\}_{i \in \mathcal I}$ that is locally finite in all the following properties 
		\begin{enumerate}
			\item $h_i$ is either a linear segment or strictly monotone.  
			\item If $h_i$ is not a vertical line segment, it can be viewed as a graph			
			\[
			h_i: [ z_i, w_i] = {\rm Dom}(h_i) \to \R,
			\] 
			\item If $h_i$ is strictly increasing, then 
			\begin{enumerate}
				\item $h_i$ is $C^1((z_i, w_i), \R)$. At the boundary points $z_i, w_i$ the derivative may take the value $\pm \infty, 0$.
				\item The equation $h_i'(s) = 0$ has finitely many solutions in $[ z_i, w_i]$. 
			\end{enumerate}
				\item If $h_i$ is strictly decreasing, then $h_i$ is continuous.						\end{enumerate} 
\end{assumption}				

The discontinuity curves $\{h_i\}_{i \in \mathcal I}$ separate $\R^2_+$ into open regions in which $c(x,y)$ is assumed continuous. The number of regions is finite in any compact set of $\R^2_+$.	Denote the set of regions by $\mathcal Q$. 

There are two types of points on these discontinuity curves,
\begin{enumerate}
	\item(Interior points) These are points ${\bf w}$ that belong on a single discontinuity curve $h_i$. For any point ${\bf w}$ of this form, we can find an $\e > 0$ so that $h_i$ partitions $ B( {\bf w}, \e) $ in to three disjoint sets,  $U_{\e, \bf w }$ (above $h_i$),  $L_{\e, \bf w }$ (below $h_i$) and 
$(h_i \cap B( {\bf w}, \e))$. 
	
	\item(Intersection/terminal points) These are points ${\bf w}$ that either belong on more than one discontinuity curve or they are terminal for $h_i$. There are finitely many of these points in any compact set.			
\end{enumerate}
				
\begin{assumption}\label{ass:c2}[Further properties of $c(x,y)$]		
\begin{enumerate}		
		\item $c(x,y)$ is continuous on any $Q \in \mathcal Q$,  lower-semicontinuous on $\R^2_+$, that further satisfies the following  stability assumption:  
		
		For every $i \in \mathcal I$ and interior point ${ \bf w } \in h_i$, there exists $\e = \e(i, {\bf w})>0$ so that for all ${\bf y} \in B( {\bf w}, \e) \cap h_i $ 
		there exists open set $Q_{i, \bf w} \in \{ L_{\e, \bf w }, U_{\e, \bf w }\}$, so that 
		for any sequence ${\bf z}_n \in Q_{i, \bf w} \cap B( {\bf w}, \e)$ with ${\bf z}_n \to {\bf y}$,
		\be \label{eq:calder2}
		 \lim_{{{\bf z}_n} \to {\bf y}} c({\bf z}) = c( {\bf y}). 
		\ee
		\item For any compact set $K \subset \R^2_+$, there exist two constants $r_{\text{\rm low}}^{(K)} > 0$ and $r_{\text{\rm high}}^{(K)} < \infty$, so that 
		\[
			r_{\text{\rm low}}^{(K)} \le c(x,y) \le r_{\text{\rm high}}^{(K)} , \quad \forall (x, y) \in K.
		\]
	\end{enumerate}
\end{assumption}

\begin{remark} Assumption \ref{ass:c2}, (1) gives by a standard compactness argument that if $c(x,y)$ is never continuous on $h_i$ then it must be that in a strip around $h_i$ the values of $c(x,y)$ on one of the incident regions is always smaller than the values in all other incident regions. This is consistent with assumption F3, equation (1.12) in \cite{Cal-15}. Lower semi-continuity of $c(x,y)$ implies that the limiting value in \eqref{eq:calder2} is the smallest of all possible limits on sequences that approach $y$. However, the assumption of  \cite{Cal-15} that $c(x,y)$ is (at least locally) Lipschitz is now removed. 
\end{remark}

Fix an $(x,y)$ in $\R^{2}_+$ and a speed function $c(\cdot, \cdot)$. Define the function 
$\Gamma_c(x,y)$ via the variational formula 
\begin{equation}\label{macroLPT}
\Gamma_c(x,y)=\sup_{\mathbf{x}(\cdot)\in\mathcal{H}(x,y)}\bigg\{\int_0^1\frac{\gamma(\mathbf{x}'(s))}{c(x_1(s),x_2(s))}ds\bigg\},
\end{equation}
where $\gamma(x,y)=(\sqrt{x}+\sqrt{y})^2$ is the last-passage constant in a homogeneous rate 1 environment, $\mathbf{x}(s)=(x_1(s),x_2(s))$ denotes a path in $\mathbb{R}^2$ and set 
\begin{align*}
\mathcal{H}(x,y) &=\{{\bf x}\in C([0,1],\R^2_+):\mathbf{x} \text{ is piecewise }C^1,\mathbf{x}(0)=(0,0), \mathbf{x}(1)=(x,y), \\
& \phantom{xxxxxxxxxxxxxxxxxxxxxxxx}\mathbf{x}'(s)\in \R^2_+ \text{ wherever the derivative is defined}\}.
\end{align*}
When the speed function $c(x,y) = r$ constant, we can immediately compute 
\begin{align*}
\Gamma_r(x,y) &= \sup_{\mathbf{x}(\cdot)\in\mathcal{H}(x,y)} \int_0^1\frac{\gamma(\mathbf{x}'(s))}{c(x_1(s),x_2(s))}ds = \frac{1}{r} \sup_{\mathbf{x}(\cdot)\in\mathcal{H}(x,y)} \int_0^1\gamma(\mathbf{x}'(s))\, ds\\
& \le \frac{1}{r} \sup_{\mathbf{x}(\cdot)\in\mathcal{H}(x,y)} \gamma\Big( \int_0^1x_1'(s)\ ds,  \int_0^1 x_2'(s) \, ds\Big), \quad \text{by Jensen's inequality since $\gamma$ is concave}\\
&= \frac{1}{r} \gamma(x,y) \le \Gamma_r(x,y).
\end{align*}
The last inequality follows from the fact that the straight line from 0 to $(x,y)$ is an admissible candidate maximiser for \eqref{macroLPT}. The calculation shows two things that we use freely in the sequence, namely
\begin{enumerate} 
	\item Straight lines are optimisers of \eqref{macroLPT} in homogeneous (constant) regions of $c(x,y)$. In fact, because $\gamma$ is strictly concave, it is easy to show that the straight line will be the unique maximiser. We refer to this fact as `Jensen's inequality' in the sequence.
	\item $\Gamma_r(x,y)$ corresponds to the limiting shape function for last passage times in a homogeneous Exp$(r)$ environment. 
\end{enumerate}

Two more properties of $\Gamma_c$ can be immediately obtained:
\begin{enumerate}[(1)]
	\item (Independence from parametrization) For any $c > 0$, $\gamma(cx,cy) = c \gamma(x,y)$ so the value of the integral 
	\be\label{eq:I}
		I({\bf x}) = \int_{0}^1 \frac{\gamma(\mathbf{x}'(s))}{c(x_1(s),x_2(s))}ds
	\ee 
	is independent of the parametrisation we choose for the curve ${\bf x}$. 
	\item (Superadditivity) Define $\Gamma_c(x,y) : =  \Gamma_c((0,0), (x,y))$ and similarly define $\Gamma_c$ 
	from any starting point $(a,b)$ to any terminal point $(x,y)$, $(x,y) \ge (a,b)$ by 
	 \begin{equation}\label{macroLPT2}
\Gamma_c((a,b),(x,y))=\sup_{\mathbf{x}(\cdot)\in\mathcal{H}((a,b), (x,y))}\bigg\{\int_0^1\frac{\gamma(\mathbf{x}'(s))}{c(x_1(s),x_2(s))}ds\bigg\},
\end{equation}
where 
\begin{align*}
\mathcal{H}((a,b),(x,y)) &=\{{\bf x}\in C([0,1],\R^2_+):\mathbf{x} \text{ is piecewise }C^1,\mathbf{x}(0)=(a,b), \mathbf{x}(1)=(x,y), \\
& \phantom{xxxxxxxxxxxxxxxxxx}\mathbf{x}'(s)\in \R^2_+ \text{ wherever the derivative is defined}\}.
\end{align*}
Then, for any $(a,b) \le (z,w) \le (x,y)$ we have 
\be \label{eq:subadditive}
\Gamma_c((a,b), (x,y)) \ge \Gamma_c((a,b), (z,w)) + \Gamma_c((z,w), (x,y)).
\ee
\end{enumerate}

In this respect, function $\Gamma_c$ behaves like a `macroscopic last passage time' and the first theorem shows that it is a continuous function.

\begin{theorem}\label{cor:conti}[Continuity of $\Gamma$.] Let $c(x,y)$ satisfy Assumptions \ref{ass:c} and  \ref{ass:c2}. Fix $(a, b)$ and $(x,y) \in \R^2_+$. 
For any $\e > 0$ there exists a $\delta_0= \delta_0(\e)> 0$ so that 
for all $\delta_1, \delta_2, \delta_3, \delta_4 \in (-\delta_0, \delta_0)$, we have 
\be
| \Gamma_c((a + \delta_1, b + \delta_2), (x + \delta_3, y+ \delta_4)) - \Gamma_c((a,b),(x,y)) | < \e.
\ee
\end{theorem}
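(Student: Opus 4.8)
The plan is to derive everything from the monotonicity and superadditivity of $\Gamma_c$ already established: reduce the four–parameter perturbation to one comparison between a slightly enlarged and a slightly shrunk rectangle, and then control that comparison by clamping a near–optimal path into the smaller rectangle and estimating the loss with the crude two–sided bounds on $c$.

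First, \eqref{eq:subadditive} makes $\Gamma_c$ monotone under inclusion of rectangles: if $(a',b')\le(a,b)\le(x,y)\le(x',y')$ then
\[
\Gamma_c\big((a',b'),(x',y')\big)\ \ge\ \Gamma_c\big((a',b'),(a,b)\big)+\Gamma_c\big((a,b),(x,y)\big)+\Gamma_c\big((x,y),(x',y')\big)\ \ge\ \Gamma_c\big((a,b),(x,y)\big),
\]
the two outer terms being nonnegative. Write $P=(a,b)$, $Q=(x,y)$, $P^{\pm}=P\pm(\delta_0,\delta_0)$, $Q^{\pm}=Q\pm(\delta_0,\delta_0)$. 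If $\delta_0$ is below half of every strictly positive coordinate of $P$ and of $Q-P$, then every perturbed pair $(P',Q')$ with displacements in $(-\delta_0,\delta_0)$ obeys $P^-\le P'\le P^+\le Q^-\le Q'\le Q^+$, and the same chain holds with $(P',Q')$ replaced by $(P,Q)$; monotonicity therefore sandwiches both $\Gamma_c(P',Q')$ and $\Gamma_c(P,Q)$ between $\Gamma_c(P^+,Q^-)$ and $\Gamma_c(P^-,Q^+)$. Hence it is enough to show $\Gamma_c(P^-,Q^+)-\Gamma_c(P^+,Q^-)\to0$ as $\delta_0\to0$. The boundary/degenerate configurations ($P$ on an axis, or $Q-P$ parallel to an axis) I would dispatch separately by an essentially one–dimensional argument, using the bound $\Gamma_c((a,b),(x,y))\le\gamma(x-a,y-b)/r_{\mathrm{low}}^{(K)}$ that follows from $c\ge r_{\mathrm{low}}^{(K)}$ and the Jensen/straight–line principle.

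For the main estimate, fix a compact $K\supseteq R(P^-,Q^+)$ (valid for all $\delta_0\le1$) with $r_{\mathrm{low}}\le c\le r_{\mathrm{high}}$ on $K$, and given $\eta>0$ take $\mathbf{x}\in\mathcal H(P^-,Q^+)$ with $I(\mathbf{x})\ge\Gamma_c(P^-,Q^+)-\eta$, $I(\cdot)$ as in \eqref{eq:I}. Clamp $\mathbf{x}$ coordinatewise into the small rectangle by $y_i(s)=\max\{P^+_i,\min\{Q^-_i,x_i(s)\}\}$. Because each $x_i$ is nondecreasing, $\mathbf{y}$ is up–right with $\mathbf{y}(0)=P^+$, $\mathbf{y}(1)=Q^-$, and is piecewise $C^1$ with finitely many pieces, so $\mathbf{y}\in\mathcal H(P^+,Q^-)$; moreover $\mathbf{y}=\mathbf{x}$ and $\mathbf{y}'=\mathbf{x}'$ a.e.\ off the margin set $M=\{s:x_1(s)\notin[P^+_1,Q^-_1]\ \text{or}\ x_2(s)\notin[P^+_2,Q^-_2]\}$. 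Using $\gamma(\mathbf{x}')\le2(x_1'+x_2')$ (from $(\sqrt a+\sqrt b)^2\le2(a+b)$) together with $c\ge r_{\mathrm{low}}$,
\[
\Gamma_c(P^+,Q^-)\ \ge\ I(\mathbf{y})\ \ge\ \int_{[0,1]\setminus M}\frac{\gamma(\mathbf{x}'(s))}{c(\mathbf{x}(s))}\,ds\ =\ I(\mathbf{x})-\int_M\frac{\gamma(\mathbf{x}'(s))}{c(\mathbf{x}(s))}\,ds\ \ge\ \Gamma_c(P^-,Q^+)-\eta-\frac{1}{r_{\mathrm{low}}}\int_M\gamma(\mathbf{x}'(s))\,ds,
\]
so the theorem reduces to showing $\int_M\gamma(\mathbf{x}'(s))\,ds\to0$ as $\delta_0\to0$, uniformly over $\eta$–optimal $\mathbf{x}$.

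On $M$ at least one coordinate of $\mathbf{x}$ is trapped in an interval of length at most $2\delta_0$; with $\gamma(\mathbf{x}')\le2(x_1'+x_2')$ this bounds the contribution of the trapped coordinate by $O(\delta_0)$, and Cauchy--Schwarz disposes of the cross term $\int_M\sqrt{x_1'x_2'}$. The genuinely delicate part is the displacement of the \emph{free} coordinate over $M$: how far along the long side of the rectangle a near–optimal path is willing to travel while its other coordinate still lies in the thin frame $R(P^-,Q^+)\setminus R(P^+,Q^-)$. Controlling this is exactly where Assumptions~\ref{ass:c} and \ref{ass:c2} must be used --- lower semicontinuity, the stability relation \eqref{eq:calder2}, and local finiteness of $H_c$ have to be combined to rule out that a path extracts a non–vanishing multiplicative gain from a frame that collapses, as $\delta_0\to0$, onto a fixed line; once that is known the path must spend almost all of its productive length inside $R(P^+,Q^-)$, so $\int_M\gamma(\mathbf{x}')=o_{\delta_0}(1)$. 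Granting this, $\Gamma_c(P^-,Q^+)-\Gamma_c(P^+,Q^-)\le\eta+o_{\delta_0}(1)$, and $\eta\downarrow0$ closes the argument. I expect precisely this last point --- quantifying the gain a near–optimal path can draw from the thin frame near the corners and edges, which is where the fine structure of the discontinuity set enters --- to be the main obstacle; monotonicity, homogeneity of $\gamma$, and the clamping surgery itself are routine by comparison.
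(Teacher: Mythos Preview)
Your monotone sandwich and the reduction to $\Gamma_c(P^-,Q^+)-\Gamma_c(P^+,Q^-)\to 0$ match the paper's final step exactly. The gap is where you say it is, but it is worse than you suggest: the claim that a near-optimal path must satisfy $\int_M\gamma(\mathbf{x}'(s))\,ds=o_{\delta_0}(1)$ is simply false. Take $c=r$ on $\{y\le b\}$ and $c=1$ on $\{y>b\}$ with $r<1$, and $P=(a,b)$ on the discontinuity. A near-optimal path from $P^-$ to $Q^+$ runs along (or just below) the fast line $y=b$ for a macroscopic horizontal distance before turning up; that entire stretch lies in the south margin $\{y<P_2^+\}\subseteq M$, so $\int_M x_1'$ is of order $x-a$ and does not shrink with $\delta_0$. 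Your inequality $I(\mathbf{y})\ge I(\mathbf{x})-\int_M\gamma(\mathbf{x}')/c(\mathbf{x})$ throws away the \emph{entire} contribution of the clamped path on $M$, and that discarded piece is exactly what compensates for the large free-coordinate displacement; with it gone the bound is useless.

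The paper does not attempt to show the frame contribution is small. Instead it proves a thin-strip estimate (Lemma~\ref{lem:2}): in a strip of width $\delta_1$ the value of $\Gamma$ exceeds the value of $\Gamma$ along the \emph{bounding edge} by at most $C\sqrt{\delta}$, because the transverse displacement is $\le\delta_1$, the cross term is $O(\sqrt{\delta_1})$ by Cauchy--Schwarz, and --- crucially --- lower semicontinuity of $c$ makes the edge at least as fast as the strip interior, so the longitudinal contribution is dominated by the edge integral rather than discarded. This is then fed into an exit-point decomposition (Corollary~\ref{cor:1}): a near-optimal path for the bigger rectangle is split at the first point $\mathbf{u}$ where it crosses into the smaller rectangle, superadditivity absorbs the post-$\mathbf{u}$ piece into $\Gamma(\text{small})$, and the pre-$\mathbf{u}$ piece lives in a thin strip where Lemma~\ref{lem:2} applies. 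The difference from your clamping is that the comparison is always against a one-dimensional $\Gamma$ along an edge of the \emph{original} rectangle, never against a quantity that has been moved $\delta_0$ into the interior; this is what lets lower semicontinuity work in the right direction. If you want to salvage the clamping route you must keep $\int_M\gamma(\mathbf{y}')/c(\mathbf{y})$ and control $\int_M\gamma(\mathbf{y}')\big[1/c(\mathbf{x})-1/c(\mathbf{y})\big]$, which forces you back to exactly the edge-versus-strip comparison the paper carries out.
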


In the next theorem we obtain $\Gamma_c$ in \eqref{macroLPT} as the law of large number of the microscopic last passage time  \eqref{microLPT}.
\begin{theorem}\label{thm:1}
Recall \eqref{microLPT}. Let $c(x,y)$ a macroscopic speed function which satisfies Assumption \ref{ass:c}, and let $(x,y) \in \R^2_+$. Then we have the scaling limit
\begin{equation}\label{eq:6}
\lim_{n\to\infty}n^{-1}G^{(n)}_{\lfloor nx\rfloor,\lfloor ny \rfloor}=\Gamma_c(x,y)\quad \P-\text{a.s.}
\end{equation}
\end{theorem}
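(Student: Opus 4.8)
The plan is to prove the scaling limit \eqref{eq:6} by a sandwich argument: establish that $\liminf_n n^{-1}G^{(n)}_{\fl{nx},\fl{ny}} \ge \Gamma_c(x,y)$ (the "easy'' lower bound, coming from following near-optimal macroscopic curves) and that $\limsup_n n^{-1}G^{(n)}_{\fl{nx},\fl{ny}} \le \Gamma_c(x,y)$ (the "hard'' upper bound, which requires controlling all microscopic paths simultaneously), both $\P$-a.s. The basic engine throughout is the homogeneous exponential LLN \eqref{eq:homoLPP}: over a rectangle $R(\fl{n\mathbf{u}},\fl{n\mathbf{v}})$ in which the speed function is (nearly) constant equal to $r$, one has $n^{-1}G^{(n)}_{R} \to \Gamma_r(\mathbf{v}-\mathbf{u}) = r^{-1}\gamma(\mathbf{v}-\mathbf{u})$; since the number of rectangles we will use is finite, the exceptional null sets can be taken in union and a single $\P$-a.s. statement survives. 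I would also record, via standard large-deviation bounds for sums of exponentials (uniform because on any compact set the rates lie in $[r^{-1}_{\text{high}}, r^{-1}_{\text{low}}]$ by Assumption \ref{ass:c2}(2)), that these convergences hold uniformly over all starting/ending corners in $\fl{nK}$; this is what upgrades pointwise control into control of a maximum over polynomially many paths.

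For the lower bound, fix $\e>0$ and pick a piecewise-$C^1$ curve $\mathbf{x}(\cdot)\in\mathcal H(x,y)$ with $I(\mathbf{x}) \ge \Gamma_c(x,y) - \e$. Using the continuity of $\Gamma_c$ (Theorem \ref{cor:conti}) together with Assumption \ref{ass:c}, perturb $\mathbf{x}$ slightly so that it is a finite concatenation of short straight segments, each contained in a single region $Q\in\mathcal Q$ on which $c$ is nearly constant, losing at most $\e$ in the functional (discontinuity curves are crossed only finitely often, and by lower semicontinuity / the stability assumption \eqref{eq:calder2} one may push the crossing segments into the cheaper side). Now inscribe a staircase of lattice rectangles along this polygonal curve; superadditivity of $G^{(n)}$ along the staircase plus the uniform homogeneous LLN on each rectangle gives $\liminf_n n^{-1}G^{(n)}_{\fl{nx},\fl{ny}} \ge \sum_k r_k^{-1}\gamma(\Delta_k) \ge I(\mathbf{x}) - C\e \ge \Gamma_c(x,y) - C'\e$, and let $\e\downarrow 0$.

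For the upper bound, fix a large $N$ and discretize space into a grid of mesh $1/N$; refine the grid near the (finitely many, in the relevant compact set) discontinuity curves so that each grid cell either lies in the closure of a single region $Q$ or is a thin sliver around a discontinuity curve whose total $\gamma$-contribution is $O(1/N)$. Any microscopic up-right path from $(0,0)$ to $(\fl{nx},\fl{ny})$ is subdivided by the grid lines into $O(N^2)$ pieces, each confined to one scaled cell; bounding the weight of a path by the sum over cells of the cell-maxima, and using the uniform homogeneous upper LLN on each cell (rate $\approx$ the minimal value of $c$ on the cell), we get $n^{-1}G^{(n)}_{\fl{nx},\fl{ny}} \le \sum_{\text{cells on some monotone cell-path}} c_{\min,\text{cell}}^{-1}\gamma(\text{cell displacement}) + o(1) + O(1/N)$ uniformly in the path, where the sum runs over the finitely many monotone sequences of cells. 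Each such cell-sequence is a Riemann-sum approximation of the functional $I$ along some admissible curve, hence is $\le \Gamma_c(x,y) + \omega_c(1/N)$-type error; taking $n\to\infty$ then $N\to\infty$ yields the bound. The main obstacle is precisely this upper bound near the discontinuity set: one must ensure that a path cannot exploit the cheap (large-$c$) side of a discontinuity curve while "collecting'' weight as if it were on the expensive side, i.e. that slivers around $h_i$ genuinely contribute negligibly and that the lower-semicontinuity of $c$ makes the cell-minima converge to the right thing; handling curved (strictly monotone $C^1$) discontinuities, where the sliver must be taken with locally varying slope and the $C^1$/finitely-many-critical-points hypotheses of Assumption \ref{ass:c} are used, is the technically delicate part, and is where the bulk of Section \ref{sec:5} presumably goes.
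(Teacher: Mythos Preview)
Your overall strategy---superadditive lower bound along a near-optimal macroscopic curve, grid-based upper bound controlling all microscopic paths---matches the paper's, and your lower bound is essentially the same as the one in Proposition~\ref{prop:1}. The gap is in the upper bound, specifically in how you treat cells that contain an \emph{increasing} discontinuity curve.

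You write that each sliver around a discontinuity has ``total $\gamma$-contribution $O(1/N)$'' and that the resulting Riemann sum is bounded by $\Gamma_c(x,y)+\omega_c(1/N)$. The first claim is simply false when $h_i$ is increasing: a microscopic path can track $h_i$ for a macroscopically positive length inside the sliver, so its displacement there is $O(1)$, not $O(1/N)$. More seriously, your cell-wise bound replaces the speed on the whole sliver by $c_{\min}$, giving $c_{\min}^{-1}\gamma(\text{exit}-\text{entry})$; but this quantity can exceed $\Gamma_c(\text{entry},\text{exit})$ by a fixed amount, because $\gamma$ is strictly concave and the macroscopic optimiser in that cell is the \emph{curved} path along $h_i$, whose $I$-value is $c_{\min}^{-1}\int\gamma(h_i')\,ds\le c_{\min}^{-1}\gamma(\text{exit}-\text{entry})$ with strict inequality unless $h_i$ is a straight segment. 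Summing over the $O(N)$ slivers along $h_i$ therefore produces an $O(1)$ overshoot that does not vanish as $N\to\infty$, so the chain ``$\sum c_{\min}^{-1}\gamma\le \Gamma_c+\text{error}$'' breaks.

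The paper closes this gap with two devices you do not invoke. First, Lemma~\ref{lem:2p} shows that if the two-phase rectangle is chosen so that $h_i$ is nearly a straight line (Assumption~(3) there: $|h'-b/a|<\eta$), then in fact $\Gamma_c\ge c_{\min}^{-1}\gamma(a,b)-C\eta\,\mathrm{length}(h)$, so the concavity loss is controllable; this is why Assumption~\ref{ass:c}(3) requires $h_i\in C^1$ with finitely many critical points. Second---and this is the step that handles arbitrary entry/exit points on the cell boundary, not just the corners---the paper does \emph{not} use the crude $c_{\min}$ bound in the upper-bound argument at all. Instead it approximates $c$ on each two-phase cell from below by a \emph{continuous} speed function $c_\eta^{\mathrm{cont}}\le c$ with $\Gamma_{c_\eta^{\mathrm{cont}}}-\Gamma_c<\e$ (Lemma~\ref{lem:01:20}), then applies an exponential concentration inequality for passage times in continuous environments (Lemma~\ref{lem:newLDP}) to get $G^{(n)}\le n\Gamma_{c_\eta^{\mathrm{cont}}}+n\theta\le n(\Gamma_c+\e+\theta)$ directly, bypassing the Jensen loss entirely. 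Your sketch would need either this continuous-approximation step or a uniform version of Lemma~\ref{lem:2p} over all entry/exit pairs to be complete.
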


\begin{remark}[The conditions on the discontinuity curves] 
In \cite{Cal-15} the discontinuity curves are assumed strictly monotone, outside of compact set. As such, when viewed as graphs of continuous functions, they are differentiable almost everywhere. This is more general than the piecewise $C^1$ condition in Assumption   \ref{ass:c} 3-(a). In our case we cannot relax the piecewise $C^1$ assumption further; in Example \ref{ex:x2} we prove that for a certain speed function $c(x,y)$ 
the maximizing macroscopic path actually follows the discontinuity curve of $c(x,y)$ on a set of positive measure and the set of $\mathcal H$ contains only piecewise $C^1$ paths. 

We expect that under Assumptions \ref{ass:c} and \ref{ass:c2} $\Gamma_c(x,y)$ is in fact a maximum and not a supremum. 
\end{remark}

We use Theorem \ref{thm:1} to analyse two examples.
 
 \subsection{ The shifted two-phase model.}
The first one is the shifted two-phase model. It is a generalisation of the one provided in \cite{Geo-Kum-Sep-10}. We want to study an explicit description of the limit shape function for a two-phase corner growth model with a discontinuity of the speed function along the line $y=x-\lambda$. We assume 
$\lambda\in\bR_{+}$. For a fixed $r \in (0,1)$ we use the macroscopic speed function $c_\ell(s,t)$ on $\bR^2_+$ defined as 
\be\label{eq:cell}
c_{\ell}(x,y) = 	\begin{cases}
			1, & \text{if } y>x-\lambda,\\
			r, & \text{if } y\le x-\lambda.
		\end{cases} 
\ee
Subscript $\ell$ is to remind the reader that the small rate is \textit{lower} than the discontinuity line, i.e. $r < 1$ in this example. Since the speed function only takes two values, the set of optimal macroscopic paths from the origin to $(x,y)$ are piecewise linear paths.

\begin{theorem} \label{thm:2shifted} Let $c_{\ell}(x,y)$ as in \eqref{eq:cell}.
There exist explicitly computable functions $A(r), D(r)$ (see equation \eqref{eq:AD}) and some optimal point $a^*> \lambda$ so that for any $(x,y)\in\bR^2_+$ the limiting shape function is given by 
\[
\Gamma_{c_{\ell}}(x,y) = 
	\begin{cases}
	\gamma(x,y), &\text{ if } y\geq L(x,y),\\
	I(x,y)	, &\text{ if }  x-\lambda\leq y\leq L(x,y),\\	
	\gamma(a^*,a^*-\lambda)+r^{-1}\gamma(x-a^*,y-a^*+\lambda), &\text{ if }  y<x-\lambda,
	\end{cases}
\] 
where $I(x,y)$ is a linear section of $\Gamma_{c_{\ell}}(x,y)$, given by 
\[ 
I(x,y) = (1+A(r)) x + \Big( 1  +\frac{1}{A(r)}\Big)y -D(r),\] 
and $L(x,y)$ is described by the equation
\[ 
\Big(A(r)x-\frac{1}{A(r)}y\Big)^2-2 D(r) \Big(A(r)x+\frac{1}{A(r)}y\Big)+D(r)^2=0.
\] 
\end{theorem}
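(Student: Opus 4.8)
The plan is to solve the variational problem in \eqref{macroLPT} explicitly for the two-phase speed function $c_\ell$. Since $c_\ell$ takes only the values $1$ and $r$, separated by the line $y = x - \lambda$, the ``Jensen's inequality'' observation from the paragraph after \eqref{macroLPT} tells us that any optimal path is piecewise linear: on each side of the line it consists of straight segments, and the only freedom is how and where it crosses the line $y = x - \lambda$. Thus I would first argue that for a terminal point $(x,y)$ with $y < x - \lambda$ the optimiser is a concatenation of exactly two straight segments: one from $(0,0)$ to some point $(a, a-\lambda)$ on the discontinuity line (where the speed is still $1$ because of lower semicontinuity, cf.\ the convention in \eqref{eq:cell}), and a second from $(a, a-\lambda)$ to $(x,y)$ in the rate-$r$ region. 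One needs to rule out paths that cross the line multiple times or travel along it; convexity of $\gamma$ and the fact that $r<1$ (so the region below the line is ``expensive'') make this routine. This reduces the problem to the one-dimensional optimisation
\[
\Gamma_{c_\ell}(x,y) = \sup_{a \ge \lambda} \Big\{ \gamma(a, a-\lambda) + r^{-1}\gamma(x-a, y-a+\lambda)\Big\},
\]
with the caveat that for terminal points above the line one must also consider the straight line from the origin (giving $\gamma(x,y)$) and paths that go up into the rate-$1$ region, hit the line, and come back — which is never beneficial — or simply stay in the rate-$1$ region.

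Next I would carry out the calculus: differentiate the objective $\phi(a) = \gamma(a,a-\lambda) + r^{-1}\gamma(x-a, y-a+\lambda)$ in $a$, set $\phi'(a^*) = 0$, and solve. Here $\gamma(u,v) = (\sqrt u + \sqrt v)^2$, so $\partial_1 \gamma = 1 + \sqrt{v/u}$, $\partial_2\gamma = 1 + \sqrt{u/v}$, and the stationarity condition becomes an algebraic relation among $\sqrt{a}, \sqrt{a-\lambda}, \sqrt{x-a}, \sqrt{y-a+\lambda}$. This is where the constants $A(r)$ and $D(r)$ emerge: I expect $A(r)$ to be (essentially) the ratio of slopes of the two optimal segments, determined by an $r$-dependent equation coming from matching the ``exit slope'' on the rate-$1$ side to the ``entry slope'' on the rate-$r$ side, analogous to a Snell's-law refraction condition. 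Solving the stationarity equation should give $a^*$ as a function of $(x,y)$ (and $\lambda$, $r$), and substituting back should produce the linear formula $I(x,y) = (1+A(r))x + (1 + 1/A(r))y - D(r)$ in the intermediate regime — the linearity in $(x,y)$ being the signature of the ``flat edge'' phenomenon already seen in \cite{Geo-Kum-Sep-10}.

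Then I would identify the three regimes by determining for which $(x,y)$ each candidate wins. The curve $L(x,y)$ is the boundary between the region where the unconstrained straight line $\gamma(x,y)$ is optimal and the region where the two-segment refracted path is optimal; I would derive its equation by setting the exit point $a^*$ equal to its extreme admissible value (the point where the optimal segment from the origin becomes tangent to, or first touches, the line $y = x-\lambda$) and then eliminating the parameter, which should yield the quadratic $(A(r)x - A(r)^{-1}y)^2 - 2D(r)(A(r)x + A(r)^{-1}y) + D(r)^2 = 0$. The third regime, $y < x - \lambda$, is exactly where the terminal point lies strictly in the rate-$r$ half-plane and the two-segment formula $\gamma(a^*, a^*-\lambda) + r^{-1}\gamma(x-a^*, y-a^*+\lambda)$ applies with the interior optimiser $a^*$. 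Finally I would check continuity of $\Gamma_{c_\ell}$ across the two interfaces $y = L(x,y)$ and $y = x-\lambda$ (consistent with Theorem \ref{cor:conti}), and verify $a^* > \lambda$ so that the first segment genuinely stays in the closed rate-$1$ region.

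The main obstacle I anticipate is twofold. First, rigorously reducing the infinite-dimensional sup over $\mathcal H(x,y)$ to the two-segment ansatz: one must show no competitor does better, including paths that run along the discontinuity line for a while (the speed there is $1$ by convention, so such motion is not obviously suboptimal) and paths with several crossings; this needs a careful convexity/exchange argument, possibly splitting the path at crossing times and applying Jensen piecewise, then arguing a single crossing dominates. Second, the algebra of solving the stationarity condition and eliminating parameters is delicate — one must pick the correct root, confirm it is a maximum (not a minimum or saddle), and check it lies in the feasible range $a \in [\lambda, x]$; getting the explicit closed forms of $A(r)$ and $D(r)$ and the clean quadratic for $L$ will require some patience but no new ideas beyond what is set up here.
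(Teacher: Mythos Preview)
Your proposal has a genuine gap rooted in a sign confusion: you write that ``$r<1$ (so the region below the line is `expensive')'', but it is the opposite. An exponential of rate $r$ has mean $1/r>1$, so for $r\in(0,1)$ the region $\{y\le x-\lambda\}$ carries \emph{larger} weights and is the attractive region for last-passage. Because of this you dismiss, for terminal points $(x,y)$ with $y>x-\lambda$, any path that ``hits the line and comes back'' as never beneficial. In fact the paper shows that for points in the intermediate strip $x-\lambda\le y\le L(x,y)$ the maximiser is precisely a three-segment \emph{trapezoidal} path: straight from $0$ to an entry point $(a_1,a_1-\lambda)$ on the line, then \emph{along} the line (where $c_\ell=r$) to an exit point $(b_1,b_1-\lambda)$, then straight to $(x,y)$. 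The optimisation is over the two parameters $a_1,b_1$, not one, and it is this two-parameter problem whose solution (with $a_1^*,b_1^*$ explicit in $r,\lambda,x,y$) produces the linear function $I(x,y)=(1+A(r))x+(1+1/A(r))y-D(r)$. Your single-crossing two-segment ansatz for points above the line would not recover $I(x,y)$ at all.

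A second consequence is that your proposed derivation of $L$ is off: $L(x,y)=0$ is not a tangency condition on a two-segment path but the locus where the trapezoidal value $I(x,y)$ equals the direct value $\gamma(x,y)$, i.e.\ where $A(r)x+\tfrac{1}{A(r)}y-D(r)=2\sqrt{xy}$, which squared gives the stated quadratic. For the third regime $y<x-\lambda$ your two-segment ansatz is correct, but note that the optimal crossing point $a^*=a^*(x,y)$ there genuinely depends on the terminal point (the paper does not solve for it in closed form) and is distinct from the constant entry point $a_1^*$ appearing in the trapezoidal regime. So the fix is: reverse your reading of which region is favourable, introduce the three-segment ansatz with parameters $a_1\le b_1$ for points above the line, carry out the two-variable calculus (the Hessian check is straightforward), and then compare $I(x,y)$ with $\gamma(x,y)$ to locate the parabola $L$.
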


\subsection{The corner-discontinuous model.}
The other example is what we call the \textit{corner-discontinuous model}. We start with a $C^2$ convex decreasing function $f: [0, a_0] \longrightarrow [0, b_0]$ where
$f(0) = b_0 >0$ and $f(a_0) = 0$. Then we define the speed function 
\be \label{eq:cfxy}
c_{f}(x,y) = 
\begin{cases}
1, & f(x) > y,\\
r, & f(x) < y,\\
1\wedge r, & f(x) =y.
\end{cases}
\ee
In words, after a bounded region of rate 1 delineated by $f$ and the coordinate axes, the rate becomes $r$. Computing analytically the shape function $\Gamma_{c_f}(x,y)$ is challenging; it depends on properties of the function $f$. When $f$ takes the specific form
\[
f(x) = (1-\sqrt{x})^2, \quad x \in [0,1],
\] 
we can explicitly identify the shape function in Example \ref{ex:solvable} and the macroscopic maximisers of \eqref{macroLPT} are straight paths from $(0,0)$ to $(x,y)$, despite the discontinuity.  

Changing the function $f$, properties of macroscopic maximisers can be obtained. From the fact that $c(x,y)$ is piecewise constant, macroscopic maximisers of \eqref{macroLPT} exist and are piecewise linear segments, one in each of the two constant regions. 

For each point $(x,y)$ in the $r$-region variational formula will be maximised by either a piecewise linear path that crosses $f$ or by a piecewise linear path, with initial segment on one of the coordinate axes. 

\begin{definition}[Types of maximisers] There are two types of potential maximisers of \eqref{macroLPT} under speed function \eqref{eq:cfxy}:
\begin{enumerate}
\item[Type {\rm C}:] We say that the maximiser is of  \textit{crossing type}  when it crosses the function $f$ at some optimal \textit{ crossing point} $(a, f(a))$, $(0 < a <a_0)$ which depends on $(x,y)$. 
\item[Type {\rm B}:] We say that the maximiser is of \textit{ boundary type}, when the first linear segment of it follows one of the coordinate axes. 
\end{enumerate}
\end{definition}
Note that for $(x,y) \in (0, a_0)\times(0, f(0))$ we cannot have type B maximisers, and for $(x,y)$ in the 1-region the maximiser must be the straight line from $(0,0)$. 
Based on this definition we define 
\[ 
 R_{0, f(0)} = \{ (x,y) \in \R^2_+: \text{maximiser of \eqref{macroLPT} is of type B and goes through $(0,f(0)$} \}.
\]
Similarly define  $ R_{a_0, 0}$ for which maximisers go through the horizontal axis. We would like to know when $ R_{0, f(0)}$ have non-empty interior. 
As it turns out, this only depends on properties of the function $f$ and the value of $r$. 

A few definitions before stating the result. 
First we define a function $m_2$ of $a \in (0, a_0)$ by 
\be \label{eq:m_2f}
m_2(a) = \frac{4}{\Big(-\frac{1}{f'(a)}-1+D+\sqrt{\Big(-\frac{1}{f'(a)}-1+D\Big)^2-4\frac{1}{f'(a)}}\Big)^2},
\ee
where
\be\label{eq:dioff}
D = D_a = r \Big(1 + \sqrt{\frac{f(a)}{a}}\Big)\Big(\sqrt{\frac{a}{f(a)}} + \frac{1}{f'(a)}\Big).
\ee
In Section \ref{sec:cornerinho} we prove that for any points $(x,y) \in$ int$(R^2_+)$ which have a candidate maximiser of type C, i.e.\ for any point $(x,y)$ for which there exists at least one admissible crossing point $(a_{x,y}, f(a_{x,y}))$ with $0 <a_{x,y}<a_0$, the slope $m_2 = m_2(a_{x,y})$ of the second linear segment must satisfy the equation
\[
\frac{y - f(a_{x,y})}{x -a_{x,y}} = m_2(a_{x,y}). 
\]
It is not necessary that for each $(x,y)$ a unique $a_{x,y}$ will satisfy the equation above, but it will be true that $a_{x,y} < x$ and $f(a_{x,y}) < y$ (see Lemma \ref{lem:dense}).

Furthermore, we define  
\[
\alpha_0= \inf\big\{ s :  \varlimsup_{a \searrow 0} a^{s}|f'(a)| = 0 \big\}  \quad \text{and} \quad \alpha_\infty =  \sup\big\{ s :  \varlimsup_{a \searrow 0} a^{s}|f'(a)| =  \infty \big\}.
\]
Check that $\alpha_0 \ge \alpha_\infty$. The two values coincide when either of them is non-zero and finite. To check that the two give the same $\alpha$, reason by way of contradiction; Assume that there exists a $\gamma$ so that 
\[
\sup\big\{ s :  \varlimsup_{a \searrow 0} a^{s}|f'(a)| =  \infty \big\}< \gamma < \inf\big\{ s :  \varlimsup_{a \searrow 0} a^{s}|f'(a)| = 0 \big\}.
\]
Then $0 < \varlimsup_{a \to 0}  a^{\gamma}|f'(a)| < \infty$. Then for any $\e > 0$ small enough, we will have that the same condition is true for $\gamma + \e$ and that is a contradiction.  

These let us define the order of growth of $f'$ as 
\be\label{eq:order}
\alpha = 
\begin{cases}
 \inf\big\{ s :  \varlimsup_{a \searrow 0} a^{s}|f'(a)| = 0 \big\} = \sup\big\{ s :  \varlimsup_{a \searrow 0} a^{s}|f'(a)| =  \infty \big\} & \text{ if } \alpha_0 \in (0, \infty)\\
 \infty, &\text{ if } \alpha_\infty = \infty\\
 0, &\text{ if } \alpha_0 = 0.
\end{cases}
\ee

When the order of growth of $f'$ is specified to be $\alpha$, we further define
\be\label{eq:inforder}
0 \le c_{\alpha}^{(-)} =  \varliminf_{a \to 0} a^{\alpha}|f'(a)| \le \varlimsup_{a \to 0} a^{\alpha}|f'(a)| = c_{\alpha}^{(+)} \le \infty.
\ee
Similarly we define
\be\label{eq:order2}
\beta = 
\begin{cases}
\beta_0 =\sup\Big\{ s : \varliminf_{a \nearrow a_0} \frac{|f'(a)|}{(a_0 -a)^s} = 0 \Big\} = \inf\Big\{ s :  \varliminf_{a \nearrow a_0} \frac{|f'(a)|}{(a_0 -a)^s} = \infty \Big\} =\beta_{\infty} & \text{ if } \beta_0 \in (0, \infty),\\
 0, &\text{ if } \beta_\infty = 0,\\
 \infty, &\text{ if } \beta_0 = \infty.

\end{cases}
\ee
Again, at $\beta$, we similarly define $\eta_{\beta}^{(-)}, \eta_{\beta}^{(+)}$ by 
\be\label{eq:inforder2}
0 \le \eta_{\beta}^{(-)} =  \varliminf_{a \to a_0}\frac{|f'(a)|}{(a_0 -a)^\beta}   \le \varlimsup_{a \to a_0}  \frac{|f'(a)|}{(a_0 -a)^\beta} = \eta_{\beta}^{(+)} \le \infty.
\ee
Now we are ready to state a theorem for this model.

\begin{theorem} \label{thm:regionLM} Let $c_f(x,y)$ be given by \eqref{eq:cfxy}, for some $C^2((0, a_0),(0, f(0)))$ convex function $f$.  
Assume either that $\alpha \neq 1/2$ or that $\alpha = 1/2$ and $ r \notin \Big[ \frac{c_{1/2}^{(+)}}{c_{1/2}^{(+)}-\sqrt{f(0)}}, \frac{c_{1/2}^{(-)}}{c_{1/2}^{(-)} - \sqrt{f(0)}} \Big]$.
 Then 
the following are equivalent:
\begin{enumerate}
\item $\varlimsup_{a \to 0} m_2(a) = + \infty$, 
\item $R_{0, f(0)} = \{0\}\times [f(0), \infty)$. 
\end{enumerate}
Similarly, assume either that $\beta \neq 1/2$ or that  $\beta = 1/2$ and $ r \notin \Big[ \frac{1}{1- \eta_{1/2}^{(-)}\sqrt{a_0}}, \frac{1}{1- \eta_{1/2}^{(+)}\sqrt{a_0}} \Big]$ . 
Then 
the following are equivalent:
\begin{enumerate}
\item $\varliminf_{a \to a_0} m_2(a) = 0$, 
\item $R_{a_0, 0} =  [a_0, \infty)\times\{0\}$. 
\end{enumerate}
\end{theorem}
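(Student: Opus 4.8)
The plan is to reduce the statement to a one‑dimensional optimisation problem and then analyse the behaviour of the optimiser near the endpoint $a=0$. Since $c_f$ is piecewise constant, for $(x,y)$ in the $r$‑region the supremum in \eqref{macroLPT} is attained by a path made of one straight segment in each constant region, so
\[
\Gamma_{c_f}(x,y)=\max_{a\in[0,\min(x,a_0)]}\Phi_{x,y}(a),\qquad \Phi_{x,y}(a):=\big(\sqrt a+\sqrt{f(a)}\big)^2+\tfrac1r\big(\sqrt{x-a}+\sqrt{y-f(a)}\big)^2,
\]
where $(a,f(a))$ is the crossing point; here $a=0$ is exactly the type‑B path through $(0,f(0))$ (its cost is $f(0)+\tfrac1r(\sqrt x+\sqrt{y-f(0)})^2=\Phi_{x,y}(0)$, the cost of going up the $y$‑axis to $(0,f(0))$ and then straight to $(x,y)$) and $a=a_0$, when $x\ge a_0$, is the type‑B path through $(a_0,0)$. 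Thus $(x,y)\in R_{0,f(0)}$ iff $a=0$ attains the maximum; since an up‑right path through $(0,f(0))$ cannot reach a point with $y<f(0)$, only the regime $x>0,\ y\ge f(0)$ is at issue, and there $f(a)<y$ for all $a\in(0,\min(x,a_0))$. I would first prove the dichotomy: for fixed such $a$ the derivative $\Phi_{x,y}'(a)$ is strictly decreasing in $y$, ranges from $+\infty$ (as $y\downarrow f(a)$) to $-\infty$ (as $y\to\infty$), and by the first‑order condition already established in the excerpt vanishes exactly at $y=g_x(a):=f(a)+(x-a)\,m_2(a)$; hence
\[
\Phi_{x,y}'(a)<0\iff y>g_x(a),\qquad \Phi_{x,y}'(a)>0\iff y<g_x(a).
\]

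With the dichotomy in hand, $\neg(1)\Rightarrow\neg(2)$ is short. If $\varlimsup_{a\to0}m_2(a)=M<\infty$ then, since $m_2\ge 0$ is continuous on $(0,a_0)$, we have $\sup_{(0,\delta)}m_2\le M+1$ for some $\delta>0$. Pick any $x\in(0,\delta\wedge a_0)$ and $y>f(0)+(M+1)x$; then $g_x(a)\le f(0)+(M+1)x<y$ for all $a\in(0,x)$, so $\Phi_{x,y}'<0$ on $(0,x)$, $\Phi_{x,y}$ is strictly decreasing on $[0,x]$, and $a=0$ is its unique maximiser. Hence the open set $\{(x,y):x\in(0,\delta\wedge a_0),\ y>f(0)+(M+1)x\}$ lies in $R_{0,f(0)}$, which is therefore strictly larger than $\{0\}\times[f(0),\infty)$.

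For $(1)\Rightarrow(2)$ the crux is to upgrade $\varlimsup_{a\to0}m_2(a)=+\infty$ to $\lim_{a\to0}m_2(a)=+\infty$; granting this, for every $x>0$ and $y\ge f(0)$ we get $g_x(a)=f(a)+(x-a)m_2(a)\to+\infty$, so $\Phi_{x,y}'(a)>0$ for all small $a>0$, whence $\Phi_{x,y}(0)<\Phi_{x,y}(\delta/2)$ and $(x,y)\notin R_{0,f(0)}$, giving $R_{0,f(0)}=\{0\}\times[f(0),\infty)$. The upgrade is where the hypothesis is used. Writing $m_2(a)=4\big(u(a)+\sqrt{u(a)^2+4/|f'(a)|}\,\big)^{-2}$ with $u(a)=1/|f'(a)|-1+D_a$ from \eqref{eq:m_2f}--\eqref{eq:dioff}, and using $f(a)\to f(0)>0$, an elementary expansion gives $D_a\to r$ if $\alpha>\tfrac12$, $D_a\to-\infty$ if $\alpha<\tfrac12$, and for $\alpha=\tfrac12$ the subsequential limits of $D_a$ fill $[\,r(1-\sqrt{f(0)}/c_{1/2}^{(-)}),\,r(1-\sqrt{f(0)}/c_{1/2}^{(+)})\,]$. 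Under the hypothesis all subsequential limits of $u(a)$ avoid $0$: they are all negative, in which case $u(a)+\sqrt{u(a)^2+4/|f'(a)|}\to0$ and so $m_2(a)\to+\infty$; or all positive, in which case $\varlimsup m_2(a)<\infty$. The excluded endpoints $r=c_{1/2}^{(\pm)}/(c_{1/2}^{(\pm)}-\sqrt{f(0)})$ are precisely the degenerate cases in which a subsequential limit of $D_a$ equals $1$, i.e.\ $u(a)$ oscillates through $0$ and $\varlimsup m_2(a)=+\infty$ need not force $\lim m_2(a)=+\infty$. Carrying out this asymptotic bookkeeping carefully (including the cases $\alpha\in\{0,\infty\}$ and the sign of $u$) is the main technical obstacle.

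Finally, the second equivalence is obtained by the symmetric argument with the roles of the two endpoints of $f$ interchanged: one analyses $\Phi_{x,y}(a)$ for $a$ near $a_0$ (where $f(a)\to0$ and the relevant growth exponent is $\beta$ and its coefficients $\eta_\beta^{(\pm)}$), the type‑B path through $(a_0,0)$ is the endpoint $a=a_0$, and the same dichotomy together with the asymptotics of $m_2$ and of $D_a$ near $a_0$ shows $R_{a_0,0}=[a_0,\infty)\times\{0\}$ iff $\lim_{a\to a_0}m_2(a)=0$, which under the hypothesis on $\beta$ (resp.\ $r\notin[\tfrac{1}{1-\eta_{1/2}^{(-)}\sqrt{a_0}},\tfrac{1}{1-\eta_{1/2}^{(+)}\sqrt{a_0}}]$) is equivalent to $\varliminf_{a\to a_0}m_2(a)=0$.
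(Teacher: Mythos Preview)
Your approach is correct and genuinely different from the paper's. Both directions rest on the same one–dimensional reduction $\Phi_{x,y}(a)$, but the key implication $(1)\Rightarrow(2)$ is handled very differently.

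The paper argues by contradiction: assuming $\operatorname{int}(R_{0,f(0)})\neq\varnothing$, it picks a sequence of \emph{crossing points} $(a_k,f(a_k))$ with $m_2(a_k)\to\infty$ (Lemma~\ref{lem:anothergap}), extends the corresponding type-C maximisers into $R_{0,f(0)}$, sends the terminal point to infinity along the line of slope $m_2(a_k)$, and after using the first-order condition \eqref{explain} obtains the inequality \eqref{eq:pre-analysis}, which is then violated by a case-by-case asymptotic analysis in $r$ and $\alpha$.

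Your route is cleaner: you observe that the hypothesis on $(\alpha,r)$ forces every subsequential limit of $u(a)=1/|f'(a)|-1+D_a$ to have a fixed sign (this is exactly the content hidden in the proof of Proposition~\ref{prop:EX2} and Lemma~\ref{lem:anothergap}), so that $\varlimsup m_2=+\infty$ in fact upgrades to $\lim m_2=+\infty$. Then the monotonicity dichotomy for $\Phi'_{x,y}(a)$ immediately gives $\Phi'_{x,y}(a)>0$ for all small $a>0$ and every $(x,y)$ with $x>0$, so $a=0$ is never a maximiser. This bypasses \eqref{eq:pre-analysis} entirely and makes transparent why the excluded interval for $r$ at $\alpha=1/2$ is exactly what it is: it is precisely where $u(a)$ can oscillate through zero.

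What the paper's approach buys is reusability: the inequality \eqref{eq:pre-analysis} is the workhorse for the borderline Theorem~\ref{thm:gap} and the phase-transition Proposition~\ref{prop:pt2}, where the limsup-to-lim upgrade fails and one must track finer asymptotics of $m_2$ on genuine crossing points. Your argument does not immediately extend to those cases, since it relies on the full limit $\lim m_2(a)=+\infty$. One small point to make explicit in your write-up: the dichotomy $\Phi'_{x,y}(a)=0\iff (y-f(a))/(x-a)=m_2(a)$ requires checking that the quadratic \eqref{eq:mess} in $\sqrt{m_2}$ has a \emph{unique} positive root; this follows because the product of the roots is $1/f'(a)<0$, but it is worth stating.
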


The situation when $\alpha = 1/2$  and $ r \in \Big[ \frac{c_{1/2}^{(+)}}{c_{1/2}^{(+)}-\sqrt{f(0)}}, \frac{c_{1/2}^{(-)}}{c_{1/2}^{(-)} - \sqrt{f(0)}} \Big)$  or respectively, $\beta = 1/2$ and $r \in \Big[ \frac{1}{1- \eta_{1/2}^{(-)}\sqrt{a_0}}, \frac{1}{1- \eta_{1/2}^{(+)}\sqrt{a_0}} \Big)$, is a bit more delicate. While Theorem \ref{thm:regionLM} is valid when we know the behaviour of $m_2(a)$ as a generic function of $a$, when $\alpha = 1/2$ we want the behaviour of $m_2(a)$ on \textit{crossing points:}

\begin{definition}\label{def:cross}(Crossing points) A point $(a, f(a))$ is a crossing point if and only if there exists $(x,y) \in \R^2_+$ so that a maximiser in \eqref{macroLPT} for 
$\Gamma_{c_f}(x,y)$ is the piecewise linear segment $(0,0) \to (a, f(a)) \to (x,y)$. 
\end{definition}

\begin{theorem}\label{thm:gap} Assume 
$\alpha = 1/2,  r \in \Big[ \frac{c_{1/2}^{(+)}}{c_{1/2}^{(+)}-\sqrt{f(0)}}, \frac{c_{1/2}^{(-)}}{c_{1/2}^{(-)} - \sqrt{f(0)}} \Big)$. Then the following are equivalent:
\begin{enumerate}
 \item There exists a sequence of crossing points $(a_k, f(a_k))$ so that $a_k \to 0$, $m_2(a_k) \to \infty$ and $\displaystyle\varliminf_{a_k \to 0}a_k^{1/2}|f'(a_k)| <\frac{r\sqrt{f(0)}}{r-1}$. 
 \item  $R_{0, f(0)} = \{0\}\times [f(0), \infty)$. 
\end{enumerate}
Similarly, assume that $\beta = 1/2$ and $r \in \Big[ \frac{1}{1- \eta_{1/2}^{(-)}\sqrt{a_0}}, \frac{1}{1- \eta_{1/2}^{(+)}\sqrt{a_0}} \Big)$. Then the following are equivalent:
\begin{enumerate}
 \item There exists a sequence of crossing points $(a_k, f(a_k))$ so that $a_k \to a_0$, $m_2(a_k) \to 0$ and $\displaystyle\varliminf_{a_k \to a_0}a_k^{1/2}|f'(a_k)| <\frac{r-1}{r\sqrt{a_0}}$. 
 \item  $R_{a_0, 0} = [a_0, \infty)\times\{0\}$. 
\end{enumerate}

\end{theorem}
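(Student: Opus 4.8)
The two statements in Theorem~\ref{thm:gap} are mirror images of one another: interchanging the two boundary segments of the rate-$1$ region sends $f(0)\leftrightarrow a_0$, the endpoint $a\searrow0\leftrightarrow a\nearrow a_0$, the exponent $a^{1/2}|f'(a)|\leftrightarrow(a_0-a)^{1/2}|f'(a)|$, and the behaviour $m_2\to\infty\leftrightarrow m_2\to0$, so I will only treat the first one and transcribe the second. The plan is to follow the scheme of Theorem~\ref{thm:regionLM}. Since $c_f$ is piecewise constant, for $(x,y)$ in the $r$-region with $y\ge f(0)$ one has
\[
\Gamma_{c_f}(x,y)=\max\Big\{\,B(x,y),\ B'(x,y),\ \sup_{0<a\le x\wedge a_0}C_a(x,y)\,\Big\},
\]
with $B(x,y)=f(0)+\tfrac1r\gamma(x,y-f(0))$ the boundary value through $(0,f(0))$, $B'(x,y)=a_0+\tfrac1r\gamma(x-a_0,y)$ its companion through $(a_0,0)$, and $C_a(x,y)=\gamma(a,f(a))+\tfrac1r\gamma(x-a,y-f(a))$ the crossing value at $(a,f(a))$; by definition $R_{0,f(0)}$ is the set where $B$ attains the maximum, and for small $x>0$ only $B$ and the $C_a$ with $a\le x$ compete. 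The optimal crossing $a^\ast(x,y)$ is characterised by $\tfrac{y-f(a^\ast)}{x-a^\ast}=m_2(a^\ast)$ and is a crossing point in the sense of Definition~\ref{def:cross}; moreover $C_a(x,y)\to B(x,y)$ as $a\searrow0$, so whether $B$ is optimal is governed by the sign of $a\mapsto C_a(x,y)$ near $a=0$.

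The computational core is the expansion, obtained by differentiating $C_a(x,y)$ and using $\gamma(p,q)=(\sqrt p+\sqrt q)^2$, $f'(a)=-a^{-1/2}g(a)$ with $g(a):=a^{1/2}|f'(a)|$, and $f(a)\to f(0)$:
\[
\partial_a C_a(x,y)\ \sim\ a^{-1/2}\Big[\sqrt{f(0)}+g(a)\Big(\tfrac{1+\rho}{r}-1\Big)\Big],\qquad a\searrow0,\quad \rho=\rho(x,y)=\frac{\sqrt x}{\sqrt{y-f(0)}},
\]
uniformly on compact subsets of the open $r$-region. Since $r>1$, for small $\rho$ the bracket decreases in $g(a)$, and at $\rho=0$ it equals $\sqrt{f(0)}-\tfrac{r-1}{r}g(a)$, which is positive exactly when $g(a)<\tfrac{r\sqrt{f(0)}}{r-1}$. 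The standing hypothesis $r\in\big[\tfrac{c_{1/2}^{(+)}}{c_{1/2}^{(+)}-\sqrt{f(0)}},\tfrac{c_{1/2}^{(-)}}{c_{1/2}^{(-)}-\sqrt{f(0)}}\big)$ is precisely the assertion $c_{1/2}^{(-)}<\tfrac{r\sqrt{f(0)}}{r-1}\le c_{1/2}^{(+)}$, i.e.\ $g(a)$ straddles this threshold as $a\searrow0$, which is why the sign of the bracket oscillates. Feeding $f'(a)=-a^{-1/2}g(a)$ into \eqref{eq:m_2f}--\eqref{eq:dioff} one checks that along $a_k\searrow0$ one has $m_2(a_k)\to\infty$ iff $-1/f'(a_k)-1+D_{a_k}\to0$ from below, which (after passing to a subsequence with $g(a_k)\to c$) holds exactly when $c\le\tfrac{r\sqrt{f(0)}}{r-1}$; in particular a \emph{strict} inequality $\varliminf_k g(a_k)<\tfrac{r\sqrt{f(0)}}{r-1}$ forces $m_2\to\infty$ along the corresponding subsequence.

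For (1)$\Rightarrow$(2) I first record a monotonicity of $R_{0,f(0)}$: if $(x_0,y_0)\in R_{0,f(0)}$ then $(x_0,y_0+t)\in R_{0,f(0)}$ for all $t\ge0$, which follows from superadditivity \eqref{eq:subadditive} and the elementary bound $\gamma(p,q+t)-\gamma(p,q)\le\gamma(p',q'+t)-\gamma(p',q')$ for $p\le p'$, $q\ge q'$, applied to $B$, $B'$ and each $C_a$ (here one uses $y_0\ge f(0)$, so no new crossing points appear). Given (1), pick crossing points $a_k\searrow0$ and a subsequence with $g(a_k)\to c<\tfrac{r\sqrt{f(0)}}{r-1}$; the expansion supplies $\rho_0>0$ so that for $x>0$, $\rho(x,y)<\rho_0$ the map $a\mapsto C_a(x,y)$ is increasing across each $a_k$ ($k$ large), and — using that the $a_k$ are crossing points, so the ray of slope $m_2(a_k)\to\infty$ from $(a_k,f(a_k))$ is covered by type-C maximisers — one deduces $\sup_aC_a(x,y)>B(x,y)$, hence $(x,y)\notin R_{0,f(0)}$. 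A point of $R_{0,f(0)}$ with first coordinate $>0$ would, by the monotonicity and letting $t\to\infty$, give points with $\rho<\rho_0$ in $R_{0,f(0)}$, a contradiction; thus $R_{0,f(0)}=\{0\}\times[f(0),\infty)$, the reverse inclusion being automatic since paths are up-right. For (2)$\Rightarrow$(1) I argue contrapositively: if no such sequence of crossing points exists, then along every sequence of crossing points $a_k\searrow0$ either $m_2(a_k)\not\to\infty$ or $\varliminf_k g(a_k)\ge\tfrac{r\sqrt{f(0)}}{r-1}$. In the first case the slopes $m_2$ realised by type-C maximisers near the $y$-axis stay in a bounded set, so an open cone of steep directions at $(0,f(0))$ is unreachable by type C and must lie in $R_{0,f(0)}$; in the second case the expansion gives $\partial_aC_a(x,y)\le0$ for all small admissible $a$ once $\rho(x,y)$ is small, so $B(x,y)=\sup_aC_a(x,y)\ge B'(x,y)$ on an open set near $(0,f(0))$. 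Either way $R_{0,f(0)}\ne\{0\}\times[f(0),\infty)$.

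The main obstacle is the borderline bookkeeping concealed by the symbol $\sim$ above: precisely because $\alpha=1/2$ and $r$ lies in the critical interval, the leading bracket can tend to $0$ (when $g(a_k)$ approaches $\tfrac{r\sqrt{f(0)}}{r-1}$), so there the sign of $C_{a_k}-B$ is decided by next-order terms, and — crucially — one must distinguish values of $a$ that are genuine crossing points from values that are optimal for no target point, since a steep dominating ray issuing from a non-crossing $a$ covers nothing. This is exactly why condition (1) is phrased with the \emph{strict} inequality and over the crossing points of Definition~\ref{def:cross}; turning the heuristic coverage argument in the (2)$\Rightarrow$(1) direction into a proof that the uncovered steep directions form an \emph{open} set — rather than merely the complement of a dense family of rays — will rely on the continuity and density properties of crossing points developed in Section~\ref{sec:cornerinho}, notably Lemma~\ref{lem:dense}.
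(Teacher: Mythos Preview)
Your route is genuinely different from the paper's and the ingredients you introduce (the monotonicity of $R_{0,f(0)}$ in $y$, and the expansion of $\partial_a C_a$) are correct and useful, but the argument as written has a real gap in the $(1)\Rightarrow(2)$ direction that your closing paragraph does not quite identify.

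The paper does not try to show directly that every $(x,y)$ with $x>0$ is reached by a type~C maximiser. Instead it argues by contradiction: assume $\mathrm{int}(R_{0,f(0)})\neq\varnothing$, take interior points along the ray from the crossing point $(a_k,f(a_k))$ with slope $m_2(a_k)$, use Proposition~\ref{lem:NAP}(2) to get the strict inequality $B>C_{a_k}$, then let the point run to infinity along the ray. The limiting inequality is then simplified \emph{using the Euler--Lagrange relation \eqref{explain} for crossing points}, which eliminates $m_2(a_k)$ and yields the one-variable inequality \eqref{eq:pre-analysis}; case~(4) of the proof of Theorem~\ref{thm:regionLM} then shows \eqref{eq:pre-analysis} fails whenever $a_k^{1/2}|f'(a_k)|\to c_{1/2}<\tfrac{r\sqrt{f(0)}}{r-1}$. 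The whole point of Theorem~\ref{thm:gap} is that in the critical window one cannot invoke Lemma~\ref{lem:anothergap} to manufacture such a sequence of crossing points, so its existence is simply taken as hypothesis~(1).

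Your plan tries to replace this limit computation by a direct coverage argument: rays from the crossing points $a_k$ hit every vertical line $\{x_0\}\times\R$ at heights $Y_k\to\infty$, and your monotonicity then forces $\{x_0\}\times\R$ out of $R_{0,f(0)}$. The missing step is that the ray from $(a_k,f(a_k))$ is only covered by type~C maximisers \emph{up to a terminal point} $(x_k',y_k')$ (Lemma~\ref{lem:UMP}(2),(3)); beyond it $a_k$ is no longer optimal. You never argue that $x_k'>x_0$, and in the critical regime nothing prevents $x_k'\to 0$ along your sequence, in which case the rays simply do not reach $\{x_0\}\times\R$ and the argument collapses. The paper's asymptotic inequality \eqref{eq:pre-analysis} is exactly what bypasses this issue: after sending the target to infinity the terminal-point location drops out, and what remains depends only on $m_1(a_k)$, $m_2(a_k)$ and $f'(a_k)$, which is why the crossing-point identity \eqref{explain} is the decisive tool you are not using.

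Your $(2)\Rightarrow(1)$ contrapositive has a related problem. In your second case you assume $\varliminf_k g(a_k)\ge \tfrac{r\sqrt{f(0)}}{r-1}$ \emph{along crossing points} and conclude $\partial_a C_a(x,y)\le 0$ ``for all small admissible $a$''; but $C_a(x,y)$ is a function of \emph{all} $a\in(0,x)$, not just crossing points of other targets, so controlling $g$ only on crossing points says nothing about the sign of $\partial_a C_a$ off that set. (The paper, incidentally, is also terse on this direction; the clean way is: if $R_{0,f(0)}$ is degenerate then by density of crossing points and continuity of $g=a^{1/2}|f'(a)|$ one can pick crossing points $a_k\to 0$ with $g(a_k)\to c_{1/2}^{(-)}<\tfrac{r\sqrt{f(0)}}{r-1}$, and for these the sign analysis in the proof of Proposition~\ref{prop:EX2}, case 1c(iv), gives $m_2(a_k)\to\infty$.)
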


We closely look at the case for which $\alpha = 1/2$ and $c^{(-)}_{1/2} = \frac{r}{r-1} \sqrt{f(0)}$ or $\eta^{(+)}_{1/2} = \frac{r-1}{r \sqrt{a_0}}$ and show that it is a phase transition; depending on how the limits are approached it may or may not lead to non-degenerate regions for type B maximisers. We include the details that justify this statement in Section \ref{sec:cornerinho}, Proposition \ref{prop:pt2}.

Finally, we obtain a partition of the parameter space $(\alpha, r)$ where we can a priori identify whether $\varlimsup_{a\to 0} m_2(a) = \infty$ or $\varliminf_{a\to a_0} m_2(a) = 0$ as the content of the next proposition. 

\begin{proposition}\label{prop:EX2}
 Let $\alpha, \beta$ and $c_{\alpha}^{(-)}, \eta_{\beta}^{(+)}$ as defined in equations \eqref{eq:order},\eqref{eq:order2}, \eqref{eq:inforder},\eqref{eq:inforder2}  and let $m_2(a)$ be given by equation \eqref{eq:m_2f}. Then, for $(\alpha, r) \in \R^2_+$, 
 \begin{enumerate}
 \item  For $\lim_{a\to 0} f'(a) = -\infty$, we have
 \be
 \varlimsup_{a\to 0} m_2(a) = \left\{
\begin{aligned}
&\frac{1}{(r-1)^2}\qquad&&\text{if } \alpha>\frac{1}{2}\text{ and } r>1, \\
&\frac{1}{\Big(r - 1-\frac{r\sqrt{f(0)}}{c_{1/2}^{(-)}}\Big)^2}\qquad&&\text{if } \alpha = \frac{1}{2},\, c_{1/2}^{(-)}> \sqrt{f(0)},\, r > \frac{c_{1/2}^{(-)}}{c_{1/2}^{(-)}-\sqrt{f(0)}},\\
&+\infty\qquad&&\text{ otherwise. }
\end{aligned}
\right.
\ee
\item For  $\varlimsup_{a\to 0} f'(a)= - c$
\be
\varlimsup_{a\to 0} m_2(a) = +\infty.
\ee 
\end{enumerate}
By interchanging the role of the coordinates, we can obtain the corresponding results for when $a \to a_0$, namely 
\begin{enumerate}
 \item  For $\lim_{a\to a_0} f'(a) = 0$, we have
 \be
 \varliminf_{a\to a_0} m_2(a) = \left\{
\begin{aligned}
&(r-1)^2\qquad&&\text{if } \beta>\frac{1}{2}\text{ and } r>1, \\
&{\big(r - 1- r \eta_{1/2}^{(+)} \sqrt{a_0} \big)^2}\qquad&&\text{if } \beta = \frac{1}{2},\, \eta_{1/2}^{(+)}< a_0^{-1/2},\, r > \frac{1}{1 - \eta_{1/2}^{(+)}\sqrt{a_0}},\\
&0\qquad&&\text{ otherwise. }
\end{aligned}
\right.
\ee
\item For  $\varliminf_{a\to a_0} f'(a)= - c$
\be
\varliminf_{a\to 0} m_2(a) = 0.
\ee 
\end{enumerate} 
 
\end{proposition}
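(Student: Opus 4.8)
The plan is to treat the expression \eqref{eq:m_2f} for $m_2(a)$ as an explicit function of the two quantities $f'(a)$ and $D_a$ (given by \eqref{eq:dioff}) and to extract the relevant $\varlimsup$ (resp.\ $\varliminf$) as $a \searrow 0$ (resp.\ $a \nearrow a_0$) by a careful asymptotic analysis of these two inputs. First I would record the elementary fact that, since $f$ is $C^2$, convex, decreasing with $f(0) = b_0 > 0$ and $f(a_0) = 0$, we have $f(a) \to f(0) > 0$ and $a \to 0$ in the first regime, so that $\sqrt{f(a)/a} \to \infty$ and $\sqrt{a/f(a)} \to 0$; hence in $D_a = r\big(1 + \sqrt{f(a)/a}\big)\big(\sqrt{a/f(a)} + 1/f'(a)\big)$ the leading behaviour is governed by the product $\sqrt{f(a)/a}\cdot\big(\sqrt{a/f(a)} + 1/f'(a)\big) = 1 + \sqrt{f(a)/a}\,/f'(a)$. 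The key reduction is therefore to understand the limit of $\tfrac{1}{f'(a)}\sqrt{f(a)/a} = \tfrac{1}{a^{1/2}|f'(a)|}\cdot(-\sqrt{f(a)})$ (note $f' < 0$), which by definition of the order of growth $\alpha$ and of $c_\alpha^{(\pm)}$ in \eqref{eq:order}--\eqref{eq:inforder} is controlled precisely by whether $\alpha > 1/2$, $\alpha = 1/2$, or $\alpha < 1/2$.

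Next I would split into the stated cases. In the case $\lim_{a\to 0}f'(a) = -\infty$ with $\alpha > 1/2$: then $a^{1/2}|f'(a)| \to \infty$, so $\sqrt{f(a)/a}\,/f'(a)\to 0$, giving $D_a \to r$ and also $-1/f'(a)\to 0$; substituting into \eqref{eq:m_2f} the denominator tends to $\big(-1 + r + \sqrt{(r-1)^2}\big)^2$. When $r > 1$ this is $(2(r-1))^2 = 4(r-1)^2$, so $m_2(a) \to 1/(r-1)^2$; when $r \le 1$ the quantity $-1+r+|r-1|$ vanishes and $m_2(a)\to +\infty$ — this is the "otherwise" branch. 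For $\alpha = 1/2$: here $a^{1/2}|f'(a)|$ oscillates between $c_{1/2}^{(-)}$ and $c_{1/2}^{(+)}$, so $1/f'(a)\to 0$ still but $\sqrt{f(a)/a}\,/f'(a)$ has $\varliminf = -\sqrt{f(0)}/c_{1/2}^{(-)}$ and $\varlimsup = -\sqrt{f(0)}/c_{1/2}^{(+)}$; to maximise $m_2$ we want to minimise the denominator of \eqref{eq:m_2f}, which (since the bracket is increasing in $D$ once $D > 1 + 1/f'(a)$, here effectively $D > 1$) means taking the subsequence realising the smallest $D_a$, i.e.\ $D_a \to r\big(1 - \sqrt{f(0)}/c_{1/2}^{(-)}\big)$; one then checks this is $> 1$ exactly when $r > c_{1/2}^{(-)}/(c_{1/2}^{(-)}-\sqrt{f(0)})$ and $c_{1/2}^{(-)} > \sqrt{f(0)}$, and in that window $m_2(a)\to 1/\big(r - 1 - r\sqrt{f(0)}/c_{1/2}^{(-)}\big)^2$, while outside it (including $\alpha < 1/2$, where $\sqrt{f(a)/a}\,/f'(a)\to -\infty$ forces $D_a \to -\infty$ and the discriminant term dominates, driving the bracket to $0$) one gets $\varlimsup m_2 = +\infty$. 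Part (2), $\varlimsup_{a\to 0} f'(a) = -c$ finite, is the case $\alpha = 0$ (or $\alpha < 1/2$ with bounded derivative along a subsequence): along a subsequence where $f'(a)\to -c$ one has $1/f'(a)\to -1/c < 0$ and $\sqrt{f(a)/a}\to\infty$, so $D_a \to -\infty$ along that subsequence, and again the bracket in \eqref{eq:m_2f} tends to $0$, yielding $\varlimsup_{a\to 0} m_2(a) = +\infty$. The statements for $a \to a_0$ follow by the symmetry interchanging the two coordinates, which swaps the roles of $x$ and $y$ and sends $f$ to $f^{-1}$ near the other endpoint; there $f(a)\to 0$, $a\to a_0 > 0$, $f'(a)\to 0$, and the analogous computation with $\beta$, $\eta_\beta^{(\pm)}$ and $a_0$ replacing $\alpha$, $c_\alpha^{(\pm)}$ and $f(0)$ produces the mirrored formulas (note $\varliminf m_2$ replaces $\varlimsup$, and $m_2 \to 0$ replaces $m_2\to\infty$, consistent with the denominator blowing up rather than collapsing).

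The main obstacle I anticipate is bookkeeping the correct choice of subsequence in the $\alpha = 1/2$ critical window and verifying that the sign conditions ($D_a > 1 + 1/f'(a)$ so that the square root in \eqref{eq:m_2f} is real and the bracket is monotone in $D$) line up exactly with the threshold $r \gtrless c_{1/2}^{(-)}/(c_{1/2}^{(-)}-\sqrt{f(0)})$ stated in the proposition; this requires checking that $\big(-1/f'(a) - 1 + D_a\big)^2 - 4/f'(a) \ge 0$ throughout, which is automatic once $D_a$ and $-1/f'(a)$ are controlled as above but needs to be stated. A secondary point is making precise the claim that taking the extremal value of $D_a$ along a subsequence actually realises the $\varlimsup$ of $m_2$ — for this I would note that $m_2(a)$ is, for fixed $f'(a)$ near $0^-$, a decreasing function of $D_a$ on the relevant range, so $\varlimsup_a m_2(a)$ is attained by simultaneously pushing $1/f'(a)\to 0$ and $D_a$ to its infimal limit value, and these can be achieved along a common subsequence because both depend only on the single quantity $a^{1/2}|f'(a)|$ up to lower-order corrections. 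I would also remark that the "otherwise" clause is a genuine catch-all: whenever the denominator's limit inferior is zero (which covers $r \le 1$, $\alpha < 1/2$, or the sub-threshold range of $r$ at $\alpha = 1/2$), one has $\varlimsup m_2 = +\infty$, and no further case analysis is needed.
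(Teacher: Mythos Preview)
Your proposal is correct and follows essentially the same route as the paper: both arguments reduce the analysis of $\varlimsup_{a\to 0} m_2(a)$ to the asymptotics of $D_a$, which in turn is governed by the single quantity $a^{1/2}|f'(a)|$ and hence by the exponent $\alpha$; the paper phrases the final step as a sign analysis of $-1/f'(a)-1+D$ (expanding $\sqrt{X^2+v}\approx |X|$ with $v=-4/f'(a)\to 0$), while you phrase it as monotonicity of the bracket in $D$ plus a subsequence choice, but these are the same computation. One small simplification: your worry about the discriminant $\big(-1/f'(a)-1+D_a\big)^2-4/f'(a)\ge 0$ is unnecessary, since $-4/f'(a)>0$ always (as $f'<0$), so the quantity under the root is a square plus a positive term; likewise the bracket $u+\sqrt{u^2+v}$ with $v>0$ is strictly increasing in $u$ for all $u\in\R$, not just for $u>0$, so your monotonicity claim holds unconditionally.
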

\begin{figure}
\begin{subfigure}{.5\linewidth}
\centering
\begin{tikzpicture}[scale=1.5, >=latex, xscale=1.2]
 \fill[color=nicos-red!30](0,0)--(2.7,0)--(2.7,1)--(0,1)--(0,0);
  \fill[color=nicos-red!30](0,1)--(1,1)--(1,2.7)--(0,2.7)--(0,1);
\fill[color=blue!30](1,1)--(2.7,1)--(2.7,2.7)--(1,2.7)--(1,1);
   \draw[->,thick](0,0)--(3,0)node[right]{$\alpha$};
    \draw[->,thick](0,0)--(0,3)node[above]{$r$};
\draw[red](1,1)--(1,2.7);
\draw[my-green, ->](1,2)--(1.2, 2.9); 
 \draw[red](0,1)node[left,black]{$1$};
 \draw[red](1,1)--(2.7,1);
\draw(0.8,0.5)node{$\varlimsup m_2\to\infty$};
\draw(2,2)node{$\varlimsup m_2\to\frac{1}{(r-1)^2}$};
\draw[line width=2pt, my-green](1,1.5)--(1,2.7);
\fill[red!30](1,1.5)circle(1mm);
\draw[red](1,1.5)circle(1mm);
\draw(2,3)node{$\varlimsup m_2\to\frac{1}{(r(1- \sqrt{f(0)}/c_{1/2}^{(-)})-1)^2}$};
\draw(1, -0.2)node{$\frac{1}{2}$};
\draw(-0.5,1.5)node{$\frac{c_{1/2}^{(-)}}{c_{1/2}^{(-)}-\sqrt{f(0)}}$};
\draw[dashed, my-green](0,1.5)--(0.9,1.5);
\end{tikzpicture}
\caption{Behaviour for $\varlimsup m_2(a)$ when $\alpha$ and\\$r$ vary, when $a\to0$ and $f'(0)\to-\infty$, when \\$c_{1/2}^{(-)} > \sqrt{f(0)}$.}
\label{fig:sub1}
\end{subfigure}%
\begin{subfigure}{.5\linewidth}
\centering
\begin{tikzpicture}[scale=1.5, >=latex, xscale=1.2]
 \fill[color=nicos-red!30](0,0)--(2.7,0)--(2.7,1)--(0,1)--(0,0);
  \fill[color=nicos-red!30](0,1)--(1,1)--(1,2.7)--(0,2.7)--(0,1);
\fill[color=blue!30](1,1)--(2.7,1)--(2.7,2.7)--(1,2.7)--(1,1);
   \draw[->,thick](0,0)--(3,0)node[right]{$\beta$};
    \draw[->,thick](0,0)--(0,3)node[above]{$r$};
\draw[red](1,1)--(1,2.7);
\draw[my-green, ->](1,2)--(1.2, 2.9); 
 \draw[red](0,1)node[left,black]{$1$};
 \draw[red](1,1)--(2.7,1);
\draw(0.8,0.5)node{$\varliminf m_2\to0$};
\draw(2,2)node{$\varliminf m_2\to(r-1)^2$};
\draw[line width=2pt, my-green](1,1.5)--(1,2.7);
\fill[red!30](1,1.5)circle(1mm);
\draw[red](1,1.5)circle(1mm);
\draw(2,3)node{$\varliminf m_2\to\big(r - 1- r \eta_{1/2}^{(+)} \sqrt{a_0} \big)^2$};
\draw(1, -0.2)node{$\frac{1}{2}$};
\draw(-0.5,1.5)node{$ \frac{1}{1 - \sqrt{a_0}\eta_{1/2}^{(+)}}$};
\draw[dashed, my-green](0,1.5)--(0.9,1.5);
\end{tikzpicture}

\caption{Behaviour for $\varliminf m_2(a)$ when $\beta$ and\\ $r$ vary, when $a\to a_0$ and $f'(a_0)\to 0$, when\\ $\eta_{1/2}^{(+)} <a_0^{-1/2}$.}
\label{fig:sub2}
\end{subfigure}

\end{figure}

Proposition \ref{prop:EX2} in conjunction with Theorem \ref{thm:regionLM} classifies the cases for which non-trivial maximisers of type B exist when $\alpha \neq 1/2$. Theorem \ref{thm:gap} is weaker, so without further analysis, the proposition can only guarantee trivial type B maximisers from the vertical axis when $\alpha=1/2$ and $ r \notin \Big[ \frac{1}{1- \eta_{1/2}^{(-)}\sqrt{a_0}}, \frac{1}{1- \eta_{1/2}^{(+)}\sqrt{a_0}} \Big]$. When $\alpha =1/2$ and $ r \in \Big[ \frac{1}{1- \eta_{1/2}^{(-)}\sqrt{a_0}}, \frac{1}{1- \eta_{1/2}^{(+)}\sqrt{a_0}} \Big)$ one needs to verify that the optimal slopes tend to $+\infty$.

We showcase the above results by performing some Monte Carlo simulations to show the maximal paths in different cases. 
For all simulations we considered the curve $y= f(x)$ to be 
\[
f(x) = (c - x^{b/k})^k,
\]
and we varied the parameters $b,c,k$ with $b<k$. See Figure \ref{fig:sim}.
\begin{figure}[h!]
    \centering
    \begin{subfigure}[t]{0.3\textwidth}
        \centering
        \includegraphics[height=1.2in]{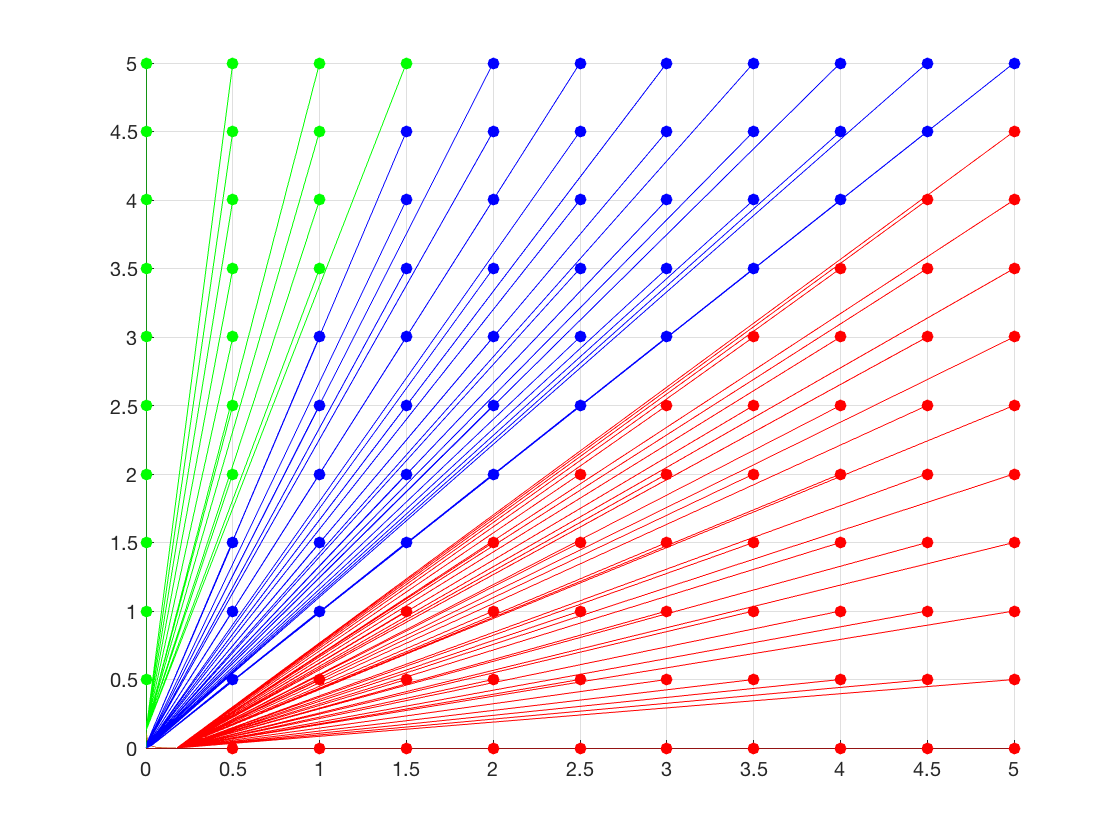}
        \caption{$c=0.5, b=1.2, k=3, r=3$.}
    \end{subfigure}
    ~
    \begin{subfigure}[t]{0.3\textwidth}
        \centering
        \includegraphics[height=1.2in]{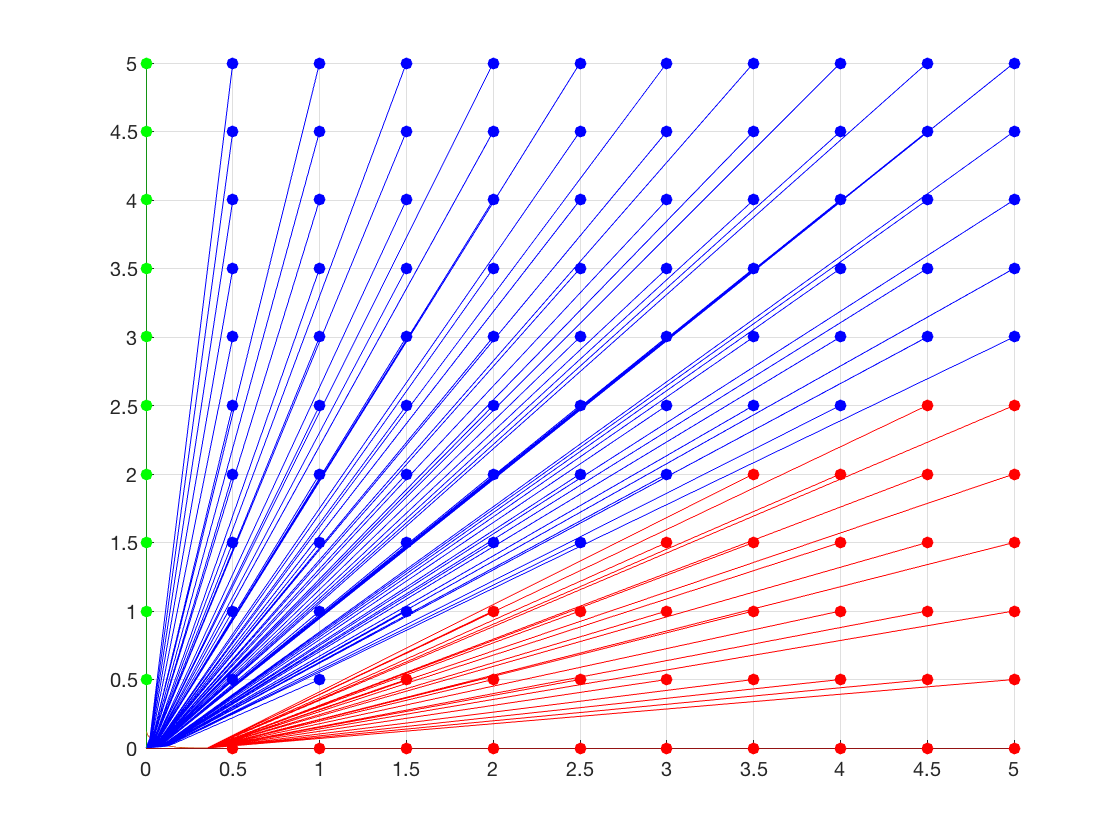}
        \caption{$c=0.5, b=2, k=3, r=3$.}
    \end{subfigure}
    ~    
    \begin{subfigure}[t]{0.3\textwidth}
        \centering
        \includegraphics[height=1.2in]{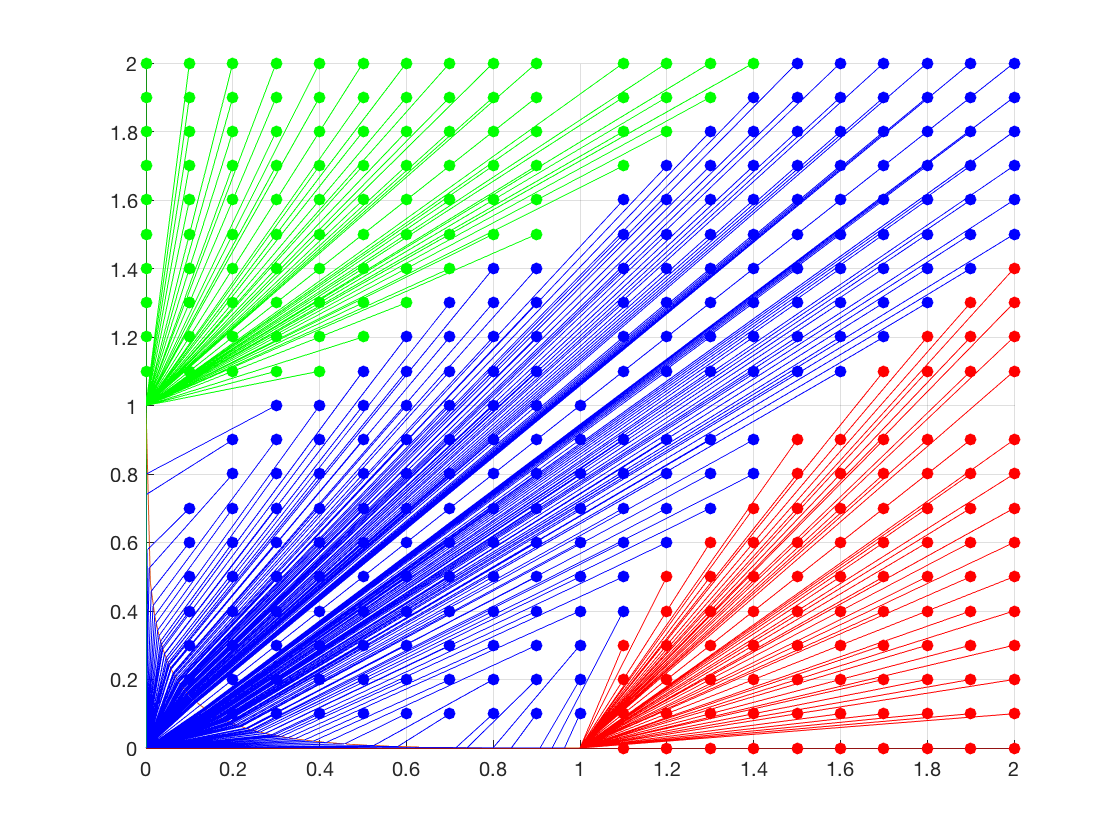}
        \caption{$c=1, b=1, k=3, r=3$.}
    \end{subfigure}
    
    \centering
    \begin{subfigure}[t]{0.3\textwidth}
        \centering
         \includegraphics[height=1.2in]{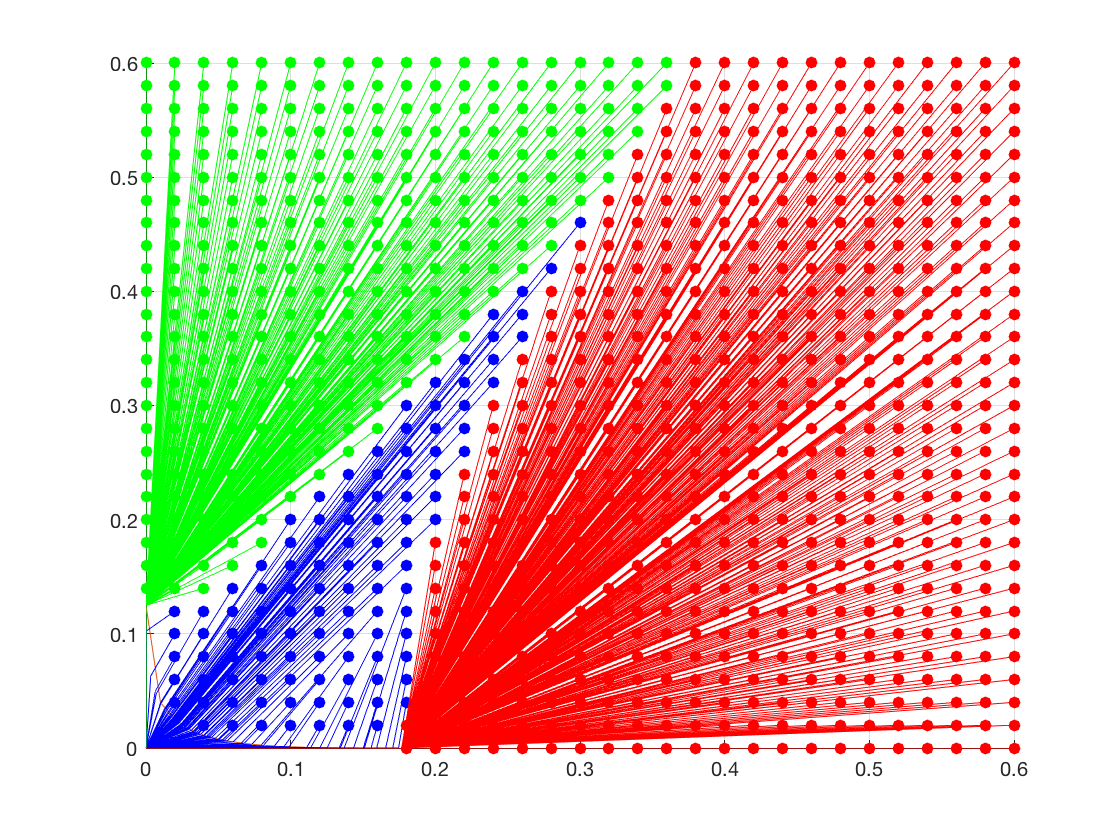}
 \caption{$c=0.5, b=1.2, k=3,  r=4$.}
    \end{subfigure}
     ~
    \begin{subfigure}[t]{0.3\textwidth}
        \centering
        \includegraphics[height=1.2in]{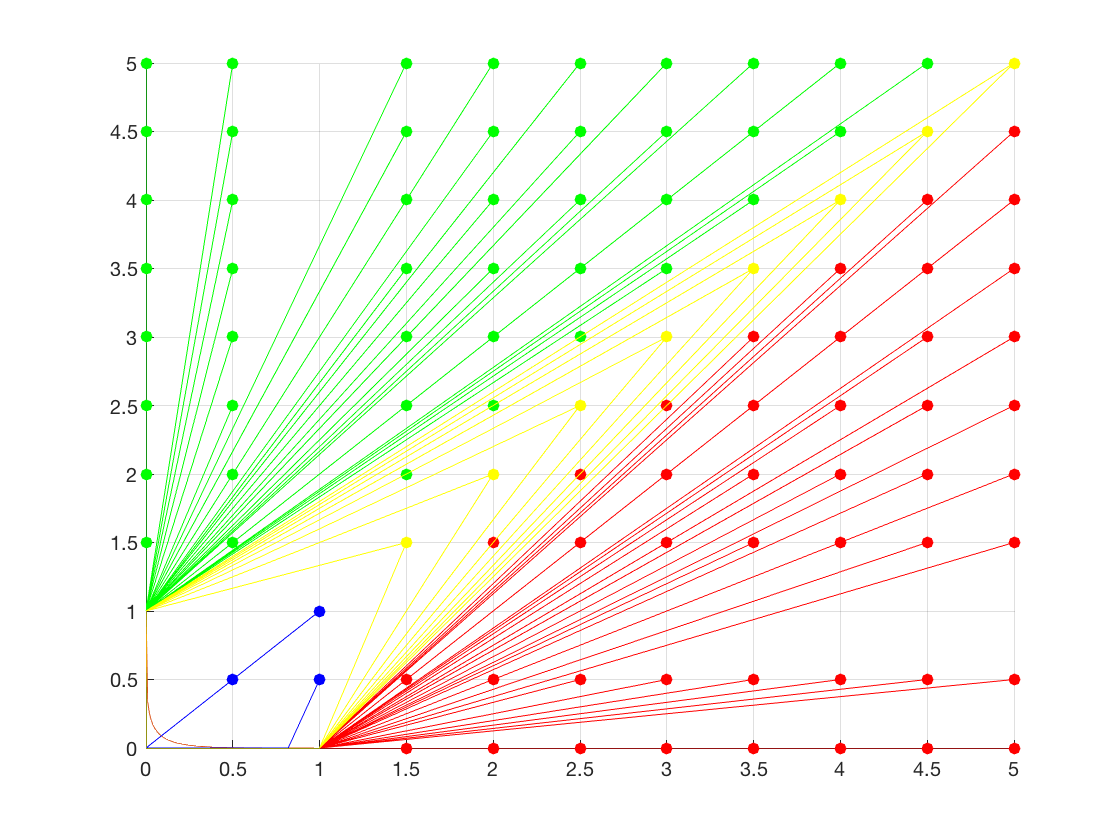}
        \caption{$c=1, b=1, k=3.5, r=3$.}
    \end{subfigure}%
    ~ 
    \begin{subfigure}[t]{0.3\textwidth}
        \centering
        \includegraphics[height=1.2in]{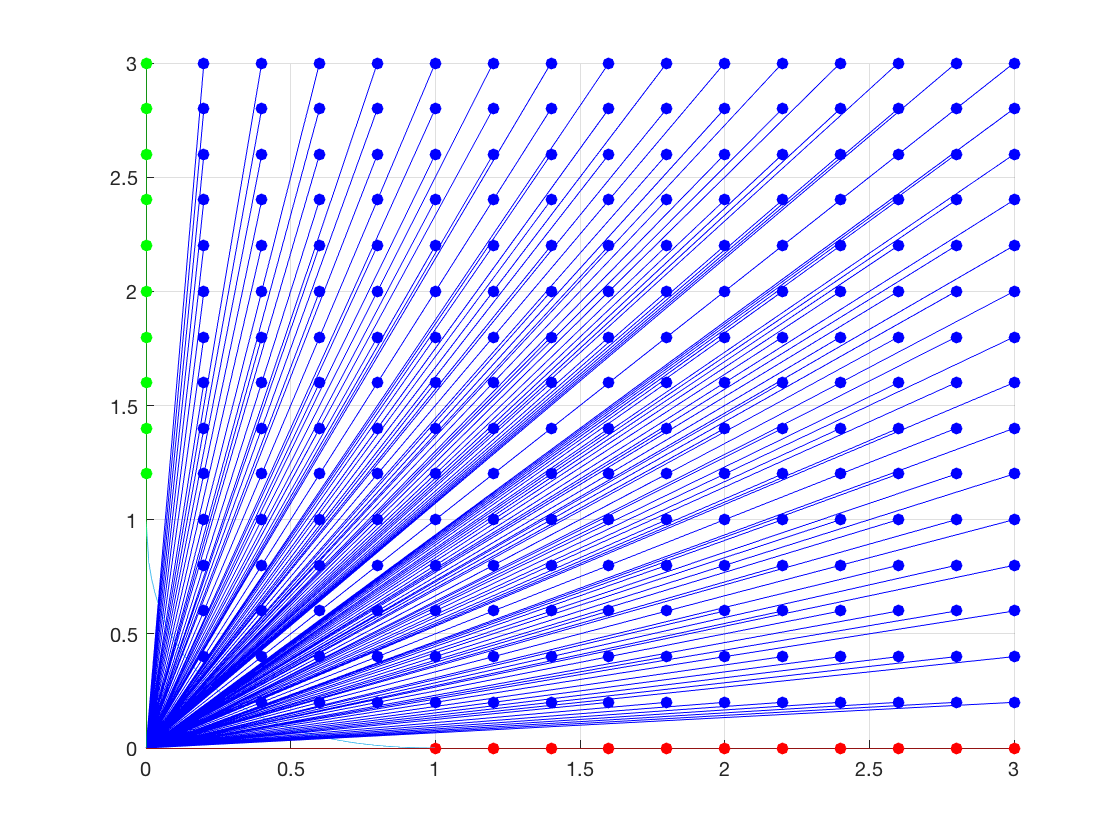}
        \caption{$c=1, b=1, k=2,  r=3$.}
    \end{subfigure}
    \caption{(Colour online) Blue paths are maximisers of type C, i.e. they cross to the $r$-region from the interior of $f$. The set of all $(x,y)$ reached by such paths may be bounded (e.g. see subfigures (D), (E)). Green and red paths are type B maximisers that follow either the $y$- or the $x$- axis respectively. Simulations suggest that when the regions $R_{0, f(0)}$ and $ R_{a_0, 0}$ are not degenerate they can intersect, and bound the Type C region. Finally, the target points of yellow paths are those for which the maximiser is not unique. }
    \label{fig:sim}
\end{figure}

Combining the explicit results obtained in the two examples, we can state the following theorem of counterexamples, describing situations that do not occur in the homogeneous setting.

\begin{theorem} \label{thm:2}Depending on the speed function $c(x,y)$,
	\begin{enumerate}
	\item $\Gamma_c(x,y)$ is not necessarily concave,  
		and its level curves are not necessarily convex. ($\Gamma_{c_\ell}$ in Theorem \ref{thm:2shifted}). 
	\item $\Gamma_c(x,y)$ may exhibit flat edges.	 ($\Gamma_{c_\ell}$ in Theorem \ref{thm:2shifted}).
	\item $\Gamma_c(x,y)$ is not necessarily differentiable on the interior of $\R^2_+$.  ($\Gamma_{c_\ell}$ in Theorem \ref{thm:2shifted}).
	\item The maximisers of \eqref{macroLPT} for some $(x,y)$ are not necessarily unique. (See points on $L(x,y)$ in Theorem \ref{thm:2shifted}, Remark \ref{rem:non-uniqueness}, and Fig. \ref{fig:sim})
	\item It is possible to have terminal points $(x,y)$ for which the maximiser of \eqref{macroLPT}	has an initial segment on one of the coordinate axes. (Theorem \ref{thm:regionLM}, Proposition \ref{prop:EX2}).
	\end{enumerate}
\end{theorem}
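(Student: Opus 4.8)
The plan is to read off all five statements from structural descriptions already established: items (1)--(4) from the shifted two-phase shape $\Gamma_{c_\ell}$ of Theorem~\ref{thm:2shifted}, and item (5) from Theorem~\ref{thm:regionLM} together with Proposition~\ref{prop:EX2}. The work is only in translating those formulas into the qualitative features claimed.

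The observation driving items (1)--(3) is that, on the rate-$1$ half-plane $\{(x,y)\in\R^2_+: y\ge x-\lambda\}$, the shape of Theorem~\ref{thm:2shifted} is a maximum of two elementary functions,
\[
\Gamma_{c_\ell}(x,y)=\max\{\gamma(x,y),\,I(x,y)\}\qquad\text{for }y\ge x-\lambda,
\]
with $\gamma$ strictly concave and $I$ affine. Indeed, for such $(x,y)$ the straight segment $(0,0)\to(x,y)$ stays in the convex region $\{y\ge x-\lambda\}$, so it is admissible and realizes $\gamma(x,y)$, while the optimal crossing path realizes $I(x,y)$; Theorem~\ref{thm:2shifted} says the supremum equals $\gamma$ on $\{y\ge L\}$ (where $\gamma\ge I$) and $I$ on $\{x-\lambda\le y\le L\}$ (where $I\ge\gamma$), so $L=\{\gamma=I\}$ and it genuinely separates two full-dimensional pieces. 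From here: $\nabla I=\bigl(1+A(r),\,1+A(r)^{-1}\bigr)$ is constant, while $\nabla\gamma$ is constant only along rays through the origin; since $L$ is not a line through the origin (it is the zero set of a quadratic not vanishing at $0$), along a generic segment transversal to $L$ the slope of $\Gamma_{c_\ell}=\max\{\gamma,I\}$ jumps \emph{upward} as one passes from the $\gamma$-side to the $I$-side (because $\gamma-I$ decreases strictly through $0$ there), which gives three collinear points with the middle value strictly below the chord --- so $\Gamma_{c_\ell}$ is not concave (item (1)) --- and, at the crossing point, a genuine gradient discontinuity --- so $\Gamma_{c_\ell}$ is not differentiable on ${\rm int}(\R^2_+)$ (item (3)). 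For the level curves, $\{\Gamma_{c_\ell}\ge t\}$ coincides near such a junction with $\{\gamma\ge t\}\cup\{I\ge t\}$, the union of a strictly convex region and a half-plane; a direct check at the junction point shows the level curve $\{\Gamma_{c_\ell}=t\}$ has a reflex corner there, hence is not convex. Item (2) is immediate from the same decomposition: on the open region $\{x-\lambda<y<L\}$ one has $\Gamma_{c_\ell}=I$ affine, so its level sets there are straight segments --- flat edges.

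Item (4): on $\{y=L(x,y)\}\cap{\rm int}(\R^2_+)$, continuity of $\Gamma_{c_\ell}$ (Theorem~\ref{cor:conti}) forces $\gamma(x,y)=I(x,y)=\Gamma_{c_\ell}(x,y)$, and the supremum in \eqref{macroLPT} is attained both by the straight segment $(0,0)\to(x,y)$ (optimal by Jensen, realizing $\gamma$) and by the optimal crossing path of Theorem~\ref{thm:2shifted} (realizing $I$), which bends at its crossing point since $r<1$; these are distinct curves, so maximisers are non-unique, as also recorded in Remark~\ref{rem:non-uniqueness} and Figure~\ref{fig:sim}. Item (5): Theorem~\ref{thm:regionLM} gives $R_{0,f(0)}=\{0\}\times[f(0),\infty)$ iff $\varlimsup_{a\to0}m_2(a)=+\infty$, so $R_{0,f(0)}$ contains points with positive first coordinate precisely when $\varlimsup_{a\to0}m_2(a)<\infty$, and Proposition~\ref{prop:EX2}(1) supplies such cases --- e.g.\ any admissible $C^2$ convex $f$ with $\lim_{a\to0}f'(a)=-\infty$ and growth order $\alpha>1/2$, together with $r>1$, for which $\varlimsup_{a\to0}m_2(a)=1/(r-1)^2<\infty$ (concretely $f(x)=(c-x^{b/k})^k$ with $k>2b$ and any $r>1$). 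For such $(f,r)$, every $(x,y)\in R_{0,f(0)}$ with $x>0$ is a terminal point whose maximiser of \eqref{macroLPT} has its initial linear segment on the $y$-axis.

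The only delicate point is the geometry around $L$ in items (1)--(3): one must confirm from the explicit $A(r),D(r),a^*$ of Theorem~\ref{thm:2shifted} that $L\cap{\rm int}(\R^2_+)$ is a genuine two-sided interface on which $\Gamma_{c_\ell}$ really takes the two different forms and that the normal-derivative jump is in the concavity-violating direction; the rest is direct bookkeeping from the cited results.
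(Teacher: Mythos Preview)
Your approach is exactly what the paper intends: it explicitly states ``We leave the calculus details necessary for the proof of Theorem~\ref{thm:2} to the reader'' and points to the same examples you invoke (Theorem~\ref{thm:2shifted} for items (1)--(4), Theorem~\ref{thm:regionLM} and Proposition~\ref{prop:EX2} for item (5)). Your sketch of those calculus details --- the $\max\{\gamma,I\}$ structure across the parabola $L=0$, the upward gradient jump forcing the concavity and $C^1$ failures, the two maximisers on $L$, and the explicit $f(x)=(c-x^{b/k})^k$ with $k>2b$, $r>1$ giving $\alpha=1-b/k>1/2$ and hence a nondegenerate $R_{0,f(0)}$ --- is correct and matches the remarks scattered through Section~\ref{sec:2shifted} (see in particular the Remark after Figure~\ref{fig:PathSM}).
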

We leave the calculus details necessary for the proof of Theorem \ref{thm:2} to the reader.

\section{The shifted two-phase model} \label{sec:2shifted}
From Jensen's inequality and Theorem \ref{thm:1} the variational formula for the limiting last passage time can be simplified to 
\be\label{eq:LPL}
\Gamma_{c_\ell}(x,y) = 
	\begin{cases}
		\displaystyle \sup_{b_1>a_1 \ge \lambda} \Big\{ \gamma (a_1,a_1-\lambda) +\frac{1}{r}\gamma(b_1-a_1, b_1-a_1)+ \gamma (x-b_1,y-b_1+\lambda)\Big\}\ \bigvee \gamma(x,y)\\
		\hspace{9.2cm} \quad \,\, \text{if } y>x-\lambda,\\
		\displaystyle \sup_{a_2 \ge \lambda} \Big\{ \gamma (a_2,a_2-\lambda) + \frac{1}{r}\gamma(x-a_2, y-a_2)\Big\} \bigvee \gamma(x,y), \quad\quad \text{ if }  y=x-\lambda,\\	
		\displaystyle \sup_{a_3 \ge \lambda} \Big\{ \gamma (a_3,a_3-\lambda) + \frac{1}{r}\gamma(x-a_3, y-a_3+\lambda)\Big\},\quad\qquad\!\!\qquad\text{ if }  y<x-\lambda. 
	\end{cases}
\ee  
The top  and middle expressions corresponds to the passage time up to $(x,y)$ above or on the discontinuity line. If $x \ge \lambda$ then the optimal paths can either be a straight line up to $(x,y)$ corresponding to microscopic maximal path in environment Exp$(1)$, or a piecewise linear path which takes advantage of the smaller rate on the discontinuity line. Microscopically, the maximal path enters the region with environment Exp$(r)$ but does not fluctuate from the discontinuity line macroscopically. It could also be that by default the maximal path is the straight line segment when $x < \lambda$ at which point the supremum takes the value $-\infty$ and only $\gamma(x,y)$ remains.

If $(x,y)$ is below the discontinuity then it has to be that the macroscopic maximal path is piecewise linear and it crosses the line $t = s -\lambda$ at some optimal point.

In the computations that follow set 
\[
K(r)=\sqrt{1+\frac{r^2}{4(1-r)}}.
\]
We treat the three cases separately:
\begin{enumerate}[(1)]
\item {\bf{ Case 1: $y>x-\lambda$:} } Assume $x \ge \lambda$ otherwise, as we discussed the maximal path is the straight line and the shape function is $\gamma(x,y)$. We begin by explicitly computing the supremum, which after substitution of the formula for $\gamma$ and some manipulation, it becomes 
\begin{align*}
I_{c_\ell,}(x,y)
&= \sup_{b_1\geq a_1}\Big\{\Big(2-\frac{4}{r}\Big)(a_1-b_1)+x+y+2(\sqrt{a_1(a_1-\lambda)}+\sqrt{(x-b_1)(y-b_1+\lambda)})\Big\},
\end{align*}
where the parameters $a_1,b_1, \lambda$ and the point $(x,y)$ have to satisfy the constraints 
\[
x \ge b_1 \ge a_1\geq \lambda, \text{ and }  y\geq b_1-\lambda.\\
\]
The unknowns are $a_1, b_1$ and they are the $x$ - coordinates of the points on the line $t = s -\lambda$ that determine the second segment of tha potential piecewise linear path.  Compute the first partial derivatives for $a_1$ and $b_1$ and set them equal to 0 to obtain
\begin{align*}
&\frac{\partial I_{c_\ell}(x,y)}{\partial a_1}=2-\frac{4}{r}+\frac{2a_1-\lambda}{\sqrt{a_1(a_1-\lambda)}}=0 \\
&\frac{\partial I_{c_\ell}((x,y)}{\partial b_1}=\frac{4}{r}-2+\frac{2b_1-x-y-\lambda}{\sqrt{(x-b_1)(y-b_1+\lambda)}}=0. 
\end{align*}
From the first equation, imposing the condition $ x \ge a_1>0$ to obtain the optimal entry point 
\be\label{eq:astar}
(a_1^*,a_1^*-\lambda)=\Big(\frac{\lambda}{2}(K(r) +1),\frac{\lambda}{2}(K(r)-1)\Big).
\ee
From the second equation and the condition and $a_1 \le b_1 \le x$, we 
\begin{align}
&(b_1^*,b_1^*-\lambda)=\Big(\frac{(x+y+\lambda) + (x - y-\lambda)K(r)}{2},\frac{(x+y-\lambda) + (x - y- \lambda)K(r)}{2}\Big)\label{eq:bstar}
\end{align}
under the constraint 
\be \label{eq:linecon}
y\le \frac{K(r)+1}{K(r)-1}x-\frac{2K(r)}{K(r)-1}\lambda.
\ee
The constraint is equivalent to $a_1^* \le b_1^*$. When it is not satisfied, the optimal path is the straight line.  It is always true that $b_1^* < x$.
Check that $(a_1^*,b_1^*)$ gives a local maximum by computing the Hessian matrix $H(a_1,b_1)$ for which
\[
\det\{H(a_1^*,b_1^*)\}=\frac{\lambda^2(x-y-\lambda)^2}{4[a_1^*(a_1^*-\lambda)(x-b_1^*)(y-b_1^*+\lambda)]^{3/2}}, \text{ and }   \frac{\partial^2  \Gamma_{c_\ell}(a_1^*,b_1^*)}{\partial a_1^2}=\frac{-\lambda^2}{2[a_1^*(a_1^*-\lambda)]^{3/2}}.
\]
It is immediate to check that it is also a global maximum for $I_{c_{\ell}}(x,y)$. 
We substitute the values of $a_1^*$ and $b_1^*$ of respectively \eqref{eq:astar} and \eqref{eq:bstar} into \eqref{eq:LPL} to obtain the value on the trapezoidal path $I_{c_\ell}(x,y)$
\begin{align*}
I_{c_\ell}(x,y) &=x\Big(1+\Big(\frac{2}{r}-1\Big)(1+K(r))-\sqrt{K(r)^2-1}\Big)+\\
&\hspace{1.5cm}+y\Big(1+\Big(\frac{2}{r}-1\Big)(1-K(r))+\sqrt{K(r)^2-1}\Big)\\
&\hspace{4cm}+2\lambda\Big(\Big(1-\frac{2}{r}\Big)K(r)+\sqrt{K(r)^2-1}\Big)\\
&= (1+A(r)) x + \Big( 1  +\frac{1}{A(r)}\Big)y -D(r),
\end{align*}
where we set 
\be \label{eq:AD}
A(r) = \frac{(1+ \sqrt{1 - r})^2}{r}, \quad D(r) = 4\lambda\frac{\sqrt{1-r}}{r}. 
\ee
In order to find the region for which $I_{c_\ell}(x,y)$ is actually $\Gamma_{c_\ell}(x,y)$, we directly compare with $\gamma(x,y)$. The two functions give the same value on the curve 
\be
 A(r) x + \frac{1}{A(r)}y -D(r) = 2 \sqrt{xy}.
 \ee
For $(x,y)$ in the region $x- \lambda \le y \le \frac{K(r)+1}{K(r)-1}x-\frac{2K(r)}{K(r)-1}\lambda$, the left-hand side in the display above is always positive, so we can square both sides and identify the curve as  
\begin{align*} 
0 &= \Big(A(r)x-\frac{1}{A(r)}y\Big)^2-2 D(r) \Big(A(r)x+\frac{1}{A(r)}y\Big)+D(r)^2= L(x,y),
\end{align*}
where $L(x,y)$ is defined by the expression in the display above. Equation $L(x,y) = 0$ defines a parabola. 
It has an axis of symmetry that is parallel to - and above - the line \eqref{eq:linecon} and it is tangent to the discontinuity line $y= x - \lambda$ precisely at point $(a_1^*, a_1^*-\lambda)$ given by \eqref{eq:astar}.  Line \eqref{eq:linecon} also crosses both the parabola and the discontinuity line precisely at the same point \eqref{eq:astar}. Therefore, 
\be
I_{c_{\ell}}(x,y) = G_{c_{\ell}}(x,y) \text{ if and only if } (x,y) \in \mathcal R_{\lambda, r} = \{ (x,y):  a^*_1 \le x,\,\,  x - \lambda \le y, \,\,L(x,y) > 0\}.
\ee 
For $(x,y) \in  \mathcal R_{\lambda, r}$ the maximiser is the trapezoidal path with second segment on the discontinuity line of  $c_{\ell}$. For all other $(x, y)$ with $y > x - \lambda$ the maximizing path is the straight line and $\Gamma_{c_{\ell}}(x,y) = \gamma(x,y)$. Points on the curve $L(x, y) = 0$ have two maximizing paths. 

One last remark is that if $(x,y)$ and $(z,w)$ both belong in $\mathcal R_{\lambda, r}$ then the slope of the third segments of the corresponding maximising paths are actually the same and equal to $\frac{K(r)+1}{K(r)-1}$. Therefore they are parallel to the axis of symmetry of the parabola (so they also intersect the critical parabola) and have finite macroscopic length.

\medskip
\item {\bf Case 2:  $y=x-\lambda$}. The same steps as before (or continuity of $G_{c_\ell,}(x,y)$ as $y \searrow x -\lambda$ ) give
\begin{align*}
\Gamma_{c_\ell,}(x,y)&=(\sqrt{a_1^*}+\sqrt{a_1^*-\lambda})^2+\frac{1}{r}(\sqrt{x-a_1^*}+\sqrt{x-a_1^*})^2\\
&= \frac{4}{r}x+\lambda\Big(K(r)+\sqrt{K^2(r)-1}-\frac{2}{r}(1+K(r))\Big).
\end{align*} 
When $x \ge a_1^*$, the maximiser has two linear segments; the first one goes from $0$ to $(a_1^*, a_1^*-\lambda)$ and the second one follows the discontinuity line up to $(x, x-\lambda)$.
 
\medskip
\item \textbf{Case 3: $y<x-\lambda$}. An explicit analytical solution to the variational problem is not easily tractable. The maximisers are piecewise linear, with slopes $m_1$, $m_2$ with $m_2 > m_1$. The optimal crossing point $(a^*_3, a_3^*-\lambda)$ on the discontinuity line always has $a_3^* < a_1^*$.
\end{enumerate}

\begin{figure}[h!]
\centering
\begin{tikzpicture}[>= latex, scale=0.8]

    \draw [red!30, domain=0.5:2,fill,rotate=-20] plot (\x, {\x*\x})--(2,4)--(3.125,2.875)--(0.5,0.25);
 \draw [blue!30, domain=0.5:2,fill,rotate=-20] plot (\x, {\x*\x})--(2,4)--(0.5,5.5)--(0.5,0.25);
\draw[green!30,fill] (-0.7,-0.5)--(3.93,-0.5)--(3.93,1.62)--(-0.7,-0.5);
\draw[blue!30,fill] (2.35,5)--(-2,5)--(-2,-0.5)--(-0.7,-0.5)--(0.6,0.08);
\draw[->] (-2,-0.5) -- (4.5,-0.5) node[anchor=north] {\small$x$};
\draw[->] (-2,-0.5) node[below]{\small0}-- (-2,6) node[anchor=east] {\small$y$};
 \draw [name path=parabola, domain=0.5:2,rotate=-20] plot (\x, {\x*\x});
 \draw [name path=parabola, dashed,domain=-2:0.5,rotate=-20] plot (\x, {\x*\x});
\draw[ rotate=-20] (0.5,-0.35)--(0.5,5.5);
\draw[ rotate=-20] (-0.45,-0.7)--(3.125,2.875);
\draw (5,1.62)node{\small$y=x-\lambda$};
\draw ((2.35,5.4)node{\small$y=A(r)(A(r)x-2\lambda\frac{(2-r)}{r})$};
\draw[ nicos-red, line width=2pt, rotate=-20] (0.5,0.25)--(2.25,2)--(2.25,3.5);
\draw[ nicos-red, line width=2pt] (-2,-0.5)--(0.6,0.08);
\draw[ blue, line width=2pt] (-2,-0.5)--(1,3);
\draw[ my-green, line width=2pt] (-2,-0.5)--(-0.2,-0.3)--(3.3,0.6);
\end{tikzpicture}
\hspace{0.5cm}
\includegraphics[height=5cm]{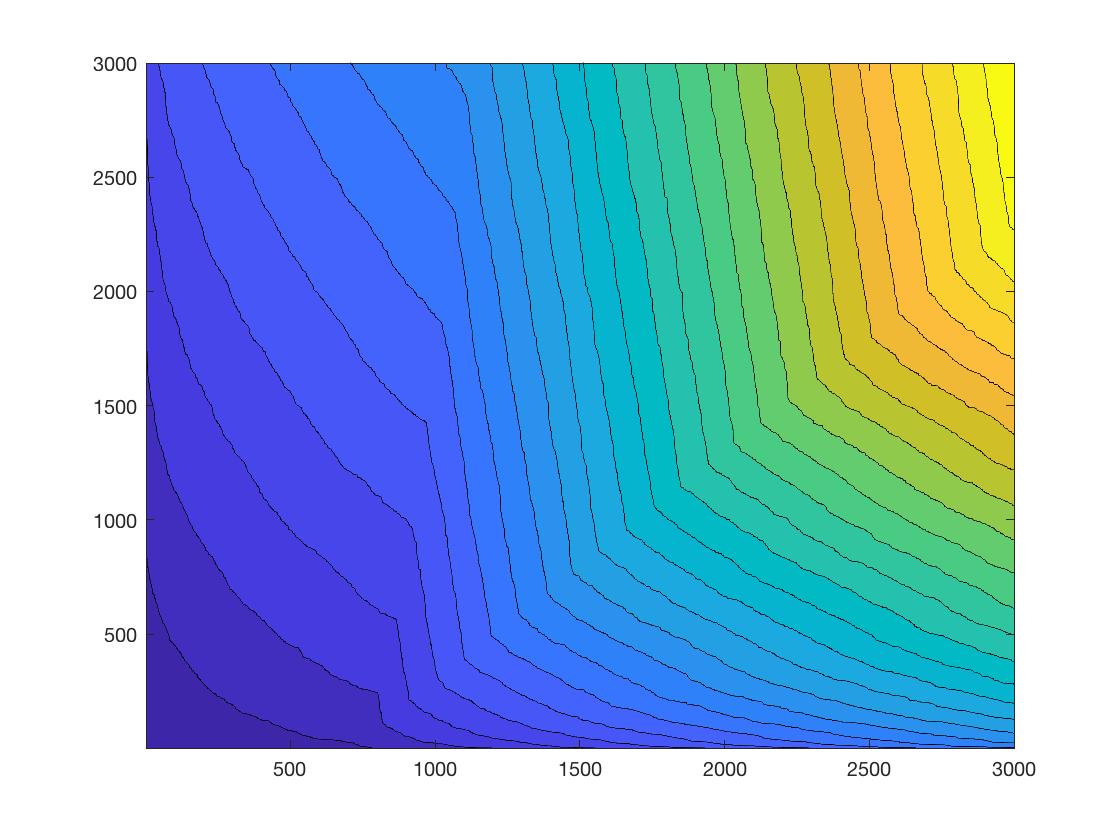}
\caption{(Left) Maximal macroscopic paths for the shifted two-phase corner growth model. In the \textcolor{blue}{blue} region we have a straight line path, in the \textcolor{red}{red} region we have a three piecewise linear path and in the \textcolor{green}{green} region we have a two piecewise linear path.\\
(Right) Numerical simulation of the shape function $\Gamma_{c_\ell}(x,y)$. Notice the non-convexity of the level curves, and the points of non-differentiability of the level curves, and by extension of $\Gamma_{c_\ell}$.
}
\label{fig:PathSM}
\end{figure}
\begin{remark} When the environment is homogeneous and $c(x,y) = c$, the shape function is strictly concave and in $C^2(\R^2_+)$.
As one can see in Figure  \ref{fig:PathSM}, the simulations suggest that the shape function for the shifted inhomogeneous model is no longer strictly concave or $C^1$ in the interior of $\R^2_+$. Indeed this is a straight-forward calculation because we have precise formulas for the shape function for $(x,y) \in \mathcal R_{\lambda, r} $ and for $(x,y)$ for which $y$ is above the critical parabola. We leave this calculation to the reader. The concavity-breaking does not occur in the two-phase model without shifting of \cite{Geo-Kum-Sep-10}. The flat edge is common in both inhomogeneous models.
\end{remark}

\section{The corner-discontinuous last passage percolation}
\label{sec:cornerinho}
It will be convenient to adopt a more general setting for the discontinuity curve $f$ then the one described in Section \ref{sec:model}. To this end, we begin from
Consider a $C^2$ function $g: \R^2_+ \to \R_+$ with the property that its level curve $g(x,y) = k$ when viewed as a function of $ y = f (x)$ is strictly decreasing and twice differentiable function so that the first and second derivative never become zero, i.e. 
\[ \frac{df}{dx} < 0, \quad  \frac{d^2f}{dx^2} \neq 0. \] 
For what follows we restrict to the case  where $f$ is convex and its second derivative strictly positive. 

Since the gradient of $g$ is always perpendicular to its level curve, for any $(a,b) \in \R^2_{>0}$ with $g(a,b) =h$ we have that 
\begin{equation}\label{eq:a2}
\partial_xg(a,b) \cdot \partial_yg (a,b) >0. 
\end{equation}
Let $a_0$ and $b_0$ be defined by $g(a_0, 0) = g(0, b_0)=k$. They can also take the value infinity if $g$ does not intersect the coordinate axes. 

We define the macroscopic speed function $c_{g,k}(x,y)$ on $\R^2_+$ to be
\[ 
c_{g,k}(x,y) = 	\begin{cases}
			1, & \text{if } g(x,y) < k,\\
			r, & \text{if } g(x,y) \ge k.
		\end{cases} 
\]
From Theorem \ref{thm:1} and the fact that macroscopic optimisers are piecewise linear in constant regions, the limiting last passage time is given by 
\be\label{eq:varprop}
\Gamma_{c_{g, k}}(x,y) = 
	\begin{cases}
		\gamma(x,y), &\text{ if } g(x,y) \le k \\
		\displaystyle \sup_{a \le x \wedge a_0,\, b \le y\wedge b_0,\, g(a,b) =k} \Big\{ \gamma (a,b) + \frac{1}{r}\gamma(x-a, y-b)\Big\}, & \text{ if } g(x,y) > k. 
	\end{cases}
\ee  
Except for some specific cases, the solution to the variational problem in \eqref{eq:varprop} cannot be explicit but can be approximated numerically. However, this model allows for partial analysis, and despite its simplicity it demonstrates behaviour that can be rigorously shown to differ from passage time in a homogeneous environment. 

We write again Definition \ref{def:cross} using the notation introduced so far in this section. 

\begin{definition}[Crossing points] We say that a point $(a,b)$ is a ($g$ -) crossing point for point $(x,y)$ if it belongs in the set 
\[
\mathcal S_{x,y} = \{ (a,b): g(a, b) = k \text{ which solve \eqref{eq:varprop} for the given } (x,y)\}.
\]
In words, $(a,b)$ solves the optimization problem \eqref{eq:varprop}. 
The set of all crossing points is defined by 
\[
\mathcal S = \{ (a,b): g(a, b) = k \text{ which solve \eqref{eq:varprop} for some } (x,y)\}.
\]
\end{definition}

If $|\mathcal  S_{x,y}| = 1$ then there is unique piecewise linear macroscopic maximal path from the origin to $(x,y)$ which is a maximiser of the variational formula \eqref{macroLPT}, and this passes through $(a,b) \in \mathcal S_{x,y}$.

In the homogeneous environment ($r =1$), maximisers of \eqref{macroLPT} are unique and are straight lines, i.e. $|\mathcal  S_{x,y}|=1$. Here, depending on the function $g$, this is no longer true, as discussed in the following remark.

\begin{remark}\label{rem:non-uniqueness} Depending on the function $g$, it is possible to have a point $(x,y)$ that does not lead to a unique maximiser of the problem \eqref{eq:varprop}. Suppose you fix a point $(t, t)$ in the $r$-region, and further assume that $f$ is symmetric about the main diagonal. By carefully modulating the values of $f$ around the main diagonal, and by appropriately lowering the value of $r$, one can show that the main diagonal cannot be an optimiser for $\Gamma$. Then the optimiser ${\bf x}$ is a concatenation of two linear segments that crosses $f$ at some point. Because $f$ is symmetric, the piecewise linear curve that is symmetric to ${\bf x}$ about the diagonal is also an optimiser. We leave the details to the reader.  \qed
\end{remark}
 
\begin{lemma}\label{lem:AS}
The set of \emph{crossing points} $\mathcal S  $
is dense on the curve $g(a,b) = k$. 
\end{lemma}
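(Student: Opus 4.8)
The plan is to show that every point on the curve $g(a,b)=k$ (equivalently, every point $(a,f(a))$ with $a\in(0,a_0)$) is a crossing point by exhibiting, for each such point, a target $(x,y)$ in the $r$-region whose unique maximiser in \eqref{eq:varprop} passes through it. First I would fix $a\in(0,a_0)$ and set $b=f(a)$, and consider target points of the form $(x,y) = (a,b) + t\,{\bf v}$ for a suitable direction ${\bf v}\in\R^2_{>0}$ and small $t>0$; these lie in the $r$-region for $t$ small because $g$ is $C^2$ and $\partial_x g,\partial_y g>0$ at $(a,b)$ (using \eqref{eq:a2}), so moving in a direction with positive components strictly increases $g$. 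The candidate maximiser is then the concatenation $(0,0)\to(a,b)\to(x,y)$, and I must verify it beats every competitor $(0,0)\to(a',f(a'))\to(x,y)$ with $a'\neq a$.

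The key computation is a first-order (in $t$) analysis of the objective $\Phi_{x,y}(a') = \gamma(a',f(a')) + r^{-1}\gamma(x-a',y-f(a'))$. For the point $(x,y)=(a,b)$ itself the maximiser is trivially the straight line and the crossing point is $(a,b)$, but $(a,b)$ is on the boundary $g=k$, not strictly in the $r$-region, so I need to perturb. The idea is: as $t\downarrow 0$, the optimal $a'=a'(t)$ in \eqref{eq:varprop} satisfies $a'(t)\to$ some limiting crossing point; by choosing the direction ${\bf v}$ appropriately (in fact, choosing ${\bf v}$ to be the direction of the second segment dictated by the optimality/slope condition — the one from \eqref{eq:m_2f}, i.e.\ ${\bf v} = (1, m_2(a))$ up to scaling), the stationarity equation $\partial_{a'}\Phi_{x,y}(a')=0$ is solved by $a'=a$ exactly, and the second-derivative/Hessian sign (as in the Case 1 computation of Theorem \ref{thm:2shifted}, or by strict concavity of $\gamma$) makes it the unique maximiser. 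Thus every $a\in(0,a_0)$ arises as a crossing point for the family of targets $(a,b)+t(1,m_2(a))$, $t>0$ small.

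An alternative, softer route I would keep in reserve: argue by contradiction and openness. The set $\mathcal S$ is closed in $\{g=k\}$ (it is the projection of the closed set of $(a',(x,y))$ achieving the sup, using continuity of $\Gamma_{c_{g,k}}$ from Theorem \ref{cor:conti} and compactness of the constraint set), so if $\mathcal S$ is not dense there is a relatively open arc $J\subset\{g=k\}$ containing no crossing point. Every target $(x,y)$ in the $r$-region whose optimal $a'$ would lie in $J$ must then have its optimal crossing point at an endpoint of $J$ (or use a type-B maximiser); tracking the slope $m_2$ of the second segment as $(x,y)$ ranges over a neighbourhood, one finds that the set of second-segment slopes achievable from the endpoints of $J$ is too small to cover all directions needed to reach an open set of targets — contradicting continuity/surjectivity of the map $(x,y)\mapsto$ (optimal data). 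This is essentially a covering argument: points just above the arc $J$ must be reached by paths crossing elsewhere, but geometric monotonicity of optimal crossing points in $(x,y)$ forbids this.

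The main obstacle will be the degenerate behaviour near the endpoints $a\to 0$ and $a\to a_0$, where $f'(a)\to -\infty$ or $f'(a)\to 0$ and the slope $m_2(a)$ may blow up to $+\infty$ or collapse to $0$ (precisely the phenomenon quantified in Theorem \ref{thm:regionLM} and Proposition \ref{prop:EX2}); there the perturbation direction ${\bf v}=(1,m_2(a))$ degenerates and one must check that a genuinely two-dimensional family of targets is still swept out, rather than just a vertical or horizontal ray. I expect density to hold on the open arc $a\in(0,a_0)$ by the first-order argument without difficulty, and the endpoints to be handled either by a direct limiting argument (crossing points accumulate at the endpoints, which is enough for density) or by invoking the $C^2$ convexity of $f$ to control $m_2(a)$ on compact subintervals and taking closure. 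Verifying that the stationary point is a strict global — not merely local — maximum of $\Phi_{x,y}$ on $(0,a_0)$ requires the same concavity bookkeeping as in the two-phase model and is routine but not entirely automatic.
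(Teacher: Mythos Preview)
Your proposal would work, but it is far more elaborate than what the lemma requires and than what the paper actually does. The paper's proof is a two-line geometric observation: given any arc $\{(a,f(a)):a_1<a<a_2\}$ on the level curve, choose a target $(x,y)$ in the $r$-region with $a_1<x<a_2$ and $f(a_2)<y<f(a_1)$ (such points exist by convexity of $f$). Every admissible crossing point $(a',f(a'))$ must satisfy $a'\le x$ and $f(a')\le y$ because paths are coordinate-wise nondecreasing, which forces $a_1<f^{-1}(y)\le a'\le x<a_2$. Hence whatever point solves \eqref{eq:varprop} for this $(x,y)$ lies in the given arc, and the arc was arbitrary.

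Your first route aims at something strictly stronger---that \emph{every} interior point of the curve is a crossing point---by building targets along the direction $(1,m_2(a))$ and then checking that the resulting stationary point is a global maximiser. This relies on the Lagrange analysis and the slope formula \eqref{eq:m_2f}, both of which the paper develops only \emph{after} Lemma~\ref{lem:AS}, and as you acknowledge the global-max verification is not automatic. Your topological backup (closedness of $\mathcal S$, covering/monotonicity contradiction) is also sound in outline but again much heavier machinery than needed. Neither route is incorrect, but both miss the elementary up-right constraint that pins the crossing point inside a prescribed arc without any optimisation analysis at all; that constraint is the whole proof.
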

\begin{proof}
To see this, fix an arbitrary segment on the level curve 
\[
\mathcal I = \{ (a,b): a_1 < a < a_2, \,\, b_1 < b < b_2, \,\, g(a,b) = k\}
\]
and consider $(x,y)$ so that $a_1/2 < x < a_2/2, \,\, b_1/2 < y < b_2/2, \,\, g(x,y) > k$ which is possible since the level curve is convex. The maximal path to $(x,y)$ has to cross the curve at some point $(a_{x,y}, b_{x,y})$ with $a_1/2 < a_{x,y} < a_2/2, \,\, b_1/2 < b_{x,y} < b_2/2$ since it will be piecewise linear with strictly positive slope for each segment. This suffices for the proof.
\end{proof}
Fix a \emph{crossing point} $(a,b)$. Then, for some $(x,y)$, this point solves the Lagrange multiplier problem  
\begin{align}
h(a, b, \lambda) &=\sup_{a,b,\lambda}\Big\{ \gamma (a,b) + \frac{1}{r}\gamma(x-a, y-b) + \lambda (g(a,b) - k)\Big\}, \label{eq:lagrange} \\
&\phantom{xxxxxxxxxxxxxxxxxx} \quad 0 \le a \le x \wedge a_0, \,\, 0 \le b \le y \wedge b_0. \notag
\end{align}
Function $h$ has two derivatives in the interior of its domain, so we can optimize over $(a, b, \lambda)$ as usual. If the local maximum is in the interior we will find it using the Lagrange multiplier method. Otherwise, we will check even the boundaries of the region.
The derivatives give 
\begin{subnumcases}{}
&$\frac{\partial h}{\partial a}=\frac{\sqrt{a}+\sqrt{b}}{\sqrt{a}}-\frac{1}{r}\frac{\sqrt{x-a}+\sqrt{y-b}}{\sqrt{x-a}}+\lambda\partial_ag(a,b)=0$, \label{eq:a3a}
  \\
&$\frac{\partial h}{\partial b}=\frac{\sqrt{a}+\sqrt{b}}{\sqrt{b}}-\frac{1}{r}\frac{\sqrt{x-a}+\sqrt{y-b}}{\sqrt{y-b}}+\lambda\partial_bg(a,b)=0$,\label{eq:a3b}
\\
&$\frac{\partial h}{\partial \lambda}=g(a,b)-k=0$. \label{eq:a3c}
\end{subnumcases}
Solve the first two for $\lambda$ and set the two expressions equal to obtain 
\be\label{eq:mess}
r\Big(1+\frac{\sqrt{b}}{\sqrt a}\Big)\partial_ag\Big(\frac{\sqrt{a}}{\sqrt{b}}-\frac{\partial_bg}{\partial_ag}\Big)=\bigg(1+\frac{\sqrt{x-a}}{\sqrt{y-b}}\bigg)\bigg(\partial_ag-\partial_bg\frac{\sqrt{y-b}}{\sqrt{x-a}}\bigg).
\ee
For the $(x,y)$ for which the crossing point is the $(a, b)$ that satisfies equation \eqref{eq:mess}, the maximal path is piecewise linear with slopes 
\[
m_1 = \frac{b}{a} \quad \text{and} \quad m_2 = \frac{y - b}{x - a}.
\]
Then equation \eqref{eq:mess} can be written as 
\be \label{eq:mess2}
\nabla g (a,b) \cdot \bigg(\frac{r(1+\sqrt{m_1})}{\sqrt{m_1}} - \frac{1+\sqrt{m_2}}{\sqrt{m_2}}, -r(1+\sqrt{m_1}) + (1 +\sqrt{m_2})\bigg) =0.
\ee
Equation \eqref{eq:mess2} has a very convenient form. It shows that if for a fixed $(x,y)$ the crossing point $(a,b)$ solves the Lagrange multiplier problem \eqref{eq:lagrange}, then the same point $(a,b)$ solves \eqref{eq:lagrange} for any $(x', y') = (a, b) + \lambda(x - a, y-b)$ on the line from $(a,b)$ with slope $m_2$. 
Using the form $g(x,y) = y - f(x)$, we have that $\nabla g(a,b) = (-f'(a), 1)$. Relation \eqref{eq:mess2} after some algebraic manipulations then becomes 
\be\label{explain}
\frac{r-1}{r} + \sqrt{m_1} - \frac{\sqrt{m_2}}{r} = -\frac{f'(a)}{r}\Big( r-1 + \frac{r}{\sqrt{m_1}} - \frac{1}{\sqrt{m_2}} \Big).
\ee
We will use this equation later, as any crossing point away from the boundary satisfies relation \eqref{explain}. 

The next lemma shows that if  $(a, b)$ solves \eqref{eq:mess2}  (or $a$ solves \eqref{explain}) does not imply that we found a global maximiser.

\begin{lemma}[Maximal paths cannot cross each other]\label{lem:UMP} 
Suppose that for a point $(x,y)$ there exist two crossing points $(a_1^*,b_1^*)$ and $(a_2^*,b_2^*)$ $(a_1^* > a_2^*) $ that satisfy \eqref{eq:mess}, \eqref{eq:mess2} subject to the constraint \eqref{eq:a3c} and in particular maximise \ref{eq:lagrange}. Then for $(x', y') = (a_1^*, b_1^*) + \kappa(x - a_1^*, y-b_1^*)$  we have that 
\begin{enumerate}
\item If $\kappa > 1$, crossing point $(a_1^*, b_1^*)$ is a critical point for the Lagrange multiplier problem when the terminal point is $(x', y')$. 
\item If $\kappa > 1$, crossing point $(a_1^*, b_1^*)$ is not a maximiser for the Lagrange multiplier problem when the terminal point is $(x', y')$.
\item If $\kappa < 1$, crossing point $(a_1^*, b_1^*)$ is the unique maximiser for the Lagrange multiplier problem when the terminal point is $(x', y')$.
\end{enumerate}
\end{lemma}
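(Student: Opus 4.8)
The plan is to exploit the scaling invariance built into equation~\eqref{eq:mess2}: the crossing-point equation depends only on the slopes $m_1 = b/a$ and $m_2 = (y-b)/(x-a)$, not on the distances along the two linear segments. Thus fixing the crossing point $(a_1^*, b_1^*)$ and moving the terminal point along the ray $(x', y') = (a_1^*, b_1^*) + \kappa(x - a_1^*, y - b_1^*)$ keeps $m_1$ fixed (first segment unchanged) and $m_2$ fixed (the ray has slope exactly $m_2$). Hence for every $\kappa$ the same $(a_1^*, b_1^*)$ still satisfies \eqref{eq:mess2} and \eqref{eq:a3c}, which proves part~(1): it remains a critical point of the Lagrange system \eqref{eq:lagrange} for the new endpoint. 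This is essentially the observation already recorded right after \eqref{eq:mess2} in the text.

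For part~(3), I would argue that when $\kappa < 1$ the point $(x', y')$ lies \emph{strictly between} the curve $\{g = k\}$ and the original endpoint $(x,y)$ along the optimal path, and use the fact (Lemma~\ref{lem:dense}/Lemma~\ref{lem:AS} machinery, plus superadditivity~\eqref{eq:subadditive} of $\Gamma_c$ and the strict concavity of $\gamma$, i.e.\ Jensen) that maximal paths cannot cross: if some other crossing point $(a', b')$ with $a' \ne a_1^*$ were also optimal for $(x', y')$, its optimal ray would have to intersect the optimal ray for $(x,y)$ through $(a_1^*,b_1^*)$ — because both terminate on the same ray — contradicting the non-crossing property of optimal macroscopic paths. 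More concretely, I would compare $\Gamma_{c}(0,(x',y'))$ computed via $(a_1^*, b_1^*)$ against any competitor, writing $\Gamma_{c}((0,0),(x',y')) \ge \Gamma_{c}((0,0),(a_1^*,b_1^*)) + r^{-1}\gamma(x'-a_1^*, y'-b_1^*)$ and, crucially, using that the function $\kappa \mapsto [\text{value along } (a_1^*,b_1^*)] - [\text{value along any other } (a',b')]$ is monotone in $\kappa$ on the relevant range because the difference of the two $\gamma$-contributions is concave in $\kappa$ and vanishes (with the right sign of derivative) at $\kappa = 1$. Strict concavity of $\gamma$ then forces uniqueness for $\kappa < 1$.

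Part~(2) is the mirror image of part~(3): the same concavity-in-$\kappa$ comparison shows that for $\kappa > 1$ the competitor associated to $(a_2^*, b_2^*)$ (which by hypothesis also maximizes at $\kappa = 1$) now strictly beats the path through $(a_1^*, b_1^*)$, so the latter, while still critical, is no longer a maximizer. The key structural input is that at $\kappa = 1$ the two paths tie, so the derivative in $\kappa$ of their difference at $\kappa=1$ cannot be zero (else by concavity they would tie on an interval, contradicting that there are only finitely many solutions of \eqref{explain} generically, or one appeals directly to strict concavity of $\gamma$), and its sign determines which path wins on each side of $\kappa=1$.

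The main obstacle I anticipate is making the ``paths cannot cross'' intuition fully rigorous at the level of the variational formula rather than the microscopic picture: one must rule out that a \emph{third}, a priori unrelated crossing point becomes optimal for some $\kappa<1$, and handle the possibility that the optimal path for $(x',y')$ is of boundary type (Type~B) rather than crossing type. I would dispatch the Type~B case separately using the remark that for $(x,y)$ close enough to the curve $\{g=k\}$ no boundary-type maximizer exists (the observation following the Definition of Types of maximisers), applied on the subsegment from the crossing point; and I would handle the ``third point'' case by the convexity of $\{g=k\}$ together with the monotonicity of $m_2$ along crossing points established via \eqref{explain}, which pins down the geometry enough that any two optimal rays to points on a common ray must emanate from the same crossing point.
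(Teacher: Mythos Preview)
Your argument for part~(1) is correct and is exactly what the paper does: the slope $m_2$ is unchanged along the ray, so \eqref{eq:mess2} still holds.

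For parts~(2) and~(3), however, you are working much harder than necessary, and the obstacles you list (third crossing points, Type~B competitors, monotonicity of $m_2$ via \eqref{explain}) are symptoms of having missed a one-line trick. The paper's argument for~(2) is this: since $(x,y)$ lies on the segment $(a_1^*,b_1^*)\to(x',y')$ and $\gamma$ is $1$-homogeneous, the weight of the path $(0,0)\to(a_1^*,b_1^*)\to(x',y')$ equals the weight of $(0,0)\to(a_1^*,b_1^*)\to(x,y)$ plus $r^{-1}\gamma(x'-x,y'-y)$. But $(a_1^*,b_1^*)$ and $(a_2^*,b_2^*)$ tie at $(x,y)$, so this equals the weight of the \emph{broken} path $(0,0)\to(a_2^*,b_2^*)\to(x,y)\to(x',y')$. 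Strict concavity of $\gamma$ (Jensen) says straightening the last two legs to $(a_2^*,b_2^*)\to(x',y')$ strictly increases the weight; hence $(a_1^*,b_1^*)$ is not a maximizer. Part~(3) is the same trick run backwards: if some $(a',b')\neq(a_1^*,b_1^*)$ were also optimal at $(x',y')$, then the broken path $(0,0)\to(a',b')\to(x',y')\to(x,y)$ would match the path through $(a_1^*,b_1^*)$, and straightening it would produce a path through $(a',b')$ strictly beating $(a_1^*,b_1^*)$ at $(x,y)$ --- contradicting maximality there. This handles \emph{any} competitor $(a',b')$ in one stroke, so the third-point and Type~B worries never arise.

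Your convexity-in-$\kappa$ idea is not wrong in spirit --- the difference $F(\kappa)$ you would study is indeed convex, and the paper's comparison is essentially evaluating it at $\kappa=0$ and $\kappa=1$ --- but as you set it up you would need a second data point besides $F(1)=0$, and the natural candidate $\kappa=0$ lands at $(a_1^*,b_1^*)$ where the competing path through $(a_2^*,b_2^*)$ is not admissible (since $b_1^*<b_2^*$). The broken-path trick sidesteps this entirely.
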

\begin{proof}

See Figure \ref{fig:2MP} for the geometric construction.

For (1) the statement follows from the fact that slope of the segment  $(a_1^*, b_1^*) \to (x', y')$ is the same as that for $(a_1^*, b_1^*) \to (x, y)$. Equation \eqref{eq:mess2} is automatically satisfied so $(a_1^*,b_1^*)$ is a critical point.

For (2) we reason as follows. The path $(0,0) \to(a_2^*, b_2^*) \to (x,y) \to (x', y')$ cannot be optimal for $(x',y')$, because it is polygonal in the homogeneous region of rate $r$ and the straight line $(a_2^*, b_2^*)$ is strictly better. However it has the same weight as the path  $(0,0) \to(a_1^*, b_1^*) \to (x', y')$ and therefore this path cannot be optimal for $(x',y')$. 

Part (3) follows with similar arguments.
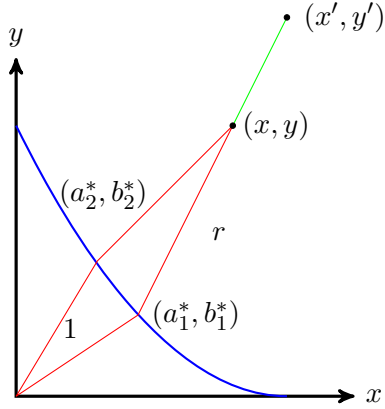
\begin{figure}[h!]
\center{
\begin{tikzpicture}[
        scale=1.8,
        IS/.style={blue, thick},
        LM/.style={red, thick},
        axis/.style={very thick, ->, >=stealth', line join=miter},
        important line/.style={thick}, dashed line/.style={dashed, thin},
        every node/.style={color=black},
        dot/.style={circle,fill=black,minimum size=4pt,inner sep=0pt,
            outer sep=-1pt},
    ]
    \draw[axis,<->] (2.5,0) node(xline)[right] {$x$} -|
                    (0,2.5) node(yline)[above] {$y$};
  
    \draw[IS] (0,2) coordinate (IS_1) parabola[bend at end]
         (2,0) coordinate (IS_2);
 \draw (1.5,1.2) node{$r$};
 \draw (0.4,0.5) node{$1$};
\draw[red] (0,0)--(0.9,0.6)node[xshift=2em]{$(a_1^*,b_1^*)$}--(1.6,2)node[xshift=1.5em]{$(x,y)$};
\draw[red] (0,0)--(0.6,1)node[xshift=0.2em,yshift=2.3em]{$(a_2^*,b_2^*)$}--(1.6,2);
\draw[green] (1.6,2)--(2,2.8)node[xshift=2em]{$(x',y')$};
\draw [fill,black] (1.6,2) circle [radius=0.02] ;
\draw [fill,black] (2,2.8) circle [radius=0.02] ;
\end{tikzpicture}
}
\caption{The construction described in the proof of Lemma \ref{lem:UMP}.}
\label{fig:2MP}
\end{figure}
\end{proof}

Next, we want to verify that the maximal path will never follow a vertical or horizontal line in the $r$ region, i.e. the slope of the second segment of a potential maximiser cannot have slope equal to zero or infinity. 

\begin{lemma} \label{lem:dense}Suppose that $(a,b) \in \mathcal S_{x,y}$. Then $a < x$ and $b < y$. In particular, any $(x,y)$ for which the maximiser of $\Gamma_{c_g,k}(x,y)$ does not cross $(a_0, 0)$ or $(0, b_0)$ has to cross at a point $(a,b)$ that satisfies \eqref{eq:mess}, \eqref{eq:mess2} and the second segment has a non-zero, finite slope. 
\end{lemma}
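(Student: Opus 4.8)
The plan is to argue by contradiction, using the geometric "no-crossing" intuition already exploited in Lemma \ref{lem:UMP}, together with the fact that $\gamma$ is strictly concave so that within the homogeneous rate-$r$ region straight lines are strictly optimal (``Jensen's inequality''). Suppose $(a,b) \in \mathcal S_{x,y}$ with, say, $a = x$ (the case $b = y$ being symmetric). Then the second segment of the candidate maximiser is a vertical line from $(a,b) = (x,b)$ up to $(x,y)$. First I would rewrite the objective in \eqref{eq:varprop} restricted to crossing points near $(x,b)$ on the level curve $g = k$: parametrise the crossing point as $(a(t), b(t))$ with $a(0) = x$, $b(0) = b$, $a'(0) < 0$, $b'(0) > 0$ (possible since the curve is strictly decreasing and $a = x \le a_0$, so we are not at the terminal point of the curve — here one must also dispose of the degenerate possibility $x = a_0$, but then $b = 0$ and the path goes through $(a_0,0)$, which is excluded by hypothesis). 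Define $\phi(t) = \gamma(a(t), b(t)) + \tfrac1r \gamma(x - a(t), y - b(t))$ and show $\phi'(0) > 0$, contradicting optimality of $(a,b)$.

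The key computation is $\phi'(0)$. Using $\gamma(u,v) = (\sqrt u + \sqrt v)^2$ we have $\partial_u \gamma = 1 + \sqrt{v/u}$ and $\partial_v\gamma = 1 + \sqrt{u/v}$. At $t = 0$, the second argument of $\gamma$ in the second term is $(x - a, y - b) = (0, y - b)$ with $y - b > 0$, so $\partial_u \gamma(x-a, y-b) = 1 + \sqrt{(y-b)/0} = +\infty$ while $\partial_v\gamma(x-a,y-b) = 1 + 0 = 1$. Since $a'(0) < 0$, the term $-\tfrac1r \,\partial_u\gamma(x-a,y-b)\cdot(-a'(0))$ contributes $+\infty$, while every other term in $\phi'(0)$ — namely $\partial_u\gamma(a,b) a'(0) + \partial_v\gamma(a,b) b'(0)$ and $-\tfrac1r\partial_v\gamma(x-a,y-b)(-b'(0))$ — is finite because $a, b, y-b$ are all strictly positive and bounded away from $0$ (recall $b > 0$ since otherwise the path passes through $(a_0,0)$, again excluded). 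Hence $\phi'(0) = +\infty > 0$, so moving the crossing point slightly into the region $a(t) < x$ strictly increases the objective, contradicting $(a,b) \in \mathcal S_{x,y}$. This proves $a < x$; the symmetric argument with the roles of the coordinates swapped gives $b < y$.

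For the second assertion: if $(x,y)$ is such that the maximiser of $\Gamma_{c_{g,k}}(x,y)$ does not pass through $(a_0,0)$ or $(0,b_0)$, then by \eqref{eq:varprop} its crossing point $(a,b)$ lies on the open arc of $g = k$ strictly between those two endpoints, hence is an interior point of the domain of the Lagrange problem \eqref{eq:lagrange}; therefore it satisfies the stationarity equations \eqref{eq:a3a}–\eqref{eq:a3c}, equivalently \eqref{eq:mess} and \eqref{eq:mess2}. The slopes are $m_1 = b/a \in (0,\infty)$ and, by the first part, $m_2 = (y-b)/(x-a)$ with $0 < x - a$ and $0 < y - b$ both finite and nonzero, so $m_2 \in (0,\infty)$ as claimed.

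The main obstacle is purely a matter of care rather than depth: one must make sure the ``escape direction'' $(a'(0), b'(0))$ along the level curve genuinely exists with $a'(0) < 0$ and $b'(0) > 0$, which relies on the standing assumption that $f$ (equivalently the level curve) is $C^2$, strictly decreasing with nonvanishing derivative, and that the crossing point is not an endpoint of the curve — the latter being exactly the content of the hypothesis excluding $(a_0,0)$ and $(0,b_0)$. Once that is set up, the blow-up of $\partial_u\gamma$ at a zero first coordinate does all the work, and no delicate estimate is needed.
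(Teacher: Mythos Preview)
Your argument is correct and is essentially the same as the paper's: both perturb the crossing point along the level curve and exploit the square-root singularity of $\gamma$ (equivalently, $\partial_u\gamma(0,v)=+\infty$) to show that a vertical second segment cannot be optimal; the paper carries this out via an explicit $\varepsilon$-expansion of $I(\pi_2)-I(\pi_1)$, while you phrase it as $\phi'(0^+)=+\infty$. One small correction: the chain rule gives the contribution $+\tfrac1r\,\partial_u\gamma(x-a,y-b)\cdot(-a'(0))$, not $-\tfrac1r\,\partial_u\gamma(x-a,y-b)\cdot(-a'(0))$ as you wrote, so your stated expression would actually equal $-\infty$; with the sign fixed (and interpreting $\phi'(0)$ as the right one-sided derivative, since $\phi$ is not differentiable at $0$ in the usual sense), your conclusion stands.
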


\begin{proof}

We only show that a second segment of infinite slope is not optimal. The strictly positive slope claim follows similarly. We compare the last passage time of a path which crosses the discontinuity in the point whose $x$ coordinate is the same of the point that it has to reach, in other words $a=x, b= f(x)$, and another path with $a=x-\varepsilon$. Under these assumptions, we have that 
\[
 f(x-\e)=b+\delta(\varepsilon) \text{  with } \lim_{\varepsilon\to0}\frac{\varepsilon}{\delta(\varepsilon)}= c \in (0, \infty).\]
 This is because $b + \delta(\e) = f(x-\e) = f(x) - f'(x)\e + o(\e) = b - \e f'(x) + o(\e)$ by a Taylor expansion around $x$ and the fact that $-f'(x) \in (0, \infty)$. Then, a direct comparison between the weight of the two paths, $\pi_1$ which crosses at $(x, b)$ and $\pi_2$ crossing at $(x - \e, f(x -\e))$ gives
\begin{align*}
I_{c_{g,k}}( \pi_2) - I_{c_{g,k}}(\pi_1) &= 
(\sqrt{x-\varepsilon}+\sqrt{b+\delta(\varepsilon)})^2+\frac{1}{r}(\sqrt{\varepsilon}+\sqrt{y-b-\delta(\varepsilon)})^2 - (\sqrt{x}+\sqrt{b})^2-\frac{1}{r}(y-b)\\
&= \Big(1-\frac{1}{r}\Big)(\delta(\varepsilon)-\varepsilon)+2\sqrt{(x-\varepsilon)(b+\delta(\varepsilon))}+\frac{2}{r}\sqrt{\varepsilon(y-b-\delta(\varepsilon))} - 2\sqrt{xb}\\
&= \Big(1-\frac{1}{r}\Big)(\delta(\varepsilon)-\varepsilon) - \frac{\e[b - x f'(x)]}{\sqrt{xb}} +\frac{2}{r}\sqrt{\varepsilon(y-b-\delta(\varepsilon))} + o(\e).
\end{align*}
Divide through by $\e$ and let it tend to 0 to see that the last expression is eventually positive. As such, $I_{c_{g,k}}( \pi_2)$ is a lower bound for the shape function at  $(x,y)$ and therefore the maximiser cannot be $\pi_1$. 
\end{proof}

\begin{lemma}\label{lem:46} Let $(x,y)$ and $(z,w) \in \R^2_+$ so that $(x,y) \neq \lambda (z,w)$ for any $\lambda \in \R$. Then 
\[ \mathcal S_{x,y} \cap \mathcal S_{z,w} \in \{ \varnothing, (a_0, 0), (0, b_0)\}.\]
In other words, the only possible crossing points from which more than one maximiser passes, are the axes points  $(a_0, 0), (0, b_0)$.
\end{lemma}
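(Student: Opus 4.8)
The plan is to push everything onto the first-order (Lagrange) conditions for an interior crossing point and to observe that those conditions determine the slope of the outgoing segment as a function of the crossing point alone; the two axis points are genuine exceptions because there the optimiser is a boundary solution of the Lagrange problem.

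\textbf{Reductions.} If $g(x,y)\le k$ or $g(z,w)\le k$, then by \eqref{eq:varprop} the corresponding set $\mathcal S_{x,y}$ or $\mathcal S_{z,w}$ is empty and the intersection is $\varnothing$; so assume both target points lie in the $r$-region. Suppose $(a,b)\in\mathcal S_{x,y}\cap\mathcal S_{z,w}$ with $(a,b)\notin\{(a_0,0),(0,b_0)\}$. Since the level curve $\{g=k\}$ meets the coordinate axes only at $(a_0,0)$ and $(0,b_0)$, the point $(a,b)$ is an interior point of that curve, so $0<a<a_0$ and $0<b<b_0$. By Lemma \ref{lem:dense}, $a<x$, $b<y$ and $a<z$, $b<w$, hence $(a,b)$ lies strictly inside the feasible region of the Lagrange problem \eqref{eq:lagrange} for \emph{both} $(x,y)$ and $(z,w)$.

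\textbf{First-order conditions force a common ray.} Being an interior maximiser, $(a,b)$ satisfies \eqref{eq:a3a}--\eqref{eq:a3c}, equivalently \eqref{explain}, once for $(x,y)$ with $m_1=b/a$ and $m_2=(y-b)/(x-a)$, and once for $(z,w)$ with the \emph{same} $m_1=b/a=f(a)/a$ and $m_2'=(w-b)/(z-a)$. For a fixed crossing point $(a,b)$ the quantities $m_1$ and $f'(a)<0$ are fixed, and clearing denominators in \eqref{explain} turns it into a quadratic equation in $v=\sqrt{m_2}$ whose constant term equals $f'(a)<0$; the product of its two roots is therefore negative, so \eqref{explain} admits exactly one positive solution $v$. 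Hence $m_2=m_2'$, i.e.\ $(y-b)/(x-a)=(w-b)/(z-a)$: the points $(x,y)$ and $(z,w)$ lie on a common ray issuing from $(a,b)$, say $(x,y)=(a,b)+\kappa\bigl((z,w)-(a,b)\bigr)$ with $\kappa>0$, $\kappa\neq1$ since $(x,y)\neq(z,w)$.

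\textbf{Finishing, and the main obstacle.} It remains to rule out $\kappa\neq1$; by symmetry assume $\kappa>1$, so $(z,w)$ lies strictly between $(a,b)$ and $(x,y)$ on the ray $R_a=\{(a,b)+s(1,m_2(a)):s>0\}$, which beyond $(a,b)$ stays in the $r$-region (the epigraph $\{g\ge k\}$ is convex). Since $(a,b)\in\mathcal S_{x,y}$, the maximiser to $(x,y)$ is $(0,0)\to(a,b)\to(x,y)$, and truncating it at $(z,w)$ gives the maximiser to $(z,w)$; so a \emph{second}, non-parallel maximal segment through $(a,b)$ cannot coexist, since two maximal paths from the origin cannot cross inside the $r$-region by Lemma \ref{lem:UMP}. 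Combining this with the density of $\mathcal S$ on $\{g=k\}$ (Lemma \ref{lem:AS}) and running the comparison of Lemma \ref{lem:UMP}(2) at the first point of $R_a$ carrying two crossing points produces the contradiction, so no interior crossing point lies in $\mathcal S_{x,y}\cap\mathcal S_{z,w}$; thus $\mathcal S_{x,y}\cap\mathcal S_{z,w}\subseteq\{(a_0,0),(0,b_0)\}$. The two axis points are legitimate exceptions: there the first linear segment of the maximiser runs along a coordinate axis, the problem \eqref{eq:lagrange} is solved on the boundary of its feasible region, \eqref{explain} no longer applies, and a whole fan of non-parallel maximal paths can emanate from $(a_0,0)$ (resp.\ $(0,b_0)$). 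The delicate point is precisely this upgrade: \eqref{explain} only says that $(a,b)$ is a \emph{critical} point of \eqref{eq:lagrange} for every target on $R_a$, and turning critical-point information into a statement about \emph{global} optimisers is exactly the job of Lemma \ref{lem:UMP}; locating the comparison at the right point of $R_a$ and handling the degenerate boundary optimisers at the axis points is where the care is needed.
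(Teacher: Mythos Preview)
Your core argument—that \eqref{explain}, cleared of denominators, is a quadratic in $v=\sqrt{m_2}$ with constant term $f'(a)<0$ and hence exactly one positive root—is correct and is a cleaner route than the paper's. The paper instead uses the vector form \eqref{eq:mess2}: both $\mathbf{v}_{x,y}$ and $\mathbf{v}_{z,w}$ are tangent to the level curve at $(a,b)$ and hence scalar multiples $\mathbf{v}_{z,w}=\kappa\,\mathbf{v}_{x,y}$; since the two coordinates of $\mathbf{v}$ have opposite signs but both increase strictly with $m_2$, a non-trivial scalar multiple is impossible, forcing $m_{x,y}=m_{z,w}$. Both arguments deliver the operative content of the lemma: the outgoing slope from an interior crossing point is unique.

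Your ``Finishing'' paragraph, however, is flawed. Once $m_2=m_2'$, the two terminal points lie on a common ray from $(a,b)$, and there is nothing further to rule out: two points on that ray \emph{do} share $(a,b)$ as a crossing point (Lemma~\ref{lem:UMP}(3) confirms this for the nearer one). The invocations of Lemma~\ref{lem:UMP}(2) and Lemma~\ref{lem:AS} do not produce a contradiction because there is none; the sentence ``a second, non-parallel maximal segment through $(a,b)$ cannot coexist'' is exactly what you have already shown, not a new obstruction. Note too that collinearity through the origin (the stated hypothesis $(x,y)\neq\lambda(z,w)$) is not the same as collinearity through $(a,b)$ unless $m_1=m_2$, so the literal statement cannot be recovered this way; the paper's own line ``the assumption that $(x,y)$ and $(z,w)$ are not collinear gives $\kappa\neq\pm 1$'' glosses over the same gap. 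You should stop after the quadratic argument.
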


\begin{proof} Assume by way of contradiction that two terminal points in general position, $(x,y)$ and $(z,w)$ have the same crossing point $(a,b)$ for which $0 < a < a_0$ and $0 < b < b_0 $. Then the gradient of $g$ at $(a, b)$ is well defined. By the previous lemma, equation \eqref{eq:mess2} holds for $m_1 = b/a$ and for both values of $m_2$, 
\[
m_2 = m_{x,y} = \frac{y - b}{ x - a}, \quad \text{and} \quad  m_2 = m_{z,w} = \frac{w - b}{ z - a}.
\]
For $(i,j) \in \{(x,y), (z,w)\}$ define 
\[ {\bf v}_{i,j} = \bigg(\frac{r(1+\sqrt{m_1})}{\sqrt{m_1}} - \frac{1+\sqrt{m_{i,j}}}{\sqrt{m_{i,j}}}, -r(1+\sqrt{m_1}) + (1 +\sqrt{m_{i,j}})\bigg) = (v_{i,j}^{(1)}, v_{i,j}^{(2)}).\]
Vector ${\bf v}_{i,j}$ would be tangent to the level curve $g(x,y) = k$ at $(a,b)$ and at such, ${\bf v}_{i,j} \neq 0$. The monotonicity of the level curve and the fact that $(a,b)$ does not lie on one of the axes give that $v_{i,j}^{(1)} \cdot v_{i,j}^{(2)} \neq 0$.
By planarity and \eqref{eq:mess2}, this and the last equation imply that there exists a $\kappa \in \R \setminus \{0\}$ so that 
${\bf v}_{z,w} = \kappa {\bf v}_{x,y}$. The assumption that $(x,y)$ and $(z,w)$ are not collinear gives that $\kappa \neq \pm 1$. Assume without loss of generality that  $m_{z,w} > m_{x,y}$. Then coordinate-wise, 
\[
v_{x,y}^{(1)} < v_{z,w}^{(1)}, \qquad v_{x,y}^{(2)} < v_{z,w}^{(2)}.
\]

On the other hand, it has to be by equations \eqref{eq:a2} and \eqref{eq:mess2} that the $v_{x,y}^{(1)}$ and $v_{x,y}^{(2)}$ have opposite signs, otherwise \eqref{eq:mess2} would never be satisfied. Assuming $0 < v_{x,y}^{(1)}$, it has to be that $\kappa > 1$, but that would imply that $v_{x,y}^{(2)} > v_{z,w}^{(2)}$ which leads to a contradiction. Similarly, we reach a contradiction when $v_{x,y}^{(1)} < 0$. 
\end{proof}

From Lemma \ref{lem:46} we know that from each crossing point except $(0, f(0))$ and $(a_0, 0)$ there is only one optimal slope that can be obtained.  Remark \ref{rem:non-uniqueness} suggests that it is possible that a point could be reached by two maximal paths that both cross at the interior of $f$. Finally we discuss what happens when two maximal paths exists for a point $(x,y)$, one from the axis and the other from a crossing point or both from the axes.

\begin{proposition}\label{lem:NAP} The following properties hold:
\begin{enumerate}
\item If a maximal path which crosses $(0, f(0))$ or $(a_0, 0)$ 
and a maximal path through any crossing point 
$(a, f(a))$ intersect, they intersect at their terminal point and that point has to belong on $\partial R_{0, f(0)}$.

\item If $(x,y) \in {\rm int}(R_{0, f(0)})$ and it also belongs on the extension of a maximiser ${\bf x}$ that crosses at $(a', f(a'))$, $a' \neq 0$, it has to be 
 \[
 I(\pi_{0, (0,f(0))}) + I(\pi_{(0,f(0)), (x,y)}) > I(\pi_{0, (a',f(a'))}) + I(\pi_{(a',f(a')), (x,y)}), 
 \]
 where $\pi_{\bf u, v}$ is a linear segment between $\bf u$ and $\bf v$.
 In particular, any $(x,y) \in {\rm int}(R_{0, f(0)})$ has a unique maximiser that has to go through $(0, f(0))$.   
 
\item If $R_{0, f(0)} \cap R_{a_0, 0} \neq \varnothing$ and $r > 1$, then the intersection is a segment of a (possibly degenerate) hyperbola.
\end{enumerate}
\end{proposition}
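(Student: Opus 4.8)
The plan is to handle (1) and (2) with the ``surgery'' ideas already used for Lemma~\ref{lem:UMP}, and to prove (3) by equating the two explicit type-B weights and reading off the resulting conic.

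For (1): let $\pi_B$ be a maximizer for some terminal point $(p,q)$ that goes through the axis crossing point $(0,f(0))$ (the case $(a_0,0)$ is identical), and $\pi_C$ a maximizer for some $(p',q')$ through an interior crossing point $(a,f(a))$, $0<a<a_0$. First I would dispose of the ``easy'' intersections: the first leg of $\pi_C$ lies strictly below the level curve $g=k$ except at its endpoints, so it cannot meet the second leg of $\pi_B$ there; and a segment issuing from $(0,f(0))$ (or $(a_0,0)$) with any nonnegative slope stays strictly above the convex, decreasing curve $g=k$, so $\pi_B$'s second leg never re-touches the curve and in particular never passes through $(a,f(a))$. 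Hence any intersection other than the origin must occur in the common relative interior of the two second legs, which, since $f$ convex makes the $r$-region $\{g>k\}$ convex, lie in that convex $r$-region. There, by superadditivity \eqref{eq:subadditive} --- which is tight along maximizers, so restrictions of maximizers are maximizers --- the concatenation ``$\pi_C$ up to $P$, then $\pi_B$ onward to $(p,q)$'' is itself a maximizer to $(p,q)$. If $P\neq(p,q)$ this concatenation is genuinely polygonal inside the convex $r$-region (the colinear-second-legs degeneracy is excluded because a line meets the strictly convex level curve in at most one point above its $x=0$ end), so replacing its two $r$-segments by one straight segment strictly improves the weight by strict concavity of $\gamma$ (``Jensen''), a contradiction. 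Thus $P=(p,q)=(p',q')$; and such a point, carrying simultaneously a type-B maximizer through $(0,f(0))$ and a type-C maximizer, cannot lie in ${\rm int}(R_{0,f(0)})$ by part~(2), so it lies on $\partial R_{0,f(0)}$.

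For (2): let $(x,y)\in{\rm int}(R_{0,f(0)})$, and suppose it lies on the ray carrying a type-C maximizer with crossing point $(a',f(a'))$, $a'\neq0$. Take $(z,w)$ on the open segment from $(a',f(a'))$ to $(x,y)$, i.e.\ with parameter $\kappa<1$ in the notation of Lemma~\ref{lem:UMP}; then part~(3) of that lemma says $(a',f(a'))$ is the \emph{unique} maximizer for $(z,w)$, so $(z,w)\notin R_{0,f(0)}$. Letting $(z,w)\to(x,y)$ contradicts openness of ${\rm int}(R_{0,f(0)})$. Hence no type-C competitor can tie the type-B path through $(0,f(0))$; since Lemma~\ref{lem:46} forbids two distinct interior crossing points from serving the same terminal point, the maximizer for $(x,y)$ is unique and goes through $(0,f(0))$, and the displayed strict inequality is exactly the statement that every type-C concatenation to $(x,y)$ is strictly worse.

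For (3): a point $(x,y)\in R_{0,f(0)}\cap R_{a_0,0}$ has both $\gamma(0,f(0))+\tfrac1r\gamma(x,y-f(0))$ and $\gamma(a_0,0)+\tfrac1r\gamma(x-a_0,y)$ equal to $\Gamma_{c_f}(x,y)$, hence equal to each other. Using $\gamma(u,v)=(\sqrt u+\sqrt v)^2$, $\gamma(0,f(0))=f(0)$ and $\gamma(a_0,0)=a_0$, this simplifies to
\[
\sqrt{x\bigl(y-f(0)\bigr)}-\sqrt{y\bigl(x-a_0\bigr)}=C,\qquad C:=\tfrac12(r-1)\bigl(a_0-f(0)\bigr).
\]
Isolating a radical and squaring twice (the two sign constraints produced along the way cut out an arc, which is why the intersection is a \emph{segment} rather than a full curve) yields a degree-two equation in $(x,y)$ whose associated quadratic form has determinant $f(0)^2a_0^2-\bigl(2C^2+a_0f(0)\bigr)^2=-4C^2\bigl(a_0f(0)+C^2\bigr)\le 0$. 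A nonpositive determinant of the quadratic part means the conic is a hyperbola, degenerating to a pair of lines exactly when $C=0$, i.e.\ $a_0=f(0)$; the hypothesis $r>1$ enters in fixing the orientation of the inequalities used in the squaring step and in guaranteeing the two regions are as described.

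The main obstacle is not the algebra in (3) but the case analysis in (1)--(2): one must verify that restrictions of maximizers are maximizers (tightness of \eqref{eq:subadditive}), that the straight-segment replacement in the surgery genuinely stays inside the convex $r$-region so that ``Jensen'' applies with strict inequality, and that each degenerate configuration (intersection at the origin, intersection on a rate-$1$ segment, colinear second legs, a maximizer's second leg hitting an axis crossing point) is either geometrically impossible or already covered by the stated conclusion --- together with a careful reading of how $R_{0,f(0)}$ is to be understood at points admitting more than one maximizer.
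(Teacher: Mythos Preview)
Your treatment of (3) is correct and essentially identical to the paper's: equate the two type-B weights, simplify to the single radical equation, square twice, and read off the conic from the sign of the discriminant of the quadratic part.

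For (1) and (2), the surgery argument showing that a type-B and a type-C maximiser can only meet at their common terminal point is also correct and matches the paper. The gap is in your logical structure: you deduce the boundary claim in (1) from (2), and then try to prove (2) directly via Lemma~\ref{lem:UMP}(3). That invocation does not work as stated. Lemma~\ref{lem:UMP} is formulated for a terminal point carrying \emph{two interior} crossing points satisfying the Lagrange conditions \eqref{eq:a3a}--\eqref{eq:a3c}; the axis point $(0,f(0))$ is not of this type, and the conclusion of part~(3) is uniqueness \emph{within the Lagrange problem}, which says nothing about whether $(z,w)$ might also admit a type-B maximiser through $(0,f(0))$. So the step ``$(a',f(a'))$ is the unique maximiser for $(z,w)$, hence $(z,w)\notin R_{0,f(0)}$'' is not justified by that lemma.

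There is a second, more structural, issue. In the statement of (2), $(x,y)$ lies on the \emph{extension} of a maximiser ${\bf x}$, i.e.\ beyond its terminal point $(p,q)$. For $(z,w)$ strictly between $(a',f(a'))$ and $(p,q)$ the restriction argument does give that the type-C path is optimal for $(z,w)$, and then your surgery (suitably generalised to compare against a hypothetical type-B maximiser at $(z,w)$) would indeed force $(z,w)\notin R_{0,f(0)}$. But for $(z,w)$ between $(p,q)$ and $(x,y)$ --- which is exactly where you need to be in order to let $(z,w)\to(x,y)$ and invoke openness --- nothing tells you the type-C path through $(a',f(a'))$ is a maximiser for $(z,w)$, so neither Lemma~\ref{lem:UMP}(3) nor its underlying surgery applies there.

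The paper avoids this by reversing the dependence: it proves (1) first, without reference to (2), using the topology of $R_{0,f(0)}$. If a terminal point $(x_T,y_T)$ with both types of maximiser lay in the interior of $R_{0,f(0)}$, its type-C maximiser (a continuous curve from the origin) would have to cross $\partial R_{0,f(0)}$ at some earlier point $(x_R,y_R)$; closedness of $R_{0,f(0)}$ gives $(x_R,y_R)$ a type-B maximiser, restriction gives it a type-C maximiser, and now the surgery argument (which you already have) forces the type-C extension to $(x_T,y_T)$ to be strictly suboptimal, contradicting its assumed optimality. With (1) in hand, (2) is immediate: were the type-C path through $(a',f(a'))$ to $(x,y)$ as good as the type-B path, it would itself be a maximiser for $(x,y)$, and (1) would put $(x,y)$ on $\partial R_{0,f(0)}$, contradicting the interior hypothesis.
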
 

\begin{proof}[ Proof of Proposition \ref{lem:NAP}]
We prove all the three properties one by one starting from the first.
\begin{enumerate}[(1)]

\item First, we show that also in this situation maximisers cannot cross.
The contrary would be impossible. In fact, if it was possible to extend either maximiser, we would be able to construct a polygonal path which is not linear in a homogeneous environment, and this is not optimal with the same arguments as in Lemma \ref{lem:UMP}. 
 
$R_{0, f(0)}$ by definition is a closed, star-shaped domain. Moreover, since maximal paths cannot cross, $R_{0, f(0)}$ is simply connected. Suppose by way of contradiction that such a terminal point $(x_T, y_T) \in {\rm int}(R_{0, f(0)})$. Then the type C maximiser ${\bf x}_{0, (x_T, y_T)}$ intersects $\partial R_{0, f(0)}$ at some point $(x_R, y_R)$. Since $R_{0, f(0)}$ is closed, $(x_R, y_R)$ has a maximiser ${\bf x}_{0, (x_R, y_R)}$ that goes through $(0, f(0))$. 
By Lemma \ref{lem:UMP}, $(x_R, y_R)$ is also maximised by the portion of ${\bf x}_{0, (x_T, y_T)}$ that terminates at $(x_R, y_R)$, and by the discussion above, $(x_R, y_R)$ has to be a terminal point. This means that $(x_T, y_T)$ cannot be optimised by that type $C$ maximiser, which gives the desired contradiction.

\item Same arguments as above imply the statement.

\item This is a computation of the set of all points $(x,y)\in \R^2_+$ which take the same amount of time going through the $x$ and $y$ axis. 
\begin{align*}
&a_0+\frac{1}{r}(\sqrt{x-a_0}+\sqrt{y})^2=f(0)+\frac{1}{r}(\sqrt{x}+\sqrt{y-f(0)})^2\\
&(a_0-f(0))\frac{r-1}{2}=\sqrt{x(y-f(0))}-\sqrt{y(x-a_0)}.
\end{align*}
Since $r>1$, we have that $(r-1)/2 > 0$. Then, for the equality to hold, we must have  $a_0\ge f(0)$ and $y\ge xf(0)/a_0$ or $a_0<f(0)$ and $y<xf(0)/a_0$. When either of these hold, we can square both sides and after some rearrangements we have
\[
2\sqrt{xy(y-f(0))(x-a_0)}=2xy-xf(0)-ya_0-(a_0-f(0))^2\frac{(r-1)^2}{4}.
\]
This holds only if $y>\frac{xf(0)+(a_0-f(0))^2\frac{(r-1)^2}{4}}{2x-a_0} $ and it implies that both sides above are non-negative. Square both sides another time
\begin{align*}
0 =f(0)^2x^2&+a_0^2y^2-2xy\Big(a_0f(0)+(a_0-f(0))^2\frac{(r-1)^2}{2}\Big)\\
&+(a_0-f(0))^2\frac{(r-1)^2}{2}(f(0)x+a_0y)+(a_0-f(0))^4\frac{(r-1)^4}{16}, 
\end{align*}
which represent the equation of a hyperbola since $(a_0f(0)+(a_0-f(0))^2\frac{(r-1)^2}{2}\Big)^2-a_0^2f(0)^2>0$. 
Note that if $a_0 = f(0)$, the relation that gives the boundary is $x = y$. \qedhere
\end{enumerate}
\end{proof}

We have now verified that the set of crossing points is dense on the level curve (Lemma \ref{lem:AS}) and each one corresponds to a non-degenerate (Lemma \ref{lem:dense}) unique value $m_2$ (Lemma \ref{lem:46}) which in turn corresponds to the slope of the second linear segment of the maximiser. Starting from equation \eqref{eq:mess}, we can identify $m_2$.

Set
\[
D = D(a,b) = r\Big(1+\frac{\sqrt{b}}{\sqrt a}\Big)\Big(\frac{\sqrt{a}}{\sqrt{b}}-\frac{\partial_bg}{\partial_ag}\Big)= r\Big( 1 + \sqrt{m_1} \Big)\Big(\sqrt{\frac{1}{m_1}}-\frac{\partial_bg}{\partial_ag}\Big).
\]
The left-hand side in \eqref{eq:mess} becomes $\partial_ag(a,b) D$. 
Keep in mind that $m_2 > 0$ and solve \eqref{eq:mess} for $m_2$: 
\begin{align}
m_2
&= \frac{4}{ \Big( \frac{\partial_bg}{\partial_ag} - 1 + D + \sqrt{\Big(\frac{\partial_bg}{\partial_ag} - 1 + D\Big)^2 + 4\frac{\partial_bg}{\partial_ag}}\Big)^2}. \label{eq:m_2}
\end{align}
Particularly, equation \eqref{eq:m_2} uniquely identifies the slope of the second segment of the optimal path for a given crossing point $(a,b)$. 
Rewrite equation \eqref{eq:m_2} using the fact that when $b = f(a)$, $\frac{\partial_bg}{\partial_ag}(a, f(a)) = -1/ f'(a)$ to obtain equations  \eqref{eq:m_2f} and \eqref{eq:dioff}. 
\subsection{Maximisers that follow the axes}

We investigate whether the optimization problem \eqref{eq:varprop} in the region $g(x,y) >k$ admits maximisers $(a_0, 0),\,\, (0, b_0)$, i.e.\ maximisers for which the first segment of the macroscopic maximal path follows the axes.

For $(x,y) \in [0,a_0)\times[0, f(0)) = B$ the maximal macroscopic path is obtained by the solution of \eqref{eq:varprop}, and it is impossible for a maximiser to follow one of the axes.
For this behaviour to materialise,  we consider an $(x,y)$ outside of $[0,a_0)\times[0, f(0))$. 

We are finally able to study what happens to $m_2$ defined in \eqref{eq:m_2} if $a$ tends to the boundary values. The idea is that if $m_2$ for crossing points near the $y$-axis (resp. $x$-axis) does not approach $+\infty$ (resp. $0$) then it has to be that type B maximisers exist. 

The behaviour of $m_2$ for $a$ near $0$ (resp. $a_0$) is the content of Proposition  \ref{prop:EX2}, which we prove next.  

\begin{proof}[Proof of Proposition \ref{prop:EX2}] We use equation \eqref{eq:m_2f} for the slope $m_2(a)$ and \eqref{eq:dioff} for the expression $D =D_a$. 
We only show the case for which $a \to 0$ and leave $a\to a_0$ to the reader.
Keep in mind that as $a \to 0$, $f(a)/a \to \infty$. 

First we estimate the limiting behaviour of $D$ using \eqref{eq:dioff}
\begin{align}
D_0 = \varliminf_{a \to 0} D &= \varliminf_{a \to 0} r \Big( 1+  \frac{1}{f'(a)}  + \frac{1}{f'(a)}  \sqrt{\frac{f(a)}{a}} + \sqrt{\frac{a}{f(a)}} \Big) \notag \\
&= r +  \frac{r \sqrt{f(0)}}{\displaystyle \varlimsup_{a\to 0} f'(a) a^{1/2}}= 
\begin{cases}
r, & \alpha > \frac{1}{2},\\
r\Big(1 - \frac{\sqrt{f(0)}}{c^{(-)}_{1/2}}\Big), & \alpha =\frac{1}{2},\\
-\infty, & \alpha < \frac{1}{2}. \label{eq:dliminf}
\end{cases}
\end{align}
\begin{enumerate}[(1)]
\item \textbf{Case 1: $a \to 0, \, f'(a) \, \to -\infty $:} 
Focus on the denominator in \eqref{eq:m_2f} 
\begin{align}
\varlimsup_{a \to 0} m_2(a) &= 4  \varlimsup_{a \to 0}  \bigg(-\frac{1}{f'(a)}-1+D+\sqrt{\bigg(-\frac{1}{f'(a)}-1+D\bigg)^2-4\frac{1}{f'(a)}}\bigg)^{-2}\nonumber\\
&= 4  \varlimsup_{a \to 0}  \bigg(-\frac{1}{f'(a)}-1+D+\bigg|-\frac{1}{f'(a)}-1+D\bigg| + O(\frac{1}{f'(a)})\bigg)^{-2}\nonumber\\
&= 4  \varlimsup_{a \to 0}  \bigg(\Big(-\frac{1}{f'(a)}-1+D\Big)\Big( 1+ \text{sign}\Big(-\frac{1}{f'(a)}-1+D\Big)\Big) + O(\frac{1}{f'(a)})\bigg)^{-2}.\label{eq:35}
\end{align}
Focus for the moment on the sign function in the last display. We have 
\begin{align*}
-\frac{1}{f'(a)}-1+D = (r-1) + \frac{r- 1}{f'(a)}+r\sqrt{\frac{a}{f(a)}}+r\frac{1}{f'(a)}\sqrt{\frac{f(a)}{a}}.
\end{align*}
As $a \to 0$, the second and third term tend to 0 while the last term is negative and as $a \to 0$ the $\liminf$ of the last term is actually $D_0-r$.
Therefore, for $a$ sufficiently small
\be \label{eq:sign}
\text{sign}\Big(-\frac{1}{f'(a)}-1+D\Big)=
\begin{cases}
\text{sign}(r-1), & \alpha > \frac{1}{2},\\
-1, & \alpha < \frac{1}{2},  
\end{cases}
\ee
We are now in a position to finish the calculation from equation \eqref{eq:35}:\begin{enumerate}
\item $r > 1, \alpha > 1/2$: From equation \eqref{eq:sign} substitute it in equation \eqref{eq:35} to obtain 
\[
\varlimsup_{a\to0}m_2(a) = \frac{1}{(r-1)^2}.
\]
\item $r < 1, \alpha > 1/2$, or  $r \neq 1, \alpha < 1/2$: From equations \eqref{eq:sign}, \eqref{eq:35} we now have
\[
\varlimsup_{a\to0}m_2(a) = +\infty.
\]

\item When $\alpha = 1/2$, there are several cases to consider: 
\begin{enumerate}[(i)]
\item $r < 1$, then $\text{sign}\Big(-\frac{1}{f'(a)}-1+D\Big) = -1$ which implies $\varlimsup_{a\to 0 }m_2(a) = + \infty$. 
\item $r > 1$ and  $c_{1/2}^{(+)} < \frac{r\sqrt{f(0)}}{r-1}$, then $\text{sign}\Big(-\frac{1}{f'(a)}-1+D\Big) = -1$.
In this case, $\varlimsup_{a\to 0 }m_2(a) = + \infty$. 
\item $r > 1$ and  $c_{1/2}^{(-)} > \frac{r\sqrt{f(0)}}{r-1}$, then $\text{sign}\Big(-\frac{1}{f'(a)}-1+D\Big) = +1$. This is the most interesting case, as it leads to yet a different possible limit. For the condition to hold it has to be that 
\[
c_{1/2}^{(-)}> \sqrt{ f(0) } \text{ and that } r > \frac{c_{1/2}^{(-)}}{c_{1/2}^{(-)}-\sqrt{f(0)}}> 1.  
\]
When both these conditions are met, we have that
\[
\varlimsup_{a\to 0} m_2(a) = \frac{1}{\Big( r -1 - \frac{r\sqrt{f(0)}}{c_{1/2}^{(-)}} \Big)^2}.
\]
\item $r > 1$ and  $c_{1/2}^{(-)} < \frac{r\sqrt{f(0)}}{r-1} \le c_{1/2}^{(+)} $, then we can find a subsequence $a_k$ such that the 
 $\text{sign}\Big(-\frac{1}{f'(a_k)}-1+D\Big) = -1$ and so that $-\frac{1}{f'(a_k)}-1+D \to r-1 - \frac{r\sqrt{f(0)}}{c_{1/2}^{(-)}}$. Again, 
 $\varlimsup_{a\to 0 }m_2(a) = + \infty$. 
 \item $r > 1$ and $c_{1/2}^{(-)} = \frac{r\sqrt{f(0)}}{r-1}$, we cannot determine the sign function, however, we can find  a subsequence $a_k$ so that  $\varliminf_{a_k\to 0}(-\frac{1}{f'(a_k)}-1+D) = 0$ so also here $\varlimsup_{a\to 0 }m_2(a) = + \infty$. 
\end{enumerate}
\end{enumerate}
\medskip
\item \textbf{Case 2: $a \to 0, \, f'(a) \, \to -c $:} In this case, $D \to -\infty$ as $a \to 0$ so the result follows by a direct limiting argument on $\eqref{eq:m_2f}$. \qedhere
\end{enumerate} 

A close inspection of the previous proof suggests the following crucial lemma.
\begin{lemma} \label{lem:anothergap} Suppose that 
	$ \varlimsup_{a\to 0} m_2(a) = + \infty,$ and that if $\alpha = 1/2$ then $r \notin \Big[ \frac{c_{1/2}^{(+)}}{c_{1/2}^{(+)}-\sqrt{f(0)}}, \frac{c_{1/2}^{(-)}}{c_{1/2}^{(-)}-\sqrt{f(0)}} \Big]$.
 	Then there exists a sequence $\{ a_k \}_{k \in \N}$ with distinct elements so that 
	\begin{enumerate}
	\item $\lim_{k\to \infty} a_k = 0,$
	\item Points $(a_k, f(a_k))$ are all crossing points,
	\item$\lim_{k\to \infty} m_2(a_k) = +\infty$.
	\end{enumerate}
\end{lemma}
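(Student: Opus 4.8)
The plan is to build the sequence by a direct construction rather than by inspecting limits: choose a sequence $\tilde a_j\downarrow 0$ along which $m_2$ already blows up (this exists by hypothesis), and then, for each $j$, exhibit an explicit target point whose maximiser of \eqref{eq:varprop} is forced to cross the curve $y=f(x)$ at a point arbitrarily close to $\tilde a_j$; by continuity of $m_2$ that crossing point is then itself a crossing point in the sense of Definition~\ref{def:cross} with $m_2$-value exceeding $j$. Two preliminary facts are needed. First, $a\mapsto m_2(a)$ given by \eqref{eq:m_2f} is continuous and finite on $(0,a_0)$: there $f(a)>0$, $a>0$ and $f'(a)\in(-\infty,0)$ (using $f\in C^2$ with $f''>0$ and $f$ strictly decreasing), so $-\tfrac1{f'(a)}>0$ and the expression $-\tfrac1{f'(a)}-1+D_a+\sqrt{(-\tfrac1{f'(a)}-1+D_a)^2-\tfrac4{f'(a)}}$ inside the outer square of \eqref{eq:m_2f} is continuous and strictly positive, because the square root strictly dominates $\bigl|-\tfrac1{f'(a)}-1+D_a\bigr|$. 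Second, $\varlimsup_{a\to0}m_2(a)=+\infty$ furnishes a strictly decreasing sequence $\tilde a_j\downarrow0$ with $m_2(\tilde a_j)\to+\infty$.

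For the core step, fix $j$ and abbreviate $a=\tilde a_j$, $m=m_2(a)\in(0,\infty)$. For small $t>0$ consider the target $(x_t,y_t)=(a+t,\,f(a)+tm)$. Since $f$ is strictly decreasing, $f(a+t)<f(a)<y_t$, so $(x_t,y_t)$ lies strictly in the $r$-region, and by Jensen in each constant region its maximiser is piecewise linear: a segment from $(0,0)$ to a curve point $(a^{*}_t,f(a^{*}_t))$ followed by a segment to $(x_t,y_t)$. Any admissible crossing point $(a'',f(a''))$ for this target has $0<a''\le x_t$ and $f(a'')\le y_t$, i.e.\ $a''\in[f^{-1}(y_t),x_t]$ once $t<(f(0)-f(a))/m$ so that $y_t<f(0)$. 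Note that with this choice the axis maximiser through $(0,f(0))$ is \emph{unavailable} (it would require $y_t\ge f(0)$) and the one through $(a_0,0)$ is unavailable ($x_t<a_0$), so the maximiser is genuinely of crossing type. As $t\downarrow0$ the interval $[f^{-1}(y_t),x_t]$ shrinks to $\{a\}$ by continuity of $f^{-1}$ and $f$, whence $a^{*}_t\to a=\tilde a_j$; moreover by Lemma~\ref{lem:dense} the crossing point satisfies $0<a^{*}_t<x_t$ and \eqref{eq:mess}, so the slope of its second segment is exactly $m_2(a^{*}_t)$.

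To assemble the sequence, use $m_2(\tilde a_j)\to\infty$ and the continuity of $m_2$ to pick $\eta_j>0$ with $m_2(a)>j$ for all $a\in(\tilde a_j-\eta_j,\tilde a_j+\eta_j)$, shrinking $\eta_j$ further so that these intervals are pairwise disjoint. By the core step, choose $t_j>0$ small enough that the crossing point $a_j:=a^{*}_{t_j}$ of the target $(\tilde a_j+t_j,\,f(\tilde a_j)+t_j m_2(\tilde a_j))$ lies in $(\tilde a_j-\eta_j,\tilde a_j+\eta_j)$. Then each $(a_j,f(a_j))$ is a crossing point, the $a_j$ are distinct (their enclosing intervals are disjoint), $a_j\to0$ (since $|a_j-\tilde a_j|<\eta_j$ and $\tilde a_j\to0$), and $m_2(a_j)>j\to\infty$; after discarding finitely many terms we obtain exactly the sequence claimed in the lemma.

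The delicate point I expect is the rigorous verification that the maximiser of $(x_t,y_t)$ is of crossing type with crossing point in the advertised interval for \emph{all} small $t$ — in particular, ruling out uniformly that the optimiser begins along a coordinate axis. The localisation used above (target placed just above the curve, at height below $f(0)$) makes this competition vacuous, which is why the construction runs smoothly; it is precisely if one instead compared against a fixed axis path far from the curve — the route taken in the analysis behind Proposition~\ref{prop:EX2} and Theorem~\ref{thm:gap} — that the hypothesis on $r$ in the borderline case $\alpha=1/2$ becomes essential. A secondary routine point is the convergence $a^{*}_t\to\tilde a_j$, which follows from the shrinking-interval argument together with the non-degeneracy of optimal slopes provided by Lemmas~\ref{lem:UMP} and~\ref{lem:46}.
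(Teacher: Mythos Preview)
Your argument is correct and takes a genuinely different route from the paper's. The paper proceeds by invoking the case analysis inside Proposition~\ref{prop:EX2}: it observes that under the stated hypotheses one lands in one of the cases 1b, 1c(i), 1c(ii), or 2 of that proof, and in each of those the computation shows not merely $\varlimsup_{a\to0}m_2(a)=+\infty$ but the full limit $\lim_{a\to0}m_2(a)=+\infty$. Given that, \emph{any} sequence of crossing points tending to $0$ works, and such a sequence exists by the density statement of Lemma~\ref{lem:AS}. The extra $\alpha=1/2$ hypothesis is there precisely to exclude cases 1c(iv)--(v), where the limit is only a $\varlimsup$ along a particular subsequence and the paper's shortcut fails.

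Your construction is more direct and more robust: by placing the target $(x_t,y_t)$ just above the curve near $(\tilde a_j,f(\tilde a_j))$, with $y_t<f(0)$ and $x_t<a_0$, you force the admissible crossing set to be the shrinking interval $[f^{-1}(y_t),x_t]$, so the optimiser must cross arbitrarily close to $\tilde a_j$; continuity of $m_2$ on $(0,a_0)$ then transfers the blow-up. This never invokes the case split, and in fact yields the conclusion from the bare hypothesis $\varlimsup_{a\to0}m_2(a)=+\infty$ alone --- so you have proved a slightly stronger lemma than stated. What the paper's approach buys is brevity (it piggybacks on work already done for Proposition~\ref{prop:EX2}); what yours buys is independence from that analysis and the removal of the borderline restriction on~$r$.
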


\begin{proof}[Proof of Lemma \ref{lem:anothergap}] The lemma is immediately true if $r = 1$ and the environment is homogeneous.

Now assume $r\neq 1$. From Proposition \ref{prop:EX2}, we know that $\varlimsup_{a\to 0} m_2(a) = + \infty$ when 
	\begin{enumerate}
		\item $\alpha < 1/2$,
		\item $ \alpha > 1/2$ and $r < 1$,
		\item $ \alpha = 1/2$ and $r \in \Big(1, \frac{c_{1/2}^{(-)}}{c_{1/2}^{(-)}-\sqrt{f(0)}}\Big]$ where the interval may be potentially empty, in which case we are not concerned with this case. 
	\end{enumerate}
	These correspond to cases 1b, 1 c(i), 1c(ii), 1c(iv), 1c(v) and 2, in the proof of Proposition \ref{prop:EX2}. 
	
	The assumption of the Lemma guarantees we are not in cases 1c(iv), 1c(v); For these cases $c_{1/2}^{(-)} \le \frac{r\sqrt{f(0)}}{r-1} \le c_{1/2}^{(+)}$ which is equivalent to  
	\[
	r \in \Big[ \frac{c_{1/2}^{(+)}}{c_{1/2}^{(+)}-\sqrt{f(0)}}, \frac{c_{1/2}^{(-)}}{c_{1/2}^{(-)}-\sqrt{f(0)}} \Big].
	\]
	
	In cases 1b, 1c(i), 1c(ii) and 2,  the fact that $\varlimsup_{a\to 0} m_2(a) = + \infty$ is independent of which sequence of $a_k$ we select, as long as it tends to 0. Therefore we can select $a_k$ to be sequence that corresponds to the first coordinate of crossing points and which tends to 0, since by Lemma \ref{lem:AS} we know they are dense on $f$.   
\end{proof}
\end{proof}

\begin{proof}[Proof of Theorem \ref{thm:regionLM}] We only prove the theorem for $a \to 0$, as the case $a \to a_0$ is analogous.

The direction $(2) \Longrightarrow (1)$ is immediate; the condition implies that all points $(x,y) \in \text{int}(\R^2_+)$ are optimised by a type C maximiser, and by letting $x \to 0$ while keeping $y > f(0)$ fixed, the crossing points $(a_{x,y}, f(a_{x,y}))$ tend to $(0, f(0))$. This forces $m_2(a_{x,y})$ to $+\infty$. 

Now for $(1) \Longrightarrow (2)$. Assume that $\varlimsup_{a \to 0} m_2(a) = + \infty$ and assume by way of contradiction that ${\rm int}(R_{0, f(0)}) \neq \varnothing$. 

Then we can find a sequence of points 
$(x_k, y_k) \in \R^2_+ \setminus R_{0, f(0)}$ with $(x_k, y_k) \to (0, f(0))$ so that
\begin{enumerate} 
\item For each $k$, the crossing points $\{(a_k, f(a_k))\}_k$ of a maximiser that does not follow the axis are different; this is possible because the crossing points are dense on the curve.
\item The limit $\lim_{k \to \infty}m_2(a_k) = + \infty$.
\end{enumerate}
This can be done by Lemma \ref{lem:anothergap}.

Now, by Proposition \ref{lem:NAP}-(2), we have that for any point $(x,y) \in {\rm int}(R_{0, f(0)})$ on the line segment 
$\ell_k: (a_k, f(a_k)) - (x_k, y_k)-(x,y)$ the limiting passage time satisfies 
 \[
 I(\pi_{0, (0,f(0))}) + I(\pi_{(0,f(0)), (x,y)}) > I(\pi_{0, (a_k,f(a_k))}) + I(\pi_{(a_k,f(a_k)), (x,y)}). 
 \]
For notational convenience set $\e = a_k$ and notice that the relation above stays true when we let $(x,y)$ tend to infinity, \emph{along the line which contains the segment} $\ell_k$. We substitute the explicit values for $I(\pi)$ in the display above to obtain
\begin{align}
&f(0)+\frac{1}{r}(\sqrt{x}+\sqrt{y-f(0)})^2 > (\sqrt{\varepsilon}+\sqrt{f(\varepsilon)})^2+\frac{1}{r}(\sqrt{x-\varepsilon}+\sqrt{y-f(\varepsilon)})^2\label{claim}.
\end{align}
Call $m_1(\e) = \frac{f(\e)}{\e}$, $m_2(\varepsilon)=\frac{y-f(\varepsilon)}{x-\varepsilon}$ and $m=\frac{y-f(0)}{x}$ and note that $m_2(\varepsilon)>m$. Both slopes are always finite for every $(x,y)\in(0,a_0)\times\mathbb{R}_+$.
Inequality \eqref{claim} is then re-written as 
\begin{equation}\label{comp}
\frac{1}{r}\Big[x\Big(1+\sqrt{m}\Big)^2- x\Big(1+\sqrt{m_2(\varepsilon)}\Big)^2\Big]> \varepsilon+f(\varepsilon)-f(0)+2\sqrt{\varepsilon f(\varepsilon)} - \frac{\e }{r}\big(1+\sqrt{m_2(\varepsilon)}\big)^2.
\end{equation}
Since the point $(x,y)$ belongs to the line $y=m_2(\varepsilon)(x-\varepsilon)+f(\varepsilon)$, taking $x\to\infty$ gives $m\to m_2(\varepsilon)$.
We first manipulate the left-hand side of \eqref{comp}.
\begin{align*}
x\Big[\Big(1+&\sqrt{m}\Big)^2-\Big(1+\sqrt{m_2(\varepsilon)}\Big)^2\Big]=x\big[2(\sqrt{m}-\sqrt{m_2(\varepsilon)})+m-m_2(\varepsilon)\big]\\
&=\frac{x(f(\varepsilon)-f(0)-\varepsilon m_2(\varepsilon))+\varepsilon (\varepsilon m_2(\varepsilon)-f(\varepsilon)+f(0))}{x-\varepsilon}\Big[1+\frac{2}{\sqrt{m}+\sqrt{m_2(\varepsilon)}}\Big].
\end{align*}
Now take the limit $x \to \infty$ in \eqref{comp}. After that, and some algebraic operations, we get that the limiting version of \eqref{comp} is 

\begin{align}
\frac{1}{r}\Big( \frac{1}{\sqrt{m_2(\e)}} + 1 - r \Big) \frac{f(\e) - f(0)}{\e} &\ge 1 - \frac{1}{r} + 2 \sqrt{m_1(\e)} - \frac{\sqrt{m_2(\e)}}{r} \notag \\
&= \sqrt{m_1(\e)} + \Big(\frac{r-1}{r} + \sqrt{m_1(\e)} - \frac{\sqrt{m_2(\e)}}{r} \Big). \label{eq:prelm}
\end{align}
This is the point where we are using the fact that $(\e, f(\e))$ is a crossing point: Utilize the relation of equation \eqref{explain} to change the last parenthesis in \eqref{eq:prelm} and obtain the equivalent inequality
\[
\frac{1}{r}\Big( \frac{1}{\sqrt{m_2(\e)}} + 1 - r \Big) \frac{f(\e) - f(0)}{\e} \ge \sqrt{m_1(\e)} - \frac{f'(\e)}{r}\Big( r-1 + \frac{r}{\sqrt{m_1(\e)}} - \frac{1}{\sqrt{m_2(\e)}} \Big),
\]
or equivalently 
\be \label{eq:pre-analysis}
\frac{1}{r}\Big( \frac{1}{\sqrt{m_2(\e)}} + 1 - r \Big)\Big( \frac{f(\e) - f(0)}{\e} - f'(\e)\Big) \ge \sqrt{m_1(\e)} - \frac{f'(\e)}{\sqrt{m_1(\e)}}.
\ee 

Now, if equation \eqref{eq:pre-analysis} is violated, we automatically reach a contradiction to the assumption that ${\rm int}(R_{0, f(0)})  \neq \varnothing$. We will show precisely this
by splitting the analysis into cases: 

\begin{enumerate}[(1)]
\item $\lim_{\e\to 0}f'(\e) = c_0$: Then as $\e \to 0$, the left-hand side of \eqref{eq:pre-analysis} converges to $0$ while the right-hand side tends to $\infty$. This gives the desired contradiction.
\item $r < 1$: In this case, select an $\e$ small enough so that $ \frac{1}{\sqrt{m_2(\e)}} +1 - r > 0$. The convexity and monotonicity of $f$ imply that $ \frac{f(\e) - f(0)}{\e} - f'(\e) < 0$ so the left-hand side of \eqref{eq:pre-analysis} is negative while the right-hand is strictly positive. This gives again a contradiction. 
\item $r > 1, \alpha < 1/2$: Since $\alpha < 1/2$, we have that for $\delta$ small, $\alpha + \delta < 1/2$. Then, using definition \eqref{eq:order}, for any $\eta$ small, we can find $\e_0$ so that for all $\e < \e_0$
\[ -f'(\e) < \frac{\eta}{ \e^{\alpha + \delta}}.\]
Integrating the inequality from $0$ to $\e$ we get
\[
f(0) - f(\e) < \frac{\eta}{1 - \alpha - \delta} \e^{1 - \alpha - \delta} < c \sqrt{\e}.
\]
The last inequality is true for any constant $c$, as long as $\e$ is small enough. We pick $c < \sqrt{\frac{f(0)}{2}}$ and reduce $\e$ further so that $f(\e) > \frac{f(0)}{2}$. We then have for all $\e$ small that
\[
\frac{f(0) - f(\e)}{\e} < \sqrt{\frac{f(\e)}{\e}}=\sqrt{m_1(\e)}.
\]
Reduce $\e$ even more, so that $1/\sqrt{m_2(\e)} < \frac{r-1}{2}$. Then we bound
\begin{align*}
\frac{1}{r}\Big( \frac{1}{\sqrt{m_2(\e)}} + 1 - r \Big)\Big( \frac{f(\e) - f(0)}{\e} - f'(\e)\Big)
&= \frac{1}{r}\Big( -\frac{1}{\sqrt{m_2(\e)}} - 1 + r \Big)\Big( \frac{f(0) - f(\e)}{\e} + f'(\e)\Big)\\
&< \frac{1}{r}\Big( r-1 -\frac{1}{\sqrt{m_2(\e)}} \Big)\Big( \frac{f(0) - f(\e)}{\e} - \frac{f'(\e)}{\sqrt{m_1(\e)}}\Big)\\
&< \frac{r-1}{r}\Big( \sqrt{m_1(\e)} - \frac{f'(\e)}{\sqrt{m_1(\e)}}\Big)\\
&<  \sqrt{m_1(\e)} - \frac{f'(\e)}{\sqrt{m_1(\e)}},
\end{align*}
which is a direct violation of \eqref{eq:pre-analysis}.
\end{enumerate}

The remaining proof is for when $\alpha = 1/2$.  In this case we have that $\varlimsup m_2(a_k) \to \infty$ for any sequence $a_k \to 0$ and $r \notin \Big[ \frac{c_{1/2}^{(+)}}{c_{1/2}^{(+)}-\sqrt{f(0)}}, \frac{c_{1/2}^{(-)}}{c_{1/2}^{(-)}-\sqrt{f(0)}} \Big]$. 

\begin{enumerate}[(4)]
\item We further impose on the subsequence of $a_k$ that

	 $a_k^{1/2}|f'(a_k)| \to c_{1/2} \le c_{1/2}^{(+)}  < \frac{r}{r-1}\sqrt{f(0)}$ by the assumption. Here $c_{1/2}$ can be any limit point. 

	For any $\delta > 0$ we can find a 
	$K = K(\delta)$ so that for all $k > K$ we have
	\[
	\frac{r-1}{r} (c_{1/2} + 3\delta) < \sqrt{f(0)}  - \delta < \sqrt{f(a_k)}, \quad  |a_k^{1/2}f'(a_k) +c_{1/2}|  < \delta.
	\]
	 The first inequality above is true for $\delta$ sufficiently small.
	 Then we estimate, as in case (3), that 
	\[
	-f'(a_k) < (c_{1/2}+\delta) a_k^{-1/2},  \quad \text{for all $k > K$ by construction}
	\]
	which implies that 
	\[
	\frac{f(0) - f(a_k)}{a_k} < 2(c_{1/2}+\delta)a_k^{-1/2}.
	\]
	Then use the inequalities above to bound 
	\begin{align*}
\frac{1}{r}\Big( -\frac{1}{\sqrt{m_2(a_k)}} - 1 + r \Big)&\Big( \frac{f(0) - f(a_k)}{a_k} + f'(a_k)\Big)\\
&< \frac{1}{r}\Big( r-1 -\frac{1}{\sqrt{m_2(a_k)}} \Big)\Big( 2(c_{1/2}+\delta)a_k^{-1/2} + f'(a_k)\Big)\\
&< \frac{1}{r}\Big( r-1 -\frac{1}{\sqrt{m_2(a_k)}} \Big)\frac{\Big(c_{1/2} + 3 \delta\Big)}{a_k^{1/2}}\\
&< \frac{1}{r}\Big( r-1 -\frac{1}{\sqrt{m_2(a_k)}} \Big)\frac{r}{r-1}\frac{ \sqrt{f(a_k)}}{a_k^{1/2}}\\
&<  \sqrt{m_1(a_k)} - \frac{f'(a_k)}{\sqrt{m_1(a_k)}},
\end{align*}
which also contradicts \eqref{eq:pre-analysis}. The last inequality follows immediately from the fact that $f ' < 0$. \qedhere
\end{enumerate}
	\end{proof}

\begin{proof}[Proof of Theorem \ref{thm:gap}] The proof is identical to that of case (4) in the 
proof of Theorem \ref{thm:regionLM}. The reason we cannot apply the argument directly is the fact that we do not know a priori that the $\varlimsup_{a_k\to0} m_2(a_k) = 0$ on a sequence of \textit{crossing points}, since Lemma \ref{lem:anothergap} does not apply here. This condition is now taken care by the assumption of Theorem \ref{thm:gap}.

To finish the proof, impose on this sequence $\{ a_k \}_{k \in \N}$ of crossing points the extra condition that   $a_k^{1/2}|f'(a_k)| \to c_{1/2} < \frac{r}{r-1}\sqrt{f(0)}$ by the assumption. Again, $c_{1/2}$ can be any limit point.  Now the calculation for (4) in the proof of Theorem \ref{thm:regionLM} can be repeated and it finishes the proof. 
\end{proof}

\subsection{Phase transition at $c_{1/2}^{(-)} = \frac{r}{r-1}\sqrt{f(0)}$}

\begin{proposition}[Phase transition at $c_{1/2}^{(-)} = \frac{r}{r-1}\sqrt{f(0)}$] \label{prop:pt2}
Suppose that $c_{1/2}^{(-)} = \frac{r}{r-1}\sqrt{f(0)}$ and assume that for some $\gamma > 0$ and some $c\in \R$, 
\be \label{eq:-fprime}
- f'(a) = c_{1/2}^{(-)}  a^{-1/2} + c  a^{\gamma - \nicefrac12}. 
\ee
Then, when $\gamma < 1/4$ the equivalence of Theorem \ref{thm:gap} is false when $c < 0$ and true when $c > 0$. When $\gamma > 1/4$, type B maximisers exist.
\end{proposition}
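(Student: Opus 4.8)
The plan is first to reduce the proposition to a clean dichotomy about $R_{0,f(0)}$. In the critical form \eqref{eq:-fprime} one has $a^{1/2}|f'(a)| = c_{1/2}^{(-)} + c\,a^{\gamma} \to c_{1/2}^{(-)} = \tfrac{r\sqrt{f(0)}}{r-1}$, so along \emph{every} sequence $a_k\to 0$ the quantity $a_k^{1/2}|f'(a_k)|$ tends to $\tfrac{r\sqrt{f(0)}}{r-1}$. Hence condition (1) of Theorem \ref{thm:gap}, which demands the \emph{strict} inequality $\varliminf a_k^{1/2}|f'(a_k)| < \tfrac{r\sqrt{f(0)}}{r-1}$, can never be satisfied here. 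Therefore the equivalence (1)$\Leftrightarrow$(2) of Theorem \ref{thm:gap} holds precisely when (2) also fails, i.e.\ when $R_{0,f(0)} \supsetneq \{0\}\times[f(0),\infty)$, which is exactly the assertion that non-trivial type B maximisers from $(0,f(0))$ exist. So the whole proposition reduces to deciding, as a function of $\gamma$ and $\mathrm{sign}(c)$, whether $\mathrm{int}(R_{0,f(0)}) = \varnothing$.

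For that I would re-run the mechanism from the proof of Theorem \ref{thm:regionLM}: crossing points are dense on $f$ by Lemma \ref{lem:AS}, and the $m_2$-asymptotics below show $m_2(a)\to\infty$ along every $a\to 0$ in the critical case, so the selection hypotheses there are automatic; consequently $\mathrm{int}(R_{0,f(0)})=\varnothing$ is equivalent to the comparison \eqref{eq:pre-analysis} being eventually violated along crossing points $\e\to 0$, while a strict gap in the opposite direction yields, via the construction of Proposition \ref{lem:NAP} run in reverse, an open set of terminal points optimised through $(0,f(0))$. The point of the critical condition $c_{1/2}^{(-)} = \tfrac{r}{r-1}\sqrt{f(0)}$ is that it makes the two sides of \eqref{eq:pre-analysis} agree to leading order (both equal $\sqrt{f(0)}\,\e^{-1/2}$), so everything hinges on the first correction. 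Feeding \eqref{eq:-fprime} in gives $f(0)-f(\e) = 2c_{1/2}^{(-)}\e^{1/2} + \tfrac{c}{\gamma+1/2}\e^{\gamma+1/2}+o(\e^{\gamma+1/2})$, hence expansions of $m_1(\e)=f(\e)/\e$, of $\sqrt{m_1(\e)}$ and of $\tfrac{f(\e)-f(0)}{\e}-f'(\e)$ in powers of $\e^{1/2}$ and $\e^{\gamma}$. The decisive input is $m_2(\e)$ via \eqref{eq:m_2f}: writing $X(\e) = -\tfrac{1}{f'(\e)}-1+D_\e$ and $Y(\e) = -\tfrac{1}{f'(\e)}>0$, one has $\tfrac{1}{\sqrt{m_2(\e)}} = \tfrac12\big(X(\e)+\sqrt{X(\e)^2+4Y(\e)}\big)$, and in the critical case the constant $(r-1)$ in $X(\e)$ cancels against the leading term of $D_\e$, leaving $X(\e) = \tfrac{(r-1)c}{c_{1/2}^{(-)}}\e^{\gamma} + O(\e^{1/2})$ and $Y(\e) = \tfrac{\e^{1/2}}{c_{1/2}^{(-)}}(1+o(1))$.

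The threshold $\gamma = \tfrac14$ is born exactly here, from the competition between $X^2 \asymp \e^{2\gamma}$ and $Y \asymp \e^{1/2}$ inside the square root. When $\gamma<\tfrac14$ one has $X^2\gg Y$, so $\tfrac{1}{\sqrt{m_2(\e)}}$ is controlled by $X$ and its size is sensitive to $\mathrm{sign}(c)$ (for $c>0$, $\tfrac{1}{\sqrt{m_2}}\asymp\e^{\gamma}$; for $c<0$ the negative leading term of $X$ cancels $\sqrt{X^2+4Y}\approx|X|$, leaving $\tfrac1{\sqrt{m_2}}\asymp Y/|X|\asymp\e^{1/2-\gamma}$, a faster blow-up of $m_2$); when $\gamma>\tfrac14$ one has $X^2\ll Y$, so $\tfrac{1}{\sqrt{m_2(\e)}} \sim (c_{1/2}^{(-)})^{-1/2}\e^{1/4}$ with no dependence on $c$. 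Substituting into \eqref{eq:pre-analysis}, the first correction beyond the saturated identity is of order $\e^{\gamma-1/2}$ with coefficient a nonzero multiple of $c$ when $\gamma<\tfrac14$, so the two signs of $c$ push \eqref{eq:pre-analysis} in opposite directions — one making it eventually violated (hence $R_{0,f(0)}$ degenerate and the equivalence of Theorem \ref{thm:gap} false) and the other producing a strict gap (hence genuine type B maximisers and the equivalence vacuously true) — whereas when $\gamma>\tfrac14$ the correction is of the fixed order $\e^{-1/4}$ with a $c$-independent nonzero coefficient, always pointing the way that leaves room for a type-B optimum, i.e.\ type B maximisers exist. The symmetric statement at the $x$-axis, involving $\eta_{1/2}^{(+)}$, follows by interchanging the two coordinates. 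The main obstacle is the multi-scale bookkeeping: correctly ordering the powers $\e^{\gamma}$, $\e^{1/2}$, $\e^{1/4}$, $\e^{\gamma-1/2}$, $\e^{1/2-\gamma}$, $\e^{2\gamma}$ and the constant terms in each regime $\gamma\lessgtr\tfrac14$, making sure no subleading term that in fact dominates is discarded, and converting a strict inequality in \eqref{eq:pre-analysis} back into an honest open region of type-B targets; the borderline $\gamma = \tfrac14$, where $X^2\asymp Y$, is genuinely delicate and is deliberately left out of the statement.
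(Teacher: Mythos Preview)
Your overall strategy matches the paper's: reduce the proposition to the dichotomy ``is $\mathrm{int}(R_{0,f(0)})$ empty or not?'' and decide this by expanding both sides of \eqref{eq:pre-analysis} at the critical scaling. Your identification of the competing scales $X^2\asymp\e^{2\gamma}$ versus $Y\asymp\e^{1/2}$ and the resulting $\gamma=\tfrac14$ threshold is exactly right, and your sign-dependent expansion of $1/\sqrt{m_2(\e)}$ (namely $\e^{\gamma}$ for $c>0$ and $\e^{1/2-\gamma}$ for $c<0$ when $\gamma<\tfrac14$) is in fact more careful than the paper's appendix, which drops the $\e^{\gamma}$ term by mishandling $|c|$ inside the square root.

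There is, however, a real gap in the direction ``asymptotics favour type~B $\Rightarrow$ type~B maximisers exist''. You write that a strict gap in the opposite direction yields an open type-B region ``via the construction of Proposition~\ref{lem:NAP} run in reverse'', but this does not work: the asymptotic comparison in \eqref{eq:pre-analysis} only pits the type-B candidate against type-C paths through crossing points $a_k\to 0$. For a fixed terminal $(x,y)$ the actual type-C maximiser may cross at some $a$ bounded away from $0$, so beating all $a_k$ near $0$ does not force type~B to win. The paper handles this with a separate geometric input, Lemma~\ref{lem:finale}: assuming by contradiction that $R_{0,f(0)}$ (and $R_{a_0,0}$) are degenerate, it manufactures terminal points $(x_k,y_k)$ with $x_k\to\infty$ whose \emph{genuine} crossing points $\beta_k$ are forced to $0$. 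Only then can one legitimately compare the type-B value to the type-C value through $\beta_k$, deriving the reversed inequality \eqref{eq:pre-analysis2} (up to $O(1)$), which the computed asymptotics then contradict. This lemma is not a triviality---its proof is a planarity argument excluding several geometric configurations---and your proposal does not anticipate it. Without it, the implication from ``\eqref{eq:pre-analysis} holds with a diverging gap'' to ``$\mathrm{int}(R_{0,f(0)})\neq\varnothing$'' is unjustified.
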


We first need a geometric lemma:
\begin{lemma} \label{lem:finale} Assume that $R_{0, f(0)} = \{0\}\times[f(0), \infty)$ and $R_{a_0, 0} = [a_0, \infty)\times\{ 0 \}$  (i.e. they are both degenerate). Then, there exists a sequence of points $(x_k, y_k)$ with $x_k \to \infty$ as $k\to \infty$, so that their corresponding crossing points $(\beta_k, f(\beta_k)) \to (0, f(0))$.  
\end{lemma}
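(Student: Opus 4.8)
The plan is to obtain the sequence \emph{directly}: push the target point out to infinity along a ray of very large slope, and show that the crossing point serving it is thereby squeezed down onto $(0,f(0))$.

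First I would record what the hypothesis buys us. Since $R_{0,f(0)}=\{0\}\times[f(0),\infty)$ and $R_{a_0,0}=[a_0,\infty)\times\{0\}$, no point with both coordinates strictly positive lies in a type-B region; hence \emph{every} interior point $(x,y)$ of the $r$-region with $x,y>0$ is served by a type-C maximiser $(0,0)\to(\beta,f(\beta))\to(x,y)$ with $0<\beta<a_0$, and by Lemma~\ref{lem:dense} together with \eqref{eq:m_2f} its second segment has slope $m_2(\beta)$, so that
\[
\frac{y-f(\beta)}{x-\beta}=m_2(\beta),\qquad (\beta,f(\beta))\in\mathcal S_{x,y}.
\]
The construction is then immediate: for $k>a_0$ put $(x_k,y_k)=(k,k^2)$, which is interior to the $r$-region with positive coordinates, pick $(\beta_k,f(\beta_k))\in\mathcal S_{x_k,y_k}$, and note
\[
m_2(\beta_k)=\frac{y_k-f(\beta_k)}{x_k-\beta_k}\ \ge\ \frac{k^2-f(0)}{k}\ \longrightarrow\ \infty .
\]

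The quantitative input I need is that $m_2$ cannot blow up except near $0$: for every $\delta\in(0,a_0)$,
\[
C_\delta:=\sup_{a\in[\delta,a_0)}m_2(a)<\infty .
\]
Granting this, if $\beta_k\not\to0$ then along a subsequence $\beta_k\ge\delta$, forcing $m_2(\beta_k)\le C_\delta<\infty$ and contradicting the display above; hence $\beta_k\to0$, so $f(\beta_k)\to f(0)$ by continuity of $f$ and the crossing points tend to $(0,f(0))$, while $x_k=k\to\infty$, which is the assertion. For $C_\delta<\infty$ I would use that $m_2$ is continuous on $(0,a_0)$ (clear from \eqref{eq:m_2f}, since $f\in C^2$, $f'<0$, and the radicand in the denominator is $(\,\cdot\,)^2-4/f'(a)>0$), so it only remains to show $\varlimsup_{a\nearrow a_0}m_2(a)<\infty$.

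That last inequality is the one genuine step, and I expect it to be the main obstacle; it is the $a\to a_0$ analogue of the computation in the proof of Proposition~\ref{prop:EX2}. Writing the bracket in \eqref{eq:m_2f} as $B(a)=P(a)+\sqrt{P(a)^2+Q(a)}$ with $Q(a)=-4/f'(a)>0$, the mechanism is that the summand $r\sqrt{a/f(a)}$ of $D_a$ diverges to $+\infty$ (since $f(a)\to0$ while $a\to a_0>0$), which pushes $B(a)\to\infty$ and hence $m_2(a)\to0$. The delicate subcase is $f'(a_0^-)=0$, where $1/f'(a)$ also diverges and may partially cancel $\sqrt{a/f(a)}$ inside $D_a$, or $P(a)$ and $\sqrt{P(a)^2+Q(a)}$ may partially cancel inside $B(a)$; here one must compare the divergence rates and use the convexity of $f$ (e.g.\ $f(a)\le(a_0-a)|f'(a)|$) to check that any such cancellation is only by a bounded multiplicative factor — morally $|1-r|$ — giving $\varlimsup_{a\to a_0}m_2(a)\le(r-1)^2<\infty$. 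Everything else in the argument is soft.
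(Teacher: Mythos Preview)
Your strategy is sound and genuinely different from the paper's, but you have left the only substantive step as a heuristic. The entire argument rests on $C_\delta=\sup_{a\in[\delta,a_0)}m_2(a)<\infty$, which in turn reduces to $\varlimsup_{a\nearrow a_0}m_2(a)<\infty$; you flag this as ``the one genuine step'' and then stop at ``morally $|1-r|$''. As written this is a sketch, not a proof. The claim is true and can be closed by a short case split on the sign of $P(a)=-1/f'(a)-1+D_a$. When $P(a)\ge0$ one has $B(a)\ge\sqrt{Q(a)}$, hence $m_2(a)\le|f'(a)|\le|f'(\delta)|$ by convexity of $f$. When $P(a)<0$ write $m_2(a)=4(\sqrt{P^2+Q}+|P|)^2/Q^2$ and compute
\[
\frac{P}{Q}=-\frac{r-1}{4}+\frac{(r-1)|f'(a)|}{4}+\frac{r\sqrt{a}\,|f'(a)|}{4\sqrt{f(a)}}-\frac{r}{4}\sqrt{\frac{f(a)}{a}}\,.
\]
The last two terms tend to $0$ and the third is nonnegative, so $P/Q<0$ forces the third term to be bounded; hence $|P/Q|\le(r-1)/4+o(1)$ when $r>1$, while for $r<1$ the display shows $P/Q>0$ eventually, so one is back in the first case. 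Either way $m_2$ stays bounded near $a_0$ and your argument closes. You should carry this out rather than gesture at it.

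The paper never touches the formula for $m_2$ near $a_0$. It argues by contradiction: assuming crossing points for targets with large $x$ stay bounded away from $0$, it sets $x_+(\alpha)=\sup\{x:\exists\,y\text{ with }a_{x,y}\le\alpha\}$ and runs a case analysis on the position of $x_+(\alpha)$ relative to $a_0$, using only the non-crossing of maximisers (Lemma~\ref{lem:UMP}), Lemma~\ref{lem:dense}, and continuity of $\Gamma$ to reach a geometric contradiction. That route is softer and would survive without the explicit expression \eqref{eq:m_2f}; yours is more direct once the formula is in hand, but it obliges you to actually do the limiting analysis you only outlined.
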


\begin{proof}[Proof of Lemma \ref{lem:finale}]
Suppose by way of contradiction that there exists a constant $A>0$ so that for all $(x,y) \in \R^2_+$ with $x > A$, the crossing points 
$(a_{x,y}, f(a_{x,y}))$ satisfy $a_{x, y} > \alpha_A > 0$. 

Fix an $\alpha > 0$ small and define 
\be\label{eq:x+}
 x_+ =x_{+} (\alpha)= \sup\{ x : \exists \, y \text{ so that the crossing point $(a_{x,y}, f(a_{x,y}))$ satisfies } a_{x,y}  \le \alpha\}.
\ee
The assumption guarantees that  $x_{+} (\alpha)$ is bounded for $\alpha$ small enough, and the set for which we take the supremum is not empty, since crossing points are dense on the graph of $f$ by Lemma \ref{lem:AS}.

For any $\delta > 0$ define the terminal point $(x_{\delta}, y_{\delta}) = (x_+ - \delta, y_\delta)$ to be such that its crossing point satisfies 
$a_{x_\delta, y_{\delta}} \le \alpha$. Then it has to be that for all points $(x_+ - \delta, y)$ with $y > y_{\delta}$ their corresponding crossing points 
has to satisfy  $a_{x_\delta, y} \le \alpha$. If this is not true, then the maximal path for  $(x_+ - \delta, y)$ would cross the one for  $(x_{\delta}, y_{\delta})$ and this is impossible by Lemma \ref{lem:UMP}. 

Now there are three cases to consider:
\begin{enumerate}[(1)]
\item  $x_+ >  a_0$: In this case, consider now a point $(x_+ + \e, y_0)$, for some small $\e>0$. Because of its $x$-coordinate, this point must have a crossing point with first coordinate larger than $\alpha$. The maximal possible slope for its second segment is $m_{\max} =\frac{ y_0 }{x_+ + \e - a_0}$. Now notice that for $y_0$ large enough, the line   $y = m_{\max} (x - a_0)$ must intersect the optimal path from 0 to $(x_{\delta}, y_{\delta})$ by planarity. In particular, the maximal paths to $(x_{\delta}, y_{\delta})$ and $(x_+ + \e, y_0)$ must intersect in the $r$-region, and this violates Lemma \ref{lem:UMP}. 

\item $x_{+} = a_0$: The same arguments as in case (1) give that the only possible crossing point for $(x_++\e, y_0)$ when $y_0$ is large enough is $(a_0, 0)$ otherwise maximal paths would intersect. This contradicts the assumption that  $R_{a_0, 0} = [a_0, \infty)\times\{ 0 \}$. 
  
\item  $x_+ < a_0$: This is the most challenging case, and we need to split it into yet two more cases.
\begin{enumerate}[(a)]
	\item $x_+(\alpha)$ is a maximum. Assume that $(x_+(\alpha), y_+(\alpha))$ is point with the crossing point of its maximiser less than $\alpha$. Now, for any $\delta, \e>0$, we can find $y_1 >  y_+(\alpha)$ so that the point $(x_++\e, y_1)$ has crossing point 
	$a_{x_+ +\e} \ge x_+ -\delta$. This is because maximal macroscopic paths cannot cross, and any point $(x_++\e, y_1)$ has to have a maximiser with crossing point with $a_{x_+ +\e} > \alpha$. 
	Suppose by way of contradiction that the crossing point $a_{x_++\e, y_1} \le x_+$.
	Keeping $\e>0$ but raising the value of $y_1$, we can find a crossing point larger than 
	$a_{x_++\e, y_1}$. But that would mean that maximisers cross, which cannot happen. 	Therefore, the crossing point $a_{x_++\e, y_1} > x_+$. 
	This has to be true for all values of $y_1$, and it is true for all $\e >0$.

	Now we want to understand the behaviour of the maximal paths when $\e \to 0$ as $y_1$ remains fixed. For each point $(x_++\e, y_0)$ let $(a_\e, f(a_\e))$ the corresponding crossing point.  For all $\e$, $a_\e > x_+$ and since maximal paths cannot cross each other, $\lim_{\e\to 0} a_\e = x_+$: Then, as $\e \to 0$ and by continuity of $\Gamma$ (Theorem \ref{cor:conti}),  $\Gamma(x_+, y_0)$ must also be optimised by the path $0 \to (x_+, f(x_+)) \to  (x_+, y_0)$. By Lemma \ref{lem:dense} this is impossible.
	
	\item $x_+(\alpha)$ is a supremum but not a maximum. Then consider terminal points of the form $(x_+, y)$, and their crossing points 
	$(a_{x_+,y}, f(a_{x_+,y}))$. Notice that for all $y$ large enough we must have  
	\[ a_{x_+,y} \in (x_+ -\delta, x_+) .\]
	Set that $y$ as $y_1$. Now, for all $y > y_1$,  we have that $a_{x_+,y} \in (x_+ -\delta, a_{x_+,y_1})$.
	This is because the maximal paths cannot cross, by Lemma \ref{lem:UMP}. 
	Now consider a terminal point $(x_2, y_2)$ so that $ a_{x_+,y_1} < x_2 < x_+ $, $y_2 > y_1$ and $a_{x_2, y_2} \le \alpha$. 
	Finally, find a $y_3>y_2$ so that $(x_+, y_3)$ has a crossing point with $a_{x_+, y_3} \ge x_2$. But this implies that 
	\[
	 a_{x_+,y_1} < a_{x_+, y_3}, \text{ while } y_3 > y_1,
	\] 
	and in particular it means maximal paths cross. This cannot happen, so we reached a contradiction. \qedhere
	\end{enumerate}

\end{enumerate}  
\begin{figure}
\center{
\begin{tikzpicture}[
        xscale=2.5,
        IS/.style={blue, thick},
        LM/.style={red, thick},
        axis/.style={ ->, >=stealth', line join=miter},
        important line/.style={thick}, dashed line/.style={dashed, thin},
        every node/.style={color=black},
        dot/.style={circle,fill=black,minimum size=4pt,inner sep=0pt,
            outer sep=-1pt},
    ]
    \draw[axis,<->] (3.5,0) node(xline)[xshift=0.5em,yshift=-0.5em] {\small$x$} -|
                    (0,5.5) node(yline)[xshift=-0.75em,yshift=0.25em] {\small$y$};
  
    \draw[IS] (0,3) coordinate (IS_1) parabola[bend at end]
         (3,0) coordinate (IS_2);
 \draw (1.2,2) node{$r$};
 \draw (0.4,0.5) node{$1$};
 \draw (3,0) node[xshift=0.5em,yshift=-0.75em]{\small$a_0$};
 \draw (0,3) node[xshift=-1.25em,yshift=0.25em] {\small$f(0)$};
\draw[dashed] (2.5,0)node[xshift=0.5em,yshift=-0.85em] {\tiny$x_+$}--(2.5,5.5);
\draw[dashed] (0.9,0)node[xshift=0.2em,yshift=-0.85em] {\tiny$x_+-\delta$}--(0.9,5.5);
\draw[dashed] (1.3,0)node[xshift=0.75em,yshift=-0.85em] {\tiny$a_{x_+,y_1}$}--(1.3,5.5);
\draw[dashed] (0.42,0)node[xshift=0.1em,yshift=-0.85em] {\tiny$a_{x_2,y_2}$}--(0.42,5.5);
\draw[dashed] (2,0)node[xshift=0.5em,yshift=-0.85em] {\tiny$x_2$}--(2,5.5);
\draw[nicos-red, dashed] (1.3,0.95)--(2.5,4.464);
\draw[nicos-red, line width=2 pt] (0,0)--(1.3,0.95)--(2.5,2.6)node[xshift=1.5em]{\tiny$(x_+,y_1)$};
\draw[my-green, line width=2 pt] (0,0)--(0.42,2.2)--(2,3)node[xshift=1.5em]{\tiny$(x_2,y_2)$};
\draw[my-blue, line width=2 pt] (0,0)--(1.8,0.47)--(2.5,5)node[xshift=1.5em]{\tiny$(x_+,y_3)$};
\draw[line width=1 pt,yscale=2.5] (2,0.75)circle(1.5mm);
\end{tikzpicture}
}
\caption{Construction in the proof of Lemma \ref{lem:finale}, part 3(b).}
\label{fig:2MP}
\end{figure}
\end{proof}

\begin{proof}[Proof of Proposition \ref{prop:pt2}.]
When $f'$ satisfies \eqref{eq:-fprime}, we have that $c_{1/2}^{(-)} = c_{1/2}^{(+)}$. This implies that for any sequence $a_k \to 0$ we will simultaneously have 
\[
a_k \to 0, \quad a_k^{1/2}|f'(a_k)| \to c_{1/2}^{(-)} \text{ and } m_2(a_k) \to \infty.
\]
In particular this will be true on a sequence $a_k$ coming from crossing points.

Fix any such sequence.
Integrate both sides of \eqref{eq:-fprime} and divide by $a_k$ to obtain
\be\label{eq:last}
\frac{f(0) - f(a_k)}{a_k} = 2c_{1/2}^{(-)}  a_k^{ - 1/2} + \frac{c}{\gamma + \nicefrac12}a_k^{\gamma - \nicefrac12}.
\ee
Moreover, we have that 
\[
(\sqrt{f(0)} - \sqrt{f(a_k)})(\sqrt{f(0)} + \sqrt{f(a_k)}) = 2c_{1/2}^{(-)}  a_k^{1/2} + \frac{c}{\gamma + \nicefrac12}a_k^{\gamma + \nicefrac12}.
\]
Equation \eqref{eq:-fprime} also implies that $- a_k^{1/2}f'(a_k) = c_{\alpha}^{(-)} + c a_k^{\gamma}$. Now we are in position to estimate
	\begin{align*}
\frac{1}{r}\Big( -\frac{1}{\sqrt{m_2(a_k)}} - 1 + r \Big)&\Big( \frac{f(0) - f(a_k)}{a_k} + f'(a_k)\Big)\\
&= \frac{1}{r}\Big( r-1 -\frac{1}{\sqrt{m_2(a_k)}} \Big)\Big( 2c_{1/2}^{(-)}  a_k^{ - 1/2} + \frac{c}{\gamma + \nicefrac12}a_k^{\gamma - \nicefrac12} + f'(a_k)\Big)\\
&= \frac{1}{r}\Big( r-1 -\frac{1}{\sqrt{m_2(a_k)}} \Big)\frac{\Big( 2c_{1/2}^{(-)}+ \frac{c}{\gamma + \nicefrac12}a_k^{\gamma} + a_k^{1/2} f'(a_k)\Big)}{a_k^{1/2}}\\
&= \frac{1}{r}\Big( r-1 -\frac{1}{\sqrt{m_2(a_k)}} \Big)\frac{\Big( 2c_{1/2}^{(-)}+ \frac{c}{\gamma + \nicefrac12}a_k^{\gamma} - c_{1/2}^{(-)}  -  c  a_k^{\gamma }\Big)}{a_k^{1/2}}\\
&= \frac{r-1}{r}\frac{\Big( c_{1/2}^{(-)}+ c \frac{1/2-\gamma}{\nicefrac12 + \gamma } a_k^{\gamma}\Big)}{a_k^{1/2}} - \frac{1}{r}\frac{1}{\sqrt{m_2(a_k)}} \frac{\Big( c_{1/2}^{(-)}+ c \frac{1/2-\gamma}{\nicefrac12 + \gamma } a_k^{\gamma}\Big)}{a_k^{1/2}}.
\end{align*}
In the last line there are two competing terms; one is asymptotically positive and the other asymptotically negative so we must treat them separately: First the higher order positive term
\begin{align*}
 \frac{r-1}{r}\frac{\Big( c_{\alpha}^{(-)}+ c \frac{1/2-\gamma}{\nicefrac12 + \gamma } a_k^{\gamma}\Big)}{a_k^{1/2}}
&= \frac{\sqrt{f(0)}}{a_k^{1/2}} +\frac{r-1}{r}c\frac{\nicefrac12-\gamma}{\nicefrac12 + \gamma}\frac{a_k^{\gamma}}{a_k^{1/2}}\\
&= \frac{\sqrt{f(a_k)}}{a_k^{1/2}} + \frac{\sqrt{f(0)} -\sqrt{f(a_k)}}{a_k^{1/2}}  + c \frac{r-1}{r}\frac{\nicefrac12-\gamma}{\nicefrac12 + \gamma}\frac{a_k^{\gamma}}{a_k^{1/2}}\\
&= \frac{\sqrt{f(a_k)}}{a_k^{1/2}} + \frac{2c_{1/2}^{(-)} + \frac{c}{\gamma + \nicefrac12}a_k^{\gamma }}{\sqrt{f(0)} + \sqrt{f(a_k)}}  +c \frac{r-1}{r}\frac{\nicefrac12-\gamma}{\nicefrac12 + \gamma}\frac{a_k^{\gamma}}{a_k^{1/2}}.
\end{align*}
Then we work with the negative term. First we perform an asymptotic expansion on $1/\sqrt{m_2(a)}$ as $a$ tends to $0$:
\be\label{m2ext}
\frac{1}{\sqrt{m_2(a)}} = \begin{cases} 
\vspace{0.1in}
\frac{1}{|c |(r-1)}a^{1/2-\gamma} +  O(a^{1/2}), &\quad \gamma \in (0, 1/2),  \\
\frac{a^{1/4}}{\sqrt{c_{1/2}^{(-)}}}+O(a^{1/2}), &\quad  \gamma\in[1/2,\infty). 
\end{cases}
\ee
The details for \eqref{m2ext} can be found in the Appendix. Using this expansion we obtain 
\begin{align*}
\frac{1}{r} \frac{1}{\sqrt{m_2(a_k)}}\frac{\Big( c_{1/2}^{(-)}+ c \frac{1/2-\gamma}{\nicefrac12 + \gamma } a_k^{\gamma}\Big)}{a_k^{1/2}}
&= \frac{1}{r} \frac{\Big( c_{1/2}^{(-)}+ c \frac{1/2-\gamma}{\nicefrac12 + \gamma } a_k^{\gamma}\Big)}{a_k^{1/2}}\times
\begin{cases} 
\vspace{0.1in}
\frac{1}{|c |(r-1)}a_k^{1/2-\gamma} +  O(a_k^{1/2}), & \gamma \in (0, 1/2),  \\
\frac{a_k^{1/4}}{\sqrt{c_{1/2}^{(-)}}}+O(a_k^{1/2}), &  \gamma\in[1/2,\infty).  
\end{cases}\\
&=\begin{cases}
\vspace{0.1in}
\frac{c_{1/2}^{(-)}}{|c |r (r-1)}a_k^{-\gamma} +  O(1), & \gamma \in (0, 1/2),  \\
\frac{\sqrt{c_{1/2}^{(-)}}}{r a_k^{1/4}}+O(1), & \gamma\in[1/2,\infty).
\end{cases}
\end{align*}
Combining the two expansions we have 
\begin{align}
\frac{1}{r}\Big( -\frac{1}{\sqrt{m_2(a_k)}} - 1 + r \Big)&\Big( \frac{f(0) - f(a_k)}{a_k} + f'(a_k)\Big)\notag\\
&= \frac{\sqrt{f(a_k)}}{a_k^{1/2}}  +c \frac{r-1}{r}\frac{\nicefrac12-\gamma}{\nicefrac12 + \gamma}{a_k^{\gamma -1/2}}- \begin{cases}
\vspace{0.1in}
\frac{c_{1/2}^{(-)}}{|c |r (r-1)}a_k^{-\gamma} +  O(1), & \gamma \in (0, 1/2),  \\
\frac{\sqrt{c_{1/2}^{(-)}}}{r a_k^{1/4}}+O(1), & \gamma\in[1/2,\infty).
\end{cases} \label{eq:ana}
\end{align}
Now the phase transition reveals itself. First  when $\gamma > 1/4$, the leading order terms in \eqref{eq:ana} are those in the brace; they are negative and tend to $-\infty$, so as before, \eqref{eq:pre-analysis} is violated. 

Now assume $1/4 \ge \gamma$. This means $1/2 - \gamma \ge \gamma$. Then, If $c > 0$, the middle term in \eqref{eq:ana} tends to $+ \infty$, and immediately gives a contradiction to \eqref{eq:pre-analysis}.

If $c < 0$ with a sufficiently large modulus (if $\gamma < 1/4$ any $c <0$ will do), we have for all $a_k$ sufficiently small that \eqref{eq:ana} can be bounded by
\begin{align}
\frac{1}{r}\Big( -\frac{1}{\sqrt{m_2(a_k)}} - 1 + r \Big)&\Big( \frac{f(0) - f(a_k)}{a_k} + f'(a_k)\Big) > \frac{\sqrt{f(a_k)}}{a_k^{1/2}}  +\frac{c}{2} \frac{r-1}{r}\frac{\nicefrac12-\gamma}{\nicefrac12 + \gamma}{a_k^{\gamma -1/2}}\label{eq:anasta}\\
&\phantom{xxxxxxxxx}>  \frac{\sqrt{f(a_k)}}{a_k^{1/2}}  - \frac{f'(a_k)}{\sqrt{m_1(a_k)}} + \frac{c}{4} \frac{r-1}{r}\frac{\nicefrac12-\gamma}{\nicefrac12 + \gamma}{a_k^{\gamma -1/2}}. \label{eq:anastasi}
\end{align}
Compare \eqref{eq:anastasi} with equation \eqref{eq:pre-analysis}. The only difference is the last term on the right-hand side, which for $c < 0$ and $\gamma < 1/4$ it is a positive term that goes to $+\infty$ as $a_k \to 0$. 

Assume by way of contradiction that in this case $R_{0, f(0)}$ is degenerate. Then we can find a sequence of terminal points $(x_k, y_k)$ with $x_k \to \infty$ (as $k \to \infty$)
with corresponding crossing points $(\beta_k, f(\beta_k)) \to (0, f(0))$ by Lemma \ref{lem:finale}. Then it must be that $m_2(\beta_k) \to \infty$ and we may assume without loss of generality that $m_2(\beta_k)$ is strictly increasing.

Assume $x_k$ is large enough so that $ \frac{x_{k}}{x_k - \beta_k} -1< A \beta_k $ for some constant $A$. Moreover we have the relations 
\[
m_1(\beta_k) = \frac{f(\beta_k)}{\beta_k}, \quad m_2(\beta_k) = \frac{y_k - f(\beta_k)}{x_k - \beta_k}, \quad m(\beta_k) = \frac{y_k - f(0)}{x_k} \text{ and } y_k = m_2(\beta_k)(x_k - \beta_k) + f(\beta_k).
\]
Since we are assuming that the region $R_{0, f(0)}$ is degenerate, the weight collected on a piecewise linear path that goes though $(0, f(0))$ and then to $(x_k, y_k)$ must be less than the weight collected on the path from the crossing point. As such, the same calculation that led to \eqref{comp}, now gives the inequality
\begin{align}\label{eq:crazy}
&\frac{1}{r}\frac{x_k(f(\beta_k)-f(0)-\beta_km_2(\beta_k))+\beta_k (\beta_k m_2(\beta_k)-f(\beta_k)+f(0))}{x_k-\beta_k}\Big[1+\frac{2}{\sqrt{m(\beta_k)}+\sqrt{m_2(\beta_k)}}\Big] \\
&\hspace{5.5cm}< \beta_k+f(\beta_k)-f(0)+2\sqrt{\beta_k f(\beta_k)} - \frac{\beta_k }{r}\big(1+\sqrt{m_2(\beta_k)}\big)^2.\notag
\end{align}
In the left hand side use the bounds  $1 < \frac{x_{k}}{x_k - \beta_k} < 1 + A\beta_k $ and $m_2(\beta_k) > m(\beta_k)$ to bound from below
 \begin{align*}
&\frac{1}{r}(f(\beta_k)-f(0)-\beta_km_2(\beta_k))(1+A\beta_k)\Big[1+\frac{2}{\sqrt{m(\beta_k)}+\sqrt{m_2(\beta_k)}}\Big]\\ 
&\hspace{3cm}+\frac{1}{r}\frac{ \beta_k (\beta_k m_2(\beta_k)-f(\beta_k)+f(0))}{x_k-\beta_k}\Big[1+\frac{1}{\sqrt{m_2(\beta_k)}}\Big] \\
&\hspace{5.5cm}< \beta_k+f(\beta_k)-f(0)+2\sqrt{\beta_k f(\beta_k)} - \frac{\beta_k }{r}\big(1+\sqrt{m_2(\beta_k)}\big)^2.\notag
\end{align*}
Using equation \eqref{m2ext},  we have that $\beta_k m_2(\beta_k) \to 0 $, so we simplify the inequality above one more time as
\begin{align}\label{eq:crazy4}
&\frac{1}{r}(f(\beta_k)-f(0)-\beta_km_2(\beta_k))\Big[1+\frac{2}{\sqrt{m(\beta_k)}+\sqrt{m_2(\beta_k)}}\Big] + O(\beta_k)\\ 
&\hspace{5.5cm}< \beta_k+f(\beta_k)-f(0)+2\sqrt{\beta_k f(\beta_k)} - \frac{\beta_k }{r}\big(1+\sqrt{m_2(\beta_k)}\big)^2.\notag
\end{align}

We finally use the estimate 
\[ |\sqrt{m(\beta_k)} - \sqrt{m_2(\beta_k)} | \le C_x \sqrt{m_2(\beta_k)}(f(0) - f(\beta_k)) \le C'_x \beta_k^{1/2} .\]
The last inequality comes from \eqref{eq:last}. We use this for one last simplification in \eqref{eq:crazy4} to
\begin{align*}
&\frac{1}{r}(f(\beta_k)-f(0)-\beta_km_2(\beta_k))\Big[1+\frac{1}{\sqrt{m_2(\beta_k)}}\Big] + O(\beta_k)\\ 
&\hspace{5.5cm}< \beta_k+f(\beta_k)-f(0)+2\sqrt{\beta_k f(\beta_k)} - \frac{\beta_k }{r}\big(1+\sqrt{m_2(\beta_k)}\big)^2.\notag
\end{align*}
With the same algebraic manipulations that led to \eqref{eq:pre-analysis}, we obtain 
\be \label{eq:pre-analysis2}
\frac{1}{r}\Big( \frac{1}{\sqrt{m_2(\beta_k)}} + 1 - r \Big)\Big( \frac{f(\beta_k) - f(0)}{\beta_k} - f'(\beta_k)\Big) \le \sqrt{m_1(\beta_k)} - \frac{f'(\beta_k)}{\sqrt{m_1(\beta_k)}} + O(1).
\ee 
This gives the desired contradiction, since equality \eqref{eq:pre-analysis2} is precisely opposite of inequality \eqref{eq:anastasi}.
\end{proof}

\begin{example}[An exactly solvable corner-step model: $(g(a,b) =  \sqrt{a} + \sqrt{b}, k =1$] 
\label{ex:solvable}
\end{example}
We have that $\partial_bg/\partial_ag = 1/\sqrt{m_1}$ and therefore $D = 0$. Then 
\[
m_2 = \frac{4}{ \Big( \frac{\partial_bg}{\partial_ag} - 1 + \sqrt{\Big(\frac{\partial_bg}{\partial_ag} + 1\Big)^2}\Big)^2}=\left(\frac{\partial_ag}{\partial_bg}\right)^2=m_1.
\]
Therefore, the optimal paths are straight lines and the last passage time can be explicitly computed for any $(x,y)$. If $(x,y)$ are such so that $\sqrt{x} + \sqrt{y} > 1$ the common optimal slope will be $m  = y/x\in \R_+$. The crossing point is given by 
\be
(a^*,b^*)=\Big(\frac{x}{(\sqrt{x}+\sqrt{y})^2},\frac{y}{(\sqrt{x}+\sqrt{y})^2}\Big),
\ee 
and the last passage time shape function can be computed to be 
\begin{align*}
\Gamma_{c_g,1}(x, y) &=\begin{cases} 
\Big( 1 - \frac{1}{r}\Big) +\frac{1}{r}(\sqrt{x} + \sqrt{y})^2, &\text{ if } \sqrt{x} + \sqrt{y} > 1\\
(\sqrt{x} + \sqrt{y})^2, &\text{ if } \sqrt{x} + \sqrt{y} \le 1.
\end{cases}
\end{align*}
 One can verify directly that going through the axes is not optimal and all maximisers have to cross the curve.
 
In fact, this is the unique case of a speed function with this form, for which the optimal paths are straight lines. Assume that always $m_2 = m_1=m = b/a$.  From equation \eqref{eq:mess2} we have 
\be \label{eq:diffiq}
0 =\nabla g (a,b) \cdot \bigg(\frac{1}{\sqrt{m}} +1, - (\sqrt{m} +1)\bigg) = \nabla g (a,b) \cdot \bigg(\frac{\sqrt{a}}{\sqrt{b}} +1, - \frac{\sqrt{b}}{\sqrt{a}} -1\bigg).
\ee 

Solve the differential equation \eqref{eq:diffiq} for $a$ and $b$ to conclude that there exists $c_1 , c_2 \in \R$ 
such that 
\[ 
g(a, b) = c_2 (\sqrt{a} + \sqrt{b})^2 + c_1.
\]
Then the level curve is enforced by \eqref{eq:a3c} and is given by $\sqrt{a} + \sqrt{b} = \alpha$ for some $\alpha = \alpha(k, c_1, c_2)$ in $\R_+$.\qed

\section{Continuity properties of $\Gamma(x,y)$ }
\label{sec:continuity}

Now, we want to study what happen to the difference of the macroscopic last passage time of two points that are very close to each other.

\begin{lemma}\label{lem:2}
Fix $a,b,z,w>0$ and a speed function $c$. Then there exists a constant $C=C(a, b,z,w,c(\cdot,\cdot))<\infty$ such that for any $\delta > 0$ we can find sufficiently small $\delta_1, \delta_2> 0$ so that
the following two regularity conditions hold:
For $0 \le a \le z$,
\be\label{eq:9}
\Gamma((a,0),(z+\delta_1,\delta_2))-\Gamma((a,0),(z,0))\leq C\sqrt{\delta}.
\ee
For $0\leq b\leq w,$
\be\label{eq:11}
\Gamma((0,b),(\delta_1,w+\delta_2))-\Gamma((0,b), w)\leq C\sqrt{\delta}.
\ee
\end{lemma}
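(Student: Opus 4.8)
The plan is to prove \eqref{eq:9} and then deduce \eqref{eq:11} by interchanging the two coordinates, so I describe only the first. By superadditivity \eqref{eq:subadditive} the map $(z',w')\mapsto\Gamma((a,0),(z',w'))$ is non-decreasing, hence the left-hand side of \eqref{eq:9} is automatically $\ge0$ and only the upper bound requires work. I fix once and for all the compact set $K=[a,z+1]\times[0,1]$, write $r_{\text{\rm low}}=r_{\text{\rm low}}^{(K)}$ and $r_{\text{\rm high}}=r_{\text{\rm high}}^{(K)}$ for the bounds of Assumption \ref{ass:c2}(2), and only ever take $\delta_1,\delta_2\le1$, so that every competitor path below lies in $K$. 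The idea is to take a near-optimal path for $\Gamma((a,0),(z+\delta_1,\delta_2))$ and ``project it down'' to the $x$-axis, showing the loss is at most $O(\sqrt{\delta_2})+O(\delta_1)$.

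For the core estimate, fix $\eta>0$ and an $\eta$-optimal $\mathbf x\in\mathcal H((a,0),(z+\delta_1,\delta_2))$. Being coordinatewise non-decreasing, $\mathbf x$ is confined to the strip $[a,z+\delta_1]\times[0,\delta_2]$, with $\int_0^1 x_1'\,ds=z+\delta_1-a$ and $\int_0^1 x_2'\,ds=\delta_2$. Using $\gamma(p,q)=p+q+2\sqrt{pq}$, I split $I(\mathbf x)$ into a horizontal part $\int_0^1 x_1'/c(\mathbf x)\,ds$ and a vertical correction $\int_0^1 (x_2'+2\sqrt{x_1'x_2'})/c(\mathbf x)\,ds$. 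The correction is bounded using $c\ge r_{\text{\rm low}}$, the identity $\int_0^1 x_2'\,ds=\delta_2$, and Cauchy--Schwarz ($\int_0^1 2\sqrt{x_1'x_2'}\,ds\le2\sqrt{(z+\delta_1-a)\delta_2}$), giving a bound $C_1\sqrt{\delta_2}$ with $C_1=C_1(a,z,c)$; this is the term that produces the $\sqrt\delta$ in the conclusion. For the horizontal part I use the pointwise inequality $c(x_1,x_2)^{-1}\le c(x_1,0)^{-1}+\psi(\delta_2)$, where $\psi(\delta_2):=\sup\{c(t,s)^{-1}-c(t,0)^{-1}:\ t\in[a,z+1],\ 0\le s\le\delta_2\}\ge0$, followed by the monotone change of variables $t=x_1(s)$ (flat pieces of $x_1$ contribute $0$) to obtain $\int_0^1 x_1'/c(x_1,0)\,ds=\int_a^{z+\delta_1}c(t,0)^{-1}\,dt$. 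Since any increasing path between two points of the axis is forced to have $x_2\equiv0$, the on-axis value is exactly $\Gamma((a,0),(z+\delta_1,0))=\int_a^{z+\delta_1}c(t,0)^{-1}\,dt\le\Gamma((a,0),(z,0))+\delta_1/r_{\text{\rm low}}$. Letting $\eta\downarrow0$ this gives
\[
\Gamma((a,0),(z+\delta_1,\delta_2))-\Gamma((a,0),(z,0))\ \le\ \frac{\delta_1}{r_{\text{\rm low}}}+(z-a+1)\,\psi(\delta_2)+C_1\sqrt{\delta_2}.
\]

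The remaining point, which I expect to be the main obstacle, is that $\psi(\delta_2)\to0$ as $\delta_2\downarrow0$. Since $c\le r_{\text{\rm high}}$ this reduces to a uniform one-sided lower-semicontinuity statement along the segment: for each $\varepsilon>0$ there should be $\delta_2>0$ with $c(t,s)\ge c(t,0)-\varepsilon$ for all $t\in[a,z+1]$ and $0\le s\le\delta_2$. This does not follow from joint continuity of $c$, because a discontinuity curve may approach the axis; instead I would use that by Assumption \ref{ass:c} only finitely many discontinuity curves meet the compact strip $[a,z+1]\times[0,1]$, and they reach the segment $[a,z+1]\times\{0\}$ in finitely many points only. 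I excise $\varrho$-neighbourhoods of those points — on which I only need the crude bound $c(t,s)-c(t,0)\ge r_{\text{\rm low}}-r_{\text{\rm high}}$, so that after integrating $x_1'\,dt$ there their total contribution to the term $(z-a+1)\psi(\delta_2)$ is $O(\varrho)$ — while on each of the finitely many remaining subintervals the segment lies in a single region $Q\in\mathcal Q$, every discontinuity curve not meeting that subinterval stays a positive distance above it, and the boundary behaviour is controlled by the stability/lower-semicontinuity hypothesis \eqref{eq:calder2}; uniform continuity of $c$ on the closure of $Q\cap K$ then delivers $c(t,s)\ge c(t,0)-\varepsilon$ there once $\delta_2$ is small. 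Choosing first $\varrho$, then $\delta_2$, then $\delta_1$ small relative to the prescribed $\delta$ (all $\le\delta$), the displayed bound is at most $(r_{\text{\rm low}}^{-1}+1+C_1)\sqrt\delta$ for $\delta\le1$, which is \eqref{eq:9} with a constant depending only on $a,z,c$; the symmetric argument gives \eqref{eq:11}, and taking the larger constant yields $C=C(a,b,z,w,c)$.

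It is worth stressing that lower-semicontinuity of $c$ is essential and not cosmetic: without it one could have $c$ large on the axis but small just above it, in which case a path to $(z+\delta_1,\delta_2)$ could still exploit the cheap region while the on-axis path (which is pinned to the axis) cannot, so that the difference in \eqref{eq:9} would fail to vanish. Thus the whole weight of the argument sits in Step 3, the uniform one-sided continuity near the axis; the rest is Jensen/Cauchy--Schwarz bookkeeping.
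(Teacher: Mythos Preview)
Your argument is correct and takes a genuinely different route from the paper's. The paper proves \eqref{eq:11} by explicitly partitioning the thin strip $[0,\delta_1]\times[0,w+\delta_2]$ into rectangles $Q_i$ (each containing a piece of one discontinuity curve) and $R_i$ (in which $c$ is constant---the proof as written tacitly assumes $c$ is piecewise constant), and then bounds a near-optimal path segment-by-segment: each $Q_i$ contributes $O(\eta)$, and each $R_i$ contributes its on-axis value $r_{R_i}^{-1}\,\mathrm{height}(R_i)$ plus a cross term $O(\sqrt{\delta_1})$. You instead split $\gamma(x_1',x_2')=x_1'+x_2'+2\sqrt{x_1'x_2'}$ once and for all, dispatch the vertical-plus-cross term by a single global Cauchy--Schwarz, and then compare the horizontal part $\int x_1'/c(\mathbf x)\,ds$ to the on-axis integral $\int c(t,0)^{-1}\,dt$ via the projection function $\psi$. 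Both approaches must ultimately isolate the finitely many points where discontinuity curves meet the axis and use crude bounds there; the paper does this implicitly through the $Q_i$, you do it explicitly via the $\varrho$-excision. Your route is analytically cleaner, works directly for general (not only piecewise-constant) $c$, and correctly identifies the one-sided lower-semicontinuity near the axis as the only nontrivial step; the paper's route is more hands-on geometrically and makes the rectangle structure visible. In either approach the $\sqrt\delta$ originates from the cross term $2\sqrt{pq}$ in $\gamma$.
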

\begin{proof}

The arguments will be symmetric, so we will prove only \eqref{eq:11}. Pick a $\delta$ positive.

First select  $\delta_1\in[0,1)$, $\delta_2\in[0,1)$ small enough such that 
\begin{enumerate}
\item Any discontinuity curve $h_i$ in $[0, \delta_1]\times [0, w + \delta_2]$ is monotone and their domain is the interval $[0, \delta_1]$.
\item The intersection points of the discontinuity curves in $[0, \delta_1]\times [0, w + \delta_2]$ (if any) all lie on the $y$-axis.
\end{enumerate}
The first one is possible since the $h_i$ are finitely many in any compact set, and piecewise monotone functions. The second one because there only finitely many intersections points. Let $H$ be the number of discontinuity curves in this rectangle, and enumerate them from the lowest to the highest, including the north and south straight boundaries.
Decrease $\delta_1$ further so that 
\[
 \max_{1 \le i \le H} \{ \om_{h_i}(\delta_1)\} < \delta 
\]
and select an $\eta = \eta(\delta_1)>0$ which satisfies the condition 
\[
\eta \le \min_{1 \le i \le H} \{ \om_{h_i}(\delta_1)\}.
\]

Keep in mind that $\eta \to 0$ as $\delta_1 \to 0$. Decrease $\delta_1$ further so that $H \eta << w$. Since $c(x,y)$ is piecewise constant, we have that in-between these discontinuity curves the rates are fixed, and on the discontinuity curve the value is the smallest of the rates in the two adjacent regions by condition (1) in Assumption \ref{ass:c2}.

From the hypotheses so far, we have that the rectangles 
$Q_i = [0, \delta_1]\times[ h_i(0)\wedge h_i(\delta_1), h_i(0)\vee h_i(\delta_1)],$ have completely disjoint interiors for all $1 \le i \le H$ and $c(x,y)$ takes two values. In the rectangles $R_{i} = [0, \delta_1] \times [h_{i}(0)\vee h_{i}(\delta_1), h_{i+1}(0)\wedge h_{i+1}(\delta_1)]$, the speed function is constant. We allow the rectangles $R_i, Q_i$ to be degenerate horizontal lines.

For any $\mathbf{x}=(x^1(s),x^2(s))\in \mathcal{H}(\delta_1,w+\delta_2)$ set
\be\label{eq:13}
I(\mathbf{x})=\int_0^1\frac{\gamma(\mathbf{x}'(s))}{c(x^1(s),x^2(s))}ds.
\end{equation}
Let $\e>0$ and assume that $\mathbf \phi=(\phi^1,\phi^2)\in\mathcal{H}(\delta_1,w+\delta_2)$ is a path such that $\Gamma(\delta_1,w+\delta_2)-I(\mathbf{\phi})<\e$.  It is possible to decompose $\mathbf \phi$ into disjoint segments $\phi_j$ so that $\phi=\sum_{j=1}^{2H} \phi_j$ and that 
\begin{enumerate}
\item For $j$ even, $\phi_j \subseteq R_{j/2}$, and therefore it is a linear segment with derivative $\phi_j'$ in $\R_+^{2}$ 
\item For $j$ odd,  $\phi_j \subseteq Q_{(j+1)/2}$.
\end{enumerate}
 The sum $\sum_{j=1}^{2H}\phi_j$ means path concatenation.
 
For $j$ odd, the total contribution of $\phi_j$ to $I(\phi)$ can be bounded by $\frac{1}{r_{\ell}}\gamma(\delta_1, \eta(\delta_1))$ where $\displaystyle r_{\ell} = \min_{(x, y) \in [0, \delta_1]\times [0, w + \delta_2]} c(x,y)$. Over all, the total contribution of the odd-indexed segments is bounded above by $4 H r_{\ell}^{-1}(\eta(\delta_1) \vee \delta_1)$. 

For $j$ even, the path segment is linear and the maximum contribution of any such segment is given by 
\begin{align*}
I(\phi_{j}) &= \frac{1}{r_{R_j}} \gamma(\delta_1, \text{height}(R_j)) =  \frac{1}{r_{R_j}}( \delta_1 +  \text{height}(R_j) + 2\sqrt{\delta_1 \text{height}(R_j)})\\
&\le  \frac{1}{r_{R_j}} \text{height}(R_j) + 2 C_j \sqrt{\delta_1}.
\end{align*}
Overall, on the even-indexed segments, the total contribution to $I(\phi)$ is bounded above by $\sum_{k=1}^H ( \frac{1}{r_{R_{2k}}} \text{height}(R_{2k}) + 2 C_{2k} )\sqrt{\delta_1} \le \sum_{k=1}^H  \frac{1}{r_{R_{2k}}} \text{height}(R_{2k}) + C \sqrt{\delta_1}$.

Then, 
\begin{align*}
\Gamma(\delta_1,w+\delta_2)- \e \le I(\phi) &\le \sum_{k=1}^H  \frac{1}{r_{R_{2k}}} \text{height}(R_{2k}) + C \sqrt{\delta_1}+ 4 H r_{\ell}^{-1}(\eta(\delta_1) \vee \delta_1)\\
&\le \Gamma(0, w+\delta_2)+ C \sqrt{\delta_1}+ 4 H r_{\ell}^{-1}(\eta(\delta_1) \vee \delta_1)\\
&\le \Gamma(0, w)+ \frac{1}{r_{\ell}}\delta_2+ C \sqrt{\delta_1}+ 4 H r_{\ell}^{-1}(\eta(\delta_1) \vee \delta_1)\\
&\le \Gamma(0, w)+ C \delta_2 \vee \sqrt{\delta_1}\vee \eta(\delta_1). 
\end{align*}
Let $\e \to 0$.
\end{proof}

\begin{corollary}\label{cor:1}
Fix $(x,y)\in \R^2_+$ and a speed function $c$. Then there exists $C=C(x,y,c(\cdot,\cdot))<\infty$ such that for any $\delta$  positive, there exist $\delta_1$,  $\delta_2$ sufficiently small
\begin{equation}\label{eq:16}
\Gamma(x+\delta_1,y+\delta_2)-\Gamma(x,y)<C\delta.
\end{equation}

\end{corollary}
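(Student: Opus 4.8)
The plan is to reduce this one-sided continuity statement to the regularity near the axes already proved in Lemma~\ref{lem:2}, glued together by superadditivity \eqref{eq:subadditive}. Fix $(x,y)\in\R^2_+$ and $\delta>0$; by replacing $\delta$ with $\delta^2$ at the end it suffices to produce $\delta_1,\delta_2>0$ with $\Gamma(x+\delta_1,y+\delta_2)-\Gamma(x,y)\le C\sqrt\delta$. I would first observe that the proof of Lemma~\ref{lem:2} is entirely local to a thin coordinate strip and uses nothing about that strip abutting a coordinate axis; hence the identical argument gives, for thin strips abutting the vertical line $\{x_1=x\}$ and the horizontal line $\{x_2=y\}$, the estimates
\[
\Gamma\big((x,q),(x+\delta_1,y+\delta_2)\big)\le \Gamma\big((x,q),(x,y)\big)+C\sqrt\delta\qquad(0\le q\le y),
\]
\[
\Gamma\big((p,y),(x+\delta_1,y+\delta_2)\big)\le \Gamma\big((p,y),(x,y)\big)+C\sqrt\delta\qquad(0\le p\le x),
\]
with $C$ now also depending on $x$ (resp.\ $y$). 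Exactly as in the proof of Lemma~\ref{lem:2}, $\delta_1,\delta_2$ can be chosen uniformly in the starting height $q$ (resp.\ $p$) by working inside the fixed enlarged strip $[x,x+\delta_1]\times[0,y+\delta_2]$ (resp.\ $[0,x+\delta_1]\times[y,y+\delta_2]$).

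Next I would pick $\e>0$ and a near-optimal path $\phi\in\mathcal H(x+\delta_1,y+\delta_2)$ with $\Gamma(x+\delta_1,y+\delta_2)-I(\phi)<\e$. Being continuous and coordinatewise nondecreasing, $\phi$ must hit $\{x_1=x\}\cup\{x_2=y\}$; let $\sigma$ be the first such time. In the case $\phi(\sigma)=(x,q)$ with $q\le y$, monotonicity forces $\phi|_{[0,\sigma]}\subseteq[0,x]\times[0,q]$ and $\phi|_{[\sigma,1]}\subseteq[x,x+\delta_1]\times[q,y+\delta_2]$, so writing $\phi=\phi_1\ast\phi_2$ for the two pieces and using parametrization-independence of $I$ together with the strip estimate,
\[
I(\phi)=I(\phi_1)+I(\phi_2)\le \Gamma\big((0,0),(x,q)\big)+\Gamma\big((x,q),(x,y)\big)+C\sqrt\delta.
\]
Superadditivity \eqref{eq:subadditive} along $(0,0)\le(x,q)\le(x,y)$ bounds the first two terms by $\Gamma(x,y)$, whence $\Gamma(x+\delta_1,y+\delta_2)<\Gamma(x,y)+C\sqrt\delta+\e$, and $\e\downarrow0$ closes this case. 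The case $\phi(\sigma)=(p,y)$ with $p\le x$ is symmetric, using the horizontal strip estimate and superadditivity along $(0,0)\le(p,y)\le(x,y)$; the degenerate case $\phi(\sigma)=(x,y)$ is absorbed into either one (take $q=y$, so that $I(\phi_2)$ is at most a constant multiple of $\gamma(\delta_1,\delta_2)\le(\sqrt{\delta_1}+\sqrt{\delta_2})^2=O(\sqrt\delta)$).

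The one genuinely delicate point is the strip estimate itself: a crude bound $I(\phi_2)\le (\text{const})\cdot\gamma(\delta_1,\,y+\delta_2-q)$ is useless, since that box can be tall and, when $c$ is far from constant, its contribution is of order $y$ rather than $\sqrt\delta$. Controlling it is exactly what the decomposition of Lemma~\ref{lem:2} into the constant rectangles $R_i$ and the thin discontinuity rectangles $Q_i$ achieves, and I expect checking that that argument transfers verbatim to strips not touching the axes — together with the uniformity of the choice of $\delta_1,\delta_2$ over $q$ and $p$ — to be the main (though routine) obstacle. Everything else (passing from $\delta$ to $\delta^2$, taking $\delta_1,\delta_2$ to be the smaller of the choices for the vertical- and horizontal-exit cases, absorbing the $\e$-loss) is bookkeeping.
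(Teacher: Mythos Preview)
Your proposal is correct and follows essentially the same route as the paper: pick a near-optimal path to $(x+\delta_1,y+\delta_2)$, split it at the first time it hits the north or east boundary of $[0,x]\times[0,y]$, bound the first piece by $\Gamma$ to the hitting point, bound the second piece via the shifted version of Lemma~\ref{lem:2}, and combine using superadditivity. The paper's proof is terser (it writes the chain $I(\phi^\e)\le \Gamma(x,b)+\Gamma((x,b),(x+\delta_1,y+\delta_2))\le \Gamma(x,y)+\big[\Gamma((x,b),(x+\delta_1,y+\delta_2))-\Gamma((x,b),(x,y))\big]$ and then invokes \eqref{eq:11} ``albeit with a starting point of $(x,b)$''), but you are more explicit about two points the paper glosses over: that the proof of Lemma~\ref{lem:2} transfers verbatim to strips abutting $\{x_1=x\}$ or $\{x_2=y\}$, and that the resulting $\delta_1,\delta_2$ must be chosen uniformly in the entry height $q$ (or $p$), since $q$ is determined only after $\phi$ is chosen; you are also right that the $C\sqrt\delta$ from Lemma~\ref{lem:2} becomes the stated $C\delta$ only after the relabelling $\delta\mapsto\delta^2$.
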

\begin{proof}Let $B_{(x,y)}$ be a rectangle, where the north-east corner point is $(x,y)$ and south-west corner is $(0,0)$. \\
Let $\e>0$ and $\phi^\e$ a path such that $\Gamma(x+\delta_1,y+\delta_2)-I(\phi^\e)<\e$. Moreover, let $\bf u $ be the point where $\phi^\e$ first intersects the north or the east boundary of $B_{(x,y)}$. Without loss of generality assume is the east boundary and so ${\bf u}=(x,b)$ for some $b\in [0,y]$. Then,
\begin{align*}
\Gamma(x+\delta_1,y+\delta_2)-\e&\leq I(\phi^\e)\\
&\leq \Gamma(x,b)+\Gamma((x,b),(x+\delta_1,y+\delta_2))\\
&=\Gamma(x,b)+\Gamma((x,b),(x+\delta_1,y+\delta_2)) \pm \Gamma((x,b),(x,y))\\
&\leq \Gamma(x,y)+\Gamma((x,b),(x+\delta_1,y+\delta_2))-\Gamma((x,b),(x,y)).
\end{align*}
A rearrangement of terms gives  
\begin{align*}
\Gamma(x+\delta_1,y+\delta_2)-\Gamma(x,y)&\leq \Gamma((x,b),(x+\delta_1,y+\delta_2))-\Gamma((x,b),(x,y))+\e\\
&\leq C\delta+\e
\end{align*}
where we used \eqref{eq:11}, albeit with a starting point of $(x,b)$. Let $\e \to 0$ to prove the corollary.
\end{proof} 

We are now ready to prove  Theorem \ref{cor:conti}.

\begin{proof}[Proof of Theorem \ref{cor:conti}]

Fix an $\e>0$ and let $\zeta_1, \zeta_2$ small enough so that  by Corollary \ref{cor:1} we have 
\[
\Gamma((a,b), (x+\zeta_3, y+\zeta_4)) - \Gamma((a,b), (x, y)) < \e/4.
\]
Then, keep $\zeta_3, \zeta_4$ fixed and find a $\zeta_1, \zeta_2$ small enough so that again by Corollary \ref{cor:1}, 
\[
\Gamma((a-\zeta_1,b-\zeta_2), (x+\zeta_3, y+\zeta_4)) - \Gamma((a,b), (x+\zeta_3, y+\zeta_4)) < \e/4.  
\]
Together the inequalities above give
\be\label{eq:stronga1} 
\Gamma((a - \zeta_1, b - \zeta_2), (x + \zeta_3, y+ \zeta_4)) - \Gamma((a,b),(x,y)) < \e/2.
\ee
Similarly, one can approximate from the inside, and find $\zeta_5$, $\zeta_6$, $\zeta_7$, $\zeta_8$ so that  
\be\label{eq:stronga2} 
\Gamma((a,b),(x,y)) - \Gamma((a + \zeta_5, b + \zeta_6), (x - \zeta_7, y- \zeta_8))  < \e/2.
\ee
Let $\delta_0 = \min_{1\le i \le 8}\{ \zeta_i\}$.  Since $\Gamma(u,v)$ decreases in the first argument and increases in the second argument the inequalities \eqref{eq:stronga1} and \eqref{eq:stronga2}, together with our choice of $\delta_0$ give 
\[
\Gamma((a-\delta_0,b-\delta_0), (x+\delta_0, y+\delta_0)) - \Gamma((a+\delta_0,b+\delta_0), (x-\delta_0, y-\delta_0)) < \e.
\] 
and that for any $\tilde a \in[ a-\delta_0, a+ \delta_0]$, $\tilde b \in[ b-\delta_0, b+ \delta_0]$, $\tilde x \in[ x-\delta_0, x+ \delta_0]$, $\tilde y \in[ y-\delta_0, y+ \delta_0]$, we have 
\[
\Gamma((a+\delta_0,b+\delta_0), (x-\delta_0, y-\delta_0)) \le \Gamma((\tilde a, \tilde b),(\tilde x, \tilde y)) \le \Gamma((a-\delta_0,b-\delta_0), (x+\delta_0, y+\delta_0)).
\]
The last two inequalities combined give the result.
\end{proof}

The reason for this technical approximation is the statements in the next lemma, motivated by the following argument. In the simplest case we would like to approximate the limits of last passage times using the limiting $\Gamma_c$ in rectangles where $c(x,y)$ has one discontinuity line. Unfortunately, unless the discontinuity of the speed is a line of slope 1, we cannot say at this point that the limit is $\Gamma_c(x,y)$. However, if the speed function is continuous, the fact that the limit of passage times is $\Gamma_c$ in that environment is given by Theorem 3.1.\ in \cite{Geo-Kum-Sep-10}. So we may approximate $\Gamma_c$ with the value $\Gamma_{\tilde c}$ where $\tilde c$ will be a continuous speed function that approximates $c(s,t)$. 

\begin{lemma}[Continuity of $\Gamma$ in the speed function]
\label{lem:01:20}
Let $c(s,t)$ take only two values $r_1, r_2$ in two regions of $[a,x]\times[b,y]$ separated by a  weakly monotone curve $h$, which satisfies Assumption \ref{ass:c}. Then, for every $\e > 0$ there exists a 
$\eta_{h,\e}> 0$ so that for all $\eta < \eta_{h, \e}$ there exists a continuous speed function $c^{\text{cont}}_{\eta}(s,t) \le c(s,t)$ so that 
\[
\Gamma_{c^{\text{cont}}_{\eta}}((a,b)(x,y))-\Gamma_c((a,b),(x,y)) \le \e.
\]  
\end{lemma}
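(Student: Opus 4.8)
The plan is to prove the two inequalities hidden in $0\le\Gamma_{c^{\text{cont}}_{\eta}}-\Gamma_c\le\e$ separately, the first for free and the second by sliding near‑optimal paths.

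\noindent\textbf{Reduction to an upper bound.} Because $\gamma\ge 0$, any continuous $c^{\text{cont}}_{\eta}\le c$ makes the integrand of \eqref{eq:I} pointwise larger along every path, so $I_{c^{\text{cont}}_{\eta}}(\mathbf x)\ge I_c(\mathbf x)$ and hence $\Gamma_{c^{\text{cont}}_{\eta}}((a,b),(x,y))\ge\Gamma_c((a,b),(x,y))$ automatically. The whole content of the lemma is therefore the reverse bound $\Gamma_{c^{\text{cont}}_{\eta}}((a,b),(x,y))\le\Gamma_c((a,b),(x,y))+\e$ for all sufficiently small $\eta$.

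\noindent\textbf{The approximant.} Take $r_1<r_2$, let $A$ be the open region where $c\equiv r_1$ and $B$ the one where $c\equiv r_2$, and write $\rho(z)=\mathrm{dist}(z,h)$ with $h=\partial A\cap\partial B$. I would define $c^{\text{cont}}_{\eta}$ to equal $r_1$ on $A\cup\{\rho\le\eta\}$, to interpolate linearly in $\rho$ from $r_1$ to $r_2$ on $\{z\in B:\eta\le\rho(z)\le 2\eta\}$, and to equal $r_2$ on $\{z\in B:\rho(z)\ge 2\eta\}$. Since $\rho$ is Lipschitz this is a continuous speed function with values in $[r_1,r_2]$, it satisfies $c^{\text{cont}}_{\eta}\le c$, and it agrees with $c$ off the thin tube $T_{\eta}:=B\cap\{\rho<2\eta\}$, inside which $r_1\le c^{\text{cont}}_{\eta}<c$.

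\noindent\textbf{Path‑sliding (the line case).} Fix $\e>0$ and a path $\phi\in\mathcal H((a,b),(x,y))$ with $I_{c^{\text{cont}}_{\eta}}(\phi)>\Gamma_{c^{\text{cont}}_{\eta}}((a,b),(x,y))-\e/2$; suppose first that $h$ is a straight segment, which already covers the two models of Sections \ref{sec:2shifted} and \ref{sec:cornerinho}. The key observation is that $T_{\eta}$ is a width‑$O(\eta)$ strip glued along $h$ to $A$, and $A$ carries the \emph{same} small rate $r_1$ that $c^{\text{cont}}_{\eta}$ uses on $T_{\eta}$; so I would translate $\phi$ by a vector $\mathbf v$ of length of order $\eta$ that slides $T_{\eta}$ strictly into $A$ — for instance $\mathbf v=3\eta\,\hat n$ with $\hat n$ the unit normal of $h$ pointing from $B$ into $A$ when $\hat n\in\R^2_+$, and a purely vertical or horizontal translation of comparable length otherwise (with a small extra first‑quadrant shift near $\partial\R^2_+$ and at the corner $(0,0)$). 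Translation leaves $\phi'$ unchanged, so $\psi:=\phi+\mathbf v$ lies in $\mathcal H((a,b)+\mathbf v,(x,y)+\mathbf v)$ with the same $\gamma$‑weights, and by construction $c(\psi(s))\le c^{\text{cont}}_{\eta}(\phi(s))$ for a.e.\ $s$ (a point that sat in $T_{\eta}$ now sits in $A$, where $c=r_1=c^{\text{cont}}_{\eta}$ at its pre‑image; everywhere else $c(\psi(s))\le r_2\le c^{\text{cont}}_{\eta}(\phi(s))$). Hence $I_c(\psi)\ge I_{c^{\text{cont}}_{\eta}}(\phi)$, so $\Gamma_c((a,b)+\mathbf v,(x,y)+\mathbf v)\ge I_c(\psi)>\Gamma_{c^{\text{cont}}_{\eta}}((a,b),(x,y))-\e/2$. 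Finally, since $|\mathbf v|=O(\eta)$, Theorem \ref{cor:conti} (continuity of $\Gamma_c$ in its endpoints), together with the superadditivity \eqref{eq:subadditive} to discard the shifted starting point, gives $\Gamma_c((a,b)+\mathbf v,(x,y)+\mathbf v)\le\Gamma_c((a,b),(x,y))+\e/2$ once $\eta$ is small; combining the two inequalities proves the lemma in this case.

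\noindent\textbf{Curved $h$, and the main obstacle.} For strictly monotone $h$ no single translation slides all of $T_{\eta}$ into $A$, and near the endpoints of $h$ the slope may degenerate to $0$ or $\pm\infty$. I would cut $[a,x]\times[b,y]$ by finitely many vertical lines into slabs in each of which $h$ either has slope bounded away from $0$ and $\infty$ — and then run the translation argument above there — or is confined to an arbitrarily small box at an endpoint of $h$, in which case the $\gamma$‑weight any path can pick up inside $T_{\eta}$ is controlled directly by a $\gamma(\delta_1,\eta(\delta_1))$‑type bound, exactly as the $Q_i$‑contributions were estimated in the proof of Lemma \ref{lem:2}. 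Writing $\Gamma_{c^{\text{cont}}_{\eta}}((a,b),(x,y))=\sup\sum_k\Gamma_{c^{\text{cont}}_{\eta}}((a_k,b_k),(a_{k+1},b_{k+1}))$ over the crossing heights at the cut lines and using that $\Gamma_c$ is superadditive \eqref{eq:subadditive} along the same cut points, the per‑slab estimates $\Gamma_{c^{\text{cont}}_{\eta}}(\text{slab }k)\le\Gamma_c(\text{slab }k)+\e_k$ reassemble to $\Gamma_{c^{\text{cont}}_{\eta}}((a,b),(x,y))\le\Gamma_c((a,b),(x,y))+\sum_k\e_k$, and one arranges $\sum_k\e_k<\e$ by the initial choice of cuts and of $\eta$. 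The hardest part is exactly this reassembly: keeping the slab decomposition and the per‑slab construction uniform in $\eta$, and handling the degenerate endpoints of a general monotone $h$ — this is where the piecewise‑$C^1$ and monotonicity content of Assumption \ref{ass:c} is genuinely needed and where the small‑box $\gamma$‑weight estimates have to be made quantitative.
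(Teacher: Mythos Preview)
Your strategy is sound and rests on the same two ingredients the paper uses --- a translation of order $\eta$ together with the endpoint continuity of $\Gamma_c$ from Theorem~\ref{cor:conti} --- but the paper reaches the conclusion more directly. Instead of translating a near-optimal \emph{path} $\phi$ and verifying the pointwise inequality $c(\psi(s))\le c^{\text{cont}}_\eta(\phi(s))$, the paper introduces an intermediate \emph{piecewise-constant} speed function $c_\eta$ obtained by shifting the discontinuity curve $h$ itself horizontally by $\eta$ (in the direction that enlarges the low-rate region), and then sandwiches $c_\eta\le c^{\text{cont}}_\eta\le c$. The payoff is the exact identity $\Gamma_{c_\eta}((a,b),(x,y))=\Gamma_c((a,b)-\mathbf v,(x,y)-\mathbf v)$ coming from translation invariance of the whole configuration; one application of Theorem~\ref{cor:conti} then gives $\Gamma_{c^{\text{cont}}_\eta}\le\Gamma_{c_\eta}\le\Gamma_c+\e$ in a single line, for \emph{any} monotone $h$, curved or straight, and regardless of whether the slope degenerates at the endpoints. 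In particular the slab decomposition, the per-slab translations, and the reassembly you flag as the ``main obstacle'' never enter, and there is no need to control the modulus of continuity of $\Gamma_c$ uniformly over the crossing heights at the cut lines. Your route would go through with the extra bookkeeping; shifting the curve rather than the path is simply the cleaner device for this lemma.
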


\begin{proof}[Proof of Lemma \ref{lem:01:20}]
Fix $(x,y)$ and without loss assume that the starting point is $(a, b) = (\alpha,0)$ for some $\alpha > 0$.
We present the case when the curve $h$ starts somewhere on $[\alpha,x]$ and exits somewhere on the east boundary
 $\{ x \}\times [0,y]$ and the rates above the curve is 
$r_1 < r_2$. Symmetric arguments as the one below will work in all other cases, and are left to the reader. 
\begin{figure}
\centering
\begin{tikzpicture}[>= latex, scale=0.7]

\draw[->] (0,0)-- (7,0) node[anchor=north] {\small$x$};
\draw[->] (0,0) node[below]{\small0}-- (0,10) node[anchor=east] {\small$y$};
\draw (6,0)--(6,9)node[above]{$(x,y)$}--(0,9);
 \fill[color=nicos-red!30](2,0)[bend right=20]to(6,8)--(6,9)--(0,9)--(0,0)--(2,0);
 \fill[color=blue!30](3.5,0)[bend right=20]to(6,4)--(6,0)--(3.5,0);
 \fill[color=purple!60](3.5,0)[bend right=20]to(6,4)--(6,8)[bend left=20]to(2,0)--(3.5,0);
\draw (6,0)--(6,9)node[above]{$(x,y)$}--(0,9);
\draw[line width=1pt] (2,0)node[xshift=-0.5em,yshift=-1em]{\small$(\alpha,0)$} [bend right=20]to(6,8);
\draw[line width=1pt] (3.5,0)node[xshift=0.5em,yshift=-1em]{\small$(\alpha+\eta,0)$} [bend right=20]to(6,4);
\draw[dashed,line width=1pt] (1,0)--(1,9);
\draw[dashed,line width=1pt] (5,0)--(5,9);
\draw (4.2,4)node{\small$h$};
\draw (5.65,1.8)node{\small$ h_\eta$};
\draw [fill] (5,9) circle [radius=0.09]node[xshift=-2em,yshift=1em]{\small$(x-\eta,y)$};
\draw [fill] (1,0) circle [radius=0.09]node[xshift=0.4em,yshift=1em]{\small$(\alpha-\eta,0)$};
\end{tikzpicture}
\caption{Graphical representation for the proof of Lemma \ref{lem:01:20}.}
\label{fig:ex}
\end{figure}
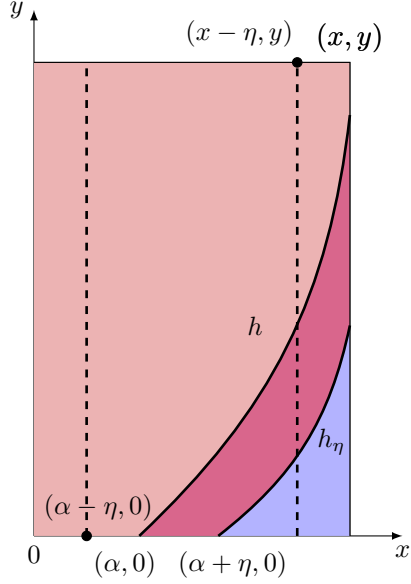

For a fixed $\e>0$ we can find an $\eta_{\e,h}>0$ so that for all $\eta < \eta_{\e,h}>0 $ we have $| \Gamma_c((\alpha-\eta,0), (x - \eta, y)) - \Gamma_c((\alpha,0),(x,y)) | < \e$. This is possible by Theorem \ref{cor:conti}.
Fix any such  $\eta$ and define the curve $h_{\eta}$ by the relation $h_{\eta}(t) = h(t+\eta)$, i.e.~ this correspond to shift of $h$ by $\eta$ to the right. 
Then, we define a speed function $c_{\eta}(\cdot, \cdot)$ on $[\alpha, x]\times[0, y]$
\[
c_{\eta}(z, w)= 
\begin{cases}
r_1, &\quad \text{ if  $(z, w)$ is above or on the graph of $h_{\eta}$, }\\
r_2, & \quad \text{otherwise}.
\end{cases}
\]
We make two observations:
\begin{enumerate}
\item $c(z,w) \ge c_{\eta}(z, w)$ for all $(z, w) \in [\alpha, x]\times[0, y]$, giving $\Gamma_{c_{\eta}}((\alpha,0),(x,y)) \ge \Gamma_{c}((\alpha,0),(x,y))$.
\item By construction
\be\label{eq:nonso}
 \Gamma_{c}((\alpha-\eta, 0), (x- \eta, y))  = \Gamma_{c_\eta}((\alpha, 0),(x,y)).
\ee
\end{enumerate}
From these observations we define a new, continuous  function $c^{\text{cont}}_\eta(\cdot, \cdot)$ on $[\alpha, x]\times[0, y]$ so that 
\[
 c_{\eta}(z, w) \le c^{\text{cont}}_\eta(z, w) \le c(z,w), \quad \text{ for all  $(z, w) \in [\alpha, x]\times[0, y]$}.
\]
This and \eqref{eq:nonso} imply
\be
 \Gamma_{c^{\text{cont}}_\eta}((\alpha,0),(x, y)) \le \Gamma_{c_{\eta}}((\alpha,0),(x, y))= \Gamma_{c}((\alpha-\eta, 0), (x- \eta, y))  \leq  \Gamma_{c}((\alpha,0),(x, y)) + \e,
\ee
which in turn yields the Lemma.
\end{proof}

\section{Proof of Theorem \ref{thm:1}}
\label{sec:5}

To prove Theorem \ref{thm:1} we need some Lemmas which help us to define some properties of the last passage time in a 2D inhomogeneous environment.

We begin by identifying the last passage time limits in simple cases of speed function, that will be used as building blocks for approximations to the general case.
We first find the law of large numbers without fixing the maximal path but forcing it to stay in a homogeneous corridor.
Let the speed function be 
\be\label{eq:strip}
c(x,y)=
\begin{cases}
&r_2\quad y>x+\lambda,\\
&r_1\quad x-\lambda\leq y\leq x+\lambda,\\
&r_3\quad y<x-\lambda.\\
\end{cases}
\ee
with $\lambda\in\R_+$.

\begin{lemma}[Passage times in homogeneous corridors] \label{lem:4} 
Assume $c(x,y)$ in \eqref{eq:strip} for all $(x,y) \in (0,b) \times (0,e)$. Let $(z,w) \in (0,b] \times (0,e]$ with $w\in(z-\lambda,z+\lambda)$ and let  $\tilde{G}_{(\fl{nz},\fl{nw})}$ be the last passage time from $(0,0)$ to $(\lfloor nz \rfloor,\lfloor nw \rfloor)$ subject to the constraint that

\begin{center}

 admissible paths stay in the $r_1$-rate region inside the strip $\fl{nb}-\lambda\leq \fl{ne}\leq \fl{nb}+\lambda$,

except possibly for a bounded number of initial and final steps.

\end{center}

Then 
\begin{equation}\label{eq:18a}
\lim_{n\to\infty}n^{-1}\tilde{G}_{(\fl{nz},\fl{nw})}=r_1^{-1}\gamma(z,w),\quad \P-\text{a.s.}
\end{equation}
\end{lemma}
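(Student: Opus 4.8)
The plan is to reduce the constrained last passage time to a genuinely homogeneous rate-$r_1$ problem and then invoke the known exponential shape theorem \eqref{eq:homoLPP}. First I would observe that a path constrained to stay in the $r_1$-region sees \emph{only} i.i.d.\ $\mathrm{Exp}(r_1)$ weights (equivalently, $r_1^{-1}\cdot\mathrm{Exp}(1)$ weights in the coupling of \eqref{microLPT}), so the constrained passage time $\tilde G_{(\fl{nz},\fl{nw})}$ is, up to the finitely many excepted initial/final steps, exactly a last passage time in a homogeneous $\mathrm{Exp}(r_1)$ environment but over a restricted path family. The bounded number of boundary steps contributes $O(1)$ almost surely (a fixed number of exponential variables), hence contributes $0$ after dividing by $n$ and taking $n\to\infty$; so it suffices to analyze the passage time over up-right paths confined to the strip $\{x-\lambda\le y\le x+\lambda\}$.

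Next I would sandwich the confined passage time between two unconstrained ones. For the upper bound, dropping the strip constraint only enlarges the set of admissible paths, so $\tilde G_{(\fl{nz},\fl{nw})}\le G^{r_1}_{(\fl{nz},\fl{nw})}$ where $G^{r_1}$ is the unconstrained homogeneous $\mathrm{Exp}(r_1)$ passage time; by \eqref{eq:homoLPP}, $n^{-1}G^{r_1}_{(\fl{nz},\fl{nw})}\to r_1^{-1}\gamma(z,w)$ a.s. For the lower bound I would use that $(z,w)$ lies in the \emph{open} cone $w\in(z-\lambda,z+\lambda)$, so the straight segment from $(0,0)$ to $(z,w)$ lies in the interior of the strip and stays a macroscopic distance $\rho>0$ away from its boundary lines. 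Partition $[0,1]$ into $K$ equal pieces; on each piece force the microscopic path into the $K^{-1}$-scaled sub-box around the corresponding segment of the straight line. For $n$ large these sub-boxes, fattened by $o(n)$, remain inside the strip, so concatenating near-optimal homogeneous paths inside each sub-box yields an admissible confined path. Superadditivity of the homogeneous passage time and \eqref{eq:homoLPP} applied in each sub-box give
\[
\varliminf_{n\to\infty} n^{-1}\tilde G_{(\fl{nz},\fl{nw})} \ge \sum_{i=1}^{K} r_1^{-1}\gamma\!\Big(\tfrac{z}{K},\tfrac{w}{K}\Big) = r_1^{-1}\gamma(z,w),
\]
using the $1$-homogeneity $\gamma(cz,cw)=c\,\gamma(z,w)$. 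Combining the two bounds gives the claim.

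The main obstacle — really the only subtle point — is justifying the lower bound rigorously: I must make sure the ``boundary correction'' at the junctions between consecutive sub-boxes and at the two strip walls costs only $o(n)$, and that the near-optimal sub-box paths can actually be concatenated into one admissible up-right path that respects the strip. This is handled by choosing the sub-boxes slightly inside the strip (possible because of the strict inequalities $z-\lambda<w<z+\lambda$), allowing $O(K)$ connector steps of total weight $O(K)=o(n)$, and invoking the a.s.\ convergence \eqref{eq:homoLPP} simultaneously for the finitely many sub-boxes. Letting $K\to\infty$ is not even needed here since $\gamma$ is $1$-homogeneous, so a single fixed $K$ (even $K=1$, once one checks the straight line stays in the interior) already suffices; the decomposition is only needed conceptually to keep the approximating path inside the strip. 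Finally I would note that the ``$\mathrm{Exp}(r_1)$ environment'' and ``$r_1^{-1}\mathrm{Exp}(1)$ environment'' are identical in law, so \eqref{eq:homoLPP} applies verbatim with the factor $r_1^{-1}$ out front.
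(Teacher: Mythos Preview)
Your overall strategy coincides with the paper's: the upper bound is obtained by dropping the path constraint and appealing to the homogeneous rate-$r_1$ shape, and the lower bound is obtained by coarse-graining along the diagonal into sub-rectangles that fit inside the strip. Two points in your write-up need correcting.

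First, the parenthetical that ``even $K=1$'' suffices is false. The straight segment from $(0,0)$ to $(z,w)$ lying inside the strip does \emph{not} imply that the full rectangle $[0,\fl{nz}]\times[0,\fl{nw}]$ does, and it is the latter that you need so that the unconstrained passage time in the box is a lower bound for $\tilde G$. If $z>\lambda$, the corner $(\fl{nz},0)$ is outside the strip and the unconstrained maximizer in that rectangle is not admissible for $\tilde G$. You genuinely need $K$ large enough (equivalently $\e=K^{-1}$ small enough) that each sub-box fits inside the strip; your observation that $1$-homogeneity of $\gamma$ then makes the limit $K\to\infty$ unnecessary is correct, but $K$ must first be chosen large.

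Second, ``invoking the a.s.\ convergence \eqref{eq:homoLPP} simultaneously for the finitely many sub-boxes'' hides a real issue: \eqref{eq:homoLPP} is an almost-sure limit for passage times started at the \emph{origin}, whereas for $k\ge 1$ the $k$-th sub-box starts at $(k\fl{\e nz},k\fl{\e nw})$, a lattice point that moves with $n$. Equality in distribution with the origin-based passage time does not transfer the a.s.\ statement. The paper handles exactly this point by invoking a one-sided large-deviation estimate for homogeneous exponential LPP (Theorem~4.1 of \cite{Sep-98-mprf-2}) to get $\P\{G_{R_k^n}\le n(\e r_1^{-1}\gamma(z,w)-\delta)\}\le e^{-Cn}$, then a union bound over the $\fl{\e^{-1}}$ boxes together with Borel--Cantelli. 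That concentration step, not the junction corrections you flag as the ``main obstacle,'' is what your plan is missing.
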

\begin{proof}
 To obtain the upper bound $\lim_{n\to \infty}n^{-1}\tilde{G}_{(\fl{nz},\fl{nw})}\leq r_1^{-1}\gamma(z,w)$ ignore the path restrictions and assume that the environment in the whole region is homogeneous of constant rates $r_1$.

For the lower bound we use a coarse graining argument, taking into account the path restrictions.
Fix an $\e\in(0,1)$ and consider the points 
\[
\mathscr P_{z,w, \e} = \{ (k\fl{\e n z}, k\fl{\e n w}): k=1, 2, \ldots, \fl{\e^{-1}} \}\cup\{ \fl{nz}, \fl{nw}) \}.
\] 
To bound $\tilde G_{(\fl{nz},\fl{nw})}$ from below, force the path to go through the partition points of $\mathscr P_{z,w, \e} $. 
By possibly reducing $\e$ further, for each  $1 \le k \le \fl{\e^{-1}}$, each rectangle with lower-left and upper-right corners two consecutive points of $\mathscr P_{z,w, \e} $ is completely inside the region of rate $r_1$. For these rectangles we allow the path segments to explore space.

For $2\leq k< \fl{ \e^{-1}}$ let $G_{R^n_k}$ be the last passage time from $((k-1)\fl{\e nz} ,(k-1)\fl{\e nw})$ to $(k\fl{nz\e}, k\fl{nw\e})$. $R_k^n$ refers to the rectangle that contains all the admissible paths between the two points. 

Let $0\leq\delta=\delta(\e)<\e r^{-1}\gamma(z,w)$ and assume without loss that $\delta/\e \to 0$ as $\e \to 0$. A large deviation estimate (Theorem 4.1 in \cite{Sep-98-mprf-2}) gives a constant $C=C(r,z,w,\e,\delta)$ such that for $k$ fixed
\begin{equation}\label{eq:18}
\mathbb{P}\{G_{R^n_k}\leq n(\e r^{-1}\gamma(z,w)-\delta)\}\leq e^{-Cn^2}.
\end{equation}
The sequence of passage times $\{ G_{R^n_k}\}_k$ are i.i.d.~ and as such, a Cram\`er large deviation estimate and a Borel-Cantelli argument give for large $n$,
\[\tilde G_{(\fl{nz},\fl{nw})}\geq \sum_{k=1}^{\fl{\e^{-1}}-1}G_{R_k^n}\geq n(\fl{\e^{-1}}-1)(\e r^{-1}\gamma(z,w)-\delta), \quad \P \text{-a.s.}\]
Divide the inequality through by $n$ and take the $\liminf$ as $n\to\infty$. After that, send $\e\to0$ to finish the proof.
\end{proof}

From the coarse graining argument in the previous proof, we see that when we restrict to maximal paths in a narrow (but macroscopic) homogeneous corridor we still obtain the same limiting passage time as if the environment was homogeneous throughout. This is a consequence of the mesoscopic fluctuations of the maximal paths and the strict concavity of $\gamma$.  As the width $\e$ of the corridor tends to $0$, the limiting shape of the corridor is a straight line, which is the shape of the macroscopic maximal path in a homogeneous region.  

\begin{lemma}[Passage times in $C^1$ homogeneous corridors]\label{lem:c1cor} Let $\bf x(s)$ be a $C^1$ increasing path from $(a,b)$ to $(c,d)$, and let $\mathcal N({\bf x}, \e)$ be a neighborhood  subject to the constraint that $c( {\bf x(s)}) = r$ (constant) on $\mathcal N({\bf x}, \e)$. 
Let $G^{(n)}_{n \mathcal N({\bf x}, \e)}$ be the passage time from $\fl{n(a,b)}$ to $\fl{n(c,d)}$, subject to the constraint that maximal paths never exit  
$ n \mathcal N({\bf x}, \e)$. Then 
\[
\varliminf_{n\to \infty} n^{-1} G^{(n)}_{n \mathcal N({\bf x}, \e)} \ge  \frac{1}{r}\int_{0}^{1} \gamma({\bf x}'(s)) \,ds.
\]
\end{lemma}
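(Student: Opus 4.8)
The plan is to reduce the statement to Lemma \ref{lem:4} by a piecewise-linear approximation of the $C^1$ curve $\bf x$, carried out inside the tube $\mathcal N({\bf x}, \e)$. First I would fix a large integer $N$ and partition $[0,1]$ into subintervals $[s_{k-1}, s_k]$, $k = 1, \dots, N$, with $s_k = k/N$, and set ${\bf p}_k = {\bf x}(s_k)$. Since $\bf x$ is $C^1$ and increasing, on each subinterval the chord from ${\bf p}_{k-1}$ to ${\bf p}_k$ has slope close to ${\bf x}'(s)$ for $s \in [s_{k-1}, s_k]$, uniformly in $k$ as $N \to \infty$ by uniform continuity of ${\bf x}'$; moreover these chords are strictly increasing (both coordinates strictly increasing) for $N$ large, because ${\bf x}'(s) \in \mathbb{R}^2_{>0}$ after possibly shrinking to a compact subinterval and handling the endpoints where a coordinate of the derivative may vanish by a separate short estimate. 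For $N$ large enough, each chord, thickened by a small corridor, lies inside $\mathcal N({\bf x}, \e)$, where $c(\cdot) \equiv r$.

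Next I would force the constrained maximal path to pass through each of the lattice points $\fl{n{\bf p}_k}$, so that
\[
G^{(n)}_{n \mathcal N({\bf x}, \e)} \ge \sum_{k=1}^{N} G^{(n)}_{\fl{n {\bf p}_{k-1}}, \fl{n {\bf p}_k}},
\]
where each summand is a passage time between two lattice points along a chord, inside a homogeneous rate-$r$ corridor. Applying Lemma \ref{lem:4} to each summand (after translating the starting point to the origin, which only changes a bounded number of initial/final steps and does not affect the limit) and using superadditivity-type bookkeeping for the floors gives
\[
\varliminf_{n\to\infty} n^{-1} G^{(n)}_{n \mathcal N({\bf x}, \e)} \ge \sum_{k=1}^{N} \frac{1}{r}\gamma({\bf p}_k - {\bf p}_{k-1}) \quad \P\text{-a.s.}
\]
Here I use that $\gamma$ is positively homogeneous of degree one, so $\gamma({\bf p}_k - {\bf p}_{k-1}) = \gamma\big(\int_{s_{k-1}}^{s_k} {\bf x}'(s)\, ds\big)$.

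Finally I would let $N \to \infty$. By continuity of $\gamma$ and uniform continuity of ${\bf x}'$ on $[0,1]$, the Riemann-type sum $\sum_{k=1}^N \gamma\big(\int_{s_{k-1}}^{s_k} {\bf x}'(s)\, ds\big)$ converges to $\int_0^1 \gamma({\bf x}'(s))\, ds$: on each subinterval $\gamma\big(\int_{s_{k-1}}^{s_k} {\bf x}'\big)$ differs from $\frac1N \gamma({\bf x}'(\xi_k))$ for a suitable $\xi_k$ by a quantity controlled by the modulus of continuity of ${\bf x}'$ times $\frac1N$, so the total error is $o(1)$. This yields the claimed lower bound. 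The main obstacle I expect is the handling of the endpoints (and any interior points) where a component of ${\bf x}'$ vanishes or blows up, so that the chords are not uniformly strictly increasing and the approximating corridors degenerate: there I would argue that the contribution of a short initial/final piece of the curve to $\int_0^1 \gamma({\bf x}'(s))\,ds$ is small (since $\gamma$ is continuous and the piece is short in arclength), drop that piece from the lower-bound construction, and pass to the limit — so the degeneracy costs nothing in the $\varliminf$. A secondary technical point is that Lemma \ref{lem:4} is stated for corridors around slope-one-type strips $\{x - \lambda \le y \le x + \lambda\}$; but its proof only uses that the corridor is a thin macroscopic homogeneous tube around a line segment of strictly positive slope, so it applies verbatim after an affine change of coordinates aligning the chord with such a strip, and I would remark on this rather than reprove it.
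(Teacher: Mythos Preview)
Your proposal is correct and follows essentially the same route as the paper: partition $[0,1]$, force the microscopic path through the points $\lfloor n{\bf x}(s_k)\rfloor$, apply the homogeneous law of large numbers on each piece, and pass to the Riemann integral via the homogeneity of $\gamma$ and the mean value theorem. The only difference is that the paper avoids Lemma~\ref{lem:4} altogether: once the partition is fine enough that each rectangle $R({\bf x}(s_{k-1}),{\bf x}(s_k))$ lies entirely inside $\mathcal N({\bf x},\e)$, the unrestricted homogeneous LLN gives $n^{-1}G^{(n)}_{\lfloor n{\bf x}(s_{k-1})\rfloor,\lfloor n{\bf x}(s_k)\rfloor}\to r^{-1}\gamma({\bf x}(s_k)-{\bf x}(s_{k-1}))$ directly, so there is no need for thin corridors, no affine change of coordinates, and no special treatment of points where a component of ${\bf x}'$ vanishes (the rectangle argument and $\gamma$ both handle degenerate directions without modification).
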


\begin{proof} 
Consider a partition of the interval $[0,1]$ 
$ \mathcal P = \{ 0 =s_0 < s_1 < \ldots < s_N = 1\} $
fine enough so that the rectangles $R({\bf x}(s_i), {\bf x}(s_{i+1}))$ are completely inside the neighborhood $\mathcal N({\bf x}, \e)$. Then, 
\begin{align*}
\varliminf_{n\to \infty} n^{-1} G^{(n)}_{n \mathcal N({\bf x}, \e)} &\ge \varliminf_{n\to \infty} n^{-1}\sum_{i=0}^{N-1} G^{(n)}_{\fl{ n{\bf x}(s_i)}, \fl{n{\bf x}(s_{i+1})}} \ge\sum_{i=0}^{N-1}  \varliminf_{n\to \infty} n^{-1} G^{(n)}_{\fl{n{\bf x}(s_i)}, \fl{n{\bf x}(s_{i+1})}} \\
&\ge \frac{1}{r}\sum_{i=0}^{N-1} \gamma( {\bf x}(s_{i+1})-{\bf x}(s_i))= \frac{1}{r}\sum_{i=0}^{N-1} \gamma\Big( \frac{{\bf x}(s_{i+1})-{\bf x}(s_i)}{s_{i+1}-s_1}\Big)(s_{i+1} - s_i)\\
&=\frac{1}{r}\sum_{i=0}^{N-1} \gamma\big({\bf x}'(\xi_i)\big)(s_{i+1} - s_i),\text{ for some $\xi_i \in [s_i, s_{i+1}]$, by the mean value theorem.} \\
\end{align*} 
As the mesh of the partition tends to $0$, the last line converges to $\frac{1}{r}\int_{0}^{1} \gamma({\bf x}'(s)) \,ds$, as it is a Riemann sum. This gives the result.
\end{proof}

\begin{lemma}[Passage times in two-phase rectangles] \label{lem:2p} Consider a $C^1$ function $h:[0,a]\to[0,b]$ and a 
macroscopic rectangle $[0,a]\times[0,b]$ and in which the speed function is 
\[
c(x,y) = r_1 \mathbbm1_{\{y > h(x)\}} + r_2 \mathbbm 1_{\{y < h(x)\}}  + r_1 \wedge r_2 \mathbbm 1_{\{ y= h(x)\}}.
\]
We further assume that
\begin{enumerate}
\item $h([0,a]) = [0,b]$, $h$ is monotone and $h(x) \notin \{ 0, b \}$, for any $x \in (0,a)$.
\item  There exists $\eta>0$ so that $\min_{x \in (0,a)} | h '(x) | >\eta > 0.$
\item If $h$ is increasing, then we further assume that for the same $\eta>0$ as in (2), we have  $\sup_{x \in (0,a)}\Big |h'(x) - \frac{b}{a} \Big| < \eta. $ 
In particular, the first derivative is bounded and there exists a constant $L$ so that the curve is Lipschitz-$L$.
\end{enumerate}
Assume for convenience that $r_1 < r_2$. Then, there exists a uniform constant $C_{h}$ so that last passage time limits satisfy
\begin{enumerate}
\item For $h$ increasing ,
\be \label{eq:diagbound}
 \frac{1}{r_1}\gamma(a,b) - \frac{2}{r_1}C_h{ \rm length}(h) \eta  \le \varliminf_{n} n^{-1} G_{\fl{na}, \fl{nb}}^{(n)} \le \varlimsup_{n} n^{-1} G_{\fl{na}, \fl{nb}}^{(n)} \le  \frac{1}{r_1}\gamma(a,b).
\ee
Moreover, 
\be \label{eq:diagcontbound}
\frac{1}{r_1}\gamma(a,b) - \frac{2}{r_1}C_h{ \rm length}(h) \eta \le  \Gamma(a,b) < \frac{1}{r_1}\gamma(a,b), 
\ee
which in turn implies 
\be
\varlimsup_{n\to \infty} | n^{-1} G^{(n)}_{\fl{na}, \fl{nb}} -  \Gamma(a,b) | \le  \frac{2}{r_1}C_h{ \rm length}(h) \eta.
\ee
\item When $h$ is decreasing
\be
\lim_{n \to \infty}  n^{-1} G^{(n)}_{\fl{na}, \fl{nb}} = \Gamma(a,b).
\ee
\end{enumerate}
\end{lemma}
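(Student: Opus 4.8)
\textbf{Proof plan for Lemma \ref{lem:2p}.}

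The plan is to treat the two cases (increasing and decreasing $h$) separately, exploiting the fact that in each case the faster region of rate $r_2$ is thin either in the $\ell^\infty$ sense along the diagonal direction (increasing case) or simply avoidable (decreasing case).

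\emph{Increasing case.} The upper bound in \eqref{eq:diagbound} is immediate: bound the rate from below by $r_1$ everywhere, so that $G^{(n)}_{\fl{na},\fl{nb}} \le r_1^{-1} G^{(n),\,\mathrm{hom}}_{\fl{na},\fl{nb}}$, and apply \eqref{eq:homoLPP}. For the lower bound I would use Lemma \ref{lem:c1cor} applied to a straight line corridor. Assumption (3) says $h$ stays within a tube of $\ell^\infty$-width comparable to $\eta\,{\rm length}(h)$ around the segment from $(0,0)$ to $(a,b)$; so just \emph{above} that tube there is a $C^1$ (indeed linear) corridor $\mathcal N({\bf x},\e)$ on which $c \equiv r_1$, joining the origin to $(a,b)$ up to endpoints a distance $O(\eta\,{\rm length}(h))$ away. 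Running Lemma \ref{lem:c1cor} on that corridor together with the continuity estimate for $\gamma$ (the modulus of continuity of $\gamma$ near the diagonal, using that $\gamma$ is Lipschitz away from the axes) gives $\varliminf_n n^{-1}G^{(n)}_{\fl{na},\fl{nb}} \ge r_1^{-1}\gamma(a,b) - C_h\,{\rm length}(h)\,\eta$ for a constant $C_h$ depending only on $h$ (through its Lipschitz constant and how far from the axes the corridor sits). This proves \eqref{eq:diagbound}. The bound \eqref{eq:diagcontbound} on $\Gamma(a,b)$ is the deterministic analogue: the straight line from $(0,0)$ to $(a,b)$ must cross the graph of $h$, and on a neighbourhood of the diagonal the speed is partly $r_2>r_1$, so by Jensen's inequality (strict concavity of $\gamma$) the optimal curve is not the straight line and $\Gamma(a,b) < r_1^{-1}\gamma(a,b)$; the lower bound on $\Gamma(a,b)$ comes from evaluating $I$ on the same shifted linear corridor as above. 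The final displayed inequality follows by combining \eqref{eq:diagbound} and \eqref{eq:diagcontbound}, since both $\varliminf_n n^{-1}G^{(n)}$ and $\varlimsup_n n^{-1}G^{(n)}$ and $\Gamma(a,b)$ all lie in the interval $\big[r_1^{-1}\gamma(a,b) - 2 r_1^{-1}C_h{\rm length}(h)\eta,\ r_1^{-1}\gamma(a,b)\big]$.

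\emph{Decreasing case.} Here $h$ is decreasing, so the graph of $h$ is a ``corner-step'' type curve and the region $\{y>h(x)\}$ of rate $r_1$ is the ``far'' region while $\{y<h(x)\}$ of rate $r_2$ is near the origin. The point is that a macroscopic path can enter the $r_2$-region (which speeds it up) and the optimal strategy is genuinely a concatenation of straight segments, which is exactly what $\Gamma$ computes. For the upper bound, partition a near-optimal microscopic path at its crossing of the discretised curve $h$: any admissible path decomposes into a portion in $\{y\le h(x)\}$ and a portion in $\{y\ge h(x)\}$, and using Lemma \ref{lem:01:20} (continuity of $\Gamma$ in the speed function) to replace $c$ by a continuous $c^{\mathrm{cont}}_\eta \le c$ from below, one gets a matching lower bound; for the upper bound one invokes Theorem 3.1 of \cite{Geo-Kum-Sep-10} for the continuous speed function $c^{\mathrm{cont}}_\eta$, obtaining $\varlimsup_n n^{-1}G^{(n)}_{\fl{na},\fl{nb}} \le \Gamma_{c^{\mathrm{cont}}_\eta}(a,b) + o(1)$, and then letting $\eta\to 0$ and using Theorem \ref{cor:conti} to conclude $\Gamma_{c^{\mathrm{cont}}_\eta}(a,b) \to \Gamma(a,b)$. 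The lower bound $\varliminf_n n^{-1}G^{(n)}_{\fl{na},\fl{nb}} \ge \Gamma(a,b)$ comes from forcing the path through the (finitely many) corners of a near-optimal piecewise-linear macroscopic curve for $\Gamma$ and applying Lemma \ref{lem:c1cor} on each linear piece — each piece lies in a constant-rate corridor because $h$ is decreasing, so the corners are the only crossings.

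\emph{Expected main obstacle.} The delicate point is the increasing case: one must verify that assumption (3) really does trap the graph of $h$ inside a linear corridor of $\ell^\infty$-width $O(\eta\,{\rm length}(h))$ on which the speed is constantly $r_1$, and that a straight linear corridor of non-vanishing width actually fits between $h$ and the segment $(0,0)\to(a,b)$ without touching the axes; the constant $C_h$ must be controlled uniformly. Relatedly, one needs the quantitative modulus of continuity of $\gamma$ near the diagonal to turn the geometric $O(\eta\,{\rm length}(h))$ displacement of endpoints into the stated error term, which is routine but must be done carefully to get the explicit form $\tfrac{2}{r_1}C_h{\rm length}(h)\eta$. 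The decreasing case, by contrast, is essentially bookkeeping on top of Lemma \ref{lem:c1cor}, Lemma \ref{lem:01:20}, and the cited result of \cite{Geo-Kum-Sep-10}.
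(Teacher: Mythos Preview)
Your plan is correct and close in spirit to the paper's proof, but the two cases are handled with genuinely different tools.

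\textbf{Increasing case.} The paper does not use a straight corridor. Instead it shifts the discontinuity curve itself: it sets $h_\e(x)=(h(x)+\e)\wedge b$, which lies entirely in the $r_1$-region, then builds a path that goes vertically from $(0,0)$ to $(0,h_\e(0))$, follows $h_\e$, and finishes along the north boundary. The lower bound on $\Gamma(a,b)$ is obtained by computing $I$ along this curve directly, using assumption~(3) to bound $h'(u)\ge b/a-\eta$ inside $\int_0^{a}\gamma(1,h'(u))\,du/r_1$, which yields $\gamma(a,b-a\eta)/r_1$ and then the explicit error $2r_1^{-1}C_h\,\mathrm{length}(h)\,\eta$. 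The microscopic lower bound is the same curve coarse-grained into rectangles sitting in the $r_1$-region. Your straight-corridor idea also works (the diagonal shifted up by $O(\eta a)$, capped at $y=b$, stays above $h$ by the tube estimate you state), but you then need a separate Lipschitz estimate on $\gamma$ near $(a,b)$ to convert the endpoint displacement into the error term; the paper's route gets the constant $C_h$ out of the integral in one step. Note also that your argument for the \emph{strict} inequality $\Gamma(a,b)<r_1^{-1}\gamma(a,b)$ is not quite right: under assumption~(3) it can happen that $h(x)\le (b/a)x$ for all $x$, in which case the diagonal lies in $\{y\ge h(x)\}$ and achieves $r_1^{-1}\gamma(a,b)$; the paper only really uses (and proves) the non-strict upper bound, which is all that is needed for the final displayed inequality.

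\textbf{Decreasing case.} Here your route is different from the paper's and arguably cleaner. The paper argues directly: any increasing macroscopic curve crosses the decreasing $h$ at a unique point, Jensen reduces candidate optimisers to two-segment piecewise linear paths through that crossing point, and then a coarse-graining/partition argument on both sides gives the two inequalities. Your proposal---replace $c$ by a continuous $c_\eta^{\mathrm{cont}}\le c$ via Lemma~\ref{lem:01:20}, invoke the known LLN for continuous speed (stated just before Lemma~\ref{lem:01:20}), and send $\eta\to0$---is a valid black-box alternative that avoids the crossing-point bookkeeping; indeed this is exactly the mechanism the paper itself uses later in the upper bound of Proposition~\ref{prop:1}. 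Either approach is fine here.
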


\begin{proof}
We first treat the case of increasing $h$. Without loss, assume $h(0)=0$ and $h(a)=b$.
Since $r_1 < r_2$ we obtain the upper bound in \eqref{eq:diagbound} if we lower $r_2$ to $r_1$ and assume a homogeneous environment with constant speed function $c_{\rm low}(x,y)=r_1$. This also gives the upper bound in \eqref{eq:diagcontbound} since $c_{\rm low}(x,y) \le c(x,y)$.

Now for the lower bound. 
Let $\e>0, \delta > 0$ sufficiently small. First consider a graph $h_{\e}(x) = (h(x) + \e)\wedge b$ which lies solely in the $r_1$ region of $c(x,y)$. 

By hypothesis $(1)$, assume $\e$ is small enough so that the first time $h_\e$ touches the top boundary $[0,a]\times \{b\}$, is precisely at some point $x_\e > a - \delta$. 
Consider a parametrisation for $h$,  $(h^{(1)}(s), h^{(2)}(s)): [0,1] \to \R^2$. Then point $x_\e$ corresponds to some $1 -s_\e \in [0,1]$. 

Then define the  curve $\bf x$ that goes from $(0,0)$ to $(0, h_\e(0))$ by time $s_\e$, then follows $h_{\e}$ until it takes the value $b$ by time $1$ and then stays on the north boundary at value $b$ for time $s_\e$.

Since $h$ is rectifiable, so is $h_\e$, and we assume without loss that $h_\e$ has the Lipschitz parametrization 
\[ \Big(h^{(1)}\Big((s-s_\e) \frac{1- s_\e}{1-2s_\e}\Big), h^{(2)}\Big((s-s_\e) \frac{1- s_\e}{1-2s_\e}\Big)+\e\Big), \quad s \in [s_\e, 1-s_\e].\] 
 Then we estimate
\begin{align}
\Gamma(a,b) &\ge  \int_{s_\e}^{1-s_\e}\frac{\gamma( {\bf x}'(s))}{r_1}\,ds 
=  \frac{1-s_\e}{1-2s_\e}  \int_0^{1-s_\e} \frac{\gamma( h^{(1)'}(s),  h^{(2)'}(s) )}{r_1}\,ds \notag \\
&=  \frac{1-s_\e}{1-2s_\e}  \int_0^{1-s_\e} h^{(1)'}(s)\frac{\gamma( 1 ,  \frac{h^{(2)'}(s) }{h^{(1)'}(s)})}{r_1}\,ds =  \frac{1-s_\e}{1-2s_\e}  \int_0^{1-s_\e} h^{(1)'}(s)\frac{\gamma( 1 , h'(h^{(1)}(s))}{r_1}\,ds \notag \\
& =  \frac{1-s_\e}{1-2s_\e}  \int_0^{h^{(1)}(1-s_\e)} \frac{\gamma( 1 , h'(u))}{r_1}\,du \notag  \ge  \frac{1-s_\e}{1-2s_\e}  \int_0^{h^{(1)}(1-s_\e)} \frac{\gamma( 1 , \frac{b}{a}-\eta)}{r_1}\,du \notag \\
&=  \frac{1-s_\e}{1-2s_\e} h^{(1)}(1-s_\e) \frac{\gamma(1 , \frac{b}{a}-\eta)}{r_1} \notag \\
&\ge a \frac{\gamma(1 , \frac{b}{a}-\eta)}{r_1} - \delta  \frac{1-s_\e}{1-2s_\e}  \frac{\gamma(1 , \frac{b}{a}-\eta)}{r_1} - \frac{s_\e}{1-2s_\e} \frac{\gamma(1 , \frac{b}{a}-\eta)}{r_1}. \label{eq:mod}
\end{align}
Letting $\e \to 0$ makes the last term vanish, 
and by then letting $\delta \to 0$ we obtain 
\be \label{eq:a16}
\Gamma(a,b) \ge \frac{\gamma(a , b -a\eta)}{r_1} = \frac{1}{r_1}\Big( a + b - a\eta + 2\sqrt{a}\sqrt{b}\sqrt{1 - \frac{a\eta}{b}}\Big).
\ee
By the mean value theorem $\eta < \min | h'(s) | < ba^{-1}$ and by item (2) in the hypothesis, one can check that  
\[
\sqrt{1 - \frac{a\eta}{b}} \ge 1 -  \frac{a\eta}{b}. 
\]
We now estimate the $\gamma$-term in the left hand side of \eqref{eq:a16}.
\begin{align}
\gamma(a , b -a\eta) 
&= a + b - a\eta + 2\sqrt{a}\sqrt{b}\Big(1 - \frac{a\eta}{b}\Big) = a + b - a\eta + 2\sqrt{a}\sqrt{b} - 2\frac{a^{3/2}\eta}{b^{1/2}}\\ 
&\ge \gamma(a,b) - 2\eta\Big( a + \frac{a^{3/2}}{b^{1/2}} \Big).\label{eq:lbbb}
\end{align}
Now the lower bound in \eqref{eq:diagcontbound}. Let 
\[ 
C^2_h > \frac{1 + 2\sqrt L}{L^3} \vee \Big(1 +\frac{1+2\sqrt{L}}{\min_{x \in (0,a)} h'(x)}\Big).
\]
Keep in mind that by the mean value theorem, $b/a \ge \min_{x \in (0,a)} h'(x)$ and by the choice of $C_h$ we have 
\[
\frac{b}{a} \ge \min_{x \in (0,a)} h'(x) \ge \frac{1+2\sqrt{L}}{C_h^2 - 1}. 
\] 
Then we can bound 
\begin{align*}  
0 &\le a^2((C_h^2 - 1)b -(1 + 2 \sqrt L)a) 
< (C_h^2 - 1)a^2b - (1 + 2 \sqrt L)a^3 + C^2_h b^3 \\
&= (C_h^2 - 1)a^2b - a^3   - 2 \sqrt L a^3 + C^2_h b^3< (C_h^2 - 1)a^2b - a^3   - 2 a^{5/2}b^{1/2} + C^2_h b^3.
\end{align*}
In the last inequality above we used (3), since it implies $ h(a) - h(0) = b \le L a$.
An equivalent way to write the last inequality is 
\be\label{eq:a17}
\Big(a + \frac{a^{3/2}}{b^{1/2}}\Big)^2 < C_h^2(a^2+b^2).
\ee
From \eqref{eq:a17}, we conclude that $a + \frac{a^{3/2}}{b^{1/2}} < C_h\sqrt{a^2 + b^2} \le C_h {\rm length}(h)$. Substitute this  in \eqref{eq:lbbb}
to finally prove the lower bound in \eqref{eq:diagcontbound}.

For the lower bound in \eqref{eq:diagbound} consider again the function $h_{\e}$ and $s_\e$ from before and consider a partition of $[0, 1 -s_\e]$, $ \mathscr P_{s_\e, \delta} = \{ x_k =  k \delta (1- s_\e) \}_{ 0\le k \le \fl{\delta^{-1}}},$ 
of mesh $\delta > 0$. We assume the partition is fine enough
so that the rectangles $R_k = [x_k, x_{k+1}]\times[h_\e(x_k), h_\e(x_{k+1})]$ completely lie in the homogeneous region of rate $r_1$ and so that Riemann sum 
\be\label{eq:R-s}
\sum_{k=0}^{\fl{\delta}^{-1} -1} r_1^{-1} \gamma( h^{(1)'}(x_{k+1}),  h^{(2)'}(x_{k+1}))(x_{k+1} - x_k)
\ge \int_{0}^{1 - s_\e}\frac{\gamma( h^{(1)'}(s),  h^{(2)'}(s) )}{r_1}\,ds - \theta_1
\ee
for some fixed tolerance $\theta_1 >0$. Moreover, assume the partition is fine enough so that for $\eta_1$ sufficiently small, with $0 < \eta_1 < \alpha$
\[
\Big|\frac{h^{(i)}(x_{k+1}) - h^{(i)}(x_k)}{x_{k+1} - x_k} -  h^{(i)'}(x_{k+1})\Big| < \eta_1, \quad \text{for } i=1, 2.
\]
Finally, fix a small $\theta_2>0$ and let $n$ large enough so that Theorem 4.1 in \cite{Sep-98-mprf-2} gives
\[
\P\{ G_{nR_k} < n r_1^{-1} \gamma(h^{(1)}(x_{k+1}) - h^{(1)}(x_k), h^{(2)}_\e(x_{k+1})-h^{(2)}_\e(x_{k})) - n\theta_2 \} \le e^{- c n}. 
\]
By the Borel-Cantelli lemma we can then let $n$ be large enough so that $\P$-a.s.\ for all $k$ 
 \[
 G_{nR_k} > n r_1^{-1} \gamma(h^{(1)}(x_{k+1}) - h^{(1)}(x_k), h^{(2)}_\e(x_{k+1})-h^{(2)}_\e(x_{k})) - n\theta_2. 
 \] 
 Above we denoted by  $G_{nR_k}$ the maximum weight that can be collected from oriented paths in the set $nR_k$.
 
By superadditivity, the passage times satisfy 
\begin{align*}
G_{\fl{na}, \fl{nb}}^{(n)} &\ge \sum_{k=0}^{\fl{\delta}^{-1} -1} G_{nR_k}  \ge n \sum_{k=0}^{\fl{\delta}^{-1} -1} r_1^{-1} \gamma(h^{(1)}(x_{k+1}) - h^{(1)}(x_k), h^{(2)}_\e(x_{k+1})-h^{(2)}_\e(x_{k})) - n\theta_2 \delta^{-1}\\
& = n \sum_{k=0}^{\fl{\delta}^{-1} -1} r_1^{-1} \gamma\Big(\frac{h^{(1)}(x_{k+1}) - h^{(1)}(x_k)}{x_{k+1} - x_k},\frac{ h^{(2)}_\e(x_{k+1})-h^{(2)}_\e(x_{k})}{x_{k+1} - x_k}\Big)(x_{k+1} - x_k) - n\theta_2 \delta^{-1}\\
&\ge n \sum_{k=0}^{\fl{\delta}^{-1} -1} r_1^{-1} \gamma( h^{(1)'}(x_{k+1})-\eta_1,  h^{(2)'}(x_{k+1})-\eta_1)(x_{k+1} - x_k) - n\theta_2 \delta^{-1}\\
&\ge n \sum_{k=0}^{\fl{\delta}^{-1} -1} r_1^{-1} \gamma( h^{(1)'}(x_{k+1}),  h^{(2)'}(x_{k+1}))(x_{k+1} - x_k) - \frac{n}{r_1}\om_{\gamma}(\eta_1)- n\theta_2 \delta^{-1},\\
&\ge n \int_{0}^{1-s_\e}\frac{\gamma(h'(s))}{r_1}\,ds - \frac{n}{r_1}\om_{\gamma}(\eta_1)- n\theta_1 - n\theta_2 \delta^{-1}, \text{ by \eqref{eq:R-s}}.
\end{align*}
Divide through by $n$ and take the $\varliminf$ on both sides. First let $\theta_1, \theta_2 \to 0$. After that take $\eta_1 \to 0$. The final estimate comes from a repetition of computation \eqref{eq:mod} and bounds \eqref{eq:lbbb}, \eqref{eq:a17}.

When $h$ is decreasing, the approximation argument is simpler. We briefly highlight it but leave the details to the reader. First of all,
any monotone curve from $[0,a]$ to $[0,b]$ will have to cross $h$ at a unique point $(\zeta, h(\zeta))$. Then from Jensen's inequality, the piecewise linear curve from $0$ to  $(\zeta, h(\zeta))$ and then to $(a,b)$ achieves a higher value for the functional \eqref{eq:I}. So, candidate macroscopic optimisers can be restricted to piecewise linear curves, and this gives the lower bound 
\[
\Gamma(a,b) \le \varliminf_{n \to \infty}  n^{-1} G_{\fl{na}, \fl{nb}}^{(n)} 
\] 
by a coarse graining argument as for the case when $h$ was increasing. For the upper bound, partition the curve $h$ finely enough with a mesh $\delta>0$. Any microscopic optimal path will have to cross the microscopic curve $[nh]$ at some point $(\fl{n\zeta}, \fl{n(h(\zeta))})$, lying between two of the partition points. For $n$ large enough, the passage time on this path will $\P$-a.s , be no more than $nr_1^{-1}\gamma(\zeta, h(\zeta)) + nr_2^{-1}\gamma(a-\zeta, b -h(\zeta)) +n\e + Cn\sqrt{\delta}$ for any fixed $\e$. Divide by $n$, take the quantifiers to $0$ and then take supremum over all crossing points to obtain the upper bound.
\end{proof}

\begin{example}\label{ex:x2}
Consider a square with south-west corner  $(0,0)$ and north-east corner $(1,1)$. This square is subdivided in two constant-rate regions by a parabola $h(x)=x^2$ where above the rate is $1$ and below is $r\in(0,1)$. 
Then the set of the all potential optimisers is a concatenation of straight lines in the $1$ region and convex segments along the discontinuity $h(x)$.
\end{example}
\begin{figure}[!h]
\centering
\begin{tikzpicture}[>= latex, scale=0.8]

\draw[->] (0,0)-- (10,0) node[anchor=north] {\small$x$};
\draw[->] (0,0) node[below]{\small0}-- (0,7) node[anchor=east] {\small$y$};
\draw (7.5,7)node [right,black] {$(1,1)$};
\draw[line width=2pt, blue]  (0,0)to [bend right=15] (8.5,6.5);
\draw[line width=2pt, my-green]  (2,0)to [bend right=15] (9.5,6.5);
\draw[line width=2pt, my-red]  (0,0)-- (8.5,6.5);
\draw[line width=2pt, purple]  (4.16,0.95)node[xshift=0.3em,yshift=-0.7em,black]{\tiny$(x_1,y_1)$}-- (7.1,3.25) node[xshift=1.5em,black]{\tiny$(x_2,y_2)$};
\draw (4,2.5) node{\small$h(x)$};
\draw (8.3,4) node{\small$\tilde h(x)$};
\draw (3.9,3.6) node{\small$1$};
\draw (4.5,1.8) node{\small$r$};
\draw (6.9,1.8) node{\small$\ell$};
\draw (6.2,2.2) node{\small$\tilde\ell$};
\draw (2.9,3.3) node{\small$\ell$};
\draw (2.3,0.7) node{\small$\delta$};
\draw[<->,line width=1pt](2,0)--(2,0.9);
\draw[line width=2pt, my-red] (0,0)to[bend right=10] (3.8,0.95);
\draw[line width=2pt, my-red,decorate,decoration={snake,amplitude=1mm,segment length=5mm,post length=5mm,pre length=5mm}] (3.8,0.95)to[bend right=40] (7.2,3.6);
\draw[line width=2pt, my-red] (7.2,3.6)to[bend right=10] (8.5,6.5);
\draw [fill] (5.63,2.1) circle [radius=0.09]node[xshift=-1em,yshift=0.5em]{\tiny$(x_t,y_t)$};
\end{tikzpicture}
\caption{Graphical representation for the Example \ref{ex:x2}.}
\label{fig:ex}
\end{figure}
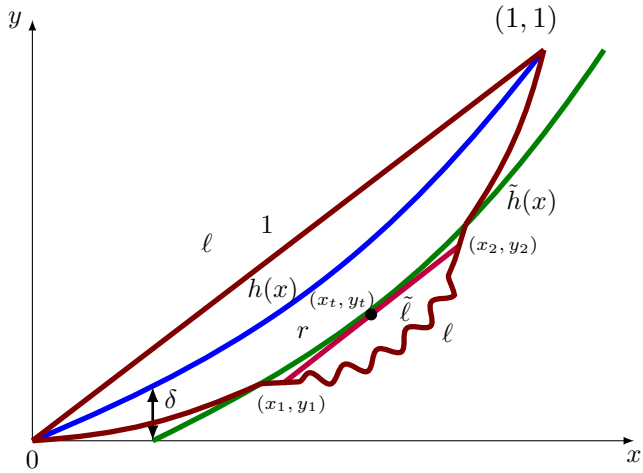

From Jensen's inequality and the convexity of $h(x)$ it is immediate to see that any segment of an optimiser in the rate $1$ region will have to be a straight line from the entry point to the exit point of the optimiser in the region. Therefore it remains to prove the shape of the maximal path in the $r$ region.

We first claim that for any potential optimiser $\ell \in \mathcal H(1,1)$, 
{
there exists a neighborhood $\mathcal N_\ell$ on $[0,1]$ such that for every $x\in \mathcal N_\ell$ a potential optimiser in $\mathcal H(1,1)$ takes the value $h(x)$ for $x\in\mathcal N_\ell$.}

To see this we use a proof by contradiction: First, we show that for $r$ small enough, any potential optimiser has to enter the $r$-region. If that was not the case, Jensen's inequality would give that the straight line from $(0,0)$ to $(1,1)$ is actually an optimiser and the last passage time constant would be  
\[ I_{\ell}(1,1)=\int_0^1(\sqrt{1}+\sqrt{1})^2dt=4.\]
However, the $C^1$ curve $h(x)$ is also an admissible curve, and it achieves potential 
\[
I_{h(x)}(1,1)=\frac{1}{r}\int_0^1(1+\sqrt{2t})^2dt=\frac{2}{r}(1+\frac{2}{3}\sqrt{2}),
\]
by the lower semicontinuity assumption on $c(x,y)$.
Therefore, for $r<\frac{1}{2}+\frac{\sqrt{2}}{3}$, we have  $I_{\ell}(1,1)<I_{h(x)}(1,1)$, so the optimiser $\ell$ has to enter the slow region.

Now suppose that $r < \frac{1}{2}+\frac{\sqrt{2}}{3}$ in order to complete the example. We can find  points $(a, h(a))$ and $(b, h(b))$ so that $\ell$ enters in the $r$ region through the point $(a,h(a))$ with $a\in[0,1)$ and stays in there without touching $h(x)$ except until $(b, h(b))$. We allow that potentially $(1,1)= (b, h(b))$. Since $\ell$ is continuous, it is possible to find a $\delta>0$ so that for  $t$ in some open interval $\mathcal N_\ell$ we have
\be\label{conddel}
|h(t)-\ell(t)|>\delta.
\ee 
To see that \eqref{conddel} is not respected by a potential optimiser, consider a $\delta$ shift $\tilde h = (h - \delta/2)^+$. Since $\ell$ is continuous it will cross $\tilde h$ at least in two points $(a_1, \tilde h(a_1))$ and  $(b_1, \tilde h(b_1))$ and without loss assume $[a_1, b_1] \subseteq \mathcal N_\ell$. Pick any $t \in (a_1, b_1)$ and consider the tangent line at $(t, \tilde h (t))$ on $\tilde h$. By construction, this should cross $\ell$  in $(x_1,y_1)$ and $(x_2,y_2)$  (see Figure \ref{fig:ex}). By Jensen's inequality we know that the path $\tilde \ell$ which goes through $\ell$ up to point $(x_1,y_1)$, straight to $(x_2,y_2)$ and then follows $\ell$. Then, $I( \tilde \ell) > I(\ell)$ and therefore, $\ell$ cannot be an optimiser. This gives the desired contradiction.

The contradiction was reached by assuming that a potential optimiser enters the slow region, but without following the discontinuity curve $h$. This completes the example.\qed

\begin{remark} \label{rem:shape} In the above example, we only used the explicit form of the discontinuity $h$ just to argue that a potential optimiser will eventually enter the slow region. If this information is known, the latter part of the proof is completely general and it uses local convexity properties of the discontinuity. In particular it just uses the fact that the discontinuity curve and the potential optimiser are continuous, piecewise $C^1$ and there exists a point $(t, h(t))$ for which the tangent line does not enter the fast region. \qed
\end{remark}

\begin{remark} The previous example suggests that potential optimisers cannot be more regular than the discontinuity curves. \qed
\end{remark}

\begin{lemma}[Exponential concentration of passage times with continuous speed]
\label{lem:newLDP}
Let $c(s,t)$ be a continuous speed function in $[0,x]\times[0,y]$. Then, for any $\theta > 0$, there exists constants $A$ and $\kappa_{\theta, c}$ 
\be \label{eq:newldp}
\P\{ G_{\fl{nx}, \fl{ny}}^{(n)} \ge n \Gamma_c(x,y) + n \theta \} \le A e^{-\kappa_{\theta, c} n}.
\ee
\end{lemma}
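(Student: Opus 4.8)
The plan is to reduce everything to the homogeneous case by a coarse‑graining argument. Since $c$ is continuous on the compact rectangle $\bar R = [0,x]\times[0,y]$, it is bounded there, $0 < r_{\rm low} \le c \le r_{\rm high} < \infty$, with modulus of continuity satisfying $\omega_c(0+) = 0$. Given $\theta > 0$, the first step is to fix a mesh $\delta > 0$ small enough that $\omega := \omega_c(\delta\sqrt 2) < r_{\rm low}$ and $(1-\omega/r_{\rm low})^{-1}\Gamma_c(x,y) \le \Gamma_c(x,y) + \theta/4$ (possible since $\Gamma_c(x,y) < \infty$ by Jensen), and to partition $\bar R$ into a $K\times L$ grid of closed sub‑rectangles $R_{ij}$ of side at most $\delta$, with $c_{ij} := \min_{R_{ij}} c$, so that $r_{\rm low}\le c_{ij}$ and $\max_{R_{ij}} c \le c_{ij}+\omega$. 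On each $R_{ij}$ the true weights $\tau^{(n)}_{k,l} = c(k/n,l/n)^{-1}\tau^n_{k,l}$ are dominated pathwise by $c_{ij}^{-1}\tau^n_{k,l}$, the weights of a homogeneous rate‑$c_{ij}$ environment built from the same exponentials.

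Next I would set up the good event. For each block $(i,j)$ and each ordered lattice pair $\mathbf u\le\mathbf v$ inside $\fl{nR_{ij}}$, let $G^{ij}(\mathbf u,\mathbf v)$ be the rate‑$c_{ij}$ homogeneous passage time from $\mathbf u$ to $\mathbf v$. By the standard upper‑tail large deviation estimate for the homogeneous exponential last passage model (see, e.g., \cite{Sep-98-mprf-2}), uniform over rates in $[r_{\rm low},r_{\rm high}]$ and over aspect ratios in the relevant compact range (including the degenerate axis‑parallel ones, which reduce to a Cram\'er estimate for a sum of exponentials), there is $\kappa = \kappa(\theta,c) > 0$ with $\P\{G^{ij}(\mathbf u,\mathbf v) > c_{ij}^{-1}\gamma(\mathbf v-\mathbf u) + n\theta/(4(K+L))\} \le e^{-2\kappa n}$ for every such $i,j,\mathbf u,\mathbf v$. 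There being only $O(n^2)$ such pairs, a union bound yields an event $\mathcal G_n$ with $\P(\mathcal G_n^c)\le Cn^2e^{-2\kappa n}\le Ae^{-\kappa n}$ on which all these bounds hold simultaneously.

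The heart of the argument is then to decompose an arbitrary up‑right lattice path $\pi$ from $(0,0)$ to $(\fl{nx},\fl{ny})$ through the coarse grid. Recording its successive entry/exit points $(0,0) = \mathbf u_0\le\mathbf u_1\le\dots\le\mathbf u_T = (\fl{nx},\fl{ny})$, $T\le K+L$, with each consecutive pair lying in one block $R_{i_tj_t}$, the weight of $\pi$ is at most $\sum_t G^{i_tj_t}(\mathbf u_{t-1},\mathbf u_t)$, hence on $\mathcal G_n$ at most $\sum_t c_{i_tj_t}^{-1}\gamma(\mathbf u_t-\mathbf u_{t-1}) + (\theta/4)n$. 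Passing to the piecewise linear curve $\mathbf x_\pi\in\mathcal H((0,0),(\fl{nx}/n,\fl{ny}/n))$ through the points $\mathbf u_t/n$ and using $1$‑homogeneity of $\gamma$ together with $(c(\mathbf z)-\omega)^{-1}\ge c_{i_tj_t}^{-1}$ on $R_{i_tj_t}$, each summand is $\le n\int_{\text{seg }t}\gamma(\mathbf x_\pi'(s))(c(\mathbf x_\pi(s))-\omega)^{-1}\,ds$; summing and invoking the definition of $\Gamma$, monotonicity of $\Gamma$ in its endpoint, and the pointwise bound $(c-\omega)^{-1}\le(1-\omega/r_{\rm low})^{-1}c^{-1}$ gives $\sum_t c_{i_tj_t}^{-1}\gamma(\mathbf u_t-\mathbf u_{t-1}) \le n\,\Gamma_{c-\omega}(x,y)\le (1-\omega/r_{\rm low})^{-1}n\Gamma_c(x,y) \le n\Gamma_c(x,y) + (\theta/4)n$. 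Thus on $\mathcal G_n$ every up‑right path from $(0,0)$ to $(\fl{nx},\fl{ny})$ has weight $< n\Gamma_c(x,y)+n\theta$, so $G^{(n)}_{\fl{nx},\fl{ny}} < n\Gamma_c(x,y)+n\theta$ there, and the lemma follows from $\P(\mathcal G_n^c)\le Ae^{-\kappa n}$.

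The main obstacle I anticipate is organizing the decomposition so the cost does not blow up: one must cut the \emph{arbitrary} maximal path precisely at the entry/exit points of the coarse blocks, because a block‑by‑block bound using each block's own corner‑to‑corner passage time overcounts by a fixed multiplicative factor (already visible when $c\equiv1$, where it would give $\tfrac{K+L}{K}\gamma$ instead of $\gamma$), whereas the entry/exit decomposition reproduces exactly the variational functional, since $\Gamma$ is a supremum over curves joining $(0,0)$ to $(x,y)$. The price is that the union bound must run over all $O(n^2)$ endpoint pairs per block, which is harmless as $n^2e^{-\kappa n}$ is summable; and it relies on the homogeneous exponential upper‑tail estimate being uniform across aspect ratios down to the degenerate axis‑parallel traversals — the one genuinely external input. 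The remaining ingredient, continuity of $\Gamma$ in the speed function at the crude level $\Gamma_{c-\omega}\le(1-\omega/r_{\rm low})^{-1}\Gamma_c$, is immediate from the definition.
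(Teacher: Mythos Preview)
Your proposal is correct and follows essentially the same coarse-graining strategy as the paper: partition $[0,x]\times[0,y]$ into blocks on which $c$ is nearly constant, dominate the inhomogeneous passage time in each block by a homogeneous one, apply the exponential upper-tail estimate block-by-block, and reassemble the block bounds into the variational functional via a piecewise linear macroscopic path through the entry/exit points. The only noteworthy technical difference is that you take a union bound over all $O(\text{poly}(n))$ lattice entry/exit pairs and invoke a uniform version of the homogeneous large-deviation estimate, whereas the paper introduces a second, finite sub-partition of the block boundaries so that only a fixed finite family of endpoint pairs is needed; your comparison $\Gamma_{c-\omega}\le(1-\omega/r_{\rm low})^{-1}\Gamma_c$ plays the same role as the paper's bound $\Gamma_{c_{\rm low}}-\Gamma_c\le \epsilon\,r_{\min}^{-2}\gamma(x,y)$.
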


\begin{proof}[Proof of Lemma \ref{lem:newLDP}]
Fix a tolerance $\e$ small. Its size will be determined in the proof. For a $K \in \N$, consider the two partitions  
\[
\mathscr P^{(K)}_{x}=\{ \alpha_{\ell} = \ell x K^{-1} \}_{0 \le \ell \le K}, \text{ and } \mathscr P^{(K)}_{y}=\{ \beta_{\ell} = \ell y K^{-1} \}_{0 \le \ell \le K}
\]
of $[0,x]$ and $[0,y]$ respectively. Let $R_{i,j}$ denote the open rectangle with south-west corner $(\alpha_i, \beta_j )$. Let 
\[
r_{i,j} = \inf_{(s,t) \in R_{i,j}} c(s,t).
\]
Define a speed function 
\[
c_{\text{low}}(s,t) = \sum_{(i, j)} r_{i,j} \mathbbm1_{\{ (s,t )\in R_{i,j}\}} + \sum_{(i,j)} r_{i-1,j} \wedge r_{i,j} \mathbbm1_{\{s = \alpha_i, \beta_j < t < \beta_{j+1 }\}} + \sum_{(i,j)}  r_{i,j-1}\wedge r_{i,j} \mathbbm1_{\{\alpha_i <s < \alpha_{i+1}, t = \beta_j\}}.
\]
The value of $c(\alpha_i, \beta_j)$ is the minimum of the values in a neighborhood around it.

We are assuming the initial condition that $r_{i, -1} = r_{-1,j} =\infty$. In words, $c_{\text{low}}(s,t)$ is a step function with the minimum value of the neighbouring rates on the boundaries of $R_{i,j}$. Note that 
$c_{\text{low}}(s,t) \le c(s,t)$. Let $\overline R_{i,j}$ denote the rectangle together with any of its boundaries for which it contributed the rate, using some rules to break ties, if the boundary value agrees for two rectangles.

At this point we assume that $K=K(\e)$ is large enough so that $\| c - c_{\text{low}}\|_{\infty} < \e$. This implies that 
\[
\Gamma_{c_{\text{low}}}(x,y) - \Gamma_{c}(x,y) \le \e \gamma(x,y) r_{\min}^{-2},
\]
where $r_{\min}$ is the smallest value of $c(x,y)$.
This is because for any path $\bf x$, 
\[\begin{aligned}
\int_0^1&\bigg\{\frac{\gamma(\mathbf{x}'(s))}{c_{\text{low}}(x_1(s),x_2(s))}-\frac{\gamma(\mathbf{x}'(s))}{c(x_1(s),x_2(s))}\bigg\}ds\\
&=\int_0^1\frac{\gamma(\mathbf{x}'(s))(c(x_1(s),x_2(s))-c_{\text{low}}(x_1(s),x_2(s)))}{c(x_1(s),x_2(s))c_{\text{low}}(x_1(s),x_2(s))}ds\leq\epsilon\int_0^1\frac{\gamma(\mathbf{x}'(s))}{c^2(x_1(s),x_2(s))}ds\\
&\leq\epsilon r_{\min}^{-2}\gamma(x,y),
\end{aligned}
\]
and the bound extends to the supremum over paths $\bf x$.

Pick a $L > 0$ so that $L^{-1} << K^{-1}$ and further partition each axis segment 
\[
\mathscr H^{(L)}_{i} = \{\alpha_i + \ell (\alpha_{i+1} - \alpha_i) L^{-1} \}_{0 \le \ell \le L}, \text{ and } \mathscr V^{(L)}_{j}=\{ \beta_i + \ell (\beta_{i+1} - \beta_i) L^{-1} \}_{0 \le \ell \le L}.
\] 
Define 
\[
\mathscr D_{i,j} =\{{\bf d}^{\ell}_{i,j} = (\alpha_i + \ell (\alpha_{i+1} - \alpha_i) L^{-1}, \beta_j )\}, \qquad \mathscr E_{i,j} = \{ {\bf e}^{\ell}_{i,j} = (\alpha_i, \beta_i + \ell (\beta_{i+1} - \beta_i) L^{-1})\}.
\]  
These completely partition all boundaries of the rectangles. 

We are now ready to prove the concentration estimate. Let $G^{\text{low}}_{\fl{nx}, \fl{ny}}$ denote the last passage time in environment determined by $c_{\text{low}}$. Let $\pi_{\max}$ be the maximal path, and let $\pi_k$ be the segment of the path in the $k$-th rectangle it visits  $n \overline R_{i_k, j_k}$.   

Now, for each $k$, $\pi_k$ will enter and exit $n \overline R_{i_k, j_k}$ between two consecutive points of $n\mathscr D_{i_k,j_k}, n\mathscr E_{i_k,j_k}$. We denote by  $n{\bf z_1}_{i_k,j_k}, n{\bf z_2}_{i_k,j_k} $ the consecutive points for the entrance and by $n{\bf z_1}_{i_{k+1},j_{k+1}}, n{\bf z_2}_{i_{k+1},j_{k+1}} $ for the exit. 

Let ${\bf x}$ be a continuous, piecewise linear path from $(0,0)$ to $(x,y)$ so that it crosses through the boundary segments  $[ {\bf z_1}_{i_k,j_k}, n{\bf z_2}_{i_k,j_k}]$ at some point ${\bf x }_k$. Then for $L$ small enough, we have that for some predetermined $\delta$
that 
\[ \Big| \frac{ \gamma( {\bf z_2}_{i_{k+1},j_{k+2}} - {\bf z_1}_{i_k,j_k})  }{r_{i_k, j_k}} - \frac{\gamma( {\bf x}_{k+1} - {\bf x }_k)}{r_{i_k, j_k}} \Big| < \delta. \]
 
We estimate
\begin{align*}
\P\{ &G^{(n)}_{\fl{nx}, \fl{ny}} \ge n \Gamma_c(x,y) + n \theta \} \le \P\{ G^{\text{low}}_{\fl{nx}, \fl{ny}} \ge n \Gamma_c(x,y) + n \theta \}\\
&\le \P\Big\{ \sum_k G^{\text{low}}_{\pi_k}  \ge n \Gamma_{c_{\text{low}}}(x,y) + n (\theta - \e \gamma(x,y) r_{\min}^{-2}) \Big\}\\
&\le \P\bigg\{ \sum_k G^{\text{low}}_{\fl{n{\bf z_1}_{i_k,j_k}}, \fl{n{\bf z_2}_{i_{k+1},j_{k+1}}}}   \ge n \Gamma_{c_{\text{low}}}(x,y) + n (\theta - \e \gamma(x,y) r_{\min}^{-2}) \bigg\}\\ 
&\le \P\bigg\{ \sum_k G^{\text{low}}_{\fl{n{\bf z_1}_{i_k,j_k}}, \fl{n{\bf z_2}_{i_{k+1},j_{k+1}}}}   \ge n \sum_{k} \frac{\gamma( {\bf x}_{k+1} - {\bf x }_k)}{r_{i_k, j_k}}+ n (\theta - \e \gamma(x,y) r_{\min}^{-2}) \bigg\}\\ 
&\le \P\bigg\{ \sum_k G^{\text{low}}_{\fl{n{\bf z_1}_{i_k,j_k}}, \fl{n{\bf z_2}_{i_{k+1},j_{k+1}}}}   \ge n \sum_{k} \frac{\gamma( {\bf z_2}_{i_{k+1},j_{k+2}} - {\bf z_1}_{i_k,j_k}) }{r_{i_k, j_k}}+ n (\theta - \e \gamma(x,y) r_{\min}^{-2} - K^2\delta) \bigg\}\\ 
&\le \sum_{k} \P\bigg\{G^{\text{low}}_{\fl{n{\bf z_1}_{i_k,j_k}}, \fl{n{\bf z_2}_{i_{k+1},j_{k+1}}}}   \ge n  \frac{\gamma( {\bf z_2}_{i_{k+1},j_{k+2}} - {\bf z_1}_{i_k,j_k}) }{r_{i_k, j_k}}+ n K^{-2}(\theta - \e \gamma(x,y) r_{\min}^{-2} - K^2\delta) \bigg\}\\ 
&\le A e^{-\kappa_{\theta, \e}n}, \quad \text{by Theorem 4.2 in \cite{Sep-98-mprf-2}}.
\end{align*}
The last inequality is only true if $\theta - \e \gamma(x,y) r_{\min}^{-2} - K^2\delta > 0$ which can be achieved when $\e$ is small enough so that $\e \gamma(x,y) r_{\min}^{-2} < \theta/3$ and then we reduce $\delta$ so that $ K^2\delta = K^{2}(\e)\delta < \theta/3$. Theorem 4.2 in \cite{Sep-98-mprf-2} is a large deviation principle which gives an exponential concentration inequality for passage times in a homogeneous environment.
\end{proof}

The final approximation before the proof of the main theorem is the limiting time constant in any piecewise constant environment. 

\begin{proposition}\label{prop:1}
Let $c(s,t)$ be a piecewise constant speed function satisfying assumption \ref{ass:c2}, with a set of discontinuity curves $\{ h_i \}_i$ satisfying Assumption \ref{ass:c}. 
Let ${\bf u} = (x, y) \in \R_2^+$. Then the following law of large numbers holds
\be
\lim_{n\to \infty} \frac{1}{n} G^{(n)}_{ \fl{n\bf u}} = \Gamma_c(\bf u), \quad \P-\text{a.s.}
\ee
\end{proposition}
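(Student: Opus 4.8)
\textbf{Proof proposal for Proposition \ref{prop:1}.}

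The plan is to show the two bounds $\varlimsup_n n^{-1}G^{(n)}_{\fl{n\bf u}} \le \Gamma_c({\bf u})$ and $\varliminf_n n^{-1}G^{(n)}_{\fl{n\bf u}} \ge \Gamma_c({\bf u})$ separately, using the building blocks assembled in Section \ref{sec:5}. Since $c$ is piecewise constant with a locally finite set of discontinuity curves satisfying Assumption \ref{ass:c}, the rectangle $R((0,0),(x,y))$ is partitioned into finitely many open regions $Q_1,\dots,Q_N \in \mathcal Q$ on which $c$ is constant, separated by finitely many arcs of the curves $h_i$.

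\textbf{Lower bound.} Fix $\e>0$ and choose a near-optimal curve ${\bf x}(\cdot)\in\mathcal H(x,y)$ for the variational formula \eqref{macroLPT} with $I({\bf x}) > \Gamma_c(x,y)-\e$; by the density/approximation arguments already used (and Remark \ref{rem:shape}) we may take ${\bf x}$ to be piecewise $C^1$, entering and leaving the discontinuity curves finitely many times, consisting of finitely many arcs each of which either lies in a single region $Q_j$ or runs along a single discontinuity curve $h_i$. For an arc lying in the interior of a constant region, Jensen's inequality lets us replace it by a straight segment without decreasing $I$, and Lemma \ref{lem:4} (or the coarse-graining in Lemma \ref{lem:c1cor}) gives the matching lower bound $r_j^{-1}\gamma(\Delta)$ for the passage time across it. For an arc that follows an increasing discontinuity curve $h_i$, we apply Lemma \ref{lem:2p}(1)/Lemma \ref{lem:c1cor}: a narrow $C^1$ corridor of rate equal to the smaller incident rate contains the curve (by lower-semicontinuity, Assumption \ref{ass:c2}), and the restricted passage time is at least $r^{-1}\int\gamma({\bf x}'(s))\,ds$ along that arc, up to the $\eta$-error of Lemma \ref{lem:2p} which we drive to zero by taking the corridor thinner; for a decreasing discontinuity curve Lemma \ref{lem:2p}(2) applies directly. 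Concatenating these estimates using superadditivity of $G^{(n)}$ over the finitely many sub-rectangles, then sending the corridor widths and $\e$ to $0$, yields $\varliminf_n n^{-1}G^{(n)}_{\fl{n\bf u}} \ge I({\bf x}) \ge \Gamma_c(x,y)-\e$, hence $\ge \Gamma_c(x,y)$.

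\textbf{Upper bound.} For the reverse inequality we first replace $c$ by a continuous underestimate. Cover $R((0,0),(x,y))$ by a grid and, exactly as in the proof of Lemma \ref{lem:newLDP}, build a step function $c_{\text{low}} \le c$ on a refinement whose discontinuities are grid lines; better still, combine this with Lemma \ref{lem:01:20}, which produces a \emph{continuous} speed function $c^{\text{cont}}_{\eta}\le c$ with $\Gamma_{c^{\text{cont}}_{\eta}}((a,b),(x,y)) - \Gamma_c((a,b),(x,y)) \le \e$, region by region. Since $c^{\text{cont}}_{\eta}\le c$ we have $G^{(n)}_{\fl{n\bf u}} \le G^{(n),\,c^{\text{cont}}_{\eta}}_{\fl{n\bf u}}$, and Lemma \ref{lem:newLDP} gives $\P\{G^{(n),\,c^{\text{cont}}_{\eta}}_{\fl{n\bf u}} \ge n\Gamma_{c^{\text{cont}}_{\eta}}(x,y) + n\theta\} \le A e^{-\kappa n}$; a Borel--Cantelli argument then gives $\varlimsup_n n^{-1}G^{(n)}_{\fl{n\bf u}} \le \Gamma_{c^{\text{cont}}_{\eta}}(x,y) + \theta \le \Gamma_c(x,y) + \e + \theta$ almost surely. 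Letting $\theta,\e\downarrow 0$ completes the upper bound, and the two bounds together prove the proposition.

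\textbf{Main obstacle.} The delicate point is the lower bound along \emph{increasing} discontinuity curves: one must justify that a near-optimal macroscopic curve can be taken piecewise $C^1$ and that the microscopic path can be confined to a thin $C^1$ corridor around it while still collecting $r^{-1}\int\gamma({\bf x}')$ up to a vanishing error. This is exactly where Assumptions \ref{ass:c} 3-(a),(b) (piecewise $C^1$, finitely many critical points of $h_i'$) and the lower-semicontinuity in Assumption \ref{ass:c2}(1) are used, together with the quantitative corridor estimate of Lemma \ref{lem:2p}; handling the endpoints where $h_i'$ vanishes or blows up, and gluing corridors across intersection/terminal points, is the part that requires care. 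Everything else — Jensen reductions in constant regions, superadditive concatenation, and the exponential-concentration upper bound — is routine given the lemmas of Section \ref{sec:5}.
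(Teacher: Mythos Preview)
Your lower bound is essentially the paper's argument: decompose a near-optimal curve into arcs lying in constant-rate regions, arcs along discontinuity curves, and small neighborhoods of the finitely many problematic points (intersection/terminal points and zeros of $h_i'$); apply Lemma \ref{lem:c1cor} (or Lemma \ref{lem:4}) on the first kind, Lemma \ref{lem:2p} on the second, a trivial $O(\delta)$ bound on the third, and concatenate by superadditivity. You also correctly flag the $\delta$-isolation of bad points and the corridor estimate of Lemma \ref{lem:2p} as the delicate step.

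Your upper bound, however, has a gap. Lemma \ref{lem:01:20} is stated and proved only for a rectangle containing a \emph{single} monotone discontinuity curve; it does not produce one global continuous $c^{\text{cont}}_\eta\le c$ on all of $[0,x]\times[0,y]$ once there are several curves, intersection/terminal points, or points where $h_i'$ vanishes or blows up. Applying it ``region by region'' yields local continuous approximations that do not patch into a single continuous function to which Lemma \ref{lem:newLDP} can be applied in one shot. The paper avoids this by keeping everything local: it refines the partition from the lower bound into small rectangles $Q_{i,j}$ (each either homogeneous or two-phase, with the bad points already isolated in $\delta$-squares), further partitions the boundary of every $Q_{i,j}$ with a mesh $\delta_2$, and then tracks the entry and exit points of the microscopic maximal path through each cell. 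On homogeneous cells the right-tail large deviation bound of \cite{Sep-98-mprf-2} gives $G^{(n)}_{Q_{i,j}}\le n\,r_{i,j}^{-1}\gamma(\cdot)+n\theta_1$; on two-phase cells Lemma \ref{lem:01:20} followed by Lemma \ref{lem:newLDP} is applied \emph{locally} to get $G^{(n)}_{Q_{i,j}}\le n\,\Gamma_c(\cdot)+n(\e+\theta_1+\omega_{\Gamma_c}(2\delta_2))$. The cell contributions are then summed and dominated by $n\Gamma_c(x,y)$ via the superadditivity \eqref{eq:subadditive} of $\Gamma_c$, with Theorem \ref{cor:conti} absorbing the $\delta_2$-rounding of entry/exit points. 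This local-then-sum step (and the associated bookkeeping of entry/exit points) is the missing ingredient in your sketch; once you insert it your argument coincides with the paper's.
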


\begin{proof}[Proof of Proposition \ref{prop:1}]

Fix ${\bf u} = (x,y) \in \R^2_+$ and consider any admissible path
$\mathbf{x} \in\mathcal{H}(x,y)$, viewed as a curve $s \in [0,1] \mapsto \mathbf{x}(s)=(x_1(s), x_2(s))$. Recall the definition of $I( \mathbf x)$ from \eqref{eq:I} and 
 remember that $\Gamma = \sup_{\mathbf{x}\in\mathcal{H}(x,y)}I(\mathbf{x})$.

Before  proceeding with the technicalities, we highlight the intuition and main approximation idea. 
The most used technique in literature to prove this kind of limit is to find an upper and lower bound for the microscopic last passage time and then show that they tend to the same macroscopic last passage time in the limit $n\to \infty$.
For the lower bound we use the superadditivity property of the microscopic last passage time, and any path acts as a lower bound. For the upper bound we have to construct a particular path which will  represent an upper bound for the microscopic last passage time, while approximating the macroscopic limit after scaling its weight by $n$. For this, we first partition the rectangle $R_{0, (x,y)} =[0,x]\times[0,y]$ in a very specific way so the following conditions are all satisfied. 

\begin{enumerate}
\item Isolate the finitely many points of intersection of the discontinuity curves in squares of size $\delta$, where $\delta$ will be sufficiently small. 

\item Isolate the finitely many points on strictly increasing $h_i$ for which $h_i'(s) = 0$ or $h'_i(s)$ is not defined, in squares of size $\delta$.
\end{enumerate}

Call the collection of these squares by $\mathcal I_{\delta} = \{ I_i \}_{1 \le i \le Q}$. This include points of intersections with the boundary of $R_{0, (x,y)}$. It is fine if these squares overlap, as long as all these problematic points are in their interior.

Away from $\mathcal I_{\delta}$, the discontinuity  curves are isolated so that for all curves we can partition each curve $h_i$ finely enough so that for a given tolerance $\eta$, 
\begin{enumerate}
\item Rectangles $R_{h_i(x_j), h_i(x_{j+1})}$ only contain the discontinuity curve $h_i$. Each rectangle now satisfies Assumption (1) of Lemma \ref{lem:2p}.
\item Assumption (3) in Lemma \ref{lem:2p} holds for any rectangle $R_{h_i(x_j), h_i(x_{j+1})}$. Assumption (2) of Lemma \ref{lem:2p}  is automatically satisfied away from $\mathcal I_{\delta}$. 
\end{enumerate}

Call the collection of these rectangles that cover curve $h_i$ by $\mathcal J_{h_i, \eta} = \{ R_{i,j} = R_{h_i(x_j), h_i(x_{j+1})} \}_j$.

\textbf{Lower Bound:}
Any macroscopic path $\mathbf{x}$ can be viewed as the concatenation of a finite number of segments $\mathbf{x}_j$ so that each segment belongs either in a constant rate region, or in one of the rectangles $\mathcal I_{\delta}$ or in one of the rectangles $\cup_{i} \mathcal J_{h_i, \e}$. Write 
\[
{\bf x}(s) = \sum_{k=1}^Q  {\bf x}(s) \mathbbm 1\{{\bf x}(s) \in I_k \} +  \sum_{k, \ell}  {\bf x}(s) \mathbbm 1\{{\bf x}(s) \in R_{k, \ell} \} + \sum_{k =1}^{D} {\bf x}(s) \mathbbm 1\{{\bf x}(s) \in D_{k} \}. 
\]
Refine the partition further, so that if $\mathbf x : [ s_i, s_{i+1}] \to \R^2 \in D_k$, then the open rectangle $R_{{\mathbf x}( s_i), {\mathbf x} (s_{i+1})} \subseteq D_k$.

Let $(x_1(s), x_2(s))$ a parametrization of the path $\bf x$. Partition the interval $[0,1]$ into  
$\mathcal P = \{ 0 =s_0 < s_1 < s_2 < \ldots< s_K =1 \}$ so that the path segment 
$\mathbf x : [ s_i, s_{i+1}] \to \R^2$ belongs to exactly one $I_k$, $R_{k, \ell}$, or $D_k$. Note that  $I(\mathbf x) = \sum_{i = 0 }^{K-1} \int_{s_i}^{s_{i+1}}\frac{\gamma(\mathbf x'(s))}{c({\mathbf x}(s))}\,ds$. The constant $K = K_{\delta, \eta}$ is the total number of different regions the path touches.

We bound each contribution separately: 
\begin{enumerate}[(1)]
	\item $\mathbf x : [ s_i, s_{i+1}] \to \R^2 \in I_k$. Then, at most, 
	\[
	 \int_{s_i}^{s_{i+1}}\frac{\gamma(\mathbf x'(s))}{c({\mathbf x}(s))} < C \delta.
	\]
	Then for all $n$ large enough
\[
\Big| \frac{G_{[\fl{n {\bf x}(s_i)}, \fl{n{\bf x}(s_{i+1})}]}}{n} -  \int_{s_i}^{s_{i+1}}\frac{\gamma(\mathbf x'(s))}{c({\mathbf x}(s))}\,ds \Big| < C\delta,
\]
since also passage times in these rectangles are bounded by $Cn\delta$.

\item $\mathbf x : [ s_i, s_{i+1}] \to \R^2 \in D_k$, where $D_k$ is the homogeneous region of rate $r_k$.  Fix a small $\theta_1>0$. Then for all $n$ large enough, by the concentration estimates in \cite{Sep-98-mprf-2}
\begin{align*}
\frac{G_{[\fl{n {\bf x}(s_k)}, \fl{n{\bf x}(s_{k+1})}]}}{n} > \frac{ \gamma\big( {\bf x}(s_{k+1}) - {\bf x}(s_k)\big)}{r_k}  - \theta_1
>  \int_{s_i}^{s_{i+1}}\frac{\gamma(\mathbf x'(s))}{c({\mathbf x}(s))}\,ds - \theta_1.
\end{align*}

\item $\mathbf x : [ s_i, s_{i+1}] \to \R^2 \in R_{k, \ell}$.  Define 
\[
s_- = \inf \{s \in [ s_i, s_{i+1}] : {\mathbf x}(s) - h_k =0   \}, \quad s_+ = \sup \{s \in [ s_i, s_{i+1}] : {\mathbf x}(s) - h_k =0 \}.
\]
In words, $\mathbf x(s_-)$ and $\mathbf x(s_+)$ are the points of first and last intersection of $\mathbf x$  with $h_k$ in the rectangle $R_{k, \ell}$. Before $\mathbf x(s_-)$ and after 
$\mathbf x(s_+)$, $\bf x$ stays in a constant-rate region, in this rectangle. Between $\mathbf x(s_-)$ and $\mathbf x(s_+)$, $\bf x$ touches the discontinuity curve. This rectangle has two constant-rate regions. Denote the smallest one of those by $r_{\rm low}$.

We bound in the case where the discontinuity curve in the rectangle is increasing. If it is decreasing, $s_- = s_+$, and the argument simplifies since the path $\bf x$ only intersects the discontinuity at a single point.

Let $ G^{(n), { \mathcal N({\bf x}, \e) }}_{\fl{n {\bf x}(s)}, \fl{n{\bf x}(t)}}$ denote the passage time from $\fl{n {\bf x}(s)}$ to $\fl{n{\bf x}(t)}$, subject to the constraint that paths stay in the strip  $n\mathcal N({\bf x}, \e)$. We assume $\e$ is small enough so that the speed function stays constant on  $n\mathcal N({\bf x}, \e) \cap R(\fl{n {\bf x}(s)},\fl{n{\bf x}(t)})$ except possibly at an $O(\e)$ region near the beginning and end points of the rectangle.

\begin{align} 
\frac{G^{(n)}_{\fl{n {\bf x}(s_i)}, \fl{n{\bf x}(s_{i+1})}}}{n} &\ge \frac{G^{(n), { \mathcal N({\bf x}, \e) }}_{\fl{n {\bf x}(s_i)}, \fl{n{\bf x}(s_{-})}}}{n}+\frac{G^{(n)}_{\fl{n {\bf x}(s_-)}, \fl{n{\bf x}(s_{+})}}}{n}+\frac{G^{(n), \mathcal N({\bf x}, \e) }_{\fl{n {\bf x}(s_+)}, \fl{n{\bf x}(s_{i+1})}}}{n} \notag \\
&\ge \int_{s_i}^{s_{-}}\frac{\gamma(\mathbf x'(s))}{c({\mathbf x}(s))}\,ds  - \theta_1 + \frac{ \gamma\big( {\bf x}(s_{-}) - {\bf x}(s_+)\big)}{r_{\rm low}} - C_{k,\ell} {\rm length}(h_{k} \cap R_{k, \ell}) \eta \notag \\
&\phantom{xxxxxxxxxxxxxxxxxxxxxxxxxxxxxx} +  \int_{s_+}^{s_{i+1}}\frac{\gamma(\mathbf x'(s))}{c({\mathbf x}(s))}\,ds -\theta_1 - O(\e) \label{eq:anb} \\
&\ge  \int_{s_i}^{s_{-}}\frac{\gamma(\mathbf x'(s))}{c({\mathbf x}(s))}\,ds +  \int_{s_-}^{s_{+}}\frac{\gamma(\mathbf x'(s))}{c({\mathbf x}(s))}\,ds+ \int_{s_+}^{s_{i+1}}\frac{\gamma(\mathbf x'(s))}{c({\mathbf x}(s))}\,ds\notag  \\
 &\phantom{xxxxxxxxxxxxxxxxxxxxxxxx}- 2 \theta_1-C_{k,\ell} {\rm length}(h_{k} \cap R_{k, \ell}) \eta - O(\e)\notag \\
&= \int_{s_i}^{s_{i+1}} \frac{\gamma(\mathbf x'(s))}{c({\mathbf x}(s))}\,ds - 2 \theta_1-C_{k,\ell} {\rm length}(h_{k} \cap R_{k, \ell}) \eta - O(\e)\label{eq:upperissue}.
\end{align}
\end{enumerate}

Line \eqref{eq:anb} follows from Lemma \ref{lem:c1cor} for some $\theta_1 >0$ and $n$ large enough.
The line before last follows because either $c({\bf x}(s_k))$ is the largest rate in $R_{i,j}$ or, if it is the smallest of the two, we use Lemma \ref{lem:2p}. The fact that these estimates hold for all large $n$ follows from a Borel-Cantelli argument and the large deviation estimates, as seen in the proof of Lemma \ref{lem:4}.
Constants $C_{k, \ell}$ are the constants given in Lemma \ref{lem:2p}, that show up in bound \eqref{eq:diagbound}. They are all bounded above by some constant 
$\tilde C_{\delta}$ (which also depends on $x, y$), since all points where the derivative of increasing $h_i$ is 0 or undefined are isolated in cubes of side $\delta$.

We are now in a position to bound, for all $n$ large enough
\begin{align*}
G^{(n)}_{\fl{nx}, \fl{ny}} &\ge \sum_{i=0}^{K_{\delta, \eta}-1} G^{(n)}_{\fl{n {\bf x}(s_i)}, \fl{n{\bf x}(s_{i+1})}}  \\
&\ge n \sum_{i=0}^{K_{\delta, \eta}-1}\int_{s_i}^{s_{i+1}} \frac{\gamma(\mathbf x'(s))}{c({\mathbf x}(s))}\,ds- 3 K_{\delta, \eta} n(\theta_1+O(\e)) -  \tilde{C} \delta n - n \eta \tilde C_\delta \sum_{i=1}^Q\text{length}(h_i).
\end{align*}

Divide by $n$, and take the $\varliminf$ as $n \to \infty$ to obtain 
\begin{align}
\varliminf_{n\to \infty} \frac{G^{(n)}_{\fl{nx}, \fl{ny}}}{n} 
&\ge I({ \bf x}) -  3 K_{\delta, \eta} (\theta_1 +  O(\e))-  C \delta  - C_\delta \eta- O(\e).
\end{align}
As the quantifiers go to $0$, $K_{\delta, \eta} $ and $C_\delta $ blow up, so we first send $\theta_1$ to $0$ and $\e \to 0$. 
After that send  $\eta \to 0$  and finally $\delta \to 0$ to obtain that for an arbitrary ${\bf x} \in \mathcal H(x,y)$, 
\[
\varliminf_{n\to \infty} \frac{G^{(n)}_{\fl{nx}, \fl{ny}}}{n} \ge  I({ \bf x}). 
\]
A supremum over the class $\mathcal H(x,y)$ in the right hand-side of the display above gives 
\be
\varliminf_{n\to \infty} \frac{G^{(n)}_{\fl{nx}, \fl{ny}}}{n} \ge \Gamma_c(x,y).
\ee 

\textbf{Upper bound:} For the upper bound we first partition 
$[0,x]\times[0,y]$ into rectangles, so that it is a refinement of the partition used for the lower bound:
This way conditions (1)-(2) are satisfied and all rectangles in $\cup_{i} \mathcal J_{h_i, \eta}$ and $\mathcal I_{\delta}$ are part of this partition. Outside of the union of $\cup_{i} \mathcal J_{h_i, \eta}$ and $\mathcal I_{\delta}$, only the regions of constant rate remain. Divide each one of the constant region into rectangles, of side no longer than $\delta_1 > 0$ and assume $\delta_1 < \delta$.

Enumerate the rectangles in the two-dimensional partition by $Q_{i,j} = [ x_i, x_{i+1} ) \times [y_j, y_{j +1})$ and their total number is $N_{\eta, \delta, \delta_1} < \infty$.

Now, for any $n \in \N$ define the environment according to $c(x,y)$ and consider the maximizing path  $(0,0)$ to $(\fl{nx}, \fl{ny})$ which we denote by $\pi^{\max}_{0,  (\fl{nx}, \fl{ny})}$. The path can be written as a finite concatenation of sub-paths 
\[
\pi^{\max}_{{0,  (\fl{nx}, \fl{ny})}} = \sum_{(x_i,y_j)} \pi_{\fl {nQ_{i,j}}}
\]
where  $\pi_{\fl {nQ_{i,j}}}$ is the segment of the path in the rectangle 
$[\fl{nx_i}, \fl{nx_{i+1}} )\times[\fl{ny_j}, \fl{ny_{j+1}})$.
Some of these segments will be empty. 

We partition the sides of each rectangle $Q_{i,j}$ further: 
Fix a $\delta_2 > 0$ and define partitions 
\[
\mathscr P_{e_1, (i,j)} =\{ {\bf h}^{(i,j)}_k =  (x_i, y_j) + k \delta_2 e_1  \}_{0 \le k \le \frac{x_{i+1} - x_i}{\delta_2}}, \quad
\mathscr P_{e_2, (i,j)} =\{ {\bf v}^{(i,j)}_k =  (x_i, y_j) + k \delta_2 e_2  \}_{0 \le k \le \frac{y_{i+1} - y_i}{\delta_2}}.
\]
These completely define a partition of the boundaries $Q_{i,j}$. Now, the entry point of 
$\pi_{\fl {nQ_{i,j}}}$ into $nQ_{i,j}$ will be between two consecutive partition points, say 
$ {\bf a}_{k}^{(i,j)} \le  {\bf a}_{k+1}^{(i,j)}$ and its exit point will be between $ {\bf b}_{\ell}^{(i,j)} \le  {\bf b}_{\ell+1}^{(i,j)}$. Note that exit point of one rectangle will be the entry point in an adjacent one, and all these points belong to some partition $\mathscr P_{e_k, (i,j)} $. If it so happens and the path enters (or exits) from one of the macroscopic partition points, we set  ${\bf a}_{k}^{(i,j)} =  {\bf a}_{k+1}^{(i,j)}$ (equiv. $ {\bf b}_{\ell}^{(i,j)} = {\bf b}_{\ell+1}^{(i,j)}$).

When  the environment in $Q_{i,j}$ is constant $r_{i,j}$, we have the bound
\begin{align}
G^{(n)}_{\fl{nQ_{i,j}}}(\pi) 
&= \sum_{v \in \pi_{\fl{nQ_{i,j}}}} \tau_v^{(n)} \le G^{(n)}_{n{\bf a}_{k}^{(i,j)}, n{\bf b}_{\ell +1}^{(i,j)}}\le n\Big( \frac{\gamma({\bf b}_{\ell+1}^{(i,j)} - {\bf a}_{k}^{(i,j)})}{r_{i,j}} + \theta_1\Big)\notag \\
& \le n\Big( \frac{\gamma({\bf b}_{\ell}^{(i,j)} - {\bf a}_{k+1}^{(i,j)})}{r_{i,j}} + C_{i,j}\om_{\gamma}(\delta_2) + \theta_1\Big). \label{A}
\end{align}
The second-to-last inequality follows \text{by Theorem 4.2 in \cite{Sep-98-mprf-2}}, for $n$ large enough.

When $c(s,t)$ on $Q_{i,j}$ takes two values, $r_1, r_2$ separated by a curve $h$, we bound as follows. First fix a tolerance $\e$ and find $\delta_3> 0$ so that 
we may define a continuous speed function $c_{\delta_3, h}(s,t)$ as in Lemma \ref{lem:01:20}, with the property $c_{\delta_3, h}(s,t) \le  c(s,t)$ and 
\be\label{eq:5:22}
\sup_{{\bf a}_k, {\bf b}_\ell} (\Gamma_{c_{\delta_3, h}}({\bf a}_k,  {\bf b}_\ell ) - \Gamma_c( {\bf a}_k, {\bf b}_\ell )) < \e.
\ee 
Then, 
\begin{align}
G^{(n)}_{\fl{nQ_{i,j}}}(\pi)&= \sum_{v \in \pi_{\fl{nQ_{i,j}}}} \tau^{(n)}_v \le G^{(c_{\delta_3, h})}_{n{\bf a}_{k}^{(i,j)}, n{\bf b}_{\ell +1}^{(i,j)}}\notag\\
&\le n\big(\Gamma_{c_{\delta_3, h}}( {\bf a}_{k}^{(i,j)}, {\bf b}_{\ell+1}^{(i,j)})+ \theta_1\big) \text{ by a Borel-Cantelli argument and Lemma \ref{lem:newLDP}}, \notag \\
&\le n\big(\Gamma_{c_{\delta_3, h}}({\bf a}_{k+1}^{(i,j)}, {\bf b}_{\ell}^{(i,j)} )+ \om_{\Gamma _c}(2\delta_2)+ \theta_1\big) \text{ by Theorem \ref{cor:conti}},\\
&\le n\big(\Gamma_{c}(  {\bf a}_{k+1}^{(i,j)}, {\bf b}_{\ell}^{(i,j)})+ \e + \om_{\Gamma _c}(2\delta_2)+ \theta_1 \big)  \text{ by equation \ref{eq:5:22}}. \label{B}
\end{align}

Using the estimates \eqref{A} and \eqref{B}, we have total upper bound for the passage time 
\begin{align*}
G^{(n)}_{ \fl{nx}, \fl{ny}} &= \sum_{(i,j)} G^{(n)}_{\fl{nQ_{i,j}}}(\pi) \\
&\le n\sum_{(i,j)}  \Gamma_c({\bf a}_{k+1}^{(i,j)}, {\bf b}_{\ell}^{(i,j)})+ n N_{\eta, \delta, \delta_1}(\max_{(i,j)} C_{i,j} \om_{\gamma}(\delta_2) + \theta_1 + \e +\om_{\Gamma _c}(2\delta_2))+ n C | \mathcal I_{\delta}|\delta\\
&\le n ( \Gamma_c(x,y) + N_{\eta, \delta, \delta_1}(\max_{(i,j)} C_{i,j} \om_{\gamma}(\delta_2) + \theta_1 + \e +\om_{\Gamma _c}(2\delta_2))+  C | \mathcal I_{\delta}|\delta)
\end{align*}
The last line follows from superadditivity of $\Gamma$. 
To finish the bound, divide by $n$ and take the $\varlimsup_{n\to \infty}$. Then, let $\delta_2 \to 0$. This will result to finer $\mathscr P_{e_k, (i,j)}$ partitions, but by modulating $\delta_3$ we can still keep estimate \eqref{eq:5:22} with the same $\e$. Then let  $\theta_1$ and $\e$ tend to 0. These are independent of the other quantifiers $\eta$, $\delta_1$ and $\delta$. Finally send $\delta \to 0$. 
\end{proof}

\begin{proof}[Proof of Theorem \ref{thm:1}] Fix $(x,y)$ and fix an $\epsilon>0$. It is always possible to find piecewise strictly positive constant functions $c_1$ and $c_2$ such that $||c_1-c_2||_\infty\leq\epsilon$ that definitely have the same discontinuity curves as the function $c$ (but perhaps more).  On $[0,x] \times[0,y]$ we can further impose $c_1(x,y)\leq c(x,y)\leq c_2(x,y)$, by defining each $c_i$ on smaller rectangles.

When the weights in \eqref{eq:DSF} are defined via the speed function $c_i$ we write $G^i$ for last passage time and $\Gamma_{c_i}$ for their limits. A coupling using common exponential variables $\{\tau_{i,j}\}$ gives
\[G^{1,(n)}_{\lfloor nx\rfloor,\lfloor ny\rfloor}\geq G^{(n)}_{ \lfloor nx\rfloor,\lfloor ny\rfloor}\geq G^{2,(n)}_{\lfloor nx\rfloor,\lfloor ny\rfloor}.\]

Letting $r_{\min}>0$ denote a lower bound for $c(x,y)$ in the rectangle $[0,x]\times[0,y]$. Then we bound for any $\mathbf{x}\in\mathcal{H}$:
\[\begin{aligned}
0&\leq\int_0^1\bigg\{\frac{\gamma(\mathbf{x}'(s))}{c_1(x_1(s),x_2(s))}-\frac{\gamma(\mathbf{x}'(s))}{c_2(x_1(s),x_2(s))}\bigg\}ds\\
&=\int_0^1\frac{\gamma(\mathbf{x}'(s))(c_2(x_1(s),x_2(s))-c_1(x_1(s),x_2(s)))}{c_1(x_1(s),x_2(s))c_2(x_1(s),x_2(s))}ds\leq\epsilon\int_0^1\frac{\gamma(\mathbf{x}'(s))}{c^2_1(x_1(s),x_2(s))}ds\\
&\leq\epsilon r_{\min}^{-2}\gamma(x,y).
\end{aligned}
\]
As the inequality is uniform across $\bf x$, the bound extends to the suprema
\[0\leq\Gamma_{c_1}(x,y)-\Gamma_{c_2}(x,y)\leq C(x,y)\epsilon.\]
From Proposition \ref{prop:1} we know that the $\Gamma_{c_i}$ are the limits for $G^i$. To obtain Theorem \ref{thm:1}, let $\e \to 0$.
\end{proof}

\appendix
\section{Approximation in \eqref{m2ext}}
In this appendix section we perform all the computations step by step to get \eqref{m2ext}.
From \eqref{eq:-fprime} 
\be
f(a)=f(0)+\int_0^a f'(s)ds=f(0)-2c_{1/2}^{(-)}a^{1/2}-\frac{c}{\gamma+1/2}a^{\gamma+1/2},
\ee
for $a$ small enough. 
Since $m_2$ in \eqref{eq:m_2} is defined as a very complicated function of $a$ we prefer to approximate every addend separately and then put all together.  

Recall
\be\label{Texp}
\frac{1}{c_1x^{\alpha}+c_2x^{\beta}}=\frac{1}{c_1x^\alpha}\frac{1}{1+\frac{c_2}{c_1}x^{\beta-\alpha}}=\frac{1}{c_1x^\alpha}\Big(1-\frac{c_2}{c_1}x^{\beta-\alpha}+O(x^{2(\beta-\alpha)})\Big)\quad\alpha<\beta.
\ee
Use \eqref{Texp} to compute
\begin{align}
\frac{1}{f'(a)}&=\frac{-a^{1/2}}{c_{1/2}^{(-)}+ca^{\gamma}}=-\frac{a^{1/2}}{c_{1/2}^{(-)}}\frac{1}{1+\frac{c}{c_{1/2}^{(-)}}a^{\gamma}}\notag\\
&=-\frac{a^{1/2}}{c_{1/2}^{(-)}}\Big(1- \frac{c}{c_{1/2}^{(-)}}a^{\gamma} +O(a^{2\gamma})\Big)=-\frac{a^{1/2}}{c_{1/2}^{(-)}} +\frac{c}{c_{1/2}^{(-)2}}a^{\gamma+1/2} \label{1f}+O(a^{2\gamma+1/2}).
\end{align}
Since $m_1(a)=f(a)/a=\frac{f(0)}{a}(1-\frac{2r}{r-1}\frac{a^{1/2}}{\sqrt{f(0)}}-\frac{c}{\gamma+1/2}\frac{a^{\gamma+1/2}}{f(0)})$ we then have
\be\label{mf}
\frac{m_1(a)}{f'(a)}=-\frac{(r-1)\sqrt{f(0)}}{r}a^{-1/2}+c\frac{(r-1)^2}{r^2}a^{\gamma-1/2} +2-\frac{2c\gamma}{(\gamma+1/2)c_{1/2}^{(-)}}a^{\gamma}+O(a^{2\gamma+1/2}).
\ee
By the Taylor expansion 
 \be\label{sqe}
\sqrt{1+x}=1+\frac{1}{2}x+O(x^2)
\ee
we obtain
\begin{align*}
\sqrt{m_1(a)}&=\sqrt{\frac{f(0)}{a}}\Big(1-\frac{r}{r-1}\frac{a^{1/2}}{\sqrt{f(0)}}-\frac{c}{2(\gamma+1/2)}\frac{a^{\gamma+1/2}}{f(0)}+O(a)\Big)\\
&=\sqrt{f(0)}a^{-1/2}-\frac{r}{r-1}-\frac{c}{2(\gamma+1/2)}\frac{a^{\gamma}}{\sqrt{f(0)}}+O(a^{1/2})
\end{align*}
and using $\frac{1}{\sqrt{1+x}}=1-\frac{1}{2}x+O(x^2)$ we get
\begin{align}
\frac{1}{\sqrt{m_1(a)}}&=\sqrt{\frac{a}{f(0)}}\Big(1+\frac{r}{r-1}\frac{a^{1/2}}{\sqrt{f(0)}}+\frac{c}{2(\gamma+1/2)f(0)}a^{\gamma+1/2}+O(a)\Big)\notag\\
&=\frac{a^{1/2}}{\sqrt{f(0)}}-\frac{r}{(r-1)f(0)}a-\frac{c}{2(\gamma+1/2)(f(0))^{3/2}}a^{\gamma+1}+O(a^{3/2}).\label{sm}
\end{align}
From \eqref{eq:dioff} we are able to expand $-\frac{1}{f'(a)}-1+D$ which after some rearrangement we can substitute \eqref{1f}, \eqref{mf}, \eqref{sm} in and obtain
\begin{align}
-\frac{1}{f'(a)}&-1+D=(r-1)\Big(\frac{1}{f'(a)}+1\Big)+r\frac{1}{\sqrt{m_1(a)}}\Big( \frac{m_1(a)}{f'(a)}+1\Big)\\
&=(r-1)-\frac{(r-1)}{c_{1/2}^{(-)}}a^{1/2} +c\frac{(r-1)}{c_{1/2}^{(-)2}}a^{\gamma+1/2} +O(a^{2\gamma+1/2})+\Big(\frac{ra^{1/2}}{\sqrt{f(0)}}-\frac{r^2}{(r-1)f(0)}a\notag\\
&\hspace{0.3cm}-\frac{rc}{2(\gamma+1/2)(f(0))^{3/2}}a^{\gamma+1}+O(a^{3/2})\Big)\Big(-\frac{(r-1)\sqrt{f(0)}}{r}a^{-1/2}+c\frac{(r-1)^2}{r^2}a^{\gamma-1/2} +3\notag\\
&\hspace{9.5cm}-\frac{2c\gamma}{(\gamma+1/2)c_{1/2}^{(-)}}a^{\gamma}+O(a^{2\gamma+1/2})\Big)\notag\\
&=-\frac{3r^2}{(r-1)f(0)}a+\frac{3r^2+2r-1}{r\sqrt{f(0)}}a^{1/2} +c\frac{(r-1)^2}{r\sqrt{f(0)}}a^{\gamma} \label{br}\\
&\hspace{0.3cm}-c\Big(2-\frac{(r-1)^2}{r^2}+\frac{\gamma-1}{\gamma+1/2}\Big)\frac{r-1}{f(0)}a^{\gamma+1/2}+c\frac{r(4\gamma-1)}{2(\gamma+1/2)f(0)^{3/2}}a^{\gamma+1}+O(a^{2\gamma+1/2}).\notag
\end{align}
To know at which order of $a$ we can approximate we split out analysis into two cases according to the value of $\gamma$ 
\begin{enumerate}
\item $\gamma\in(0,1/2)$,
\item $\gamma\in[1/2,\infty)$.
\end{enumerate}
If $\gamma\in(0,1/2)$, from \eqref{br}
\be\label{sbr}
\Big(-\frac{1}{f'(a)}-1+D\Big)^2=c^2\frac{(r-1)^2}{c_{1/2}^{(-)2}}a^{2\gamma}+2c\frac{(3r^2+2r-1)}{c_{1/2}^{(-)2}}a^{\gamma+1/2}\\+O(a^{2\gamma+1/2}).
\ee
Substitute \eqref{sbr} into the following expression
\begin{align*}
&\sqrt{\Big(-\frac{1}{f'(a)}-1+D\Big)^2-4\frac{1}{f'(a)}}=\Big(c^2\frac{(r-1)^2}{c_{1/2}^{(-)2}}a^{2\gamma}+\frac{4}{c_{1/2}^{(-)}}a^{1/2} +c\Big(-r^2(4\gamma-1)\\
&\hspace{1cm}-4r(\gamma+1/2)+2(\gamma+1/2)\Big)\frac{(r-1)(3r^2+2r-1)}{r^3(\gamma+1/2)f(0)^{3/2}}a^{\gamma+1/2}  +O(a^{2\gamma+1/2})\Big)^{1/2}\\
&=c\frac{r-1}{c_{1/2}^{(-)}}a^{\gamma}\Big(1+4\frac{c_{1/2}^{(-)}}{c^2(r-1)^2}a^{-(2\gamma-1/2)} \\
&\hspace{0.7cm}+\Big(-r^2(4\gamma-1)-4r(\gamma+1/2)+2(\gamma+1/2)\Big)\frac{(3r^2+2r-1)}{c(\gamma+1/2)(r-1)^4c_{1/2}^{(-)}}a^{-\gamma+1/2}  +O(a^{1/2})\Big)^{1/2}.
\end{align*}
and by \eqref{sqe} we can Taylor expand  
\begin{align*}
&\sqrt{\Big(-\frac{1}{f'(a)}-1+D\Big)^2-4\frac{1}{f'(a)}}\\
&\hspace{1cm}=c\frac{r-1}{c_{1/2}^{(-)}}a^{\gamma}+\frac{2}{c(r-1)}a^{-\gamma+1/2}\\
&\hspace{1cm}\quad+\Big(-r^2(4\gamma-1)-4r(\gamma+1/2)+2(\gamma+1/2)\Big)\frac{(3r^2+2r-1)}{(2\gamma+1)(r-1)^3c_{1/2}^{(-)2}}a^{1/2}  +O(a^{\gamma+1/2}).
\end{align*}
In the end, putting all estimates together, we approximate  \eqref{eq:m_2}
\begin{align}
&\frac{1}{\sqrt{m_2(\e)}}=\frac{1}{2}\Big|-\frac{1}{f'(a)}-1+D+\sqrt{\Big(-\frac{1}{f'(a)}-1+D\Big)^2-4\frac{1}{f'(a)}}\Big|\notag \\
&=\frac{1}{2}\Big|\frac{3r^2+2r-1}{r\sqrt{f(0)}}a^{1/2} -c\frac{(r-1)^2}{r\sqrt{f(0)}}a^{\gamma}+c\frac{(r-1)^2}{r\sqrt{f(0)}}a^{\gamma}+\frac{2}{c(r-1)}a^{-\gamma+1/2} \notag \\
&\hspace{0.5cm}+\Big(-r^2(4\gamma-1)-4r(\gamma+1/2)+2(\gamma+1/2)\Big)\frac{(3r^2+2r-1)}{(2\gamma+1)(r-1)^3c_{1/2}^{(-)2}}a^{1/2}  +O(a^{\gamma+1/2})\Big|\notag \\
&=\frac{1}{2}\Big|\Big((2\gamma+1)\Big(2r-1+r(r-1)\sqrt{f(0)}\Big)+r^2(4\gamma-1)\Big)\frac{(3r^2+2r-1)}{(2\gamma+1)(r-1)^3c_{1/2}^{(-)2}}a^{1/2}\notag \\ 
&\hspace{9.5cm}+\frac{2}{c(r-1)}a^{-\gamma+1/2}+O(a^{\gamma+1/2})\Big|. \label{eq:app1}
 \end{align}
If $\gamma\in[1/2,\infty)$, from \eqref{br}
\be\label{sbr2}
\Big(-\frac{1}{f'(a)}-1+D\Big)^2=\frac{(3r^2+2r-1)^2}{r^2f(0)}a-2c\frac{(3r^2+2r-1)}{c_{1/2}^{(-)2}}a^{\gamma+1/2}+O(a^{\gamma+1}).
\ee
Use \eqref{sbr2} to obtain
\begin{align*}
&\sqrt{\Big(-\frac{1}{f'(a)}-1+D\Big)^2-4\frac{1}{f'(a)}}=\Big(\frac{4}{c_{1/2}^{(-)}}a^{1/2}+\frac{(3r^2+2r-1)^2}{r^2f(0)}a \\
&\hspace{1cm}+c\Big(-r^2(4\gamma-1)-4r(\gamma+1/2)+2(\gamma+1/2)\Big)\frac{(r-1)(3r^2+2r-1)}{r^3(\gamma+1/2)f(0)^{3/2}}a^{\gamma+1/2}  +O(a^{\gamma+1})\Big)^{1/2}\\
&=2\sqrt{c_{1/2}^{(-)}}a^{1/4}\Big(1+\frac{(3r^2+2r-1)^2}{4r(r-1)\sqrt{f(0)}}a^{1/2}\\
&\hspace{1cm}+c\Big(-r^2(4\gamma-1)-4r(\gamma+1/2)+2(\gamma+1/2)\Big)\frac{3r^2+2r-1}{4r^2(\gamma+1/2)f(0)}a^{\gamma}  +O(a^{\gamma+1/2})\Big)^{1/2}.
\end{align*}
By \eqref{sqe} 
\begin{align*}
&\sqrt{\Big(-\frac{1}{f'(a)}-1+D\Big)^2-4\frac{1}{f'(a)}}=2\sqrt{c_{1/2}^{(-)}}a^{1/4}+\frac{(3r^2+2r-1)^2}{2r^{3/2}\sqrt{r-1}f(0)^{3/4}}a^{3/4}\\
&\hspace{1cm}+c\Big(-r^2(4\gamma-1)-4r(\gamma+1/2)+2(\gamma+1/2)\Big)\frac{(3r^2+2r-1)\sqrt{r-1}}{2r^{5/2}(\gamma+1/2)f(0)^{5/4}}a^{\gamma+1/4}  +O(a^{\gamma+3/4})\Big).
\end{align*}
Finally, combining the estimates we have
\begin{align}
\frac{1}{\sqrt{m_2(\e)}}&=\frac{1}{2}\Big|-\frac{1}{f'(a)}-1+D+\sqrt{\Big(-\frac{1}{f'(a)}-1+D\Big)^2-4\frac{1}{f'(a)}}\Big|\notag \\
&=\frac{1}{2}\Big|2\sqrt{c_{1/2}^{(-)}}a^{1/4}+\frac{3r^2+2r-1}{r\sqrt{f(0)}}a^{1/2} +c\frac{r-1}{c_{1/2}^{(-)}}a^{\gamma}+O(a^{3/4})\Big|.\label{eq:app2}
\end{align}

Equation \eqref{m2ext}, follows from \eqref{eq:app1} and \eqref{eq:app2}. \qed

\bibliographystyle{ieeetr}

\end{document}